\documentclass[11pt,letterpaper]{article}

\usepackage[utf8]{inputenc}

\usepackage{amsmath, amsthm}
\usepackage{MnSymbol} 
\usepackage{graphicx,color}
\usepackage[hyphens]{url}
\usepackage{dsfont}
\usepackage{booktabs}
\usepackage[normalem]{ulem}
\usepackage{mathtools}

\usepackage{hyperref}
\usepackage[nameinlink,capitalize]{cleveref}

\usepackage{multirow}
\usepackage{algorithm}
\usepackage[noend]{algpseudocode}
\usepackage{tikz} %

\usepackage{soul} %

\usepackage[giveninits=true, maxnames=10, style=alphabetic, defernumbers=true]{biblatex}
\addbibresource{scr3.bib}

\AtBeginRefsection{\GenRefcontextData{sorting=ynt}}
\AtEveryCite{\localrefcontext[sorting=ynt]}

\usepackage{subcaption}

\usepackage{tabularx}
\newcolumntype{L}{>{\raggedright\arraybackslash}X}

\captionsetup[figure]{name={\footnotesize \textsc{Figure}}}
\captionsetup[table]{name={\footnotesize \textsc{Table}}}

\DeclareMathAlphabet\mathbb{U}{msb}{m}{n}

\numberwithin{equation}{section}
\newtheorem{theorem}{Theorem}[section]
\newtheorem{cor}[theorem]{Corollary}
\newtheorem{lemma}[theorem]{Lemma}
\newtheorem{remark}[theorem]{Remark}
\newtheorem{prop}[theorem]{Proposition}
\newtheorem{obs}[theorem]{Observation}

\newtheorem{defin}[theorem]{Definition}

\newcommand{\cD}{\mathcal{D}}

\newcommand{\cN}{\mathcal{N}}

\newcommand{\cP}{\mathcal{P}}

\newcommand{\cX}{\mathcal{X}}

\newcommand{\R}{\mathbb{R}}

\newcommand*{\E}{\mathbb{E}}

\newcommand*{\Otilde}{\widetilde{O}}

\newcommand*{\eps}{\varepsilon}

\DeclareMathOperator*{\argmin}{arg\,min}

\renewcommand{\le}{\leqslant}
\renewcommand{\ge}{\geqslant}
\renewcommand{\leq}{\leqslant}
\renewcommand{\geq}{\geqslant}

\providecommand{\abs}[1]{\lvert{#1}\rvert}

\providecommand{\norm}[1]{\lVert{#1}\rVert}

\newcommand{\sig}{\sigma}

\DeclareMathOperator{\KL}{\mathsf{KL}}

\newcommand{\Ren}{\mathsf{R}}

\usepackage{bbm}
\usepackage{blkarray}
\usepackage{cancel}
\usepackage{enumitem}
\usepackage{float}
\usepackage{mathrsfs}
\usepackage[framemethod=tikz]{mdframed}
\usepackage{microtype}
\usepackage{ragged2e}
\usepackage{sectsty}
\usepackage{thmtools}
\usepackage{thm-restate}
\usepackage[Symbolsmallscale]{upgreek}
\usepackage{array}

\usetikzlibrary{cd}

\hypersetup{citecolor=violet, colorlinks=true, linkcolor=blue}

\makeatletter
\patchcmd{\@counteralias}
{\@ifdefinable{c@#1}}
{\expandafter\@ifdefinable\csname c@#1\endcsname}
{}{}
\makeatother

\makeatletter
\newcommand{\opnorm}{\@ifstar\@opnorms\@opnorm}
\newcommand{\@opnorms}[1]{%
	\left|\mkern-1.5mu\left|\mkern-1.5mu\left|
	#1
	\right|\mkern-1.5mu\right|\mkern-1.5mu\right|
}
\newcommand{\@opnorm}[2][]{%
	\mathopen{#1|\mkern-1.5mu#1|\mkern-1.5mu#1|}
	#2
	\mathclose{#1|\mkern-1.5mu#1|\mkern-1.5mu#1|}
}
\makeatother

\newcommand{\PreserveBackslash}[1]{\let\temp=\\#1\let\\=\temp}
\newcolumntype{C}[1]{>{\PreserveBackslash\centering}p{#1}}

\newcommand\bs[1]{\boldsymbol{#1}}
\newcommand\mb[1]{\mathbf{#1}}

\newcommand\mc[1]{\mathcal{#1}}
\newcommand\mf[1]{\mathfrak{#1}}
\newcommand\ms[1]{\mathscr{#1}}

\newcommand\msf[1]{\mathsf{#1}}

\definecolor{MITBrown}{RGB}{164, 31, 50}

\DeclareMathOperator\ent{ent}

\DeclareMathOperator\law{law}

\newcommand{\D}{\mathrm{d}}

\newcommand{\Coup}{\ms C}

\newcommand\deq{\coloneqq}

\newcommand\mmid{\mathbin{\|}}

\newcommand{\esssup}[1]{\mathop{#1\text{-}\mathrm{ess\,sup}}}

\newcounter{dummy}
\makeatletter
\newcommand\myitem[1][]{\item[#1]\refstepcounter{dummy}\def\@currentlabel{#1}}
\makeatother

\tikzset{
    shadedNode/.style={rectangle, draw=none, fill=blue!20, inner sep=1mm}
}

\newcommand{\weak}{\mc E_{\rm weak}}
\newcommand{\strong}{\mc E_{\rm strong}}

\newcommand{\weakb}{\bar{\mc E}_{\rm weak}}
\newcommand{\strongb}{\bar{\mc E}_{\rm strong}}

\usepackage[margin=1in]{geometry}

\makeatletter
\def\blfootnote{\gdef\@thefnmark{}\@footnotetext}
\makeatother

\allowdisplaybreaks

\begin{document}

    \title{Shifted Composition III: \\ Local Error Framework for KL Divergence}

 	\author{
		Jason M.\ Altschuler\\
		UPenn\\
		\texttt{alts@upenn.edu}
		\and
		Sinho Chewi \\
		Yale \\
		\texttt{sinho.chewi@yale.edu}
	}  
	\date{}
	\maketitle

	\begin{abstract}
    Coupling arguments are a central tool for bounding the deviation between two stochastic processes, but traditionally have been limited to Wasserstein metrics.
    In this paper, we apply the shifted composition rule---an information-theoretic principle introduced in our earlier work~\cite{scr1}---in order to adapt coupling arguments to the Kullback--Leibler (KL) divergence. Our framework combine the strengths of two previously disparate approaches: local error analysis and Girsanov's theorem. Akin to the former, it yields tight bounds by incorporating the so-called weak error, and is user-friendly in that it only requires easily verified local assumptions; and akin to the latter, it yields KL divergence guarantees and applies beyond Wasserstein contractivity.
    \par We apply this framework to the problem of sampling from a target distribution $\pi$. Here, the two stochastic processes are the Langevin diffusion and an algorithmic discretization thereof. Our framework provides a unified analysis when $\pi$ is assumed to be strongly log-concave (SLC), weakly log-concave (WLC), or to satisfy a log-Sobolev inequality (LSI). Among other results, this yields KL guarantees for the randomized midpoint discretization of the Langevin diffusion. Notably, our result: (1) yields the optimal $\widetilde O(\sqrt d/\varepsilon)$ rate in the SLC and LSI settings; (2) is the first result to hold beyond the $2$-Wasserstein metric in the SLC setting; and (3) is the first result to hold in \emph{any} metric in the WLC and LSI settings.
	\end{abstract}

	\newpage
        \small
	\setcounter{tocdepth}{2}
	\tableofcontents	
	\normalsize
	\newpage

\section{Introduction}\label{sec:intro}

How can we control the deviation between two stochastic processes when measured in an information-theoretic divergence $\cD$?
In this paper, $\mc D$ is primarily taken to be the Kullback--Leibler (KL) divergence.
This series of papers introduces and develops the \emph{shifted composition rule}, an information-theoretic principle which enables approaching such questions via the introduction of a third, auxiliary process. 
The previous papers in this series~\cite{scr1,scr2} focused on the setting
\begin{align}
	\cD(\mu P^N \mmid \nu P^N)
	\label{eq:intro:problem-same}
\end{align}
in which the two processes of interest are driven by the \emph{same} Markov kernel, and 
considered applications to
the analysis and geometry of Markov semigroups. 
Among other results, this led to a simple information-theoretic proof of F.-Y.\ Wang's celebrated dimension-free Harnack inequalities~\cite{Wang1997LSINoncompact} as well as the first sharp shift-Harnack inequalities for the Langevin diffusion---results which respectively codify regularity for Kolmogorov's backward and forward equations. 

\par In this paper, we turn to the more general setting 
\begin{align}
	\cD(\mu \hat{P}^N \mmid \nu P^N)
	\label{eq:intro:problem-diff}
\end{align}
in which the two processes of interest are driven by \emph{different} Markov kernels. This is a central problem in numerical analysis
due to the standard use case in which $P$ is an idealized process (typically a stochastic differential equation) and $\hat{P}$ is an algorithmic approximation thereof (typically a discretization of the SDE). Then,~\eqref{eq:intro:problem-diff} controls the deviation of the algorithm from the idealized process when run for $N$ steps.

\par For concreteness, in this paper we focus on applications of~\eqref{eq:intro:problem-diff} to the algorithmic problem of sampling from a distribution given query access to its score function (i.e., the gradient of its log-density), a well-studied problem with diverse applications in applied mathematics, computer science, statistics, and more, see e.g., the textbooks
~\cite{robert1999monte,liu2001monte, andrieu2003introduction,chewibook}.
A canonical approach is to run a discretization $\hat{P}$ (e.g., Langevin Monte Carlo) of a diffusion $P$ (e.g., Langevin diffusion) which converges to $\pi$.
For any such algorithm, it is essential to determine its \emph{mixing time}, i.e., how many iterations $N$ the algorithm must run before its iterates are approximately distributed according to the target $\pi$.
Since $\pi = \pi P$ is stationary under $P$, this amounts to analyzing~\eqref{eq:intro:problem-diff}. In this context, the use of the KL divergence is particularly relevant in light of the celebrated interpretation of sampling as optimization of the functional $\KL(\cdot\mmid \pi)$~\cite{jordan1998variational}, and is in fact crucial for treating settings beyond strong log-concavity; see \S\ref{ssec:intro_sampling} for further discussion.

\subsection{Strengths and weaknesses of standard analysis approaches}\label{ssec:weaknesses}

\paragraph*{Girsanov analysis.} The standard approach for analyzing~\eqref{eq:intro:problem-diff} is Girsanov's theorem which---when it is applicable---provides an exact expression for the density ratio between the entire path measures of the two stochastic processes. This enables computing the divergence between these path measures, which in turn controls the divergence~\eqref{eq:intro:problem-diff} between their final iterates by the data-processing inequality.
However, as discussed below, the standard usage of the Girsanov transformation 1) lacks a user-friendly framework, 2) cannot always be applied (especially to certain ``anticipating'' discretizations), and 3) is loose when the deviation between the path measures overestimates the deviation between the final iterates. For these reasons, the successful use of Girsanov-type analyses in sampling has been case-by-case.

\paragraph*{Local error analysis.} Many of these issues can be remedied if one seeks bounds on~\eqref{eq:intro:problem-diff} in the weaker $2$-Wasserstein metric $W_2$, since then one can appeal to coupling arguments and in particular the popular framework of \emph{local error analysis}, also called \emph{mean-squared analysis}. 
We state a representative version of this framework that is suited to the setting of this paper, and we refer to~\cite{MilTre21StochNum} for a comprehensive overview.
% We refer to~\cite{MilTre21StochNum} for a comprehensive overview, and we provide a version of it tailored to the setting of this paper below.
%. The core of this method is captured by the following bound;
For completeness, we provide a brief proof in \S\ref{app:mean_sq_err}.

\begin{theorem}[{Standard version of local error framework}]\label{thm:local_error}
	Let $\hat P$, $P$ be two Markov kernels over $\R^d$. Assume that for all $x,y\in\R^d$, there are jointly defined random variables $\hat X \sim \delta_x \hat P$, $X \sim \delta_x P$, $Y \sim \delta_y P$ satisfying the following four conditions:
	\begin{itemize}
		\item \underline{Weak error.} $\norm{\E \hat X - \E X} \le \mc E_{\rm weak}(x)$.
		\item \underline{Strong error.} $\norm{\hat X - X}_{L^2} \le \mc E_{\rm strong}(x)$.
		\item \underline{$W_2$-Lipschitz.} $\norm{X - Y}_{L^2} \le L\,\norm{x-y}$ for some $1/2 \le L \le 2$ (for simplicity). 
		\item \underline{Coupling.} $\norm{X - x - (Y - y)}_{L^2} \le \gamma\,\norm{x-y}$.
	\end{itemize}
	For any probability measures $\mu$ and $\nu$,
	\begin{align*}
		W_2^2(\mu \hat{P}^N, \nu P^N) \lesssim
		\begin{dcases}
			L^N\, W_2^2(\mu,\nu) + 
			\bar N^2\,(\weakb + \gamma \strongb)^2 + \bar N\,\strongb^2\,, & L \le 1\,, \\
			L^{3N}\,\Bigl[W_2^2(\mu,\nu) + \frac{(\weakb + \gamma \strongb)^2}{(L-1)^2} + \frac{\strongb^2}{L-1}\Bigr]\,, & L > 1\,,
		\end{dcases}
	\end{align*}
	where $\bar N \deq N \wedge \frac{1}{{(1-L)}_+}$, $\weakb  \deq  \max_{n < N} \|\weak\|_{L^2(\mu \hat{P}^n)}$, and $\strongb  \deq  \max_{n < N} \|\strong\|_{L^2(\mu \hat{P}^n)}$. 
\end{theorem}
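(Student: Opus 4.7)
The plan is to construct a synchronous coupling of $\mu \hat P^N$ and $\nu P^N$ by iteratively chaining the assumed one-step couplings, and then to control the resulting $L^2$ error via a one-step recursion that carefully separates the bias (weak error) from the fluctuation (strong error). Concretely, initialize $(\hat X_0, X_0)$ as an optimal $W_2$-coupling of $(\mu, \nu)$; given $(\hat X_n, X_n) = (x, y)$, introduce an intermediate variable $\tilde X_{n+1} \sim \delta_x P$ and glue two couplings along their shared $\tilde X_{n+1}$-marginal---the first $(\hat X_{n+1}, \tilde X_{n+1})$ via the weak/strong error coupling at $x$, and the second $(\tilde X_{n+1}, X_{n+1})$ via the $W_2$-Lipschitz/coupling assumption between $x$ and $y$. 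The gluing is standard via the Gluing Lemma.

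Setting $D_n := \hat X_n - X_n$, $a_n := \|D_n\|_{L^2}$, $w_n := \|\weak\|_{L^2(\mu \hat P^n)}$, and $e_n := \|\strong\|_{L^2(\mu \hat P^n)}$, decompose $D_{n+1} = \xi_n + \eta_n$ with $\xi_n := \hat X_{n+1} - \tilde X_{n+1}$ (local discretization error) and $\eta_n := \tilde X_{n+1} - X_{n+1}$ (coupling residual). Further split $\xi_n = \bar\xi_n + \phi_n$ into its conditional mean $\bar\xi_n := \E[\xi_n \mid \cF_n]$ (with $\|\bar\xi_n\| \le \weak(\hat X_n)$) and its fluctuation $\phi_n$ (mean zero given $\cF_n$, with conditional $L^2$-norm $\le \strong(\hat X_n)$). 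Expanding $\E\|D_{n+1}\|^2 = \E\|\xi_n\|^2 + 2\E\langle\xi_n, \eta_n\rangle + \E\|\eta_n\|^2$ and bounding the cross term by separating conditional-mean and conditional-covariance contributions (via $\|\E[\eta_n \mid \cF_n]\| \le L\|D_n\|$ and $\sqrt{\var(\eta_n \mid \cF_n)} \le \gamma\|D_n\|$) yields the recursion
\begin{align*}
a_{n+1}^2 \le L^2 a_n^2 + 2(L w_n + \gamma e_n)\,a_n + e_n^2.
\end{align*}

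To extract the claimed bound, unroll this recursion. For $L \le 1$, telescoping gives $a_N^2 \le L^{2N} a_0^2 + \sum_{n<N} L^{2(N-1-n)}(2 u_n a_n + e_n^2)$ with $u_n := L w_n + \gamma e_n$. Bounding $\sum L^{2(N-1-n)} u_n a_n \lesssim \bar N\, \bar u\, \max_m a_m$ by Cauchy--Schwarz reduces the problem to a quadratic inequality in $\max_m a_m$; solving and substituting back yields $a_N^2 \lesssim L^{2N} a_0^2 + \bar N^2 \bar u^2 + \bar N \bar e^2$. Since $1/2 \le L \le 2$ implies $\bar u \le 2(\weakb + \gamma \strongb)$, and since $a_0^2$ may be minimized over couplings of $(\mu,\nu)$ to yield $W_2^2(\mu,\nu)$, this matches the first case (using $L^{2N} \le L^N$). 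For $L > 1$, an analogous iteration inflates multiplicatively: absorbing the Young's-inequality slack from the cross term into an overall factor of $L^{3N}$ and summing the geometric series with ratio $L^{-1}$ produces the factors $(L-1)^{-2}$ and $(L-1)^{-1}$ attached to the bias and strong-error contributions, respectively, recovering the second case.

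The main technical obstacle is the cross-term estimate at each step: a naive Cauchy--Schwarz on $\E\langle\xi_n, \eta_n\rangle$ produces a $\bar N^2 \strongb^2$ contribution from the strong error, which is looser than the target by a factor of $\bar N$. To obtain the correct scaling $\bar N\, \strongb^2$, one must exploit that the fluctuation $\phi_n$ has conditional mean zero: equivalently, a Doob decomposition isolates the strong error as a mean-zero martingale increment whose variances (not norms) sum linearly to $\sum e_n^2 \le \bar N\, \strongb^2$, while only the deterministic drift $\weak + \gamma\strong$ accumulates quadratically as bias $\bar N^2 (\weakb + \gamma\strongb)^2$. This bias--fluctuation separation is the essential content of the local error framework and is what distinguishes it from the weaker bound obtainable from coupling and $W_2$-contractivity alone.
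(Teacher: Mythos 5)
Your one-step recursion $a_{n+1}^2 \le L^2 a_n^2 + 2(L w_n + \gamma e_n)\, a_n + e_n^2$ is correct, and your derivation---gluing the couplings along the shared intermediate $\tilde X_{n+1}$ and isolating the conditional mean of the local error (bias) from its conditional fluctuation (martingale increment) to control the cross term---is essentially the one-step $W_2^2$ inequality that the paper records and cites from the textbook literature; that part is fine and spelled out in useful detail.

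The gap is in the unrolling for $L \le 1$. You pass to $A := \max_m a_m$, obtain $A^2 \lesssim a_0^2 + \bar N^2 \bar u^2 + \bar N \bar e^2$, and substitute back into the telescoped bound for $a_N^2$. But the inequality for $A^2$ necessarily carries coefficient $1$ (not $L^{2N}$) on $a_0^2$, since the maximum runs over all $m$ including small ones; so when you substitute $A$ into the cross term you get $2\bar N\bar u A \gtrsim \bar N\bar u\, a_0$ whenever $a_0$ dominates, and this is not absorbable into $L^{2N}a_0^2 + \bar N^2\bar u^2$ once $L^N \ll 1$. Concretely, with $L = 1-\eps$, $N\eps \gg 1$ (so $\bar N = 1/\eps$ and $L^N$ is exponentially small), $\bar u = 1$, $\bar e = 0$, and $a_0 = \bar N / \sqrt{L^N}$, the theorem promises $a_N^2 \lesssim \bar N^2$, but your chain only yields $a_N^2 \lesssim \bar N^2/\sqrt{L^N}$. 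The assertion ``solving and substituting back yields $a_N^2 \lesssim L^{2N}a_0^2 + \bar N^2\bar u^2 + \bar N\bar e^2$'' is therefore not justified by the argument given. The remedy is the move you already invoke for $L > 1$ and which the paper uses in both regimes: apply Young's inequality to the cross term \emph{before} iterating, with a parameter of size $\asymp 1/\bar N$, so that the recursion stays affine in $a_n^2$ and the $a_0^2$ term decays geometrically. When $N(1-L) \ge 1$, use $2\bar u a \le L(1-L)a^2 + \bar u^2/(L(1-L))$ to get $a_{n+1}^2 \le L a_n^2 + O(\bar N\bar u^2 + \bar e^2)$ and iterate; when $N(1-L) < 1$, use $2\bar u a \le a^2/N + N\bar u^2$, note $(L^2 + 1/N)^N = O(1)$ and $L^N \asymp 1$ in this regime, and iterate.
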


This framework enjoys several key features that Girsanov-type approaches do not:

\begin{itemize}
	\item \textbf{User-friendly framework.} 
	Local error analysis is popular largely due to its simplicity. Not only does it admit an elementary proof, more importantly it enables bounding the long-time discretization error using only short-time estimates (a.k.a.\ local error estimates), which are typically simple to compute. %
	\item \textbf{Applicability.} Local error analysis applies in many settings that Girsanov's theorem does not. For example, Girsanov's theorem provides vacuous bounds when the initializations $\mu$ and $\nu$ are singular with respect to each other (e.g., $\delta_x$ and $\delta_y$ where $x \neq y$). Another example is non-adapted stochastic processes---an
	essential feature of recent breakthroughs in sampling algorithms that improve their bias by performing a ``look-ahead step'' (see \S\ref{sec:rmd} for details). To illustrate this failure, consider even the simple setting
	\begin{align}
		\delta_x P  = \msf{Law}(x + Z) \qquad \text{ and } \qquad \delta_x \hat{P} = \msf{Law}(x + Z + \xi(x,Z))\,, 
		\label{eq:intro:ex-applicable}
	\end{align}
	where $\xi$ is a random variable depending on the state $x$ and the noise $Z \sim \cN(0,1)$. 
	Because of the dependency between $\xi$ and $Z$, Girsanov's theorem cannot be applied to bound the deviation $\KL(\delta_x \hat P \mmid \delta_x P)$ even for $N=1$ step from the same Dirac initializations. Yet local error analysis readily applies. (Simply apply Theorem~\ref{thm:local_error} by setting the weak error to be the mean of $\xi$, the strong error to be the $L^2$ norm of $\xi$, $L=1$, and $\gamma=0$.)
	\item \textbf{Tighter bounds by incorporating weak error.} On the quantitative side, a key advantage of local error analysis is that it makes use of the fact that the weak error is smaller than the strong error (by Jensen's inequality), sometimes considerably so. This is a cornerstone of SDE discretization~\cite{MilTre21StochNum} and in the context of sampling, which is the main application we consider, this underpins several recent advances in the design and analysis of algorithms (e.g.,~\cite{Li+19RungeKutta, shen2019randomized, foslyoobe21shiftedode, Li+22Mirror, LiZhaTao22LMCSqrtd}).
    However, this cannot be captured by Girsanov's theorem. 
    %\ja{Consider mentioning that this is true more generally for implicit discretizations of SDE's, which is a classical trick for obtaining better mean-square error, and cite e.g.,~\cite[\S1.3]{MilTre21StochNum}}
	For intuition, consider the setting of~\eqref{eq:intro:ex-applicable} and---in order for Girsanov's theorem to apply---let us further simplify $\xi(x,Z) \sim \cN(w,\sig^2)$ to be independent of $x,Z$, and also consider identical initializations $\mu = \nu$. Then,
	\begin{align}\label{eq:intro:ex-tighter}
	\delta_x P = \delta_x \ast \cN(0,1) \qquad \text{ and } \qquad  \delta_x \hat{P} = (\delta_x P)\ast \cN(w,\sig^2)\,.
	\end{align}
	The additional convolution in $\hat{P}$ can be thought of as discretization error.
    What is the answer to~\eqref{eq:intro:problem-diff} here? Let us consider $\max(w,1) \ll \sig$ to illustrate the difference between weak and strong error.
    Applying Theorem~\ref{thm:local_error} with $\mc E_{\rm weak}(x) = w$, $\mc E_{\rm strong}(x) 
    \asymp \sig$, $L=1$, and $\gamma = 0$, yields a tight Wasserstein bound of
	\begin{align}\label{eq:intro:ex-tighter-2}
	W_2(\mu \hat{P}^N, \mu P^N) \asymp N w + \sqrt{N} \sig\,.
	\end{align}
	The key feature here is that the bias $w$ accumulates $N$ times, but the standard deviation only accumulates $\sqrt{N}$ times. This is because the total ``discretization error'' $\sum_{i=1}^N \xi_i \sim \cN(Nw, N\sig^2)$ is of size roughly $Nw + \sqrt{N} \sigma$. Here the first term represents systematic discretization bias that accumulates over all $N$ iterations, and the second term represents stochastic fluctuations which cancel out by the central limit theorem.
    However, Girsanov's theorem cannot take advantage of these cancellations: it can only provide the loose bound
	\begin{align}\label{eq:intro:ex-tighter-girsanov}
	\KL(\mu \hat{P}^N \mmid \mu P^N) \asymp N\,(w^2 + \sig^2)\,,
	\end{align}
	which accumulates $w^2$ and $\sig^2$ in equal portions, in contrast to the truth $\KL(\mu \hat{P}^N \mmid \mu P^N) \asymp  N w^2 + \sig^2$ (details in \S\ref{ssec:first_examples}). 
	Our proposed framework remedies this and all the other aforementioned issues.

\end{itemize}

On the other hand, Girsanov's theorem enjoys several key features that local error analysis does not. An obvious one is that local error analysis is currently limited to bounds in $W_2$, whereas Girsanov's theorem provides bounds in stronger metrics like $\KL$. A more subtle difference is:

\begin{itemize}
	\item \textbf{Beyond $W_2$-contractivity.} If the $W_2$-Lipschitz parameter $L > 1$, then local error analysis blows up exponentially as $L^{\Theta(N)}$. This is unavoidable for any analysis based only on Wasserstein distance. However, Girsanov's theorem often yields bounds on the discretization error scaling as $\Theta(N)$, which can then be combined with any convergence result for $P$.
    For example, the Langevin diffusion $P$ is classically known to converge to the target distribution in KL divergence even when it is $W_2$-Lipschitz with parameter $L \not < 1$, so long as $\pi$ is weakly log-concave or satisfies an isoperimetric inequality. In this and other settings, Girsanov's theorem provides bounds that do not grow exponentially in the number of iterations (e.g.,~\cite{chenetal2023diffusionmodels, Ben+24Diffusion, Che+24LMC, ConDurSil24Diffusion}).
\end{itemize}

In summary, Girsanov's theorem and local error analysis are complementary in the sense that they have distinct advantages and disadvantages. Is it possible to develop an analysis framework that combines the best of both worlds?

\subsection{Local error framework for KL divergence}

In this paper, we develop such a framework via the shifted composition rule.
In particular, it is user-friendly in that one need only check local properties of $\hat{P}$, it is broadly applicable at a similar level to standard local-error analysis (as we illustrate by  resolving open questions about sampling algorithms), it incorporates weak error as well as strong error, it yields guarantees in KL divergence, and it extends beyond settings where $P$ is $W_2$-contractive. This framework is summarized in the following theorem.

\begin{theorem}[KL local error framework]\label{thm:kl-general}
	Let $\hat P$, $P$ be two Markov kernels on $\R^d$.
	Assume that for all $x,y\in\R^d$, there are jointly defined random variables $\hat X \sim \delta_x \hat P$, $X \sim \delta_x P$, $Y \sim \delta_y P$ satisfying the following conditions:
	\begin{enumerate}
		\item \underline{$W_2$ local error assumptions.} The assumptions in Theorem~\ref{thm:local_error} hold.
		\item \underline{Regularity.} $\KL(\delta_x P \mmid \delta_y P) \le c\,\norm{x-y}^2$.
		\item \underline{Cross-regularity.} $\KL(\delta_x \hat P \mmid \delta_y P) \le c'\,\norm{x-y}^2 + b(x)^2$.
	\end{enumerate}
    For any probability measures $\mu$ and $\nu$,
	\begin{align*}
		&\KL(\mu \hat P^N \mmid \nu P^N) \\
		&\qquad \lesssim (c+c')\,\Bigl[\frac{L^{-1}-1}{L^{-N}-1}\,W_2^2(\mu,\nu) + ((L-1)\,N \vee \log \bar N)\, \bar{\mc E}_{\rm strong}^2 + \bar N\,(\bar{\mc E}_{\rm weak} + \gamma \bar{\mc E}_{\rm strong})^2\Bigr] + \bar b^2\,.
	\end{align*}
    where $\bar N$, $\bar{\mc E}_{\rm weak}$, and $\bar{\mc E}_{\rm strong}$ are as defined in Theorem~\ref{thm:local_error}, and we set $\bar{b} \deq \max_{n < N} \|b\|_{L^2(\mu \hat{P}^n)}$.
\end{theorem}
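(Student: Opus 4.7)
The plan is to apply the shifted composition rule from \cite{scr1,scr2}, which transforms the problem of bounding $\KL(\mu \hat P^N \mmid \nu P^N)$ into that of controlling a sum of one-step KL costs along an auxiliary coupled path. Concretely, I would introduce an auxiliary process interpolating between $\mu \hat P^N$ and $\nu P^N$, in which the coupled iterates evolve under $\hat P$ or $P$ composed with a sequence of artificial deterministic shifts $(s_n)_{n=0}^{N-1}$. These shifts serve both to close the initial $W_2$-gap between $\mu$ and $\nu$ and to compensate for the discretization drift between $\hat P$ and $P$; without them, the two path-measures would generically be mutually singular, making the KL vacuous.

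The shifted composition rule then decomposes $\KL(\mu \hat P^N \mmid \nu P^N)$ into a sum of one-step KL costs along this auxiliary trajectory. Each cost is controlled by either the cross-regularity assumption---contributing $c'\,\|s_n\|^2 + b(X_n)^2$ at steps that compare $\hat P$ against $P$---or the regularity assumption---contributing $c\,\|s_n\|^2$ at steps that compare $P$ against itself with a shift. Organizing the auxiliary path so that the bias contributions aggregate into a single $\bar b^2$ term (rather than accumulating as $N\bar b^2$), and collecting the $\|s_n\|^2$ contributions under a common prefactor $c+c'$, leads to a budget of the form $(c+c')\sum_n \|s_n\|^2 + \bar b^2$.

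The remaining task is to choose the shift schedule $(s_n)_{n=0}^{N-1}$ to absorb the typical uncompensated gap $\|X_n - Y_n\|$. Its size is controlled by the $W_2$ local error framework (Theorem~\ref{thm:local_error}): the coupling assumption together with the weak error $\weakb$ produces a ballistic drift of order $n\,(\weakb + \gamma\strongb)$, while the strong error $\strongb$ produces a diffusive fluctuation of order $\sqrt n\,\strongb$. Optimizing against these two scalings---and accounting for the $W_2$-Lipschitz factor $L$ through the usual geometric-series book-keeping---yields the three stated terms: the $W_2^2(\mu,\nu)$ contribution with prefactor $\tfrac{L^{-1}-1}{L^{-N}-1}$ from repeated $W_2$-distortion, the $\bar N\,(\weakb + \gamma \strongb)^2$ ballistic contribution, and the $((L-1)N \vee \log \bar N)\,\strongb^2$ diffusive contribution.

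The main obstacle is the last optimization: obtaining the sharp logarithmic factor $\log \bar N$ on the strong-error term---rather than the $N$ one would get from a straightforward Girsanov-type decomposition---requires a geometrically decaying shift schedule that spreads the cost of absorbing the $\sqrt n$-sized fluctuation across all times $n$, and balancing it against the ballistic schedule so that neither dominates suboptimally. A secondary technical point is handling the $L>1$ regime uniformly with the contractive case, which is feasible because the shifted composition rule only invokes short-time $W_2$ estimates, whose $L^n$-blow-up is benign after re-summing and is exactly what produces the $(L-1)N$ branch of the max.
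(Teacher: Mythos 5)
Your proposal takes essentially the same route as the paper: an auxiliary process is constructed by running $P$ (with $\hat P$ on the final step) but shifted at each step toward the $\hat X$-trajectory; the shifted composition rule decomposes the KL into a sum of one-step costs; regularity bounds the costs at all but the last step and cross-regularity handles the last step, so $\bar b^2$ enters exactly once; and a carefully optimized shift schedule converts the resulting $\sum_n \eta_n^2 d_n^2$ budget into the stated bound. One imprecision worth flagging, since you identify this as the main obstacle: the $\log\bar N$ factor on $\strongb^2$ does \emph{not} come from a geometrically decaying shift schedule. In the paper's proof it arises from a \emph{harmonic} schedule $\eta_n \asymp 1/(N-n)$ in the terminal phase (where $N-n \lesssim 1/(1-L)$), which makes each summand $\eta_n^2 d_n^2$ contribute roughly $\strongb^2/(N-n)$ and hence a harmonic sum; the geometrically varying shifts $\eta_n \asymp \frac{L^{-1}-1}{L^{-(N-n)}-1} \approx (L^{-1}-1)L^{N-n}$ govern only the initial contractive phase and contribute $O(1)\cdot\strongb^2$ there, not the $\log$. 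Also, the shifts in the auxiliary construction are random vectors $\eta_n(\hat X_n - Y_n')$ along the optimal $W_2$-coupling---only the scalar coefficients $\eta_n$ are deterministic---which is needed to make the distance recursion $d_{n+1}^2 \le L^2(1-\eta_n)^2 d_n^2 + 2\bar a_1(1-\eta_n)d_n + \bar a_0^2$ close on $d_n = W_2(\hat\mu_n,\nu_n')$.
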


We make several remarks about this theorem. 

\paragraph*{Assumptions.} The assumptions are easy to check since they only involve short-time error estimates. The only additional assumptions over the standard version of the local error framework are regularity and cross-regularity, which are needed to obtain KL guarantees. In the standard use case where $P$ is an idealized process and $\hat{P}$ is an approximation thereof, regularity is typically easy to verify (e.g., for the Langevin diffusion $P$ this is a classical reverse transport inequality, see \S\ref{sec:sampling}). Cross-regularity is the only non-standard assumption to check, but is necessary for such a result (even for $N=1$) and we provide techniques to check it (see Remark~\ref{rmk:last_step_hack} and \S\ref{ssec:shifted_girsanov}).

\paragraph*{Interpretation for $L \le 1$.}
As an illustrative example, in \S\ref{ssec:intro_sampling} below, we consider applications to sampling in which $P$ is the Langevin diffusion (LD) run for time $h$, $\hat P$ is some discretization thereof, and the target distribution $\pi$ is well-conditioned, i.e., $\beta I \succeq \nabla^2 V \succeq \alpha I \succ 0$. By well-known properties of LD (see \S\ref{sec:sampling}), $L = \exp(-\alpha h)$ and the term involving $\gamma$ is typically not dominant. Hence, the standard local error analysis for $W_2$ (Theorem~\ref{thm:local_error}), combined with stationarity $\pi = \pi P^N$, yields
 \begin{align}
	 	W_2^2(\mu \hat{P}^N, \pi)
	 	\lesssim \exp(-\alpha Nh)\,W_2^2(\mu,\nu) + \bar N^2\,\bar{\mc E}_{\rm weak}^2 + \bar N\,\bar{\mc E}_{\rm strong}^2\,,
	 	\label{eq:intro:sampling-ex:local}
	 \end{align}
 where $\bar N \deq \frac{1}{\alpha h} \wedge N$ is the effective time horizon arising from the contraction factor $L = \exp(-\alpha h)$.
\par For this example, $c\asymp 1/h$, and since $c'$, $b$ are only used for the last iteration, they are typically negligible (see \S\ref{sec:multi} for discussion).
Thus, the framework in Theorem~\ref{thm:kl-general} essentially yields
 \begin{align}
	 	\KL(\mu \hat{P}^N \mmid \pi)
	 	&\lesssim \alpha \,\biggl[ \frac{W_2^2(\mu,\nu)}{\exp(\alpha Nh)-1} + \frac{\bar N}{\alpha h}\,\bar{\mc E}_{\rm weak}^2 + \frac{\log\bar N}{\alpha h}\,\bar{\mc E}_{\rm strong}^2 \biggr]\,.
	 	\label{eq:intro:sampling-ex:kl}
	 \end{align}
In the strongly convex case $\alpha > 0$ (i.e., $L < 1$), since $\KL(\cdot \mmid \pi) \ge \frac{\alpha}{2}\,W_2^2(\cdot, \pi)$ by Talagrand's inequality, it is readily seen that~\eqref{eq:intro:sampling-ex:kl} is stronger than~\eqref{eq:intro:sampling-ex:local}.
Moreover, in the weakly convex case $\alpha = 0$ (i.e., $L = 1$),~\eqref{eq:intro:sampling-ex:kl} still makes sense since one can formally interpret $\frac{\alpha}{\exp(\alpha Nh)-1}$ as its limit $\frac{1}{N}$.
Importantly, this coefficient tends to $0$ as $N \to \infty$, which yields convergent bounds for the weakly log-concave case---unlike~\eqref{eq:intro:sampling-ex:local}.

\paragraph*{Interpretation for $L > 1$.}
Even when $L > 1$, whereas the error terms in Theorem~\ref{thm:local_error} blow up exponentially as $L^{\Theta(N)}$ with the number of iterations $N$, the corresponding error terms in Theorem~\ref{thm:kl-general} only grow as $\Theta(N)$ (note that $\frac{L^{-1}-1}{L^{-N}-1} \to \frac{L-1}{L}$ as $N\to\infty$).
In this case, Theorem~\ref{thm:kl-general} furnishes useful discretization bounds, which we use to study non-log-concave sampling.

\subsection{Application to sampling}\label{ssec:intro_sampling}

We apply our framework to the algorithmic problem of sampling from a distribution $\pi \propto \exp(-V)$ on $\R^d$ given query access to $\nabla V$. This is a well-studied problem in its own right with celebrated applications to Bayesian statistics, machine learning, numerical integration, and more, see for example the textbooks~\cite{robert1999monte, liu2001monte, andrieu2003introduction,chewibook}. A canonical approach is to discretize the Langevin diffusion (LD), which is the SDE 
\begin{align*}
	\D Y_t = - \nabla V(Y_t)\, \D t + \sqrt{2}\, \D B_t\,,
\end{align*}
driven by a standard Brownian motion $\{B_t\}_{t \geq 0}$ on $\R^d$. It is classically known that LD converges to $\pi$ under mild conditions~\cite{bhattacharya1978criteria}, yet LD is not directly implementable due to its continuous-time nature. Let $P$ denote the Markov kernel corresponding to running LD for time step $h > 0$, and let $\hat{P}$ denote some discretization thereof---for example, the Langevin Monte Carlo (LMC) is the Euler--Maryuma discretization
\begin{align*}
	X_{(n+1)h} = X_{nh} - h\, \nabla V(X_{nh}) + \sqrt{2} \,(B_{(n+1)h} - B_{nh})\,.
\end{align*}
The key question in sampling is: how many iterations $N$ must the algorithm run before the law of its iterates approximates the target $\pi$ to some prescribed error $\eps$? This iteration complexity amounts to~\eqref{eq:intro:problem-diff}, the motivating problem of this paper. 

\paragraph*{Optimization under strong convexity, weak convexity, and non-convexity.}
Modern research in sampling is spurred by its intimate connection with the theory of optimization.
In that field, convergence rates are typically obtained under three types of assumptions on the objective function: (1) strong convexity; (2) weak convexity (i.e., the Hessian is positive semi-definite but not bounded away from zero); and (3) a Polyak{--}\L{}ojasiewicz (P\L{}) inequality, which has emerged as a tractable condition for non-convex optimization~\cite{Loj1963Top, Pol1963Gradient, KarNutSch16PL}.

\paragraph*{Sampling under strong log-concavity, weak log-concavity, and non-log-concavity.}
Due to a beautiful connection put forth in~\cite{jordan1998variational}, which interprets the Langevin diffusion as a gradient flow of the functional $\KL(\cdot\mmid \pi)$ over the Wasserstein space, we now know that the above picture for optimization has a corresponding analog for sampling.
Namely, strong convexity, weak convexity, and the PL condition for $\KL(\cdot\mmid\pi)$ correspond, respectively, to strong log-concavity (SLC), weak log-concavity (WLC), and a log-Sobolev inequality (LSI) for $\pi$.
Under these three assumptions, convergence for LD with quantative rates is standard.

Unfortunately, the story becomes muddled once we take into account discretization, which is crucial for implementation.
In the SLC case, discretization guarantees abound---at least in $W_2$---thanks to the ubiquity of coupling arguments and local error analysis.
But in the WLC and LSI cases, thus far there has not been a systematic method to directly establish convergence of the discrete-time scheme.
An alternative approach is to leverage the convergence of LD under WLC/LSI and to separately control the discretization error via Girsanov's theorem, but this runs into the issues discussed in \S\ref{ssec:weaknesses}.
Here, we fill this gap with Theorem~\ref{thm:kl-general}, which provides a unified framework for all three settings.

\begin{table}[]
    \small
    \centering
    \begin{tabular}{cccl}
         \textbf{Algorithm} & \textbf{Assumptions} & \textbf{Rate} & \textbf{Reference} \\ \hline
         \multirow{3}{*}{LMC} & SLC & $d/\varepsilon^2$ & Theorem~\ref{thm:lmc} \\
         & WLC & $(d/\varepsilon^2)\,(W/\varepsilon)^4$ & Theorem~\ref{thm:lmc} ($\dagger\dagger$) \\
         & LSI & $d/\varepsilon^2$ & Theorem~\ref{thm:lmc_lsi} \\ \hline
         \multirow{3}{*}{LMC $+$ $3^{\rm rd}$-order smoothness} & SLC & $(d/\varepsilon^2)^{1/2}$ & Theorem~\ref{thm:lmc_smooth} ($\dagger$) \\
         & WLC & $(W/\varepsilon)^4 + (d/\varepsilon^2)^{1/2}\, (W/\varepsilon)^3$ & Theorem~\ref{thm:lmc_smooth} ($\dagger\dagger$) \\
         & LSI & $(d/\varepsilon^2)^{1/2}$ & Theorem~\ref{thm:lmc_smooth_lsi} ($\dagger\dagger$) \\ \hline
         \multirow{3}{*}{RMD} & SLC & $(d/\varepsilon^2)^{1/2}$ & Theorem~\ref{thm:rmlmc} ($\dagger$) \\
         & WLC & $(d/\varepsilon^2)^{1/3}\,(W/\varepsilon)^{8/3}$ & Theorem~\ref{thm:rmlmc} ($\dagger\dagger$) \\
         & LSI & $(d/\varepsilon^2)^{1/2}$ & Theorem~\ref{thm:rmlmc_lsi} ($\dagger\dagger$)
    \end{tabular}
    \caption{\footnotesize We present the rates obtained in this paper for LMC (\S\ref{sec:sampling}), LMC under a $3^{\rm rd}$-order smoothness assumption (\S\ref{sec:lmc_smooth}), and RMD (\S\ref{sec:rmd}), under the three assumptions of SLC, WLC, and LSI\@. 
    A single dagger $(\dagger)$ signifies that the rate is established here for the first time in KL divergence (prior results only held in $W_2$), and two daggers $(\dagger \dagger)$ signifies that the rate is established here for the first time in \emph{any} metric.
    A number of simplifications have been made for ease of presentation: we omit polylogarithmic factors and dependence on other problem parameters (smoothness constants, strong convexity, etc.). Here, $W \deq W_2(\hat\mu_0,\pi)$ denotes the $W_2$ distance at initialization, which is usually at least $d^{1/2}$. The $3^{\rm rd}$-order smoothness assumption we adopt for LMC is that $\norm{\nabla \Delta V} \lesssim 1 +\norm{\nabla V}$; under the weaker assumption of boundedness of $\norm{\nabla^3 V}_{\rm op}$, the rates for LMC should be multiplied by another factor of $d^{1/2}$.}\label{tab:parallels}
\end{table}

\paragraph*{Highlight: analysis of randomized midpoint discretization.}
We envision that our framework can be used to analyze many variants of LMC, including other discretizations or variants with inexact gradient access. For illustrative purposes, we consider three applications: the basic LMC discretization in \S\ref{sec:sampling}; the LMC discretization under higher-order smoothness in \S\ref{sec:lmc_smooth}; and the randomized midpoint discretization (RMD) in \S\ref{sec:rmd}. See Table~\ref{tab:parallels} for a summary of our results. For the purpose of this introduction, we focus this discussion on our results for RMD\@. 

RMD was first introduced in~\cite{shen2019randomized} and applied to the Langevin diffusion in~\cite{hebalasubramanianerdogdu2020randomizedmidpoint}.
It has been the subject of intense recent study since it substantially improves the iteration complexity of LMC from $\widetilde O(d/\varepsilon^2)$ to $\widetilde O(\sqrt{d/\varepsilon^2})$ \emph{without requiring higher-order smoothness}, and moreover it was shown in~\cite{caoluwang2021uldlowerbd} to be an \emph{optimal} discretization in a suitable sense. %

RMD is an intriguing example of a kernel for which Girsanov's theorem does not apply at all, due to the use of a ``look-ahead'' step that renders natural interpolations of the algorithm iterates to be non-adapted.
Moreover, the point of the look-ahead is to improve the weak error, thereby necessitating an analysis that takes this into account.
These difficulties have obstructed attempts to establish \emph{any} guarantees beyond the $W_2$ metric, let alone sharp ones, leading~\cite{YuDal24Parallel} to list KL guarantees for RMD as a ``highly non-trivial open problem''.

Toward this end, the recent work of~\cite{KanNag24PoiMidpt} proposed a closely related variant of RMD and exhibited a TV distance guarantee which improves over the vanilla LMC algorithm, albeit with a suboptimal rate; see the discussion in the related work section (\S\ref{ssec:related_work}) and \S\ref{sec:rmd} for details.

As an application of our framework, we fully resolve this open problem by obtaining the KL divergence guarantees for all three settings discussed above: SLC, WLC, and LSI\@.
Notably, for the original RMD algorithm, our result: (1) yields the optimal $\widetilde O(\sqrt d/\varepsilon)$ rate in the SLC and LSI settings; (2) is the first result to hold beyond the $2$-Wasserstein metric in the SLC setting; and (3) is the first result to hold in \emph{any} metric in the WLC and LSI settings.

\subsection{Techniques}

Here, we provide a brief discussion of the techniques developed in \S\ref{sec:multi} for proving Theorem~\ref{thm:kl-general}.

\paragraph*{Shifted Girsanov and cross-regularity.}
The key idea of~\cite{ArnThaWan06HarnackCurvUnbdd}, which we built upon in our earlier works~\cite{scr1, scr2}, is to bound the deviation between two processes by applying Girsanov's theorem to a third, auxiliary process which is ``shifted'' to hit one of them at a prescribed time.
This enables adapting coupling methods to bound information divergences such as KL between two copies of the \emph{same} SDE\@.
We refer to this as the ``shifted Girsanov'' technique.

A first approach to Theorem~\ref{thm:kl-general}, which requires bounding the deviation between two \emph{different} processes, is to apply the shifted Girsanov technique in which the first process is taken to be a suitable continuous-time interpolation of the $\hat P$ iterates, and the second is the idealized process.
Indeed, this approach is the natural continuous-time version of Theorem~\ref{thm:kl-general}, and we use it to establish the cross-regularity property for LMC in \S\ref{ssec:shifted_girsanov}.

However, this approach shares the three key drawbacks of Girsanov's theorem described earlier. First, it is not fully general; for example it does not apply when there is no natural continuous-time interpolation of the $\hat P$ iterates to which Girsanov's theorem applies, as is the case for RMD\@. Moreover, even when applicable, it is unclear how to incorporate weak error, and it also requires carrying out the shifted Girsanov argument anew for each application, which is ultimately at odds with our goal of developing a user-friendly framework.
It is for these reasons that we move to a discrete-time framework; see \S\ref{ssec:shifted_girsanov} for further discussion.

\paragraph*{Moving to a discrete-time framework.}
A core insight from~\cite{scr1, scr2} is that the shifted Girsanov argument of~\cite{ArnThaWan06HarnackCurvUnbdd} can be naturally formulated in discrete-time, through the use of an information-theoretic principle which we call the \emph{shifted composition rule}.
The discrete-time formulation is more general, requiring no assumption on the existence of a suitable interpolation. %
Moreover, it enables us to integrate the argument with local error analysis: starting with local assumptions on the kernel, we provide an optimized choice of shifts leading to a multi-step bound---in many cases, an optimal one---culminating in Theorem~\ref{thm:kl-general} which can then be applied off-the-shelf.

\subsection{Related work}\label{ssec:related_work}

There are multiple approaches for bounding the divergence between two stochastic processes driven by the \emph{same} Markov kernel, i.e., the setting of problem~\eqref{eq:intro:problem-same}.
One approach, popular in information theory~\cite{raginsky2016strong, polyanskiy2017strong, polyanskiy2024information}, is to determine if $P$ satisfies a non-trivial strong data processing inequality (SDPI). The basic idea is that if the SDPI constant $\eta_P  \deq  \sup_{\mu \neq \nu} \KL(\mu P \mmid \nu P) / \KL(\mu \mmid \nu)$ of $P$ is less than $1$, then one has exponentially decaying bounds of the form $\KL(\mu P^N \mmid \nu P^N) \leq \eta_P^N \KL(\mu \mmid \nu)$.

More closely related to the approach of this paper is to apply Girsanov's theorem to an auxiliary process; this dates back to the seminal work~\cite{ArnThaWan06HarnackCurvUnbdd}, and has been used to great effect for studying the analysis and geometry of Markov diffusion processes,
see~\cite{Wang12Coupling} for a survey, and~\cite{scr1,scr2} for discrete-time formulations and recent accounts.

This paper studies the more general setting~\eqref{eq:intro:problem-diff}, in which the two stochastic processes are driven by \emph{different} Markov kernels.
This is essential for the aforementioned applications such as SDE discretization and sampling.
In this setting, a recent line of work in differential privacy bounds information-theoretic divergences between two different stochastic processes using the technique of shifted divergences~\cite{pabi,AltTal22dp,AltBokTal24} and recently by using an auxiliary interpolating process~\cite{Bok24dp}.
These approaches are also closely related to the shifted composition framework of~\cite{scr1, scr2}, see the discussion therein.
However, those analyses essentially require closed-form updates for the Markov kernels and therefore apply at a significantly diminished level of generality---for example, it was not known how to analyze the bias of sampling algorithms using those techniques (an open question in~\cite{AltTal23Langevin}), which we accomplish here as a direct application of our framework (see the examples in \S\ref{sec:sampling}, \S\ref{sec:lmc_smooth}, \S\ref{sec:rmd}). %

The recent work of~\cite{KanNag24PoiMidpt} proposes a variant of the randomized midpoint method which they call the \emph{Poisson midpoint method}. Their approach controls the KL divergence between the Poisson midpoint method run with step size $h$ for $N$ steps and the LMC discretization run with step size $h/K$ for $KN$ steps, resulting in a rate of $\widetilde O(d^{3/4}/\varepsilon)$ in TV distance in the strongly log-concave and LSI settings. In comparison, the application of our framework to RMD yields the expected rate of $\widetilde O(d^{1/2}/\varepsilon)$ in KL divergence, via a straightforward computation of the local errors (\S\ref{sec:rmd}). Their work also applies to the underdamped Langevin diffusion, which we aim to study in a future work.

\section{Preliminaries}\label{sec:prelim}

First, we recall the definition of the $2$-Wasserstein distance. Let $\cP_2(\R^d)$ denote the space of probability measures on $\R^d$ with finite second moment. The $2$-Wasserstein distance between $\mu,\nu \in \cP_2(\R^d)$ is defined as
\begin{align*}
	W_2^2(\mu,\nu) \deq \inf_{\gamma \in \Coup(\mu,\nu)} \int \|x-y\|^2\,\gamma(\D x, \D y)\,,
\end{align*}
where $\Coup(\mu,\nu)$ denotes the space of joint distributions with first marginal $\mu$ and second marginal $\nu$. 

In the rest of the section, we briefly recall relevant information-theoretic preliminaries. We begin with the definition of the Kullback--Leibler (KL) divergence and its basic properties. Proofs and further background on KL can be found, e.g., in~\cite{CoverThomasBook}. Note that here and throughout, we abuse notation slightly by identifying measures with their densities. 

\begin{defin}\label{def:KL}
	The KL divergence between probability measures $\mu$ and $\nu$ is 
	\begin{align*}
		\KL(\mu \mmid \nu) \deq \int \log\bigl(\frac{\D \mu}{\D \nu}\bigr)\,\D \mu
	\end{align*}
	if $\mu \ll \nu$, and is defined to be $+\infty$ otherwise. 
\end{defin}

\begin{prop}[Basic properties of the KL divergence]\label{prop:kl_prop}
	Let $\mu,\nu$ be probability measures. 
	\begin{itemize}
		\item \underline{Positivity.} $\KL(\mu\mmid \nu) \geq 0$, with equality if and only if $\mu = \nu$. 
		\item \underline{Data-processing inequality.} For any Markov kernel $P$, it holds that $\KL(\mu P \mmid \nu P) \leq \KL(\mu \mmid \nu)$. 
		\item \underline{Convexity.} $\KL(\cdot \mmid \cdot)$ is jointly convex. 
		\item \underline{Gaussian identity.} $\KL(\cN(x,\Sigma) \mmid \cN(y, \Sigma) ) = \frac{1}{2}\, \langle x-y, \Sigma^{-1}\, (x-y)\rangle$. 
	\end{itemize}
\end{prop}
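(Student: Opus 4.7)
The plan is to dispatch each of the four properties in turn; all four are classical textbook facts whose proofs rest on the (strict) convexity of $-\log$ or on direct computation, so I will keep the sketches brief.

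\textbf{Positivity.} If $\mu \not\ll \nu$ the claim is trivial, so assume otherwise and let $r = \D\mu/\D\nu$. Writing $\KL(\mu\mmid\nu) = \int r \log r \, \D\nu = \E_{\nu}[\varphi(r)]$ with the strictly convex function $\varphi(t) = t\log t$, Jensen's inequality gives $\KL(\mu\mmid\nu) \ge \varphi(\E_\nu r) = \varphi(1) = 0$, with equality iff $r$ is $\nu$-a.s.\ constant, in which case $r \equiv 1$ and $\mu = \nu$.

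\textbf{Data-processing inequality.} My preferred route is via the Donsker--Varadhan variational formula $\KL(\mu\mmid\nu) = \sup_f \{\int f \, \D\mu - \log \int e^f \, \D\nu\}$, where $f$ ranges over bounded measurable functions. For any test function $g$, letting $f = Pg$ (the kernel applied to $g$), Jensen's inequality applied pointwise to the convex function $\exp$ gives $e^{Pg} \le P e^g$, whence $\int e^f \, \D\nu \le \int e^g \, \D(\nu P)$. Combined with $\int Pg \, \D\mu = \int g \, \D(\mu P)$, this yields $\KL(\mu P \mmid \nu P) \ge \int g \, \D(\mu P) - \log \int e^g \, \D(\nu P)$ for every $g$, hence $\KL(\mu P \mmid \nu P) \le \KL(\mu\mmid\nu)$ upon taking $\sup$ over $g$.

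\textbf{Joint convexity.} This follows from the log-sum inequality $\sum_i a_i \log(a_i/b_i) \ge (\sum_i a_i)\log((\sum_i a_i)/(\sum_i b_i))$ applied pointwise: if $\mu = t\mu_1 + (1-t)\mu_2$ and $\nu = t\nu_1 + (1-t)\nu_2$ have densities $p = tp_1 + (1-t)p_2$ and $q = tq_1 + (1-t)q_2$ against a common dominating measure, then
\begin{align*}
p \log \frac{p}{q} = \sum_{i=1,2} (t_i p_i) \log \frac{\sum_j t_j p_j}{\sum_j t_j q_j} \le \sum_{i=1,2} t_i p_i \log \frac{p_i}{q_i},
\end{align*}
and integration yields $\KL(\mu\mmid\nu) \le t\KL(\mu_1\mmid\nu_1) + (1-t)\KL(\mu_2\mmid\nu_2)$. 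Alternatively, the variational formula exhibits $\KL$ as a supremum of affine functionals of $\mu$, which together with the previous DPI derivation gives convexity in $\nu$ as well.

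\textbf{Gaussian identity.} Pure computation: the log-density ratio evaluated at $Z \sim \cN(x,\Sigma)$ is $\log(\D\mu/\D\nu)(Z) = \tfrac{1}{2}\bigl[(Z-y)^\T \Sigma^{-1}(Z-y) - (Z-x)^\T \Sigma^{-1}(Z-x)\bigr]$. Writing $Z - y = (Z-x) + (x-y)$, expanding, and using $\E[Z-x] = 0$ and $\E[(Z-x)^\T \Sigma^{-1}(Z-x)] = \tr(\Sigma^{-1}\Sigma) = d$, the quadratic terms in $Z-x$ cancel and the cross term vanishes, leaving $\KL(\cN(x,\Sigma)\mmid \cN(y,\Sigma)) = \tfrac{1}{2} \langle x-y, \Sigma^{-1}(x-y)\rangle$. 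None of these steps presents a real obstacle; the only mild subtlety is to justify the use of the Donsker--Varadhan formula in the DPI step, but this is standard once one restricts to bounded $g$ and approximates.
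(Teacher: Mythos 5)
The paper states this proposition without proof, deferring to a textbook reference, so there is no in-paper argument to compare against. Your four arguments are all standard and correct: Jensen's inequality on the strictly convex $t\mapsto t\log t$ for positivity, the log-sum inequality applied pointwise to densities for joint convexity, direct computation for the Gaussian identity, and the Donsker--Varadhan variational characterization for the data-processing inequality. The D--V route for DPI is a clean alternative to the more common log-sum or chain-rule argument and has the advantage of handling abstract state spaces uniformly. One small slip in the DPI paragraph: the inequality you obtain by chaining Jensen (giving $e^{Pg}\le Pe^g$) with the D--V bound for $(\mu,\nu)$ should read $\KL(\mu\mmid\nu)\ge \int g\,\D(\mu P) - \log\int e^g\,\D(\nu P)$; as written you have $\KL(\mu P\mmid\nu P)$ on the left, which is merely the trivial D--V lower bound for the pushed-forward pair and would not by itself yield the DPI. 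With that label corrected, taking the supremum over $g$ gives $\KL(\mu P\mmid\nu P)\le\KL(\mu\mmid\nu)$ as intended.
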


A key reason for the usefulness of the KL divergence is its \emph{chain rule}. This can be stated as follows. Let $\msf X$, $\msf Y$ be jointly defined random variables on a standard probability space $\Omega$. Let $\bs\mu$, $\bs \nu$ be two probability measures over $\Omega$, with superscripts denoting the laws of random variables under these measures. Then
\begin{align}
	\KL(\bs \mu^{\msf Y} \mmid \bs \nu^{\msf Y})
	\leq
	\KL(\bs\mu^{\msf X, \msf Y} \mmid \bs \nu^{\msf X, \msf Y})
	&= \KL(\bs\mu^{\msf X} \mmid \bs \nu^{\msf X}) +  \int \KL(\bs\mu^{\msf Y\mid \msf X=x} \mmid \bs \nu^{\msf Y\mid \msf X=x}) \, \bs \mu^{\msf X}(\D x)\,.
	\label{eq:chain-rule-kl}
\end{align}
Strictly speaking, the equality in the second step is the chain rule for the KL divergence, and the inequality in the first step is due to the data-processing inequality. We join these two inequalities here because it provides contrast to the \emph{shifted chain rule}, an upgraded version of the chain rule developed in~\cite{scr1}.

\begin{theorem}[Shifted chain rule]\label{thm:scr-kl}
	Let $\msf X$, $\msf X'$, $\msf Y$ be three jointly defined random variables on a standard probability space $\Omega$. Let $\bs\mu$, $\bs \nu$ be two probability measures over $\Omega$, with superscripts denoting the laws of random variables under these measures. Then
	\begin{align*}
		\KL(\bs\mu^{\msf Y} \mmid \bs \nu^{\msf Y})
		&\le \KL(\bs\mu^{\msf X'} \mmid \bs \nu^{\msf X}) + \inf_{\gamma \in \Coup(\bs \mu^{\msf X}, \bs \mu^{\msf X'})} \int \KL(\bs\mu^{\msf Y\mid \msf X=x} \mmid \bs \nu^{\msf Y\mid \msf X=x'}) \,\gamma(\D x, \D x')\,.
	\end{align*}
\end{theorem}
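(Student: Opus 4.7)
The natural approach is to construct, for each candidate coupling $\gamma \in \Coup(\bs\mu^{\msf X}, \bs\mu^{\msf X'})$, an auxiliary measure $\bs\pi$ on the ambient space that has the same $\msf Y$-marginal as $\bs\mu$ but whose joint structure is adapted to the shifted comparison. Specifically, I would define $\bs\pi$ on the triple $(\msf X, \msf X', \msf Y)$ by setting the $(\msf X, \msf X')$-marginal equal to $\gamma$ and by declaring $\msf Y$ to be conditionally independent of $\msf X'$ given $\msf X$, with $\bs\pi^{\msf Y \mid \msf X = x} = \bs\mu^{\msf Y \mid \msf X = x}$. Since the $(\msf X, \msf Y)$-marginal of $\bs\pi$ then equals $\bs\mu^{\msf X, \msf Y}$, we have $\bs\pi^{\msf Y} = \bs\mu^{\msf Y}$, so the desired left-hand side equals $\KL(\bs\pi^{\msf Y} \mmid \bs\nu^{\msf Y})$.

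The main chain of inequalities then proceeds as follows. First, apply the data-processing inequality to the projection $(x', y) \mapsto y$ to obtain
\begin{align*}
\KL(\bs\mu^{\msf Y} \mmid \bs\nu^{\msf Y}) = \KL(\bs\pi^{\msf Y} \mmid \bs\nu^{\msf Y}) \le \KL(\bs\pi^{\msf X', \msf Y} \mmid \bs\nu^{\msf X, \msf Y})\,.
\end{align*}
Here both sides live on the same product space because $\msf X$ and $\msf X'$ take values in a common space. Next, apply the ordinary chain rule on the right:
\begin{align*}
\KL(\bs\pi^{\msf X', \msf Y} \mmid \bs\nu^{\msf X, \msf Y}) = \KL(\bs\pi^{\msf X'} \mmid \bs\nu^{\msf X}) + \int \KL(\bs\pi^{\msf Y \mid \msf X' = x'} \mmid \bs\nu^{\msf Y \mid \msf X = x'})\,\bs\pi^{\msf X'}(\D x')\,.
\end{align*}
The first term is exactly $\KL(\bs\mu^{\msf X'} \mmid \bs\nu^{\msf X})$, which is the leading term in the claim.

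The second term requires one more step because, under $\bs\pi$, the conditional law $\bs\pi^{\msf Y \mid \msf X' = x'}$ is not $\bs\mu^{\msf Y \mid \msf X = x}$ for a single $x$, but rather the mixture $\int \bs\mu^{\msf Y \mid \msf X = x}\,\gamma(\D x \mid \msf X' = x')$ obtained by integrating out $\msf X$ against the conditional of $\gamma$. Joint convexity of the KL divergence (Proposition~\ref{prop:kl_prop}) lets me push the mixture outside:
\begin{align*}
\KL(\bs\pi^{\msf Y \mid \msf X' = x'} \mmid \bs\nu^{\msf Y \mid \msf X = x'}) \le \int \KL(\bs\mu^{\msf Y \mid \msf X = x} \mmid \bs\nu^{\msf Y \mid \msf X = x'})\,\gamma(\D x \mid \msf X' = x')\,.
\end{align*}
Integrating against $\bs\pi^{\msf X'} = \bs\mu^{\msf X'}$ and using the disintegration of $\gamma$ recovers the integral against $\gamma(\D x, \D x')$ appearing in the statement. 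Finally, taking the infimum over couplings $\gamma$ yields the theorem.

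\textbf{Main obstacle.} The conceptual step is the construction of $\bs\pi$: one must simultaneously preserve $\bs\mu^{\msf X, \msf Y}$ (to exploit the $\msf Y$-marginal identity) and realize the coupling $\gamma$ on $(\msf X, \msf X')$, and the only way to reconcile these is to impose conditional independence of $\msf Y$ and $\msf X'$ given $\msf X$. This is what forces the conditional $\bs\pi^{\msf Y \mid \msf X' = x'}$ to become a mixture, and hence forces the use of convexity; without this step one would get a pointwise inequality that is strictly stronger and generally false. A minor technical point is that the product-space data-processing step implicitly uses the standard Borel hypothesis so that the regular conditional distributions $\bs\mu^{\msf Y \mid \msf X = x}$ exist, but this is exactly the standing assumption of the theorem.
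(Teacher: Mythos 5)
Your proof is correct. Note first that the paper itself does not prove Theorem~\ref{thm:scr-kl}; it is quoted from Theorem~3.1 of~\cite{scr1}, so there is no in-paper proof to compare against line by line.

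That said, the argument you give is sound and is the natural derivation for the KL form. The construction of $\bs\pi$ is well posed on a standard probability space precisely because the first marginal of any $\gamma\in\Coup(\bs\mu^{\msf X},\bs\mu^{\msf X'})$ is $\bs\mu^{\msf X}$, which is also the $\msf X$-marginal of $\bs\mu^{\msf X,\msf Y}$; imposing $\bs\pi^{\msf Y\mid\msf X,\msf X'}=\bs\mu^{\msf Y\mid\msf X}$ is therefore consistent with both requirements, and it yields $\bs\pi^{\msf X,\msf Y}=\bs\mu^{\msf X,\msf Y}$ and hence $\bs\pi^{\msf Y}=\bs\mu^{\msf Y}$. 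The data-processing step through the projection $(x',y)\mapsto y$, the ordinary chain rule on $(\msf X',\msf Y)$ versus $(\msf X,\msf Y)$, and the identification $\bs\pi^{\msf X'}=\bs\mu^{\msf X'}$ all check out. You correctly identify that $\bs\pi^{\msf Y\mid\msf X'=x'}=\int\bs\mu^{\msf Y\mid\msf X=x}\,\gamma(\D x\mid\msf X'=x')$ and that convexity of $\mu\mapsto\KL(\mu\mmid\nu)$ (a special case of the joint convexity you cite) is what lets you exchange the mixture with $\KL$; integrating against $\bs\mu^{\msf X'}$ and disintegrating $\gamma$ then recovers the coupling integral, and the infimum over $\gamma$ is taken at the end.

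One remark worth flagging: the convexity step is exactly where the KL argument is simpler than the general R\'enyi statement (Theorem~\ref{thm:scr}). For $q>1$, $\Ren_q(\cdot\mmid\nu)$ is not convex, so pushing the $\gamma(\D x\mid\msf X'=x')$-mixture outside fails, and the coupling integral must be replaced by an $\mathrm{ess\,sup}$, as the paper records. Your proof is therefore specific to KL and does not extend verbatim to the companion R\'enyi result, but as a proof of the stated theorem it is complete and correct.
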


The key idea in the shifted chain rule is to introduce an auxiliary, third random variable $\msf X'$. This generalizes the original chain rule (when $\msf X' = \msf X$) and enables many new applications via different choices of $\msf X'$, as developed in this series of papers. The present paper uses this flexibility of $\msf X'$ in order to analyze the evolution of two Markov processes updating with different kernels, see \S\ref{sec:multi}. 

\par This extends from KL divergences to the more general family of R\'enyi divergences, in which case the (shifted) chain rule for KL becomes the (shifted) composition rule for R\'enyi, the namesake of this series of papers. Details in \S\ref{app:renyi}. 

A simple but useful implication of the joint convexity of the KL divergence is the following well-known lemma, which we use for upgrading our KL bounds from Dirac initializations to arbitrary initializations in a black-box manner. See~\cite{scr1} for a short proof and a discussion of how it has been used for related problems. 

\begin{lemma}[Convexity principle]\label{lem:convexity-principle-kl}
	Let $\cP$ be a Markov kernel on a Polish space $\cX$, and let $\rho$ be a measurable function on $\cX \times \cX$ such that $\KL(\delta_x P \mmid \delta_y P) \leq \rho(x,y)$ for all $x,y \in \cX$. Then for any $\mu,\nu \in \cP(\cX)$,  
	\begin{align*}
		\KL(\mu P \mmid \nu P) \leq \inf_{\gamma \in \Coup(\mu,\nu)} \int \rho(x,y)\, \gamma(\D x, \D y)\,.
	\end{align*}
\end{lemma}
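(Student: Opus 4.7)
The plan is a direct application of the joint convexity of the KL divergence, which is listed as a basic property in Proposition~\ref{prop:kl_prop}. The core idea is that any coupling $\gamma \in \Coup(\mu,\nu)$ gives a simultaneous mixture representation of both $\mu P$ and $\nu P$ over the same index set, which is exactly the shape needed to apply joint convexity.

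Concretely, I would proceed as follows. Fix an arbitrary coupling $\gamma \in \Coup(\mu,\nu)$. Since $\mu$ is the first marginal of $\gamma$ and $\nu$ is the second, pushing each side through $P$ yields the mixture representations
\begin{align*}
    \mu P = \int (\delta_x P) \, \gamma(\D x, \D y)\,, \qquad \nu P = \int (\delta_y P) \, \gamma(\D x, \D y)\,.
\end{align*}
Now apply the joint convexity of $\KL(\cdot \mmid \cdot)$ with respect to the common mixing measure $\gamma$ to obtain
\begin{align*}
    \KL(\mu P \mmid \nu P) \leq \int \KL(\delta_x P \mmid \delta_y P) \, \gamma(\D x, \D y) \leq \int \rho(x,y) \, \gamma(\D x, \D y)\,,
\end{align*}
where the last inequality uses the hypothesis on $\rho$. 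Taking the infimum over $\gamma \in \Coup(\mu,\nu)$ on the right-hand side yields the claim, since the left-hand side does not depend on $\gamma$.

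There is essentially no obstacle: joint convexity is stated as a given in Proposition~\ref{prop:kl_prop}, and the mixture representation is immediate from the definition of a coupling. The only mild technical point is a measurability check, namely that $x \mapsto \delta_x P$ is measurable into the space of probability measures (so that the integrals above make sense and joint convexity applies in its standard mixture form); this is automatic from the definition of a Markov kernel on a Polish space. The infimum over couplings need not be attained, but this is irrelevant because the inequality holds for every $\gamma$.
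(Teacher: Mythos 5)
Your proof is correct and is the standard argument; the paper itself does not reprove this lemma but defers to \cite{scr1}, where essentially the same mixture-plus-joint-convexity argument appears. The decomposition of $\mu P$ and $\nu P$ as $\gamma$-mixtures of $\delta_x P$ and $\delta_y P$, followed by joint convexity of $\KL$ and the hypothesis on $\rho$, is exactly the intended route, and your measurability remark is the right technical footnote.
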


\section{Framework}\label{sec:multi}

Here, we develop our framework for analyzing the deviation between two Markov processes in terms of KL divergence. As discussed in the introduction, the core challenge is that traditional Wasserstein bounds use coupling arguments and local error analyses, but neither are available for information divergences like KL\@. Our framework remedies this through the introduction of an auxiliary, third process. In order to provide intuition, we build up to this in two steps. First, in \S\ref{ssec:kl-simple} we show how to adapt coupling arguments to KL in order to establish a simplified version of Theorem~\ref{thm:kl-general} that does not incorporate weak error. Then, in \S\ref{ssec:kl-general} we prove Theorem~\ref{thm:kl-general} by showing how to perform local error analysis in KL, thereby incorporating weak error into our framework.

\subsection{KL coupling analysis: proof of Theorem~\ref{thm:kl-simple}}\label{ssec:kl-simple}

Here, we show how to perform coupling arguments---traditionally restricted to Wasserstein metrics---in KL divergence.
In order to illustrate this idea, in this subsection we prove the following simplified version of Theorem~\ref{thm:kl-general} which does not incorporate local error analysis.

\begin{theorem}[Simplified framework: KL analysis by coupling]\label{thm:kl-simple}
    Suppose that $P$, $\hat{P}$ are Markov kernels on $\R^d$ satisfying the following one-step bounds:
    \begin{enumerate}
        \item \underline{Regularity.} $\KL(\delta_x P \mmid \delta_y P) \leq c\,\|x-y\|^2$.
        \item \underline{Wasserstein bound.} $W_2(\delta_x \hat P, \delta_y P) \leq L\, \|x-y\| + a$.
        \item \underline{Cross-regularity.} $\KL(\delta_x \hat{P} \mmid \delta_y P) \leq c'\,\|x-y\|^2 + b^2$.
    \end{enumerate}
       If $L \leq 1$, then for any probability measures $\hat\mu_0$, $\nu_0$,
     \begin{align}
	     	 \KL(\hat \mu_0 \hat{P}^N \mmid \nu_0 P^N)
	     	\leq
     		\Bigl( c + (c'-c)\, \frac{1-L^2}{1-L^{2N}} \Bigr)\, \Bigl(\frac{1+L}{1-L}\Bigr)\,\frac{( a\,(1-L^{N-1}) +  L^{N-1}\, (1-L)\, 
        W_2(\hat\mu_0,\nu_0)
       ) ^2 } {1-L^{2N}} 
     	+ b^2\,.\label{eq:thm-main:kl}
     \end{align}
\end{theorem}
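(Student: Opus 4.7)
The plan is to apply the shifted chain rule (Theorem~\ref{thm:scr-kl}) iteratively, peeling off one transition at a time from $\KL(\hat\mu_0 \hat{P}^N \mmid \nu_0 P^N)$ with a specific geometric schedule of auxiliary shifts. By Lemma~\ref{lem:convexity-principle-kl} together with the joint convexity of the KL divergence (Proposition~\ref{prop:kl_prop}), it suffices to prove the bound for Dirac initializations $\hat\mu_0 = \delta_x$ and $\nu_0 = \delta_y$, since the general case follows by integrating against a $W_2$-optimal coupling of $(\hat\mu_0,\nu_0)$.

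I would set up the path measures $\bs\mu$ (the $\hat P$-chain starting from $x$) and $\bs\nu$ (the $P$-chain starting from $y$) on paths $(Z_0, Z_1, \ldots, Z_N)$, and apply Theorem~\ref{thm:scr-kl} iteratively starting from the final transition. At the first peel (taking $\msf X = Z_{N-1}$, $\msf Y = Z_N$, and $\msf X'$ an auxiliary whose law under $\bs\mu$ is chosen to be $\delta_{x_1} P^{N-1}$ for some point $x_1$), the single-step conditional KL compares $\hat P$ to $P$ and is controlled by cross-regularity as $c' s_{N-1}^2 + b^2$, where $s_{N-1}$ is the Wasserstein magnitude of the shift. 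The residual $\KL(\delta_{x_1} P^{N-1} \mmid \delta_y P^{N-1})$ now involves the same kernel $P$ on both sides, so further applications of the shifted chain rule incur only regularity cost $c\, s_n^2$ per peel.

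After $N$ peels, the accumulated cost has the form $c \sum_{n=0}^{N-2} s_n^2 + c'\, s_{N-1}^2 + b^2$, subject to the constraint that the shifts collectively absorb the Wasserstein discrepancy. Iterating the Wasserstein bound gives $W_2(\hat\mu_{N-1}, \nu_{N-1}) \le L^{N-1} W_2(\hat\mu_0, \nu_0) + a\,\frac{1-L^{N-1}}{1-L}$, and a shift of magnitude $s_n$ contributes $L^{N-1-n} s_n$ to absorbing this discrepancy (by $L$-contraction over the remaining $P$-applications), yielding the constraint $\sum_{n=0}^{N-1} L^{N-1-n} s_n = W_2(\hat\mu_{N-1}, \nu_{N-1})$. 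Taking the geometric schedule $s_n = \beta\, L^{N-1-n}$, the geometric sum $\sum_{n=0}^{N-1} L^{2(N-1-n)} = \frac{1-L^{2N}}{1-L^2}$ fixes $\beta = \frac{(1-L^2)\,W_2(\hat\mu_{N-1},\nu_{N-1})}{1-L^{2N}}$; substituting and using the identity $(1-L)\,W_2(\hat\mu_{N-1}, \nu_{N-1}) = a(1-L^{N-1}) + L^{N-1}(1-L)\,W_2(\hat\mu_0, \nu_0)$ reproduces the claimed bound exactly, with the coefficient $c + (c'-c)\,\frac{1-L^2}{1-L^{2N}}$ emerging from the weighted mixture of $c$ (for the $N-1$ intermediate peels) and $c'$ (for the last peel).

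I expect the main obstacle to be the bookkeeping in the iterated peel: one must ensure that (i) after the first peel the residual is again a KL between two same-kernel chains amenable to further peeling, (ii) each one-step conditional KL cleanly separates into the form $c\, s_n^2$ (or $c'\, s_{N-1}^2 + b^2$ at the first peel) without accruing cross-terms, and (iii) the Wasserstein constraint on the shifts is correctly tracked through both the mixed $\hat P/P$ iteration at the initial peel (which picks up the additive $a$-term) and the pure-$P$ iterations thereafter.
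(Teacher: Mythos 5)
Your peeling-from-the-end strategy is genuinely different from the paper's construction (which builds an auxiliary process \emph{forward} in time: $Y_0'=Y_0$, $\tilde Y_n = Y_n' + \eta_n(\hat X_n - Y_n')$, $Y_{n+1}' \sim P(\tilde Y_n,\cdot)$, with $\hat P$ only at the last step so that $Y_N' = \hat X_N$), and your shift-optimization arithmetic does reproduce the paper's closed form. However, there are two gaps in the backward-peeling route that I do not see how to close.

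First, after the initial peel you are comparing $P$-pushforwards on \emph{both} sides (e.g., the term $W_2(\delta_{x_{k-1}}P^{N-k},\delta_{x_k}P^{N-k})$), and your argument that ``a shift of magnitude $s_n$ contributes $L^{N-1-n}s_n$'' uses $W_2(\delta_{x'}P,\delta_{x''}P)\le L\,\|x'-x''\|$. But Theorem~\ref{thm:kl-simple} does \emph{not} assume $W_2$-Lipschitzness of $P$ alone --- the only Wasserstein hypothesis is the cross bound $W_2(\delta_x\hat P,\delta_y P)\le L\,\|x-y\|+a$. The paper's forward construction never needs the pure $P$-Lipschitz bound because at every step it compares a $\hat P$-update (of $\hat X_n$) against a $P$-update (of $\tilde Y_n$), so the assumed cross bound suffices (Lemma~\ref{lem:aux-dist}).

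Second, and more fundamentally, constraining the auxiliary law at time $N-1$ to be of the form $\delta_{x_1}P^{N-1}$ is too rigid for the geometric schedule $s_n=\beta L^{N-1-n}$ to be realizable. The quantity $s_{N-1}=W_2(\delta_x\hat P^{N-1},\delta_{x_1}P^{N-1})$ must dominate the ``shape'' discrepancy between a $\hat P^{N-1}$-pushforward and any $P^{N-1}$-pushforward of a Dirac, which cannot be removed by choice of $x_1$. Concretely, in the toy example of \S\ref{ssec:first_examples} with $x=y$ one computes $\inf_{x_1}W_2(\delta_x\hat P^{N-1},\delta_{x_1}P^{N-1})=\sqrt{N-1}\,(\sqrt{1+\sigma^2}-1)$, which for large $N$ far exceeds your $\beta\asymp W_2(\hat\mu_{N-1},\nu_{N-1})/N\asymp w+\min(\sigma,\sigma^2)$. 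So the constraint $\sum_n L^{N-1-n}s_n\ge W_2(\hat\mu_{N-1},\nu_{N-1})$ is a necessary triangle-inequality consequence, not a sufficient condition under which your prescribed shifts can actually be achieved. The paper avoids this by letting the auxiliary law at time $n$ be whatever results from the shift-then-evolve recursion (a general measure, not a $\delta P^n$-pushforward), so that each per-step shift is a free parameter $\eta_n\in[0,1]$ along a $W_2$ geodesic; the discretization error $a$ is then injected incrementally at each step (via the mixed Wasserstein bound) rather than all at once at the first peel.
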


We remark that this result extends directly from KL divergence to R\'enyi divergence if one replaces $W_2$ by $W_{\infty}$, and in fact this extension covers standard applications of shifted divergences to differential privacy~\cite{pabi}. Details in \S\ref{app:renyi}. 

\begin{remark}[Interpretation of the bound]
    The terms involving $c' - c$ and $b^2$ are typically lower order and only arise due to the final iteration.
    Typically $1 - L \asymp h$ and $c \asymp h^{-1}$. In this case, the bounds in Theorems~\ref{thm:kl-simple} and Appendix~\ref{thm:main-renyi} are of the form $O((L^{2N}\wedge \frac{1}{N})\,W_2^2(\hat\mu_0,\nu_0) + \bar N a^2/h)$, where $\bar N = N$ for $L =1$ and $\bar N \asymp N \wedge \frac{1}{h}$ for $L < 1$.
    This is in agreement with~\eqref{eq:intro:sampling-ex:kl} when $\bar{\mc E}_{\rm weak} = \bar{\mc E}_{\rm strong} = a$.
\end{remark}

\begin{remark}[Cross-regularity]\label{rmk:last_step_hack}
    Among our assumptions, the only non-standard one is cross-regularity, which can be difficult to check depending on the kernel $\hat P$.
    Toward this end, we note that by considering a modified process in which the last step is replaced by another kernel $\hat P_{\msf{last}}$ (i.e., $\hat\mu_N = \hat\mu_0 \hat P^{N-1} \hat P_{\msf{last}}$), the bounds of Theorems~\ref{thm:kl-general} and~\ref{thm:kl-simple} still hold as stated, provided that $c'$, $b$ are taken to be the corresponding parameters for $\hat P_{\msf{last}}$.
    Thus, since we establish cross-regularity for the LMC kernel in Lemma~\ref{lem:cross_reg_lmc}, this can be used to study sampling algorithms in which we replace the last step with a step of LMC\@.
\end{remark}

We now set out to prove Theorem~\ref{thm:kl-simple}. Let us consider Dirac initializations $\hat \mu_0 = \delta_x$ and $\nu_0 = \delta_y$; the result can then be generalized to arbitrary initialization measures via a simple convexity argument (Lemma~\ref{lem:convexity-principle-kl}), as described at the end of the proof. Consider the stochastic processes
\begin{align*}
	\hat X_{n+1} \sim \hat P(\hat X_n,\cdot)\,, \qquad Y_{n+1} \sim P(Y_n,\cdot)\,,
\end{align*}
with $\hat X_0 = x$, and $Y_0 = y$. Recall that our goal is to bound the divergence between $\hat\mu_N$ and $\nu_N$, where $\hat \mu_n  \deq \law(\hat X_n) = \delta_x \hat{P}^n$ and $\nu_n  \deq  \law(Y_n) = \delta_y P^n$. Our analysis proceeds in several steps.

\subsubsection{Construction of the auxiliary process}\label{sssec:aux_process}

Our first step is to construct an auxiliary process ${\{Y_n'\}}_{n=0}^N$, which we do iteratively as follows; see also Figure~\ref{fig:aux} for an illustration. Initialize $Y_0' = y$. For each $n < N$, let $\hat X_n$, $Y_n'$ be optimally coupled w.r.t.\ the metric $W_2$. Define 
\begin{align}
    \tilde{Y}_n  \deq  Y_n' + \eta_n\, (\hat X_n - Y_n')
    \label{eq:aux-tilde}
\end{align}
for ``shift'' parameters $\eta_0, \dots, \eta_{N-1} \in [0,1]$ to be chosen below, and define
\begin{align}
    Y_{n+1}' \sim Q_n(\tilde{Y}_n, \cdot) \qquad \text{where} \qquad  Q_n  \deq  \begin{cases}
        P\,, & n < N-1\,, \\
        \hat{P}\,, & n = N-1\,.
    \end{cases} 
    \label{eq:aux-prime}
\end{align}
The difference in the final iteration ensures that the auxiliary process hits the first process at time $N$: $Y_N' = \hat X_N$. Indeed, by setting the final shift $\eta_{N-1} = 1$ (the remaining shift parameters $\eta_0, \dots, \eta_{N-1}$ are optimized below), we have $\tilde{Y}_{N-1} = \hat X_{N-1}$, and thus the use of the kernel $\hat{P}$ rather than $P$ in~\eqref{eq:aux-prime} ensures that $Y_N' = \hat X_N$.
For easy recall, we record this interpolation in the following observation. 

\begin{figure}
	\centering
    \includegraphics[width=0.7\linewidth]{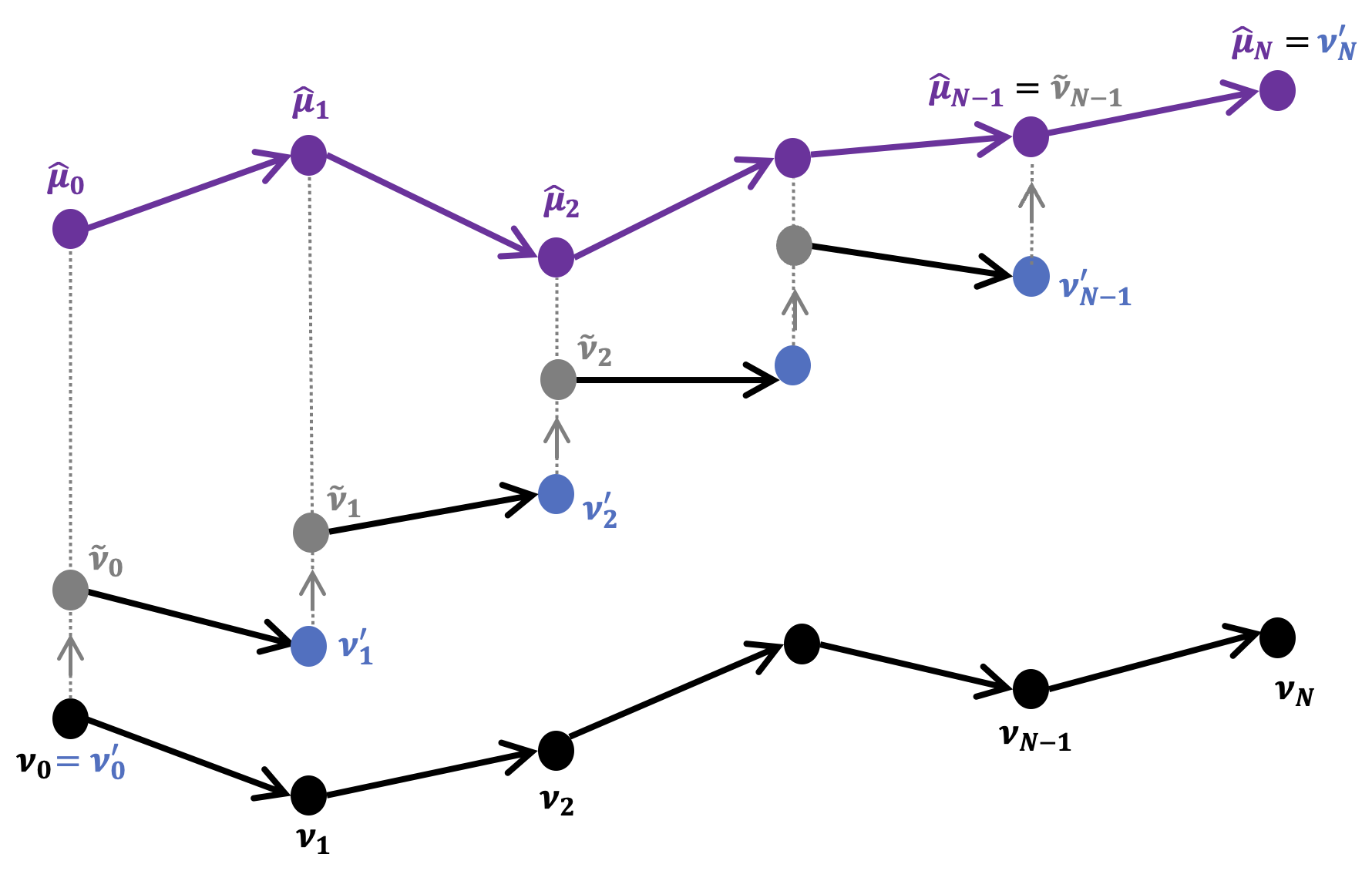}
	\caption{\footnotesize Illustration of the auxiliary process constructed in \S\ref{ssec:kl-simple}. This paper develops a framework to bound the divergence between $\{\hat{\mu}_n = \delta_x \hat{P}^n\}$ (top stochastic process) and $\{\nu_n = \delta_y P^n\}$ (bottom stochastic process). These processes differ both in that they have different initializations $x$ and $y$, and update via different Markov kernels $\hat{P}$ (purple) and $P$ (black), respectively. The auxiliary process $\{\nu_n'\}$ (blue) is constructed to interpolate between one process at initialization ($\nu_0' = \nu_0$) and the other at termination ($\nu_N' = \hat{\mu}_N$). Its update consists of two parts.	First, $\nu_n'$ is shifted along the Wasserstein geodesic towards $\hat{\mu}_n$ (vertical dotted line) to produce $\hat{\nu}_n$; this brings the process closer to the interpolation criteria at termination. Second, $\nu_{n+1}'$ is produced from $\tilde{\nu}_n$ by applying the kernel $P$ (black arrow), except in the last termination where $\hat{P}$ is used to ensure the termination criterion.
	}
	\label{fig:aux}
\end{figure}

\begin{obs}[Interpolation of the auxiliary process]\label{obs:aux-interp}
    Provided we choose $\eta_{N-1} = 1$, the auxiliary process $\{Y_n'\}_{n=0}^N$ satisfies $Y_0' = Y_0 = y$ and $Y_N' = \hat X_N$. 
\end{obs}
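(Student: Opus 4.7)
The plan is to verify the two endpoint equalities directly from the recursive construction of the auxiliary process in~\eqref{eq:aux-tilde}--\eqref{eq:aux-prime}; the argument amounts to unfolding definitions.

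First, the initial equality $Y_0' = Y_0 = y$ is immediate by construction, since the preceding text specifies $Y_0' := y$, while $Y_0 = y$ by the initialization of the $Y$-process.

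For the terminal equality $Y_N' = \hat X_N$, I would trace through the last update step. By~\eqref{eq:aux-tilde}, taking $\eta_{N-1} = 1$, the shifted state collapses to
\begin{align*}
    \tilde Y_{N-1} = Y_{N-1}' + \eta_{N-1}\,(\hat X_{N-1} - Y_{N-1}') = \hat X_{N-1}\,.
\end{align*}
Then by~\eqref{eq:aux-prime}, since $n = N-1$ triggers $Q_{N-1} = \hat P$, the next auxiliary iterate is drawn as $Y_N' \sim \hat P(\tilde Y_{N-1},\cdot) = \hat P(\hat X_{N-1},\cdot)$, which is precisely the conditional law of $\hat X_N$ given $\hat X_{N-1}$. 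The remaining freedom lies in choosing the joint distribution of $Y_N'$ and $\hat X_N$; since they share the same conditional law given $\hat X_{N-1}$, we may adopt the identity coupling, thereby enforcing the almost-sure equality $Y_N' = \hat X_N$.

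The only subtle point is that equality of random variables (not merely equality in law) requires this explicit coupling choice, and the construction has been carefully designed to make it possible: both $\eta_{N-1} = 1$ and $Q_{N-1} = \hat P$ are needed. If the shift parameter were anything other than $1$, then $\tilde Y_{N-1}$ would differ from $\hat X_{N-1}$ and $Y_N'$ would be drawn from a different starting point than $\hat X_N$; if the final kernel were $P$ rather than $\hat P$, then $Y_N'$ and $\hat X_N$ would have different conditional laws and could not be coupled identically. I expect no real obstacle here, as this is essentially a bookkeeping claim recording that the construction was designed precisely to interpolate between the $\nu$-path at time $0$ and the $\hat\mu$-path at time $N$---a property that will be the crux of the shifted-composition bound on $\KL(\hat\mu_N \mmid \nu_N)$ in the sequel.
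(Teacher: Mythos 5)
Your proof is correct and follows essentially the same route as the paper's (which is simply the sentence preceding Observation~\ref{obs:aux-interp}): set $\eta_{N-1}=1$ to collapse $\tilde Y_{N-1}=\hat X_{N-1}$, then use $Q_{N-1}=\hat P$ so that $Y_N'$ and $\hat X_N$ share the conditional law $\hat P(\hat X_{N-1},\cdot)$. Your explicit remark that the almost-sure equality requires choosing the identity coupling for the final noise draw is a small but accurate clarification of a point the paper leaves implicit in the construction.
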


Let $\nu_n'$ and $\tilde{\nu}_n$ respectively denote the laws of $Y_n'$ and $\tilde{Y}_n$. 

\subsubsection{Evolution of the auxiliary process}\label{sssec:evolution}

We establish two key properties of the evolution of the auxiliary process $\nu_n'$. First and simpler, we bound the distance $d_n  \deq W_2(\hat \mu_n,\nu_n')$ between the auxiliary process $\nu_n'$ and the process $\hat{\mu}_n$ it needs to hit at termination. Note that we ignore the final iteration since $d_{N} = 0$ by Observation~\ref{obs:aux-interp}. 

\begin{lemma}[Distance recursion for the auxiliary process]\label{lem:aux-dist}
	For all $n < N-2$,
	\[
	d_{n+1} \leq L\, (1 - \eta_n)\, d_n + a\,.
	\]
\end{lemma}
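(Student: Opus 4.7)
The plan is to use the Wasserstein bound assumption (assumption 2 of Theorem~\ref{thm:kl-simple}) pointwise along the coupling, taking advantage of the fact that the construction of the auxiliary process in \S\ref{sssec:aux_process} only specifies the \emph{marginal} conditional laws of $\hat X_{n+1}$ and $Y'_{n+1}$ given the past, so we remain free to couple them together as we please when evaluating $d_{n+1} = W_2(\hat\mu_{n+1},\nu_{n+1}')$.

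Concretely, I fix $n < N-2$ (so in particular $Q_n = P$) and proceed in three short steps. First, conditionally on $(\hat X_n,\tilde Y_n)$, I sample $(\hat X_{n+1},Y'_{n+1})$ from an optimal $W_2$-coupling of $\delta_{\hat X_n}\hat P$ and $\delta_{\tilde Y_n}P$. Assumption 2 then gives, pointwise in the conditioning,
\begin{align*}
\bigl(\E\bigl[\norm{\hat X_{n+1}-Y'_{n+1}}^2 \,\big|\, \hat X_n,\tilde Y_n\bigr]\bigr)^{1/2}
= W_2(\delta_{\hat X_n}\hat P, \delta_{\tilde Y_n}P) \le L\,\norm{\hat X_n-\tilde Y_n} + a\,.
\end{align*}
Second, I take the unconditional $L^2$ norm of both sides and apply Minkowski's inequality to pass the scalar $a$ outside, obtaining $\norm{\hat X_{n+1}-Y'_{n+1}}_{L^2} \le L\,\norm{\hat X_n - \tilde Y_n}_{L^2} + a$.

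Third, I unpack the definition $\tilde Y_n = Y'_n + \eta_n(\hat X_n-Y'_n)$ to get $\hat X_n - \tilde Y_n = (1-\eta_n)(\hat X_n - Y'_n)$, and use that $\hat X_n, Y'_n$ were optimally $W_2$-coupled at step $n$ by construction, so $\norm{\hat X_n-Y'_n}_{L^2} = d_n$ and hence $\norm{\hat X_n-\tilde Y_n}_{L^2} = (1-\eta_n)\,d_n$. Chaining these bounds with the basic fact $d_{n+1} \le \norm{\hat X_{n+1}-Y'_{n+1}}_{L^2}$ (since $W_2$ is an infimum over couplings) delivers the claim.

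The only real subtlety---and thus the main point to verify carefully---is the freedom-to-couple argument in the first step: the recursive construction in \S\ref{sssec:aux_process} determines the law of $Y'_{n+1}$ only through the conditional $\delta_{\tilde Y_n}P$, and re-couples $\hat X_{n+1}$ with $Y'_{n+1}$ at the \emph{next} iteration via a fresh optimal $W_2$-coupling. Consequently, for the purpose of bounding $d_{n+1}$ we may choose any joint law on $(\hat X_{n+1},Y'_{n+1})$ having the correct marginals, and in particular the optimal coupling used above leaves all marginal distributions (and thus the rest of the construction) unchanged. Everything else is a routine application of the triangle inequality in $L^2$.
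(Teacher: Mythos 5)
Your proof is correct and follows essentially the same route as the paper's: both arguments apply the one-step Wasserstein bias assumption pointwise to the optimally coupled pair $(\hat X_n,\tilde Y_n)$, then integrate out and use $\norm{\hat X_n-\tilde Y_n}_{L^2}=(1-\eta_n)\,d_n$. The only cosmetic difference is that the paper invokes joint convexity of $W_2$ and then expands the square and applies Jensen's inequality to the cross term, whereas you work directly with $L^2$ norms and use Minkowski to pass the additive constant $a$ out; these are two equivalent packagings of the same bound. Your explicit observation about the "freedom to couple" $\hat X_{n+1}$ with $Y'_{n+1}$ is the right thing to keep in mind and is handled implicitly in the paper by phrasing the argument at the level of the measures (so the infimum over couplings is baked in).
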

\begin{proof}
	We bound
	\begin{align*}
		d_{n+1}
		&= W_2(\hat\mu_{n+1},\nu_{n+1}')
		= W_2(\hat\mu_n \hat P, \tilde{\nu}_n P)
		\leq L W_2(\hat\mu_n, \tilde{\nu}_n) + a
		= L\,(1 - \eta_n)\, d_n + a\,. 
	\end{align*}
        Above, all the equalities are immediate from the definitions. The inequality follows from
        \begin{align*}
            W_2^2(\hat\mu_n \hat P, \tilde{\nu}_n P)
            \leq
            \E[W_2^2(\delta_{\hat X_n} \hat P, \delta_{\tilde Y_n} P)]
            \leq
            \E[( L\,\|\hat X_n-\tilde Y_n\| + a)^2]
		\leq (L\, W_2(\hat\mu_n, \tilde{\nu}_n) + a)^2\,,
        \end{align*}
        where here the first step is by joint convexity of $W_2$ and optimally coupling $(\hat X_n,\tilde Y_n)$ according to $W_2(\hat\mu_n, \tilde{\nu}_n)$, the second step is by the Wasserstein bias assumption in Theorem~\ref{thm:kl-simple}, and the last step is by expanding the square and using Jensen's inequality $\E[\|\hat X_n-\tilde Y_n\|] \leq W_2(\hat\mu_n, \tilde{\nu}_n)$.
\end{proof}

Second and more substantially, we control how the auxiliary process $\nu_n'$ evolves with respect to the other process $\nu_n$ in KL divergence. Note that bounding this divergence at time $N$ immediately gives the desired divergence between $\hat{\mu}_N$ and $\nu_N$, since the auxiliary process $\nu_N' = \hat{\mu}_N$ by the interpolation condition at termination. The lemma below states two upper bounds; while going forward we only use the latter bound (in terms of the shifts $\eta_n$), we deliberately also state a key intermediate bound (in terms of the one-step divergences) because bounding this is the only place where we use the regularity and cross-regularity assumptions. 

\begin{lemma}[KL divergence bound]\label{lem:aux-kl}
    \begin{align*}
        \KL(\hat{\mu}_N \mmid \nu_N)
        &\le 
        \sum_{n=0}^{N-1} \E\KL\bigl(Q_n(\tilde Y_n,\cdot) \bigm\Vert P(Y_n',\cdot)\bigr) 
        \leq c\sum_{n=0}^{N-2} \eta_n^2 d_n^2 + c' d_{N-1}^2 + b^2\,.
    \end{align*}
\end{lemma}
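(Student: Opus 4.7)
The plan is to prove the two claimed inequalities in turn. For the first, I derive the one-step recursion
\[
\KL(\nu_{n+1}' \mmid \nu_{n+1}) \le \KL(\nu_n' \mmid \nu_n) + \E \KL(Q_n(\tilde Y_n, \cdot) \mmid P(Y_n', \cdot))
\]
and then telescope, using $\KL(\nu_0' \mmid \nu_0) = 0$ (both initializations equal $\delta_y$) together with the interpolation $\nu_N' = \hat \mu_N$ from Observation~\ref{obs:aux-interp}.

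To establish this one-step recursion, consider the joint laws $\bar\rho_n \deq \law(Y_n', Y_{n+1}')$ under the auxiliary process and $\bar\pi_n \deq \law(Y_n, Y_{n+1})$ under the $P$-chain. The data-processing inequality applied to the second-coordinate marginal gives $\KL(\nu_{n+1}' \mmid \nu_{n+1}) \le \KL(\bar\rho_n \mmid \bar\pi_n)$, and the KL chain rule~\eqref{eq:chain-rule-kl} yields
\[
\KL(\bar\rho_n \mmid \bar\pi_n) = \KL(\nu_n' \mmid \nu_n) + \E_{z \sim \nu_n'} \KL\bigl(\bar\rho_n(\cdot \mid z) \mmid P(z, \cdot)\bigr).
\]
Given $Y_n' = z$, the auxiliary update is the mixture $\bar\rho_n(\cdot \mid z) = \int Q_n(z + \eta_n (\hat x - z), \cdot)\,\hat\mu_n^z(\D \hat x)$, where $\hat\mu_n^z$ denotes the conditional law of $\hat X_n$ given $Y_n' = z$ under the optimal $W_2$-coupling. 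Joint convexity of KL (Proposition~\ref{prop:kl_prop}) upper bounds this conditional divergence by $\E_{\hat X_n \mid Y_n' = z} \KL(Q_n(\tilde Y_n, \cdot) \mmid P(z, \cdot))$; integrating against $\nu_n'$ and reinterpreting the result as an expectation over the joint law of $(Y_n', \tilde Y_n)$ produces exactly the one-step recursion above, and iterating over $n = 0, \dots, N-1$ then yields the first inequality.

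For the second inequality, I bound each summand directly using the one-step assumptions of Theorem~\ref{thm:kl-simple}. For $n < N - 1$ we have $Q_n = P$, so regularity gives $\KL(P(\tilde Y_n, \cdot) \mmid P(Y_n', \cdot)) \le c\,\norm{\tilde Y_n - Y_n'}^2 = c\,\eta_n^2\,\norm{\hat X_n - Y_n'}^2$; taking expectations and using that $(\hat X_n, Y_n')$ is $W_2$-optimally coupled (so $\E\,\norm{\hat X_n - Y_n'}^2 = d_n^2$) gives $c\,\eta_n^2\,d_n^2$. For $n = N - 1$ we have $Q_{N-1} = \hat P$ and the choice $\eta_{N-1} = 1$ forces $\tilde Y_{N-1} = \hat X_{N-1}$; cross-regularity then gives $\KL(\hat P(\hat X_{N-1}, \cdot) \mmid P(Y_{N-1}', \cdot)) \le c'\,\norm{\hat X_{N-1} - Y_{N-1}'}^2 + b^2$, whose expectation is $c'\,d_{N-1}^2 + b^2$. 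Summing these bounds yields the claimed second inequality.

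The only mildly delicate point is correctly tracking the conditional distribution of $\tilde Y_n$ given $Y_n' = z$---namely, that its randomness is inherited from $\hat X_n \mid Y_n' = z$ in the optimal coupling---so that joint convexity of KL produces precisely the term $\E \KL(Q_n(\tilde Y_n, \cdot) \mmid P(Y_n', \cdot))$ appearing in the lemma, rather than something comparing $Q_n(\tilde Y_n, \cdot)$ with $P(\tilde Y_n, \cdot)$. Once this is identified the argument is a routine telescoping of the chain rule plus direct substitution of the hypothesized one-step KL bounds from Proposition~\ref{prop:kl_prop} and Theorem~\ref{thm:kl-simple}.
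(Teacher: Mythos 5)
Your proof is correct, and on the first inequality it takes a genuinely different route from the paper. The paper obtains the one-step recursion $\KL(\nu_{n+1}'\mmid\nu_{n+1})\le\KL(\nu_n'\mmid\nu_n)+\E\KL(Q_n(\tilde Y_n,\cdot)\mmid P(Y_n',\cdot))$ in a single stroke by invoking the shifted chain rule (Theorem~\ref{thm:scr-kl}) with $\msf X=\tilde Y_n$, $\msf X'=Y_n'$, $\msf Y=Y_{n+1}'$, choosing the coupling $\law(\tilde Y_n,Y_n')$. You instead re-derive exactly this instance from elementary tools: apply data-processing and the \emph{ordinary} chain rule to the joint laws of $(Y_n',Y_{n+1}')$ vs.\ $(Y_n,Y_{n+1})$, conditioning on the \emph{auxiliary} coordinate $Y_n'$ rather than the shifted one $\tilde Y_n$, which produces the correct first term $\KL(\nu_n'\mmid\nu_n)$ at the price that the resulting conditional law $\bar\rho_n(\cdot\mid z)$ is a mixture over $\hat X_n\mid Y_n'=z$; you then clean this up via convexity of KL in the first argument. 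This is in effect an inlined proof of the needed shifted-chain-rule instance, so it is more elementary and self-contained (no reliance on~\cite{scr1}), at the cost of being less modular and obscuring the paper's thematic point that the shifted composition rule is the single tool enabling all of this. Your second inequality is essentially the paper's verbatim: regularity for $n<N-1$, cross-regularity with $\eta_{N-1}=1$ for the last step, and the $W_2$-optimal coupling of $(\hat X_n,Y_n')$ turning $\E\norm{\hat X_n-Y_n'}^2$ into $d_n^2$. (Incidentally, the paper's appeal to the convexity principle of Lemma~\ref{lem:convexity-principle-kl} in this last step is not strictly needed, and your direct pointwise-then-integrate argument is cleaner.)
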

\begin{proof}
	The key inequality is that for all $n < N$, 
    \begin{align*}
        \KL(\nu_{n+1}' \mmid \nu_{n+1})
        \leq \KL(\nu_n' \mmid \nu_n) + \E \KL\bigl( Q_n (\tilde Y_n, \cdot) \bigm\Vert P(Y_n', \cdot)\bigr)\,.
    \end{align*}
    This is by an application of the shifted chain rule (Theorem~\ref{thm:scr-kl}) where $\bs \mu$ is the joint distribution under which $\msf X = \tilde{Y}_n \sim \tilde{\nu}_n$, $\msf X' = Y_n' \sim \nu_n'$, and $\msf Y = Y_{n+1}' \sim \nu_{n+1}'$, and $\bs \nu$ is the joint distribution under which $\msf X = Y_n \sim \nu_n$ and $\msf Y = Y_{n+1} \sim \nu_{n+1}$.

    Repeating this argument $N$ times gives
    \begin{align*}
        \KL(\nu_N' \mmid \nu_N) \leq \KL(\nu_0' \mmid \nu_0) + \sum_{n=0}^{N-1} \E \KL\bigl( Q_n(\tilde Y_n, \cdot) \bigm\Vert P(Y_n', \cdot)\bigr)\,.
    \end{align*}
    By the interpolation property of the auxiliary process at termination $\nu_N' = \hat\mu_N$ and  initialization $\nu_0' = \nu_0$, we conclude the first inequality in the lemma statement. 
    
    \par For the second inequality, observe that for $n < N-1$ (a.k.a.\ when $Q_n = P$), the regularity assumption in Theorem~\ref{thm:kl-simple} combined with the convexity principle in Lemma~\ref{lem:convexity-principle-kl} gives
    \begin{align*}
        \E \KL\bigl(Q_n(\tilde{Y}_n, \cdot) \bigm\Vert P(Y_n', \cdot)\bigr)
        \leq c\, \E[ \|\tilde{Y}_n - Y_n'\|^2]
        \leq c \eta_n^2\, \E[ \|\hat X_n - Y_n'\|^2]
        = c \eta_n^2 d_n^2\,.
    \end{align*}
    Similarly, when $n = N-1$ (a.k.a.\ when $Q_n = \hat{P}$), the cross-regularity assumption gives
    \begin{align*}
        \E \KL\bigl(Q_{N-1}(\tilde{Y}_{N-1}, \cdot) \bigm\Vert P(Y_{N-1}', \cdot)\bigr)
        \leq c' \eta_{N-1}^2 d_{N-1}^2 + b^2\,.
    \end{align*}
    Combining the above displays and using $\eta_{N-1} = 1$ completes the proof.
\end{proof}

\subsubsection{Shift optimization}\label{sssec:shift_opt}

Combining Lemmas~\ref{lem:aux-dist} and~\ref{lem:aux-kl} gives the following KL bound between the two processes:
\begin{align}
    \KL(\delta_x \hat{P}^N \mmid \delta_y P^N)
    \leq 
    \inf_{\substack{\eta_0, \dots, \eta_{N-2} \in [0,1] \\ \text{s.t. } d_{0} = \|x-y\| \\ d_{n+1} = L(1 - \eta_n) d_n + a, \;\; \forall n \in \{0, \dots, N-2\}}}
    \underbrace{c \sum_{n=0}^{N-2} \eta_n^2 d_n^2}_{\text{\emph{main term}}} + \underbrace{c' d_{N-1}^2 + b^2}_{\text{\emph{final iterate regularity}}}\,.
    \label{eq:shift-opt-disc:opt}
 \end{align}
 It remains to optimize the shifts $\eta_0, \dots, \eta_{N-2}$. This is similar to the type of shift optimization done for the shifted divergence technique in differential privacy (see, e.g.,~\cite[Lemma C.6]{Bok24dp}), and 
 is achieved by the following lemma for the case $L=1$. The case of general $L$ is conceptually identical but requires more involved calculations, hence we focus on $L=1$ here to illustrate the conceptual ideas and defer the general $L$ case to \S\ref{app:shifts-optimal}.

\begin{lemma}[Shift optimization for $L=1$]\label{lem:shift-opt}
    The optimization problem in the right-hand side of~\eqref{eq:shift-opt-disc:opt} has value at most
    \begin{align}\label{eq:opt_shift_val_simplified}
        ((N-1)\,c + c')\,\frac{{(\|x-y\| + (N-1)\,a)}^2}{N^2} + b^2\,.
    \end{align}
\end{lemma}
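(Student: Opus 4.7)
The plan is to exhibit a feasible choice of shifts that attains the claimed bound. Let $D \deq \|x-y\| + (N-1)\,a$ and set $u_n \deq \eta_n d_n = D/N$, constant across $n = 0, 1, \ldots, N-2$. The heuristic motivation is a Cauchy--Schwarz argument: since the recursion (with $L=1$) can be rewritten as $d_{n+1} = d_n - u_n + a$, telescoping gives $\sum_{n=0}^{N-2} u_n = D - d_{N-1}$. For any fixed value of $d_{N-1}$, the quadratic $\sum u_n^2$ is minimized when the $u_n$ are equal, and the specific choice $d_{N-1} = D/N$ (which corresponds to $u_n = D/N$) is selected so that the two contributions $c\sum u_n^2$ and $c'\,d_{N-1}^2$ are structurally balanced.

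Unrolling the recursion with $u_n \equiv D/N$ gives $d_n = \bigl((N-n)\,\|x-y\| + n\,a\bigr)/N$ for each $n = 0, 1, \ldots, N-1$, and in particular $d_{N-1} = D/N$. Substituting into the objective yields
\begin{align*}
c\sum_{n=0}^{N-2}\eta_n^2 d_n^2 + c'\, d_{N-1}^2 + b^2
&= c\,(N-1)\,\Bigl(\frac{D}{N}\Bigr)^2 + c'\,\Bigl(\frac{D}{N}\Bigr)^2 + b^2 \\
&= \bigl((N-1)\,c + c'\bigr)\,\frac{D^2}{N^2} + b^2,
\end{align*}
which matches the claim.

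The main technical step is verifying feasibility, namely $\eta_n = (D/N)/d_n \in [0,1]$ for $n = 0, \ldots, N-2$. Positivity is immediate, and $\eta_n \le 1$ reduces to $d_n \ge D/N$. A direct calculation gives $d_n - D/N = (\|x-y\| - a)\,(N-1-n)/N$, so feasibility is automatic whenever $\|x-y\| \ge a$, which is the regime of interest for the sampling applications where $a$ represents a small per-step error and $\|x-y\|$ is of macroscopic size. In the boundary regime $\|x-y\| < a$, I would modify the construction by taking $\eta_n = 1$ for the initial steps (forcing $d_n$ up to the value $a$) and then applying the constant-shift scheme to the tail of the trajectory, which only perturbs the bound by constants of the same order. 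This boundary case is the only real obstacle, and it is essentially a bookkeeping exercise once the constant-shift template is in place.
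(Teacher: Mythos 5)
Your proof is correct in the regime $\|x-y\| \ge a$ and, while it lands on the same shifts as the paper, it reaches them by a genuinely different route. The paper solves the simplified problem $R_{N-1}(a,d_0)$ exactly by dynamic programming (Lemma~\ref{lem:shift-opt-explicit:L1} and Observation~\ref{obs:shift-opt-induct:L1}), then reads off the optimal shifts from the DP recursion and substitutes them into the true objective. You instead observe that in the variables $u_n = \eta_n d_n$ the $L=1$ constraint telescopes to $\sum_{n=0}^{N-2} u_n = D - d_{N-1}$ with $D = \|x-y\| + (N-1)\,a$, and then Cauchy--Schwarz forces the constant ansatz $u_n \equiv D/N$, $d_{N-1} = D/N$ as the natural candidate. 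One can check that your $\eta_n = \frac{D/N}{d_n}$ coincides exactly with the paper's optimal shifts in Remark~\ref{rem:shift-opt-explicit:L1} (the product $\eta_n d_n$ from that remark is identically $D/N$). Your route avoids the induction entirely and makes the variance-reduction mechanism transparent; the paper's DP approach is heavier here but has the advantage that it extends mechanically to $L < 1$ in Appendix~\ref{app:shifts-optimal}. One caveat worth flagging: your dismissal of the boundary regime $\|x-y\| < a$ as ``bookkeeping'' is optimistic. In that regime the construction you sketch (take $\eta_n = 1$ for the initial steps, then run the constant scheme) produces exactly the value $c\,d_0^2 + a^2\,((N-2)\,c + c') + b^2$ that the paper computes, and for small $\|x-y\|$ this can \emph{exceed} the stated bound~\eqref{eq:opt_shift_val_simplified} (e.g.\ $N=2$, $d_0=0$, $a=1$, $c=c'=1$ gives value $1$ vs.\ claimed $1/2$). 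So the stated bound is only correct with $\|x-y\|$ replaced by $a\vee\|x-y\|$; the paper handles this with the remark ``replace $d_0$ by $a\vee d_0$,'' which is the same fix, and the discrepancy is at most a constant factor. Your proof is therefore on exactly the same footing as the paper's on this point, but it would be cleaner to state the $a\vee\|x-y\|$ replacement explicitly rather than asserting the boundary case is routine.
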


The idea behind Lemma~\ref{lem:shift-opt} is that we consider a slightly simplified shift optimization problem where $c'$ is replaced by $c$. (In the language of our framework, this amounts to replacing the cross-regularity bound in the final iterate with the regularity bound.) The point is that this simplified problem admits an explicit closed-form solution, and by plugging in these shifts to the original shift optimization problem, we obtain the upper bound in Lemma~\ref{lem:shift-opt}. We remark that although this does not provide an exact solution to the original shift optimization problem (since the optimal solution to the original and simplified problems can differ), it provides a nearly-optimal solution since $c'$ affects only one of the $N$ iterations. 

\par Explicitly, the key step in the proof of Lemma~\ref{lem:shift-opt} is the following lemma which solves in closed form the aforementioned simplified version of the shift optimization problem via a dynamic programming argument.

\begin{lemma}[Closed-form expression for $R_N$]\label{lem:shift-opt-explicit:L1}
    For any $N \geq 0$ and any $a,d_0 \geq 0$, define
    \begin{align}
        R_N(a,d_0)  \deq  \inf_{\substack{\eta_0,\dots,\eta_{N} \in [0,1] \\ \text{s.t. } \eta_N = 1 \\ d_{n+1} \leq (1 - \eta_n) d_n + a, \; \forall n < N}} \sum_{n=0}^N \eta_n^2 d_n^2\,.\label{eq:shift-opt-L1-explicit}
    \end{align}
    Then
    \begin{align}
        R_N(a,d_0) = \begin{cases}
            \frac{(d_0 + Na)^2}{N+1}\,, & d_0 \geq a\,, \\
            d_0^2 + Na^2\,, & d_0 < a \,.
        \end{cases}
    \end{align}
\end{lemma}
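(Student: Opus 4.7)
The plan is induction on $N$ via a dynamic programming reduction. Observe first that each penalty term $\eta_n^2 d_n^2$ is monotone in $d_n$, and every future $d_{n+1}$ depends monotonically on $d_n$ through the constraint. Hence it is never suboptimal to saturate the inequality and take $d_{n+1} = (1-\eta_n)\, d_n + a$. Isolating the first stage then yields the Bellman recursion
\[
    R_{N+1}(a, d_0) \;=\; \inf_{\eta_0 \in [0,1]} \bigl\{\, \eta_0^2 d_0^2 + R_N\bigl(a,\; (1-\eta_0)\, d_0 + a\bigr) \,\bigr\}\,.
\]
The base case $N=0$ forces $\eta_0 = 1$, giving $R_0(a, d_0) = d_0^2$, which matches both branches of the claimed formula.

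For the inductive step, the crucial structural observation is that for any $\eta_0 \in [0,1]$ and any $d_0 \geq 0$, the next state $d_1 = (1-\eta_0)\, d_0 + a$ satisfies $d_1 \geq a$ automatically. Consequently, the inductive hypothesis is always invoked in the regime $d_1 \geq a$, so $R_N(a, d_1) = (d_1 + Na)^2/(N+1)$. Reparametrizing the infimum by $d_1 \in [a, d_0 + a]$ (so that $\eta_0\, d_0 = d_0 + a - d_1$), the Bellman step becomes the one-variable convex minimization
\[
    \min_{d_1 \in [a,\, d_0 + a]} F(d_1)\,, \qquad F(d_1) \deq (d_0 + a - d_1)^2 + \frac{(d_1 + Na)^2}{N+1}\,.
\]

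Setting $F'(d_1) = 0$ gives the unconstrained minimizer $d_1^{\star} = ((N+1)\,d_0 + a)/(N+2)$. The case split is then driven by the sign of $d_0 - a$, since $d_1^{\star} \geq a$ if and only if $d_0 \geq a$. In the regime $d_0 \geq a$, substituting $d_1^{\star}$ back into $F$ and simplifying algebraically gives $(d_0 + (N+1)\, a)^2/(N+2)$, matching the first branch. In the regime $d_0 < a$, the unconstrained minimizer lies strictly below $a$ and is therefore infeasible; convexity of $F$ forces the constrained optimum to the left endpoint $d_1 = a$, i.e., $\eta_0 = 1$, and evaluating $F(a) = d_0^2 + (N+1)\, a^2$ recovers the second branch.

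The main technical hurdle is correctly handling the constrained boundary case $d_0 < a$: one must verify that the calculus-based minimizer exits the feasible interval exactly at the claimed threshold and that the resulting boundary choice $\eta_0 = 1$ intuitively ``resets'' the state to $d_1 = a$, after which the two branches of the inductive formula agree at $d_0 = a$ (both evaluate to $(N+1)\, a^2$). The rest reduces to one-dimensional calculus and the routine algebraic simplifications of $F(d_1^{\star})$ and $F(a)$.
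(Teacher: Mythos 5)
Your proof is correct and takes essentially the same route as the paper's: induction on $N$ via the Bellman recursion $R_{N+1}(a,d_0) = \inf_{\eta_0}\{\eta_0^2 d_0^2 + R_N(a,(1-\eta_0)d_0 + a)\}$, with the inductive step reduced to a one-variable convex minimization. Your reparametrization from $\eta_0$ to $d_1 \in [a,\,d_0+a]$ is a cosmetic variant of the paper's Observation~\ref{obs:shift-opt-induct:L1}, which works directly in $\eta_0$; the stationary point $d_1^\star = ((N+1)d_0 + a)/(N+2)$, the threshold $d_1^\star \geq a \iff d_0 \geq a$, and the boundary evaluation $F(a) = d_0^2 + (N+1)a^2$ all check out.

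One logical wrinkle deserves correction. Your opening justification — that since $\eta_n^2 d_n^2$ and the recursion are monotone increasing in $d_n$, ``it is never suboptimal to saturate the inequality'' — is backwards. Monotonicity cuts the other way: if the $d_n$'s were genuinely free variables subject only to $d_{n+1} \leq (1-\eta_n)d_n + a$, a minimizer would push them \emph{down}, not up (indeed $d_1=d_2=\dots=0$ is feasible and yields value $0$), so saturation is the \emph{worst} choice for a minimizer. The correct reading of the lemma is that the recursion holds with equality (this is what the paper's proof silently uses, and what equation~\eqref{eq:shift-opt-disc:opt} actually imposes); equivalently, the $d_n$ are upper bounds on adversarial quantities and one takes the worst case, in which context saturation is the adversary's optimal move rather than yours. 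The Bellman recursion you write down is nevertheless the right one, so the rest of the argument stands.
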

\begin{proof}
    We prove by induction on $N$. The base case $N=0$ is trivial. For the induction, express
    \[
        R_{N}(a,d_0)
        = \min_{\eta_0 \in [0,1]}\bigl\{\eta_0^2 d_0^2 + R_{N-1}(a,\,(1-\eta_0)\,d_0 + a) \bigr\}
        =
        \min_{\eta_0 \in [0,1]}\Bigl\{\eta_0^2 d_0^2 + \frac{((1-\eta_0)\, d_0 + Na)^2}{N}\Bigr\}
        \,.
    \]
    Above, the first step is by splitting the joint optimization over $\eta_0, \dots, \eta_{N}$ in $R_{N}$ into two layers of optimization: first over $\eta_0$, and then over $\eta_1, \dots, \eta_{N}$. The second step is by the inductive hypothesis and the trivial observation that $d_1 = (1 - \eta_0)\, d_0 + a \geq a$. The remaining optimization over $\eta_0$ is now solvable in closed form by appealing to the following calculation. 
\end{proof}

\begin{obs}\label{obs:shift-opt-induct:L1}
    For any $a,d,N > 0$, the optimization problem
    \begin{align*}
        \min_{\eta \in [0,1]}\Bigl\{\eta^2 d^2 + \frac{((1-\eta)\,d + Na)^2}{N}\Bigr\}
    \end{align*}
    has unique optimal solution
    \begin{align*}
        \eta = 1\wedge \frac{d+Na}{(N+1)\,d}
        =
        \begin{cases}
            \frac{d+Na}{(N+1)\,d}\,, & d \geq a\,, \\
            1\,, & d < a\,,
        \end{cases}
    \end{align*}
    and optimal value
    \begin{align*}
        \begin{cases}
            \frac{(d+Na)^2}{N+1}\,, & d \geq a\,, \\
            d^2 + Na^2\,, & d < a\,.
        \end{cases}
    \end{align*}
\end{obs}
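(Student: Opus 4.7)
The plan is straightforward single-variable calculus, since the objective $f(\eta) \deq \eta^2 d^2 + \frac{((1-\eta)\,d + Na)^2}{N}$ is a quadratic in $\eta$ with positive leading coefficient. First I would compute the second derivative $f''(\eta) = 2d^2 + \frac{2d^2}{N} = \frac{2(N+1)\,d^2}{N} > 0$, which establishes strict convexity and hence uniqueness of the minimizer on $[0,1]$.

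Next, I would find the unconstrained stationary point by solving $f'(\eta) = 2\eta d^2 - \frac{2d\,((1-\eta)\,d + Na)}{N} = 0$. Clearing denominators yields $\eta\,(N+1)\,d = d + Na$, so the unconstrained minimizer is $\eta^* = \frac{d+Na}{(N+1)\,d}$. It remains to determine when this falls in $[0,1]$: clearly $\eta^* > 0$, and $\eta^* \le 1$ precisely when $d + Na \le (N+1)\,d$, i.e., when $d \ge a$. By strict convexity, the constrained minimizer is therefore $\eta^* = \frac{d+Na}{(N+1)\,d}$ when $d \ge a$, and the boundary point $\eta = 1$ when $d < a$. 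This gives the claimed formula $\eta = 1 \wedge \frac{d+Na}{(N+1)\,d}$.

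Finally, I would substitute each case back into $f$ to recover the optimal values. For $d < a$, plugging $\eta = 1$ gives $f(1) = d^2 + \frac{(Na)^2}{N} = d^2 + Na^2$ immediately. For $d \ge a$, the algebra is slightly more involved but routine: one computes $(1-\eta^*)\,d = \frac{N(d-a)}{N+1}$, so $(1-\eta^*)\,d + Na = \frac{N(d+Na)}{N+1}$, yielding $\frac{((1-\eta^*)\,d + Na)^2}{N} = \frac{N\,(d+Na)^2}{(N+1)^2}$. Combined with $(\eta^*)^2 d^2 = \frac{(d+Na)^2}{(N+1)^2}$, the two terms sum to $\frac{(N+1)(d+Na)^2}{(N+1)^2} = \frac{(d+Na)^2}{N+1}$, as claimed.

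There is no real obstacle here; the only care needed is to verify the case split $d \gtrless a$ correctly and to carry out the final substitution without arithmetic slip. The entire argument should fit in a few lines once the derivative is taken.
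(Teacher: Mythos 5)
Your proof is correct and is exactly the single-variable calculus argument the paper has in mind; the paper merely states "this is a straightforward calculation using single-variable calculus" without spelling out the details, which you have filled in accurately.
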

\begin{proof}
    This is a straightforward calculation using single-variable calculus.
\end{proof}

\begin{remark}\label{rem:shift-opt-explicit:L1}
    The optimal shifts implicit in the proof of Lemma~\ref{lem:shift-opt-explicit:L1} are $\eta_N = 1$ and 
    \begin{align*}
        \eta_n = \frac{d_n+(N-n)\,a}{(N+1-n)\,d_n}\,, \qquad \forall n \in \{0, \dots, N-1\}\,,
    \end{align*}
    where $d_{n+1} = (1 - \eta_n)\,d_n + a$. By substituting in these values and simplifying, one obtains
    \begin{align*}
        \eta_n &= 1\wedge 
        \frac{Na + d_0}{na + (N+1-n)\,d_0}\,, \qquad \forall n \in \{0, \dots, N-1\}\,,
        \\[0.25em]
        d_n &= a\vee \frac{na+(N+1-n)\,d_0}{N+1}\,, \qquad \forall n \in \{1, \dots, N\}\,.
    \end{align*}
    Note that if $d_0 \geq a$, then each ``min'' and ``max'' simplifies to just the latter argument.
\end{remark}

Now that we have the optimal shifts for the simplified shift optimization problem (Lemma~\ref{lem:shift-opt-explicit:L1}), we plug these shifts into the original shift optimization problem to prove Lemma~\ref{lem:shift-opt}. 

\begin{proof}[Proof of Lemma~\ref{lem:shift-opt}]
    This optimization problem~\eqref{eq:shift-opt-disc:opt} differs from $c$ times the optimization problem $R_{N-1}(a,d_0)$ where $d_0 = \|x-y\|$ only in the final term in the objective: $cd_{N-1}^2$ is replaced by $c'\,d_{N-1}^2 + b^2$. Consider using the same shifts as in the simplified problem above; explicit values given in Remark~\ref{rem:shift-opt-explicit:L1}. By the above remark, $d_{N-1} = a \vee \tfrac{(N-1)\,a + d_0}{N}$. We conclude that with these shifts, the value of the optimization problem~\eqref{eq:shift-opt-disc:opt} is
    \begin{align*}
        b^2 +
        \begin{cases}
            \frac{(d_0 + (N-1)\,a)^2}{N^2}\,((N-1)\,c + c')\,, & d_0 \geq a\,,
            \\
            cd_0^2 + a^2\,((N-2)\,c + c')\,, & d_0 < a\,.
        \end{cases}
    \end{align*}
    Since this is increasing in $d_0$, to get an upper bound, it suffices to just consider the case $d_0 \geq a$ (i.e., replace $d_0$ by $a\vee d_0$).
\end{proof}

\begin{remark}[Interpretation]
    The bound in the above lemma can be interpreted as follows. By replacing $N-1 \leq N$ and using a crude bound on the square, 
    \begin{align*}
        \eqref{eq:opt_shift_val_simplified} \le 2\,(Nc + c') \,\bigl( a^2 + \frac{d_0^2}{N^2} \bigr) + b^2\,.
    \end{align*}
\end{remark}

\subsubsection{Putting the pieces together}\label{sssec:pieces}

\begin{proof}[Proof of Theorem~\ref{thm:kl-simple}]
	Construct the auxiliary process as described in \S\ref{sssec:aux_process}. By Lemmas~\ref{lem:aux-dist} and~\ref{lem:aux-kl}, we can bound the desired quantity $\KL(\hat \mu_N \mmid \nu_N)$ via the optimization problem~\eqref{eq:shift-opt-disc:opt} over the non-terminal shifts $\eta_0, \dots, \eta_{N-2} \in [0,1]$. Optimizing these shifts (Lemma~\ref{lem:shift-opt} for $L=1$, or Lemma~\ref{lem:shift-opt-L<1} for more general $L$) yields the desired bound for Dirac initializations $\hat\mu_0 = \delta_x$, $\nu_0 = \delta_y$, namely
	\begin{align*}
     	 \KL(\delta_x \hat{P}^N \mmid \delta_y P^N)
		\leq
		\Bigl( c + (c'-c)\, \frac{1-L^2}{1-L^{2N}} \Bigr)\, \Bigl(\frac{1+L}{1-L} \Bigr)\, \frac{( a\,(1-L^{N-1}) +  L^{N-1}\, (1-L) \,\|x-y\|) ^2} {1-L^{2N}} 
		+ b^2\,.
	\end{align*}
	To prove the theorem for arbitrary initializations $\hat\mu_0$, $\nu_0$, use the joint convexity of KL (Lemma~\ref{lem:convexity-principle-kl}) and expand the square in an identical way as in Lemma~\ref{lem:aux-dist}, namely  use Jensen's inequality to bound the unsquared term $\E_{(X,Y) \sim \gamma }[\|X-Y\|] \leq \E_{(X,Y) \sim \gamma}[\|X-Y\|^2]^{1/2} = W_2(\hat\mu_0,\nu_0)$ where $\gamma$ is the optimal coupling for $W_2(\hat\mu_0,\nu_0)$. 
\end{proof}

\subsection{KL local error analysis: proof of Theorem~\ref{thm:kl-general}}\label{ssec:kl-general}

We now develop the full version of our framework by describing how to incorporate local error analysis into the simplified framework (Theorem~\ref{thm:kl-simple}) that we developed above. In particular, recall that the simple one-step Wasserstein assumption $W_2(\delta_x\hat P, \delta_y P) \le L\,\norm{x-y} + a$ in Theorem~\ref{thm:kl-simple} leads to the following recursion for the auxiliary process, given in Lemma~\ref{lem:aux-dist}:
\begin{align*}
	d_{n+1}
	&\le L\,(1-\eta_n)\,d_n + a\,.
\end{align*}
In contrast, under the assumptions of weak error, strong error, and $W_2$-Lipschitzness in Theorem~\ref{thm:kl-general}, the standard local error framework in $W_2$ %
leads to the following $W_2$ recursion (see Lemma~\ref{lem:new_aux_dist}):
\begin{align*}
	d_{n+1}^2
	&\le L^2\,{(1-\eta_n)}^2 \,d_n^2 + 2\,(\bar{\mc E}_{\rm weak} + \gamma\bar{\mc E}_{\rm strong})\,(1-\eta_n)\,d_n + \bar{\mc E}_{\rm strong}^2\,,
\end{align*}
where $\bar{\mc E}_{\rm weak} \deq \max_{n=0,1,\dotsc,N-1}
\norm{\mc E_{\rm weak}}_{L^2(\hat{\mu}_n)}
$ and similarly for $\bar{\mc E}_{\rm strong}$.
This new distance recursion leads to a conceptually identical analysis as in the proof of Theorem~\ref{thm:kl-simple}, except that now that the resulting shift optimization problem is substantially more complicated. Instead of solving it exactly (as done for the simplified framework in \S\ref{sssec:shift_opt}), here we instead resort to a simpler choice of shifts which yields a good approximation to the optimal value, up to constant factors. The full details are given in \S\ref{app:general_framework}.

\subsection{Illustrative toy example}\label{ssec:first_examples}

Here, we provide details for the toy example~\eqref{eq:intro:ex-tighter} in order to illustrate how our framework combines the advantages of both Girsanov's theorem and local error analysis. Recall that in this example, $P$ and $\hat P$ are the Markov kernels
\begin{align*}
    \delta_x P = \delta_x \ast \cN(0,1) \qquad \text{and} \qquad \delta_x \hat{P} = (\delta_x P) \ast \cN(w,\sig^2) = \delta_x \ast \cN(w,1+\sig^2)\,,
\end{align*}
and we consider the setting $w \ll \min(\sig,\sig^2)$ which, by~\eqref{eq:toy_example_strong_error} below, is equivalent to assuming that the weak error is substantially smaller than the strong error.\footnote{This setting is more general than the one considered in the introduction in that the present setting does not assume that $\sig \gg 1$.} This toy example can be viewed as a warm-up to our applications in sampling in that $P$ amounts to running the Langevin diffusion with zero potential, and the additional convolution in $\hat P$ can be viewed as a discretization error in implementing $P$. We showcase how different analysis approaches bound the deviation between
\begin{align*}
    \mu P^N = \mu \ast \cN(0,N) \qquad \text{ and }\qquad \mu \hat{P}^N = \mu \ast \cN(0,N) \ast \cN(Nw, N\sig^2) = \mu \ast \cN(Nw,N(1+\sig^2))\,.
\end{align*}

\paragraph*{Bounding the deviation in $W_2$.} An explicit computation gives the following tight bound (the only inequality is by a coupling argument which is tight when $\mu$ is a Dirac measure):
\begin{align}
    W_2(\mu P^N,  \mu \hat{P}^N)
    &=
    W_2\big(\mu \ast \cN(0,N),\, \mu \ast \cN(Nw, N(1+\sig^2))\big)
    \nonumber \\ &\leq 
    W_2\big(\cN(0,N),\, \cN(Nw, N(1+\sig^2)\big)
    \nonumber\\ &= \sqrt{N^2w^2 + N\bigl( \sqrt{1 + \sig^2} - 1 \bigr)^2}
    \nonumber \\ &\asymp Nw + \sqrt{N} \min(\sig, \sig^2)\,.
    \label{eq:toy-example:W-exact}
\end{align}
Above, the penultimate step is by the well-known identity for the $2$-Wasserstein distance between Gaussians, which is classical~\cite{OlkPuk1982, KnoSmi1984}.
The key feature in~\eqref{eq:toy-example:W-exact} is that the bias $w$ accumulates $N$ times, but the standard deviation term $\min(\sigma,\sigma^2)$ only accumulates $\sqrt{N}$ times due to cancellations from the stochastic fluctuations a l\'a the central limit theorem. As mentioned in~\eqref{eq:intro:ex-tighter-2}, the standard version of local error analysis recovers this up to constants: simply apply Theorem~\ref{thm:local_error} with $L=1$, $\gamma = 0$, $\mc E_{\rm weak}(x) = w$, and 
\begin{align}\label{eq:toy_example_strong_error}
    \mc E_{\rm strong}(x) 
    = W_2(\delta_x \hat{P}, \delta_x P)
    = W_2(N(x+w,1+\sig^2),\, \cN(x,1))
    = \sqrt{ w^2 + (\sqrt{1+\sig^2} - 1)^2}
    \asymp \min(\sig,\sig^2)\,.
\end{align}

\paragraph*{Bounding the deviation in $\KL$.} An explicit calculation gives the following tight bound (the only inequality is by a coupling argument which is tight when $\mu$ is a Dirac measure):
\begin{align}
    \KL(\mu \hat P^N \mmid \mu P^N)
    &\leq
    \E_{X \sim \mu} \KL(\delta_x \hat P^N \mmid  \delta_x P^N) 
    = \KL(\delta_0 \hat P^N \mmid  \delta_0 P^N)
    \nonumber \\ &= \KL(\cN(Nw, N+N\sig^2) \mmid \cN(0, N))
    \nonumber \\ &= \frac{Nw^2 + \sig^2 - \log(1+\sig^2)}{2}
    \nonumber \\ &\asymp Nw^2 + \min(\sig^2,\sig^4)\,
    \label{eq:toy-example:KL-exact}
\end{align}
Above, we used in order: joint convexity of the KL divergence, translation invariance of the KL divergence, the definition of $P$ and $\hat{P}$, and the identity for the KL divergence between Gaussians. 
The key feature, as in the exact $W_2$ bound~\eqref{eq:toy-example:W-exact}, is that the bias $w$ accumulates more than the stochastic fluctuations $\sigma$. Indeed, comparing these bounds on equal footing would, by Talagrand's inequality, amount to comparing $\sqrt{\KL}/\sqrt{N}$ and $W_2$ in which case $\sqrt{\KL}/\sqrt{N} \asymp N w + \sqrt{N} \min(\sig,\sig^2)$ which recovers the asymptotics there. 

\par Observe that standard usages of Girsanov's theorem are \emph{not} able to obtain this benefit of weak error.
Indeed, the analog of Girsanov's theorem in this context is to use the data-processing inequality (Proposition~\ref{prop:kl_prop}) to bound the KL divergence between the joint measures $\hat{\bs\mu}$ and $\bs\mu$, where $\hat{\bs \mu} \deq \law(\hat X_0,\hat X_1,\dotsc,\hat X_N)$ and $\bs\mu \deq \law(X_0, X_1,\dotsc,X_N)$, and then to apply the KL chain rule~\eqref{eq:chain-rule-kl}:
\begin{align*}
    \KL(\mu \hat P^N \mmid \mu P^N)
    \le \KL(\hat{\bs \mu} \mmid \bs \mu)
    &= \sum_{n=0}^{N-1} \E_{\hat X_n\sim \mu\hat P^n} \KL(\delta_{\hat X_n} \hat P \mmid \delta_{\hat X_n} P) \\
    &= N \KL(\cN(0,1) \mmid \cN(w, 1+\sigma^2))
    \asymp Nw^2 + N\min(\sigma^2,\sigma^4)\,.
\end{align*}

\par On the other hand, the proposed framework for KL local error analysis (Theorem~\ref{thm:kl-general}) does benefit from the weak error. To apply this result, use the same parameters for $\mc E_{\rm weak}(x) = w$, $\mc E_{\rm strong}(x) \asymp \min(\sig,\sig^2)$, $L=1$, $\gamma = 0$ as for standard local error analysis. For the regularity, take $c = 1$ since $\KL(\delta_x P \mmid \delta_y P) = \|x-y\|^2/2$. For the cross-regularity, it suffices to take $c' = 1$ and $b^2 = w^2 + (\sig^2 - \log(1 + \sig^2))/2 \asymp w^2 + \min(\sig^2,\sig^4)$ since by the identity for the KL divergence between Gaussians and then a crude bound $(x-y+w)^2 \leq 2\,(x-y)^2 + 2w^2$, 
\[
    \KL(\delta_x \hat P \mmid \delta_y P)
    = \KL(N(x+w, 1+\sig^2) \mmid \cN(y,1))
    = \frac{(x-y+w)^2 + \sig^2 - \log(1 + \sig^2)}{2}
    \leq {(x-y)}^2 + b^2\,.
\]
Applying Theorem~\ref{thm:kl-general} then yields
\begin{align*}
    \KL(\mu P^N \mmid  \mu \hat{P}^N)
    \lesssim
    Nw^2 + (\log N)\min(\sig^2,\sig^4)
\end{align*}
which asymptotically matches the tight bound~\eqref{eq:toy-example:KL-exact} up to the logarithmic term $\log N$.

\section{Warm-up: Langevin Monte Carlo (LMC)}\label{sec:sampling}

In this and subsequent sections, we apply our techniques to obtain KL divergence guarantees for sampling from a continuous distribution $\pi$ over $\R^d$.

We begin with a warm-up by applying our framework to perhaps the simplest sampling algorithm: Langevin Monte Carlo (LMC).
Although the results we obtain in this setting are not new, this example illustrates the main features of the KL local error framework, and the proofs provide a template for the applications in \S\ref{sec:lmc_smooth} and \S\ref{sec:rmd}.

In \S\ref{ssec:shifted_girsanov}, we show how to check the cross-regularity assumption of Theorem~\ref{thm:kl-general} via the application of Girsanov's theorem to an auxiliary, \emph{shifted} process.
This argument, which we term the \emph{shifted Girsanov} argument, is the continuous-time analog of Theorem~\ref{thm:kl-simple}; see the discussion in \S\ref{ssec:shifted_girsanov}.

\subsection{Results for LMC}

Let $\pi \propto \exp(-V)$ be a probability density over $\R^d$.
The Langevin diffusion (LD) corresponding to $\pi$ is the solution to the SDE
\begin{align}\label{eq:langevin}\tag{$\msf{LD}$}
    \D Y_t = -\nabla V(Y_t) \, \D t + \sqrt 2\,\D B_t\,,
\end{align}
where ${\{B_t\}}_{t\ge 0}$ is a standard Brownian motion.
For a fixed step size $h > 0$, let $P = P_h$ denote the Markov transition kernel corresponding to~\ref{eq:langevin}, run for time $h$.

The standard Euler{--}Maruyama discretization applied to~\ref{eq:langevin} yields the Langevin Monte Carlo (LMC) algorithm:
\begin{align}\label{eq:lmc}\tag{$\msf{LMC}$}
    \hat X_{(n+1)h}
    &= \hat X_{nh} - h\,\nabla V(\hat X_{nh}) + \sqrt 2\,(B_{(n+1)h} - B_{nh})\,, \qquad n=0,1,2,\dotsc\,.
\end{align}
Let $\hat P$ denote the Markov transition kernel corresponding to a single iteration of~\ref{eq:lmc}.
As an illustration and a warm-up, we establish iteration complexity guarantees for~\ref{eq:lmc} via our framework. We state these results next, and then contextualize them with existing results in \S\ref{sec:lmc_discussion} below. 
\par The first result is for the strongly log-concave ($\alpha > 0$) and weakly log-concave ($\alpha=0$) settings. In the strongly log-concave case, we make the following assumption on the initialization for simplicity.

\begin{remark}[Strongly log-concave initialization]\label{rmk:slc_init}
    In the strongly log-concave case $0 \prec \alpha I \preceq \nabla^2 V$, we make the standard assumption that the initialization satisfies $W_2(\hat\mu_0,\pi) \le \sqrt{d/\alpha}$.
    This is satisfied by initialization at the mode: $\hat\mu_0 = \delta_{x_\star}$, $x_\star = \argmin V$ (cf.\ the ``basic lemma'',~\cite[\S 4]{chewibook}), and the mode can be computed via standard convex optimization routines and does not dominate the cost of sampling.
    Alternatively, the initialization requirement can be met by using Theorem~\ref{thm:local_error}.
    More general initializations can be considered via straightforward modifications of the proof.
\end{remark}

\begin{theorem}[LMC]\label{thm:lmc}
    Let $\pi \propto \exp(-V)$, and let $\hat P$ denote the kernel for~\ref{eq:lmc}.
    We impose the following assumption:
    \begin{itemize}
        \item $V$ is twice-continuously differentiable and satisfies $0 \preceq \alpha I \preceq \nabla^2 V \preceq \beta I$ over $\R^d$.
        Write $\kappa \deq \beta/\alpha$.
    \end{itemize}
    Then, the following statements hold.
    \begin{enumerate}
        \item \underline{$\alpha > 0$ case:} If $\varepsilon \in [0,\sqrt d]$, $h\asymp \frac{\varepsilon^2}{\beta \kappa d}$, and $W_2(\hat\mu_0,\pi) \le \sqrt{d/\alpha}$, then $\KL(\hat\mu_0\hat P^N\mmid \pi) \le\varepsilon^2$ for all
        \begin{align}\label{eq:lmc_str_cvx}
            N \gtrsim \frac{\kappa^2 d}{\varepsilon^2} \log \frac{d}{\varepsilon^2}\,.
        \end{align}
        \item \underline{$\alpha = 0$ case:} If $\varepsilon > 0$ is sufficiently small and $h\asymp \frac{\varepsilon^4}{\beta^2 dW^2}$, then $\KL(\hat\mu_0\hat P^N \mmid \pi) \le \varepsilon^2$ for
        \begin{align*}
            N \asymp \frac{\beta^2 dW^4}{\varepsilon^6}\,,
        \end{align*}
        where $W \deq W_2(\hat\mu_0,\pi)$.
    \end{enumerate}
\end{theorem}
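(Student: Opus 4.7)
The plan is to apply the KL local error framework (Theorem~\ref{thm:kl-general}) with $P = P_h$ the Langevin diffusion kernel and $\hat P$ the one-step LMC kernel, and then to optimize the step size $h$. I need to verify the four $W_2$ local error assumptions, the regularity of $P$, and the cross-regularity of $\hat P$ against $P$, and finally control the iterate-averaged weak and strong errors.

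For the $W_2$ assumptions, I couple $\hat X = x - h\,\nabla V(x) + \sqrt 2\,B_h$, $X = x - \int_0^h \nabla V(Y_s)\,\D s + \sqrt 2\,B_h$ (with $Y_0 = x$), and $Y = y - \int_0^h \nabla V(Z_s)\,\D s + \sqrt 2\,B_h$ (with $Z_0 = y$) synchronously via the same Brownian motion. Under $\alpha I \preceq \nabla^2 V \preceq \beta I$, short-time Ito/Gr\"onwall calculus gives $L = e^{-\alpha h}$, $\gamma = O(\beta h)$, and, using $\|Y_s - x\|_{L^2} \lesssim s\,\|\nabla V(x)\| + \sqrt{sd}$, the one-step estimates
\begin{align*}
    \mc E_{\rm weak}(x) \le \mc E_{\rm strong}(x) \lesssim \beta h^{3/2}\sqrt d + \beta h^2\,\|\nabla V(x)\|\,.
\end{align*}
The regularity $\KL(\delta_x P \mmid \delta_y P) \lesssim \|x-y\|^2/h$, giving $c \asymp 1/h$, is a classical reverse transport inequality for the Langevin semigroup. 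The only non-standard hypothesis is the cross-regularity, which is the content of the upcoming Lemma~\ref{lem:cross_reg_lmc} (proved in \S\ref{ssec:shifted_girsanov} via a shifted Girsanov argument); the additive term $b(x)^2$ is subdominant since it only enters through the last iteration (Remark~\ref{rmk:last_step_hack}).

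For the SLC case, the $W_2$-contraction of LD together with a one-step second-moment estimate for $\hat P$ propagates the initialization hypothesis $W_2^2(\hat\mu_0,\pi) \le d/\alpha$ to all iterates, yielding $\sup_n \mathbb{E}_{\hat\mu_n}\|\nabla V\|^2 \lesssim \beta\kappa d$ and hence $\bar{\mc E}_{\rm weak}^2 \asymp \bar{\mc E}_{\rm strong}^2 \lesssim \beta^2 h^3 d$. With $\bar N \asymp 1/(\alpha h)$ and $c \asymp 1/h$, the three terms of Theorem~\ref{thm:kl-general} evaluate, up to logarithms, to
\begin{align*}
    \alpha\,e^{-\alpha Nh}\,W_2^2(\hat\mu_0,\pi) \,+\, \beta^2 h^2 d\,\log\tfrac{1}{\alpha h} \,+\, \beta\kappa h d\,,
\end{align*}
in which the weak-error term dominates and forces $h \lesssim \varepsilon^2/(\beta\kappa d)$; the initialization term then forces $N \gtrsim \kappa^2 d\,\log(d/\varepsilon^2)/\varepsilon^2$, as claimed in~\eqref{eq:lmc_str_cvx}.

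In the WLC case $\alpha = 0$, I take $L = 1$ in Theorem~\ref{thm:kl-general}, so that $\bar N = N$ and $(L^{-1}-1)/(L^{-N}-1)$ is interpreted as $1/N$. The main obstacle here is that, without contraction, I can no longer use stationarity to bound $\mathbb{E}\|\nabla V\|^2$ along iterates. Instead I use a one-sided stability estimate based on convexity of $V$ to obtain $\sup_n \mathbb{E}_{\hat\mu_n}\|\nabla V\|^2 \lesssim \beta^2\,(d+W^2)$, so that $\bar{\mc E}_{\rm weak}^2 \lesssim \beta^2 h^3 d$ in the relevant regime. The dominant contributions to Theorem~\ref{thm:kl-general} become $W^2/(Nh) + N\beta^2 h^2 d$, which remain convergent as $N \to \infty$ unlike the $W_2$-only bound; balancing them with $h \asymp \varepsilon^4/(\beta^2 dW^2)$ and $N \asymp \beta^2 dW^4/\varepsilon^6$ yields the claimed rate.
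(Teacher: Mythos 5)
Your plan is essentially the paper's: compute the local errors for LMC (Lemma~\ref{lem:lmc_strong_err}), the regularity and coupling parameters for the continuous-time diffusion (Lemma~\ref{lem:ld_properties}), the cross-regularity via shifted Girsanov (Lemma~\ref{lem:cross_reg_lmc}), feed these into Theorem~\ref{thm:kl-general}, and tune $h$ and $N$. You also correctly identify that the $b$ and $c'$ terms are benign. Where you are loosest, however, is exactly where the technical content of the proof lies: controlling $G_n^2 \deq \max_{k < n}\E_{\hat\mu_0\hat P^k}\|\nabla V\|^2$.

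In the SLC case you say the ``$W_2$-contraction of LD together with a one-step second-moment estimate for $\hat P$'' propagates the initialization; unrolled, this gives $W_2(\hat\mu_n,\pi)\lesssim e^{-\alpha nh}W + \bar{\mc E}_{\rm strong}/(\alpha h)$, but $\bar{\mc E}_{\rm strong}$ itself involves $G_n$, so the estimate is circular. The paper closes this loop via Lemma~\ref{lem:recursive_grad}: the dependence is self-consistent because the $G_n$ contribution to the $W_2$ bound enters with a prefactor (roughly $\kappa^2 h^2$) which is $\ll 1$ once $h\lesssim 1/(\beta\kappa)$, and so can be absorbed.

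The WLC case is the genuine gap. Your ``one-sided stability estimate based on convexity of $V$''---nonexpansiveness of $x\mapsto x-h\nabla V(x)$---controls $\E\|\hat X_n - x_\star\|^2$ in terms of $\E\|\hat X_0 - x_\star\|^2$, and the latter is \emph{not} bounded by $W^2$ plus anything finite, since $\int\|x-x_\star\|^2\,\D\pi$ can be infinite when $\alpha = 0$. The claimed bound $\sup_n\E_{\hat\mu_n}\|\nabla V\|^2 \lesssim \beta^2(d+W^2)$ therefore does not follow from this route. The paper instead bootstraps through Lemma~\ref{lem:recursive_grad} using Lemma~\ref{lem:grad_bd} (whose $\E_\pi\|\nabla V\|^2\le\beta d$ input avoids second moments of $\pi$), in two phases: a $W_2$ phase for $n\le n_0\asymp 1/(\beta h)$, where the quadratic-in-$n$ growth of the $W_2$ local error bound is still tolerable, and a KL/Donsker--Varadhan phase for $n\ge n_0$ where the KL bound from Theorem~\ref{thm:kl-general} is used to close the recursion. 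Without some such two-phase argument, the gradient control in the WLC case does not go through.
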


Next, we show that our framework can also analyze LMC under the assumption that $\pi$ satisfies a log-Sobolev inequality (LSI), thereby allowing for non-convexity of the potential $V$.
We recall that $\pi$ satisfies LSI with constant $1/\alpha$ if for all compactly supported, non-negative, and smooth functions $f : \R^d\to\R$, it holds that
\begin{align}\label{eq:lsi}\tag{$\msf{LSI}$}
    \ent_\pi(f) \deq \E_\pi\bigl[f\log\frac{f}{\E_\pi f}\bigr]\le \frac{1}{2\alpha}\, \E_\pi[\norm{\nabla f}^2]\,.
\end{align}
The LSI is equivalent to exponential decay of the KL divergence for the continuous-time Langevin diffusion~\eqref{eq:langevin}, and---as was first shown in~\cite{VempalaW19}---leads to guarantees in discrete time as well.
We also recall that if $\pi \propto \exp(-V)$ with $\nabla^2 V \succeq \alpha I$, then $\pi$ satisfies an LSI with constant $1/\alpha$, but there are many examples of non-convex potentials $V$ for which $\pi$ still satisfies an LSI\@. Thus, the LSI assumption is more general than the strongly log-concave case of Theorem~\ref{thm:lmc}.
We refer to~\cite[\S 5]{bakry2014analysis} for a more thorough exposition.

\begin{remark}[LSI initialization]\label{rmk:lsi_init}
    In the LSI setting, we assume that the initialization satisfies $\log \chi^2(\hat\mu_0 \mmid \pi) = \widetilde O(d)$.
    For example, when $\pi$ is in fact strongly log-concave, then the initialization $\hat\mu_0 = \mc N(x_\star, \beta^{-1} I)$ satisfies $\log \chi^2(\hat\mu_0\mmid \pi) \le \frac{d}{2}\log \kappa$.
    In general, the assumption on the initialization is fairly mild (cf.~\cite[\S A]{Che+24LMC} for further discussion) and we make it for simplicity of the resulting bound.
    Results avoiding this assumption can extracted from the proof.
\end{remark}

\begin{theorem}[LMC under LSI]\label{thm:lmc_lsi}
    Let $\pi \propto \exp(-V)$, and let $\hat P$ denote the kernel for~\ref{eq:lmc}.
    We impose the following assumptions:
    \begin{itemize}
        \item $\pi$ satisfies~\eqref{eq:lsi} with constant $1/\alpha$.
        \item $V$ is twice-continuously differentiable and satisfies $-\beta I \preceq \nabla^2 V \preceq \beta I$ over $\R^d$.
        Write $\kappa \deq \beta/\alpha \ge 1$. 
        \item The initialization satisfies $\log \chi^2(\hat\mu_0 \mmid \pi) = \widetilde O(d)$.
    \end{itemize}
    If $\varepsilon \in [0,\sqrt d]$ and $h = \widetilde\Theta(\frac{\varepsilon^2}{\beta \kappa d})$, then $\KL(\hat\mu_0\hat P^N \mmid \pi) \le \varepsilon^2$ for
    \begin{align*}
        N = \widetilde\Theta\Bigl(\frac{\kappa^2 d}{\varepsilon^2}\Bigr)\,.
    \end{align*}
\end{theorem}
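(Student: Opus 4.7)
The strategy is to combine the discretization bounds furnished by Theorem~\ref{thm:kl-general} with the KL contraction of the continuous-time Langevin semigroup under LSI\@. Since $V$ is only smooth (not convex), the $W_2$-Lipschitz constant of $P$ is $L = e^{\beta h} > 1$, so standard $W_2$-based local error analysis would blow up as $L^{\Theta(N)}$. However, the error coefficients in Theorem~\ref{thm:kl-general} grow only linearly in $N$, since $\frac{L^{-1}-1}{L^{-N}-1}$ remains $O(L-1) = O(\beta h)$ even when $L > 1$, which is precisely what makes our framework applicable in this non-log-concave regime.

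First I would verify the hypotheses of Theorem~\ref{thm:kl-general} for $(\hat P, P)$. A short-time It\^o expansion under $-\beta I \preceq \nabla^2 V \preceq \beta I$ produces a strong error of the form $\mc E_{\rm strong}(x)^2 \lesssim \beta^2 h^3 d + \beta^2 h^4\,\norm{\nabla V(x)}^2$ and a weak error a factor of $h$ smaller, together with $L = e^{\beta h}$ and synchronous coupling parameter $\gamma = O(\beta h)$. Regularity $\KL(\delta_x P \mmid \delta_y P) \lesssim \norm{x-y}^2/h$ is the standard reverse transport inequality for the Langevin semigroup, and cross-regularity is furnished by Lemma~\ref{lem:cross_reg_lmc} via the shifted Girsanov argument of \S\ref{ssec:shifted_girsanov}. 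To turn these pointwise bounds into usable uniform-in-$n$ bounds on $\bar{\mc E}_{\rm weak}$ and $\bar{\mc E}_{\rm strong}$, I would establish the standard moment bound $\E_{\hat\mu_n}[\norm{\nabla V}^2] \lesssim \beta d$ via a one-step energy dissipation argument.

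The main conceptual difficulty is that plugging $\mu = \hat\mu_0$, $\nu = \pi$ directly into Theorem~\ref{thm:kl-general} leaves a residual term of order $\beta h\, W_2^2(\hat\mu_0,\pi) \asymp \kappa hd$, which does not decay with $N$. My plan is therefore to partition the $N$ iterations into $M$ chunks of length $K$, choosing $Kh \asymp 1/\alpha$ so that the continuous-time Langevin essentially mixes over one block (and $L^K = O(1)$). Inside the $k$-th chunk, I apply Theorem~\ref{thm:kl-general} with $\mu = \nu = \hat\mu_{kK}$, which kills the $W_2$ term and leaves only a discretization error controlling $\KL(\hat\mu_{(k+1)K} \mmid \hat\mu_{kK} P^K)$; meanwhile LSI yields $\KL(\hat\mu_{kK} P^K \mmid \pi) \le e^{-2\alpha Kh}\,\KL(\hat\mu_{kK} \mmid \pi)$, with $K$ tuned so that the contraction factor is (say) $1/2$.

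The main obstacle is that KL does not satisfy a triangle inequality, so combining the per-chunk discretization bound with the LSI contraction into a bound on $\KL(\hat\mu_{(k+1)K} \mmid \pi)$ requires extra care. My resolution is to work in R\'enyi divergence $\mc D_q$ for some fixed $q > 1$, using the R\'enyi extension of the framework outlined in \S\ref{app:renyi}, and to invoke the weak triangle inequality for R\'enyi divergences, which costs only constant factors. This yields a block recursion of the form $R_{k+1} \le \tfrac{1}{2}\, R_k + C\,\mc E_{\rm disc}^2$, which unrolls to $R_M \lesssim \mc E_{\rm disc}^2 + 2^{-M}\, R_0$. Absorbing the initial condition $R_0 = \widetilde O(d)$ via an $\widetilde O(\log)$-round burn-in and balancing $h \asymp \varepsilon^2/(\beta\kappa d)$ against the per-chunk discretization error then yields the advertised complexity $N = \widetilde\Theta(\kappa^2 d/\varepsilon^2)$.
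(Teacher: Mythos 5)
Your identification of the driving insight---that the error coefficients in Theorem~\ref{thm:kl-general} scale as $\frac{L^{-1}-1}{L^{-N}-1} \to \frac{L-1}{L}$, so the discretization term grows only linearly in $N$ even for $L = e^{\beta h} > 1$---is exactly right, as is the realization that one should apply the framework with the \emph{same} initialization on both sides. But the chunking step you add on top of this opens a genuine gap. To turn the per-chunk bound $\KL(\hat\mu_{(k+1)K} \mmid \hat\mu_{kK} P^K)$ and the LSI contraction of $\hat\mu_{kK} P^K$ towards $\pi$ into a one-step recursion, you must invoke a (weak) triangle inequality inside each chunk, i.e., $M = \widetilde O(\log)$ times. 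Each application of the R\'enyi weak triangle inequality (Proposition~\ref{prop:renyi_prop}) strictly inflates the orders appearing on the right-hand side: $q \mapsto q/\lambda$ and $q \mapsto (q-\lambda)/(1-\lambda)$, both $> q$ for $\lambda \in (0,1)$. You therefore cannot carry a \emph{fixed} $q > 1$ through $M$ rounds; the orders escalate, and you would need the per-chunk discretization bound to hold at correspondingly higher R\'enyi orders. But the only R\'enyi version of the framework (Theorem~\ref{thm:main-renyi}) replaces $W_2$ by $W_\infty$, and---as the paper itself notes in \S\ref{app:renyi-result}---this is too stringent for LMC vs.\ Langevin, since $\delta_x\hat P$ and $\delta_y P$ have different covariance structures and hence infinite $W_\infty$ distance. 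So ``costs only constant factors'' elides the real obstruction, and the proposed block recursion $R_{k+1} \le \tfrac12 R_k + C\,\mc E_{\rm disc}^2$ does not close.

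The paper avoids all of this by applying the weak triangle inequality \emph{exactly once}, over the full horizon rather than per chunk. Concretely: Theorem~\ref{thm:kl-general} with $\mu = \nu = \hat\mu_0$ (same initialization, so the $W_2$ term vanishes) and $L = e^{\beta h}$ over all $N$ iterations yields $\KL(\hat\mu_0\hat P^n \mmid \hat\mu_0 P^n) \lesssim n\,(\beta^2 h^3 G_n^2 + \beta^2 d h^2)$; Theorem~\ref{thm:langevin_renyi_conv} gives $\log(1+\chi^2(\hat\mu_0 P^n\mmid\pi)) \le e^{-\alpha nh}\,\widetilde O(d)$; and Corollary~\ref{kl:weak-triangle} combines the two into $\KL(\hat\mu_0\hat P^n \mmid\pi) \le 2\KL(\hat\mu_0\hat P^n\mmid\hat\mu_0 P^n) + \log(1+\chi^2(\hat\mu_0 P^n\mmid\pi))$, with the gradient moment $G_N^2 = \widetilde O(\beta d)$ controlled via Lemma~\ref{lem:recursive_grad}. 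Choosing $Nh = \widetilde\Theta(1/\alpha)$ handles the mixing term and $h = \widetilde\Theta(\varepsilon^2/(\beta\kappa d))$ controls the discretization term, giving $N = \widetilde\Theta(\kappa^2 d/\varepsilon^2)$ directly, with no chunking and no R\'enyi-order escalation.
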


\subsection{Discussion}\label{sec:lmc_discussion}

As discussed earlier, our goal in Theorems~\ref{thm:lmc} and~\ref{thm:lmc_lsi} is simply to illustrate the KL local error framework on the most basic sampling algorithm before tackling more challenging settings in later sections (where we provide new results), but we pause to give a few comparisons with the literature.
There are three main techniques for controlling the discretization error in KL divergence.
\begin{itemize}
    \item The \textbf{Girsanov method} (see, e.g.,~\cite[\S 4]{chewibook}) directly controls the KL divergence between the discretized and idealized processes using Girsanov's theorem from stochastic calculus.
    As we show in \S\ref{ssec:shifted_girsanov}, the KL local error framework is closely related to the application of Girsanov's theorem to a carefully chosen \emph{auxiliary} process, see \S\ref{sec:intro} for a full discussion.

    \item The \textbf{interpolation method} of~\cite{VempalaW19} is based on writing a differential inequality for the KL divergence along a continuous-time interpolation of LMC\@. It also applies to the setting of \S\ref{sec:lmc_smooth}, see the discussion therein.
    However, at present, it is not known how to generally incorporate weak error into this method.
    \item The remarkable \textbf{proof via convex optimization} of~\cite{durmus2019analysis} makes deep use of the interpretation of the Langevin diffusion as a Wasserstein gradient flow~\cite{jordan1998variational}, leading to the state-of-the-art guarantees for LMC\@.
    However, this proof technique is tailored for situations admitting such a gradient flow structure and has thus far only been applied to LMC and to mirror LMC~\cite{ahn2021efficient}. In particular, it remains open if this analysis technique can be used for more general algorithms and/or in more general settings. 
\end{itemize}
We also note that the first two methods listed above typically require the initial KL divergence, $\KL(\hat \mu_0\mmid \pi)$ to be finite, prohibiting initialization at a point mass, whereas the KL local error framework exploits ``$W_2$-to-$\KL$'' regularization and hence can yield non-vacuous bounds for any initialization $\hat\mu_0$ with $W_2(\hat\mu_0,\pi) < \infty$, see Theorem~\ref{thm:lmc}.\footnote{To simplify the proofs, we do not make use of this regularization in the LSI setting.}

In the \emph{specific} context of LMC, these three methods are applicable, permitting comparison with our results.
The Girsanov approach to LMC has been fleshed out in~\cite{Che+24LMC} and can reach Theorem~\ref{thm:lmc} (strongly log-concave case), albeit with the caveats that it requires finite KL at initialization and has diverging bounds as $N \to \infty$, as well as Theorem~\ref{thm:lmc_lsi}.
We remark that~\cite{Che+24LMC} considered weaker assumptions as well, such as a Poincar\'e inequality, and such cases could also be covered via the KL local error framework although we refrain from doing so in the interest of brevity.
The interpolation method was applied to LMC in~\cite{VempalaW19, Che+24LMC}, and yields a slightly stronger version of Theorem~\ref{thm:lmc_lsi} (namely, with a bound that does not diverge as $N\to\infty$). The approach of~\cite{durmus2019analysis} yields the sharpest bounds for LMC\@: $\widetilde O(\kappa d/\varepsilon^2)$ in the strongly log-concave case and $O(\beta dW^2/\varepsilon^4)$ in the weakly log-concave case (but is inapplicable in the LSI case). These are substantially smaller than the rates obtained in Theorem~\ref{thm:lmc}.
Interestingly, however, for the weakly log-concave case, the result of~\cite{durmus2019analysis} requires averaging the iterates.
To the best of our knowledge, Theorem~\ref{thm:lmc} is actually the first result in this setting that holds for the last iterate.

\subsection{Proof of Theorem~\ref{thm:lmc}}

Theorem~\ref{thm:lmc} follows from verifying the assumptions of Theorem~\ref{thm:kl-general}.
We begin with properties of the continuous-time diffusion.

\begin{lemma}[Properties of the Langevin diffusion]\label{lem:ld_properties}
    Let $P$ denote the kernel corresponding to~\ref{eq:langevin} run for time $h$.
    Assume that $\alpha I \preceq \nabla^2 V \preceq \beta I$.
    Then, for all $x,y\in\R^d$, there is a coupling $X_h \sim \delta_x P$, $Y_h \sim\delta_y P$ such that the following hold.
    \begin{enumerate}
        \item \underline{$W_2$-Lipschitz.} $\norm{X_h-Y_h}_{L^2} \le \exp(-\alpha h)\,\norm{x-y}$.
      \item \underline{Coupling.} $\norm{X_h-Y_h-(x-y)}_{L^2} \le \frac{\beta\, (1 - \exp(-\alpha h))}{\alpha} \,\norm{x-y}$.
      \item \underline{Regularity.} $\KL(\delta_x P \mmid \delta_y P) \le \frac{\alpha}{2\,(\exp(2\alpha h) - 1)} \, \norm{x-y}^2$.
    \end{enumerate}
\end{lemma}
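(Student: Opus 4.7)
My plan for parts (1) and (2) is to use the \emph{synchronous coupling}: drive both Langevin SDEs by the same Brownian motion $B$, so that $X_0 = x$, $Y_0 = y$ both satisfy $\D X_t = -\nabla V(X_t)\,\D t + \sqrt{2}\,\D B_t$ (resp.\ $\D Y_t$). The difference $Z_t \deq X_t - Y_t$ then evolves by the \emph{deterministic} ODE $\dot Z_t = -(\nabla V(X_t) - \nabla V(Y_t))$, so the Brownian term drops out. For (1), compute $\frac{\D}{\D t}\|Z_t\|^2 = -2\,\langle Z_t,\nabla V(X_t) - \nabla V(Y_t)\rangle \le -2\alpha\,\|Z_t\|^2$ using $\nabla^2 V \succeq \alpha I$, and Gr\"onwall gives the claimed $\exp(-\alpha h)$ contraction. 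For (2), integrate: $\|Z_h - Z_0\| \le \int_0^h \|\nabla V(X_t) - \nabla V(Y_t)\|\,\D t \le \beta \int_0^h \|Z_t\|\,\D t \le \beta\,\|x-y\| \int_0^h \exp(-\alpha t)\,\D t$, using $\nabla^2 V \preceq \beta I$ and (1).

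Part (3) is a classical reverse-transport (log-Harnack) inequality for the Langevin semigroup under strong convexity. My plan is to prove it via a shifted Girsanov argument, which is the continuous-time analog of the auxiliary-process construction of \S\ref{ssec:kl-simple} and is elaborated in \S\ref{ssec:shifted_girsanov}. Let $X$ be Langevin from $x$, and define the shifted process $\tilde X_s \deq X_s + \psi(s)$ for an absolutely continuous path $\psi : [0,h] \to \R^d$ with $\psi(0) = y - x$ and $\psi(h) = 0$, so that $\tilde X_0 = y$ and $\tilde X_h = X_h$. Then $\tilde X$ satisfies a Langevin SDE from $y$ perturbed by the extra drift $\phi_s = \nabla V(\tilde X_s) - \nabla V(X_s) + \dot\psi(s)$, and Girsanov's theorem combined with the data processing inequality gives $\KL(\delta_x P \mmid \delta_y P) \le \tfrac{1}{4}\,\E \int_0^h \|\phi_s\|^2\,\D s$.

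It remains to optimize over $\psi$. For the prototype quadratic potential $V(x) = \tfrac{\alpha}{2}\|x\|^2$, the drift simplifies to $\phi_s = \dot\psi + \alpha\psi$, which is deterministic. Changing variables via $\chi(s) \deq \e^{\alpha s}\psi(s)$ transforms the boundary constraint into $\int_0^h \dot\chi\,\D s = -(y-x)$ and the objective into $\int_0^h \e^{-2\alpha s}\|\dot\chi\|^2\,\D s$, and Cauchy--Schwarz paired against $\e^{\alpha s}$ gives the lower bound $\tfrac{2\alpha\|x-y\|^2}{\e^{2\alpha h}-1}$, attained by $\psi(s) = (y-x)\,\sinh(\alpha(h-s))/\sinh(\alpha h)$. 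Together with the factor $\tfrac{1}{4}$ from Girsanov, this matches the claim exactly. The main obstacle is extending the calculation to general strongly convex $V$, where $\phi_s = A_s \psi + \dot\psi$ involves the random integrated Hessian $A_s \in [\alpha I,\beta I]$, and a deterministic shift would introduce a $(\beta - \alpha)$-dependent correction. This is circumvented by appealing to Wang's Harnack-type inequality---originally proved via the adapted parallel-transport coupling of Arnaudon--Thalmaier--Wang and recovered cleanly via the shifted composition rule in \cite{scr1}---which yields the stated bound depending only on $\alpha$.
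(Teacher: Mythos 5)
Parts~(1) and~(2) are correct and coincide with the synchronous-coupling argument that the paper defers to (cf.~\cite[\S5]{chewibook}); the observation that $Z_t \deq X_t - Y_t$ satisfies the \emph{deterministic} ODE $\dot Z_t = -(\nabla V(X_t)-\nabla V(Y_t))$, so that the bounds hold pathwise, is exactly right. Your Gaussian calculation for part~(3) is also correct, including the $1/4$ coming from Girsanov.

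Where the proposal has a genuine gap is in the way you set up the general-$V$ case and then punt on it. You describe the deterministic additive shift $\tilde X_s = X_s + \psi(s)$ as ``the continuous-time analog of the auxiliary-process construction of \S\ref{ssec:kl-simple},'' but it is not: as spelled out in \S\ref{ssec:shifted_girsanov}, the auxiliary process uses a \emph{feedback} shift, $\D Y'_s = \{-\nabla V(Y'_s) + \eta_s\,(X_s - Y'_s)\}\,\D s + \sqrt 2\,\D B_s$, where the extra drift is state-dependent and points from $Y'_s$ toward $X_s$. This distinction is precisely what removes your $(\beta-\alpha)$-dependent correction. With the feedback shift, the Girsanov cost is $\frac14\int_0^h \eta_s^2\,\norm{X_s - Y'_s}^2\,\D s$, and (under the synchronous coupling, so that the noise cancels in $X_s - Y'_s$) the difference satisfies the pathwise inequality
\begin{align*}
\tfrac12\,\partial_s\norm{X_s - Y'_s}^2
= -\langle X_s - Y'_s,\ \nabla V(X_s) - \nabla V(Y'_s)\rangle - \eta_s\,\norm{X_s - Y'_s}^2
\le -(\alpha + \eta_s)\,\norm{X_s - Y'_s}^2\,,
\end{align*}
so that the Hessian enters only through an inner product controlled by strong convexity alone; there is no operator-norm term for the integrated Hessian and hence no $\beta$-dependence. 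This gives $\norm{X_s - Y'_s} \le e^{-\int_0^s(\alpha+\eta_r)\,\D r}\,\norm{x-y}$ for \emph{general} $\alpha$-strongly convex $V$, and optimizing $\eta$ is exactly the variational problem you already solved in the Gaussian case (substitute $G(s) \deq e^{-\int_0^s(\alpha+\eta_r)\,\D r}$ for $\psi(s)/(y-x)$; the objective becomes $\int_0^h(\dot G + \alpha G)^2\,\D s$ with $G(0)=1$, $G(h)=0$, whose minimum is $2\alpha/(e^{2\alpha h}-1)$). Multiplying by $1/4$ yields the claimed bound. So you had all the right ingredients and you cite the right references~\cite{ArnThaWan06HarnackCurvUnbdd,scr1}; the only correction is that the shift must be the feedback one, not the deterministic additive one, and with that change your Gaussian argument upgrades to a complete proof of~(3).
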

\begin{proof}
    The first two statements are standard, see, e.g.,~\cite[\S 5]{chewibook}.
    The last statement is a reverse transport inequality which dates back to~\cite{bobgenled2001hypercontractivity}; see also Part I~\cite{scr1} for a proof using the shifted composition rule.
\end{proof}

In particular, for the common setting of $\alpha \ge 0$ and $h \le 1/\beta$, we can take $L = \exp(-\alpha h)$, $\gamma \le \beta h$, and $c \le 1/(4h)$ in Theorem~\ref{thm:kl-general}.

The next lemma only considers the strong error for LMC\@.
Indeed, for this application, there is no gain from considering the weak error separately, because it cannot be shown to be substantially smaller than the strong error; hence, we only use the trivial bound $\mc E_{\rm weak} \le \mc E_{\rm strong}$.
Thus, for LMC, we do not require the full power of the local error framework in Theorem~\ref{thm:kl-general}. (In the following sections \S\ref{sec:lmc_smooth} and \S\ref{sec:rmd} we consider settings where the weak error is smaller than the strong error.)

\begin{lemma}[Strong error for LMC]\label{lem:lmc_strong_err}
    Let $P$, $\hat P$ denote the kernels corresponding to~\ref{eq:langevin} run for time $h$ and~\ref{eq:lmc}, respectively.
    Assume that $-\beta I \preceq \nabla^2 V \preceq \beta I$ and that $h \lesssim 1/\beta$ for a sufficiently small implied constant.
    Then, for all $x\in\R^d$, there is a coupling $\hat X_h \sim \delta_x \hat P$, $X_h \sim \delta_x P$ such that
    \begin{align*}
        \norm{\hat X_h - X_h}_{L^2}
        &\lesssim \beta h^2\,\norm{\nabla V(x)} + \beta d^{1/2} h^{3/2}\,.
    \end{align*}
\end{lemma}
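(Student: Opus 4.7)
The plan is to use the synchronous coupling in which \ref{eq:langevin} and \ref{eq:lmc} are driven by the same Brownian motion, starting from the same point $x$. Under this coupling,
\begin{align*}
    \hat X_h - X_h &= -h\,\nabla V(x) + \int_0^h \nabla V(X_s)\,\D s = \int_0^h \bigl(\nabla V(X_s) - \nabla V(x)\bigr)\,\D s\,,
\end{align*}
so by $\beta$-smoothness of $V$ and Minkowski's inequality for $L^2$-valued integrals,
\begin{align*}
    \norm{\hat X_h - X_h}_{L^2} \le \beta \int_0^h \norm{X_s - x}_{L^2}\,\D s\,.
\end{align*}

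The main step is therefore to bound $\norm{X_s - x}_{L^2}$ uniformly over $s \in [0,h]$. Writing the SDE in integral form as $X_s - x = -\int_0^s \nabla V(X_u)\,\D u + \sqrt 2\,B_s$, and using $\norm{\nabla V(X_u)} \le \norm{\nabla V(x)} + \beta\,\norm{X_u - x}$ together with $\norm{B_s}_{L^2} = \sqrt{ds}$, one obtains
\begin{align*}
    \norm{X_s - x}_{L^2} \le s\,\norm{\nabla V(x)} + \beta \int_0^s \norm{X_u - x}_{L^2}\,\D u + \sqrt{2ds}\,.
\end{align*}
Since $\beta h \lesssim 1$, Grönwall's inequality then yields $\norm{X_s - x}_{L^2} \lesssim s\,\norm{\nabla V(x)} + \sqrt{ds}$ for all $s \in [0,h]$. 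Substituting into the bound above and integrating term by term gives $\int_0^h \beta\,(s\,\norm{\nabla V(x)} + \sqrt{ds})\,\D s \asymp \beta h^2\,\norm{\nabla V(x)} + \beta d^{1/2} h^{3/2}$, which is the stated estimate.

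There is no significant obstacle here: the only subtlety is ensuring the Grönwall argument is set up cleanly in the $L^2$ norm (which is legitimate since $L^2(\Omega;\R^d)$ is a Banach space and Minkowski's integral inequality applies). The decomposition into a deterministic drift contribution of size $\beta h^2\,\norm{\nabla V(x)}$ and a Brownian fluctuation contribution of size $\beta d^{1/2} h^{3/2}$ is what will later feed into the local error framework, with the weak error (analyzed separately in subsequent applications) being bounded by the same quantity for LMC since there is no cancellation in the drift term to exploit.
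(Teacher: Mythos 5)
Your proof is correct and follows the standard synchronous-coupling-plus-Gr\"onwall argument that the paper itself refers to (the paper simply cites this as a standard computation from the textbook). The decomposition into a drift-bias term and a Brownian-fluctuation term, the $L^2$-Minkowski step, and the Gr\"onwall application under $\beta h \lesssim 1$ are all exactly as expected.
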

\begin{proof}
    This is also a standard computation, cf.~\cite[\S 4]{chewibook}.
\end{proof}

The only non-standard assumption to check is the cross-regularity. We defer the proof of the following result to \S\ref{ssec:shifted_girsanov}.

\begin{lemma}[Cross-regularity for LMC]\label{lem:cross_reg_lmc}
    Let $P$, $\hat P$ denote the kernels corresponding to~\ref{eq:langevin} run for time $h$ and~\ref{eq:lmc}, respectively.
    Assume that $-\beta I \preceq \nabla^2 V \preceq \beta I$ and that $h \lesssim 1/\beta$ for a sufficiently small implied constant.
    Then, for all $x,y\in\R^d$,
    \begin{align*}
        \KL(\delta_x \hat P \mmid \delta_y P)
        \lesssim \frac{\norm{x-y}^2}{h} + \beta^2 h^3 \,\norm{\nabla V(x)}^2 + \beta^2 dh^2\,.
    \end{align*}
\end{lemma}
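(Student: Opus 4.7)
The plan is to establish cross-regularity via a \emph{shifted Girsanov} argument, which is the continuous-time analog of the discrete-time shift construction used to prove Theorem~\ref{thm:kl-simple}. The direct obstacle to comparing $\delta_x \hat P$ and $\delta_y P$ by Girsanov is that the natural continuous-time interpolation of the LMC step starts at $x$ while the Langevin diffusion starts at $y$, so their path measures are mutually singular at time $0$. To remedy this, first consider the ordinary LMC interpolation $Z_t = x - t\,\nabla V(x) + \sqrt 2\,B_t$ on $[0,h]$, whose endpoint satisfies $\law(Z_h) = \delta_x\hat P$, and then introduce the shifted path $W_t \deq Z_t - \tfrac{h-t}{h}\,(x-y)$. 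By construction, $W_0 = y$ and $W_h = Z_h$, so $W$ shares the initial condition of $Y$ while still terminating at a sample from $\delta_x\hat P$.

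The shifted process satisfies $\D W_t = \bigl(\tfrac{x-y}{h} - \nabla V(x)\bigr)\,\D t + \sqrt 2\,\D B_t$, so $W$ and $Y$ are driven by the same Brownian motion and differ only in drift. Girsanov's theorem applied to these two path measures, followed by the data-processing inequality, gives
\begin{align*}
\KL(\delta_x \hat P \mmid \delta_y P) \le \frac{1}{4}\,\E \int_0^h \Bigl\| \frac{x-y}{h} - \nabla V(x) + \nabla V(Y_t) \Bigr\|^2 \, \D t\,.
\end{align*}

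From here the analysis is routine. Decomposing the integrand as $\tfrac{x-y}{h} + (\nabla V(Y_t) - \nabla V(x))$ and invoking $\beta$-smoothness of $V$ controls it by $\tfrac{2\,\|x-y\|^2}{h^2} + 4\beta^2\,\|x-y\|^2 + 4\beta^2\,\|Y_t - y\|^2$, with the middle piece absorbed into the first under $h \lesssim 1/\beta$. It then remains to bound $\int_0^h \E\|Y_t - y\|^2\,\D t$, for which a standard Grönwall-type argument on the Langevin SDE gives $\E\|Y_t - y\|^2 \lesssim t^2\,\|\nabla V(y)\|^2 + td$ for $t \lesssim 1/\beta$. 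Translating $\|\nabla V(y)\|$ to $\|\nabla V(x)\|$ via $\beta$-smoothness and integrating produces the three terms $\|x-y\|^2/h$, $\beta^2 h^3\,\|\nabla V(x)\|^2$, and $\beta^2 h^2 d$ claimed in the statement, with the spurious cross-term $\beta^4 h^3\,\|x-y\|^2$ again absorbed into $\|x-y\|^2/h$ since $\beta^4 h^3 \le \beta \le 1/h$ under $h \lesssim 1/\beta$.

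The main obstacle is not the Girsanov computation itself but the bookkeeping: the natural diffusion moment bound is expressed in terms of $\|\nabla V(y)\|$, whereas Theorem~\ref{thm:kl-general} requires the bias term to be in terms of $\|\nabla V(x)\|$ (since $b(x)$ is averaged against the $\hat P$ chain, not the $P$ chain). Executing this translation without generating cross-terms of the wrong order is exactly where the step-size constraint $h \lesssim 1/\beta$ enters crucially, and this is the step that must be handled with care.
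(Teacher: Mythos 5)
Your approach is genuinely different from the paper's and, once a technical error is fixed, arguably cleaner. The paper's shifted-Girsanov argument (\S\ref{ssec:shifted_girsanov}) constructs an auxiliary \emph{Langevin-type} SDE $Y_t'$ with drift $-\nabla V(Y_t') + \eta_t\,(\hat X_t - Y_t')$ and an adaptively-chosen shift $\eta_t$, then controls the distance $d_t = \|\hat X_t - Y_t'\|_{L^2}$ by a differential inequality (Gr\"onwall). You instead shift the LMC interpolation $Z_t = x - t\,\nabla V(x) + \sqrt 2\,B_t$ by a \emph{deterministic linear} shift to get $W_t = Z_t - \frac{h-t}{h}\,(x-y)$, which has constant drift and is therefore an explicit Gaussian process---no differential inequality is needed, and no (semi-)convexity of $V$ enters the argument. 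This is a legitimate simplification.

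However, your Girsanov formula has the drift discrepancy evaluated along the \emph{wrong} process. Girsanov's theorem compares two measures on the same path space; writing $\mb P_W \deq \law(W)$, $\mb P_Y \deq \law(Y)$, one has
\begin{align*}
\KL(\mb P_W \mmid \mb P_Y) = \frac{1}{4}\,\E^{\mb P_W}\!\int_0^h \Bigl\| \frac{x-y}{h} - \nabla V(x) + \nabla V(W_t) \Bigr\|^2 \,\D t\,,
\end{align*}
with the expectation and the drift argument both under $\mb P_W$, i.e.\ along $W_t$. Your formula has $\nabla V(Y_t)$ and (implicitly) expectation over $Y$, which would correspond to the \emph{reversed} divergence $\KL(\mb P_Y \mmid \mb P_W)$, not the quantity you need. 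The slip seems to come from thinking of $W$ and $Y$ as coupled processes ``driven by the same Brownian motion''; that synchronous coupling plays no role once you invoke Girsanov, which is a statement about one process under two measures.

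Fortuitously, the error is harmless here, and in fact correcting it streamlines the proof: since $W_t - x = \frac{h-t}{h}\,(y-x) - t\,\nabla V(x) + \sqrt 2\,B_t$ is explicitly Gaussian, you get directly $\E\|W_t - x\|^2 \lesssim \|x-y\|^2 + t^2\,\|\nabla V(x)\|^2 + td$ with no Gr\"onwall argument, and the bookkeeping you flagged about translating $\|\nabla V(y)\|$ to $\|\nabla V(x)\|$ evaporates. Multiplying by $\beta^2$, integrating, and absorbing $\beta^2 h\,\|x-y\|^2 \lesssim \|x-y\|^2/h$ under $h \lesssim 1/\beta$ gives the stated bound. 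You should rewrite the Girsanov step with $W_t$ in place of $Y_t$ and drop the spurious Gr\"onwall appeal.
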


Equipped with these bounds, we can now prove Theorem~\ref{thm:lmc}.

\begin{proof}[Proof of Theorem~\ref{thm:lmc}]
    We apply Theorem~\ref{thm:kl-general} with the following parameters: $L = \exp(-\alpha h)$, $\gamma = \beta h$, $b(x) = O(\beta h^{3/2}\,\norm{\nabla V(x)} + \beta d^{1/2} h)$, $c, c' = O(1/h)$, and
    \begin{align*}
        \mc E_{\rm weak}(x) \le \mc E_{\rm strong}(x) = O\bigl(\beta h^2\,\norm{\nabla V(x)} + \beta d^{1/2} h^{3/2}\bigr)\,.
    \end{align*}
    Hence, by Theorems~\ref{thm:local_error} and~\ref{thm:kl-general}, for $G_n^2 \deq \max_{k < n} \E_{\hat \mu_0\hat P^k}[\norm{\nabla V}^2]$,
    \begin{align*}
        W_2^2(\hat \mu_0\hat P^n, \pi)
        &\lesssim \exp(-\alpha nh)\, W_2^2(\hat\mu_0,\pi) + \bigl(n\wedge \frac{1}{\alpha h}\bigr)^2\,(\beta^2 h^4 G_n^2 + \beta^2 dh^3)\,,
    \end{align*}
    and
    \begin{align}\label{eq:lmc_kl_recursion}
        \KL(\hat\mu_0 \hat P^n \mmid \pi)
        &\lesssim \frac{\alpha W_2^2(\hat\mu_0,\pi)}{\exp(\alpha nh)-1} + \bigl(n \wedge \frac{1}{\alpha h}\bigr)\,(\beta^2 h^3 G_n^2 + \beta^2 dh^2)\,.
    \end{align}
    Intuitively, the $G_n$ terms do not dominate because they carry higher powers of $h$.
    In order to rigorously deal with these terms, we provide Lemma~\ref{lem:recursive_grad} in \S\ref{app:grad}.

    In the case $\alpha > 0$, we invoke Lemma~\ref{lem:recursive_grad} with $n_0 = \infty$, $\msf A^2 = O(\kappa^2 h^2)$, $\msf B^2 = O(W_2^2(\hat\mu_0,\pi) + \kappa^2 dh) = O(d/\alpha)$, provided $h \lesssim 1/(\beta \kappa)$.
    This yields $G_n^2\lesssim \beta \kappa d$ for all $n$.
    Substituting this into~\eqref{eq:lmc_kl_recursion} completes the proof in this case.

    In the case $\alpha = 0$, we invoke Lemma~\ref{lem:recursive_grad} with the total iteration count $N = \Theta(W^2/(\varepsilon^2 h))$, $n_0 = \Theta(1/(\beta h))$, $\msf A^2 = O(h^2)$, $\msf B^2 = O(W^2)$, $\msf C^2 = O(\beta^2 h^2 W^2/\varepsilon^2)$, $\msf D^2 = O(\beta W^2 + \beta^2 dhW^2/\varepsilon^2) = O(\beta W^2)$, provided that
    \begin{align*}
        h\lesssim \frac{W^2}{d} \wedge \frac{\varepsilon}{\beta^{3/2} W} \wedge \frac{\varepsilon^2}{\beta d}\,,
    \end{align*}
    where $W \deq W_2(\hat\mu_0,\pi)$.
    This yields $G_N^2\lesssim \beta d + \beta^2 W^2$.
    The proof is completed by substituting the gradient bound into~\eqref{eq:lmc_kl_recursion} again.
\end{proof}

\subsection{Proof of Theorem~\ref{thm:lmc_lsi}}

Next, we show how to adapt the analysis to remove the assumption of (strong) log-concavity. The idea is simply that as long as the potential $V$ is \emph{$\beta$-smooth}, then regardless of (strong) convexity of $V$, the assumptions of Theorem~\ref{thm:kl-general} are still met with $L = \exp(\beta h) > 1$ (simply apply the first item of Lemma~\ref{lem:ld_properties} with $\alpha = -\beta$). 
Despite the fact that $L > 1$, the bound of Theorem~\ref{thm:kl-general} does \emph{not} grow exponentially with the number of iterations and hence produces a fairly good discretization bound, which can be combined with the convergence of the continuous-time Langevin diffusion via the LSI\@. This is in contrast with the exponential growth of standard local error analyses in $W_2$ (cf., Theorem~\ref{thm:local_error}). We begin by recalling the convergence of the Langevin diffusion under LSI\@.

\begin{theorem}[{\cite[Theorem 3]{VempalaW19}}]\label{thm:langevin_renyi_conv}
    Assume that $\pi$ satisfies~\eqref{eq:lsi} with constant $1/\alpha$.
    Then, the law ${(\nu_t)}_{t\ge 0}$ of the Langevin diffusion~\eqref{eq:langevin} with stationary distribution $\pi$ satisfies, for all $t\ge 0$,
    \begin{align*}
        \log(1+\chi^2(\nu_t \mmid \pi)) \le \exp(-\alpha t)\log(1+\chi^2(\nu_0 \mmid \pi))\,.
    \end{align*}
\end{theorem}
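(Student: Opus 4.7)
The plan is to derive the differential inequality $\partial_t \Phi(t) \le -\alpha\,\Phi(t)$ for $\Phi(t) \deq \log(1+\chi^2(\nu_t\mmid\pi))$ and then apply Gronwall. Write $f_t \deq \D\nu_t/\D\pi$ and $F(t) \deq \E_\pi[f_t^2] = 1+\chi^2(\nu_t\mmid\pi)$, so that $\Phi = \log F$; note $F\ge 1$ by Cauchy{--}Schwarz, so $\Phi \ge 0$. The Fokker{--}Planck equation for~\eqref{eq:langevin} gives $\partial_t f_t = \mathcal L f_t$ with $\mathcal L f \deq \Delta f - \langle \nabla V, \nabla f\rangle$, and integration by parts against the weight $\pi$ yields the dissipation identity
\begin{align*}
    F'(t) = 2\,\E_\pi[f_t\,\mathcal L f_t] = -2\,\E_\pi[\norm{\nabla f_t}^2]\,.
\end{align*}

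Next, I would apply~\eqref{eq:lsi} in the Dirichlet-form guise $\ent_\pi(f_t^2) \le \frac{2}{\alpha}\,\E_\pi[\norm{\nabla f_t}^2]$, which arises by plugging the probability density $f_t^2/F(t)$ into the stated LSI. This yields
\begin{align*}
    \E_\pi[f_t^2\log f_t^2] - F(t)\log F(t) \le \frac{2}{\alpha}\,\E_\pi[\norm{\nabla f_t}^2]\,,
\end{align*}
and combining with the dissipation identity and dividing by $F(t)$ gives
\begin{align*}
    \Phi'(t)\le -\alpha\,\Bigl(\frac{\E_\pi[f_t^2\log f_t^2]}{F(t)}-\log F(t)\Bigr)\,.
\end{align*}

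The key step, and the main obstacle, is the Jensen-type inequality
\begin{align*}
    \E_\pi[f_t^2\log f_t^2]\ge 2\,F(t)\log F(t)\,,
\end{align*}
which cannot be obtained by applying Jensen to $\pi$ directly (that would only yield a factor of $1$). My idea is to pass to the tilted probability measure $\D\mu \deq f_t\,\D\pi$, which is indeed a probability since $\E_\pi[f_t] = 1$. Then $\E_\pi[f_t^2\log f_t] = \E_\mu[f_t\log f_t]$ and $F(t) = \E_\mu[f_t]$, so Jensen applied to the convex function $x\mapsto x\log x$ under $\mu$ gives $\E_\mu[f_t\log f_t]\ge \E_\mu[f_t]\log\E_\mu[f_t] = F(t)\log F(t)$; doubling produces the claim.

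Substituting into the previous bound yields $\Phi'(t)\le -\alpha\,\Phi(t)$, and Gronwall's lemma applied to $\Phi\ge 0$ delivers $\Phi(t)\le e^{-\alpha t}\,\Phi(0)$, which is precisely the stated bound.
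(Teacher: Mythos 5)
Your proof is correct, and it is essentially a self-contained specialization (to $q=2$) of the argument in the cited reference Vempala--Wibisono; the paper itself does not reprove this statement, it simply cites it. Your derivation of $F'(t)=-2\,\E_\pi[\norm{\nabla f_t}^2]$ from $\partial_t f_t=\mathcal L f_t$ and integration by parts is standard, and the tilted-measure argument for $\E_\pi[f_t^2\log f_t^2]\ge 2\,F(t)\log F(t)$ is exactly the $q=2$ case of the Jensen-type lemma that Vempala--Wibisono prove for general R\'enyi order $q$; combining with the LSI and Gr\"onwall is then immediate, and you correctly note $F\ge 1$ so $\Phi\ge 0$.

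One small caveat on the LSI step: as literally printed in~\eqref{eq:lsi}, the right-hand side reads $\frac{1}{2\alpha}\,\E_\pi[\norm{\nabla f}^2]$ rather than the Fisher-information form $\frac{1}{2\alpha}\,\E_\pi[\norm{\nabla f}^2/f]$; with the former, substituting $f_t^2/F(t)$ does \emph{not} yield the Dirichlet-form guise $\ent_\pi(f_t^2)\le \frac{2}{\alpha}\,\E_\pi[\norm{\nabla f_t}^2]$ that you use, since $\norm{\nabla(f_t^2)}^2 = 4f_t^2\,\norm{\nabla f_t}^2$. The printed form is a typo (it is not the standard LSI and is not what the cited references use), and you read it as intended, but it is worth being explicit that the substitution step relies on the Fisher-information normalization.
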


\begin{proof}[Proof of Theorem~\ref{thm:lmc_lsi}]
    We apply Theorem~\ref{thm:kl-general}, now with $L = \exp(\beta h)$ and $\gamma = O(\beta h)$.
    It yields
    \begin{align*}
        \KL(\hat\mu_0 \hat P^n \mmid \hat \mu_0 P^n)
        &\lesssim n\,(\beta^2 h^3 G_n^2 + \beta^2 dh^2)\,.
    \end{align*}
    By the R\'enyi weak triangle inequality (Corollary~\ref{kl:weak-triangle}) and Theorem~\ref{thm:langevin_renyi_conv},
    \begin{align*}
        \KL(\hat\mu_0\hat P^n \mmid \pi)
        &\le \log(1+\chi^2(\hat\mu_0 P^n \mmid \pi)) + 2\KL(\hat\mu_0\hat P^n \mmid \hat \mu_0 P^n) \\
        &\lesssim \exp(-\alpha nh)\,\widetilde O(d) + n\,(\beta^2 h^3 G_n^2 + \beta^2 dh^2)\,.
    \end{align*}
    We take $Nh = \widetilde \Theta(1/\alpha)$ and for $n \le N$, we apply Lemma~\ref{lem:recursive_grad} with $n_0 = 0$, $\msf C^2 = \widetilde O(\beta \kappa h^2)$, $\msf D^2 = \widetilde O(d + \beta \kappa dh) = \widetilde O(d)$, provided $h \le \widetilde O(1/(\beta \kappa))$.
    It yields $G_N^2 = \widetilde O(\beta d)$, and substituting this into the inequality above completes the proof.
\end{proof}

\subsection{Verifying cross-regularity via shifted Girsanov}\label{ssec:shifted_girsanov}

We now prove the cross-regularity lemma for LMC (Lemma~\ref{lem:cross_reg_lmc}).
It is not straightforward to bound $\KL(\delta_x \hat P \mmid \delta_y P)$, since we do not have a closed-form expression for the Langevin kernel $P$.
Note also that we cannot directly apply Girsanov's theorem to the (interpolated) LMC algorithm and the diffusion, since they are singular with respect to each other (they start at $\delta_x$, $\delta_y$ respectively), thereby resulting in a vacuous bound.
Hence, we instead apply Girsanov's theorem to an \emph{auxiliary} process which hits the LMC algorithm at time $h$.
The idea of using Girsanov's theorem to an auxiliary process goes back to~\cite{ArnThaWan06HarnackCurvUnbdd} (see the related work in \S\ref{ssec:related_work}). The main difference here is that we apply it to two \emph{different} stochastic processes.

\begin{remark}
    An alternative approach is to use the R\'enyi weak triangle inequality (Corollary~\ref{kl:weak-triangle}),
    \begin{align*}
        \KL(\delta_x \hat P \mmid \delta_y P)
        &\le 2\KL(\delta_x \hat P \mmid \delta_y \hat P) + \log(1+\chi^2(\delta_y \hat P \mmid \delta_y P))\,.
    \end{align*}
    The first term can be computed explicitly, and the second term can be controlled via a standard Girsanov argument (see~\cite{chewibook, Che+24LMC}).
    However, we do not take this route here, in order to develop the shifted Girsanov argument and also to avoid the unnecessary use of R\'enyi divergences.
\end{remark}

Consider the following stochastic processes, which are the continuous-time limits of the discrete-time processes constructed in \S\ref{sec:multi}.
\begin{align*}
    \D \hat X_t
    &= -\nabla V(x) \,\D t + \sqrt 2 \, \D B_t'\,, & \hat X_0 &= x\,, \\
    \D Y_t'
    &= \{-\nabla V(Y_t') + \eta_t\,(\hat X_t - Y_t')\}\,\D t + \sqrt 2 \,\D B_t'\,, & Y_0' &= y\,, \\
    &= -\nabla V(Y_t') \, \D t + \sqrt 2 \,\D B_t\,,
\end{align*}
where $\D B_t' = \D B_t - \frac{\eta_t}{\sqrt 2}\,(\hat X_t - Y_t')\,\D t$.
We will choose the deterministic process ${\{\eta_t\}}_{t\ge 0}$ so that $Y_h' = \hat X_h$.
This necessitates taking $\eta_t \to \infty$ as $t\nearrow h$, but the validity of the following arguments can be argued in the usual way, cf.~\cite[Remark 4.1]{scr1}.
We define the path measures $\mb P$, $\mb P'$ so that under $\mb P'$, $B'$ is a standard Brownian motion and $Y_h' = \hat X_h \sim \delta_x \hat P$, and under $\mb P$, $B$ is a standard Brownian motion and $Y_h' \sim \delta_y P$.
The data-processing inequality (Proposition~\ref{prop:kl_prop}) and Girsanov's theorem~\cite[Theorem 5.22]{legall2016stochasticcalc} imply
\begin{align*}
    \KL(\delta_x \hat P \mmid \delta_y P)
    \le \KL(\mb P' \mmid \mb P)
    &= \frac{1}{4}\, \E^{\mb P'} \int_0^h \eta_t^2\,\norm{\hat X_t - Y_t'}^2 \, \D t\,.
\end{align*}
It remains to bound the quantity on the right-hand side.

\begin{proof}[Proof of Lemma~\ref{lem:cross_reg_lmc}]
    We focus on the convex case $\nabla^2 V \succeq 0$ and we defer the non-convex case to \S\ref{app:shifted_girsanov_rmk}.
    We begin by computing
    \begin{align*}
        \D(\hat X_t - Y_t')
        &= \{-\nabla V(x)+\nabla V(Y_t') - \eta_t\,(\hat X_t - Y_t')\} \, \D t\,,
    \end{align*}
    which implies
    \begin{align*}
        \frac{1}{2}\,\partial_t\,\E^{\mb P'}[\norm{\hat X_t - Y_t'}^2]
        &= -\eta_t\,\E^{\mb P'}[\norm{\hat X_t - Y_t'}^2] - \E^{\mb P'}\langle \hat X_t - Y_t', \nabla V(x) - \nabla V(Y_t')\rangle \\
        &= -\eta_t\,\E^{\mb P'}[\norm{\hat X_t - Y_t'}^2] - \E^{\mb P'}\langle \hat X_t - Y_t', \nabla V(\hat X_t) - \nabla V(Y_t') + \nabla V(x) - \nabla V(\hat X_t)\rangle \\
        &\le -\eta_t\,\E^{\mb P'}[\norm{\hat X_t - Y_t'}^2] + \sqrt{\E^{\mb P'}[\norm{\hat X_t - Y_t'}^2]\,\E^{\mb P'}[\norm{\nabla V(\hat X_t) - \nabla V(x)}^2]}\,.
    \end{align*}
    Taking square roots and writing $d_t \deq \sqrt{\E^{\mb P'}[\norm{\hat X_t - Y_t'}^2]}$, it leads to
    \begin{align*}
        \partial_t d_t
        &\le -\eta_t d_t + \sqrt{\E^{\mb P'}[\norm{\nabla V(\hat X_t) - \nabla V(x)}^2]}
        \le -\eta_t d_t + \beta \sqrt{\E^{\mb P'}[\norm{\hat X_t - x}^2]} \\
        &= -\eta_t d_t + \beta \sqrt{\E^{\mb P'}[\norm{-t\,\nabla V(x)+\sqrt 2\,B_t'}^2]}
        = -\eta_t d_t + \underbrace{O\bigl(\beta h\,\norm{\nabla V(x)} + \beta d^{1/2} h^{1/2}\bigr)}_{\eqqcolon \mf a}\,.
    \end{align*}
    We take the choice
    \begin{align*}
        \eta_t = \frac{d_0 + \mf a h}{d_0} \, \frac{1}{h-t}\,,
    \end{align*}
    which can be derived as a continuous-time limit of the optimal shifts used in \S\ref{sssec:shift_opt}.
    The differential inequality then yields
    \begin{align*}
        d_t
        &\le \exp\Bigl(-\int_0^t \eta_s \,\D s\Bigr)\,d_0 + \mf a\exp\Bigl(-\int_0^t \eta_s \, \D s\Bigr) \int_0^t \exp\Bigl(\int_0^s\eta_r\,\D r\Bigr)\,\D s \\
        &\le \Bigl(\frac{h-t}{h}\Bigr)^{1+\mf a h/d_0}\,d_0 + \Bigl(\frac{h-t}{h}\Bigr)^{1+\mf ah/d_0}\,\mf a \int_0^t \Bigl(\frac{h}{h-s}\Bigr)^{1+\mf ah/d_0}\,\D s
        = \frac{h-t}{h} \, d_0\,.
    \end{align*}
    Hence,
    \begin{align*}
        \KL(\mb P' \mmid \mb P)
        &\le \frac{1}{4} \,\E^{\mb P'} \int_0^h \eta_t^2 d_t^2 \, \D t
        = \frac{{(d_0 + \mf a h)}^2}{4h}\,. \qedhere
    \end{align*}
\end{proof}

\begin{remark}
    In a similar manner, we can also establish a cross-regularity bound for LMC that holds in R\'enyi divergence.
    See Lemma~\ref{lem:cross_reg_lmc_renyi} in \S\ref{app:renyi_shifted_girsanov}.
\end{remark}

The shifted Girsanov argument can in fact be used to establish Theorem~\ref{thm:lmc} directly, bypassing the use of Theorem~\ref{thm:kl-general}; in fact, the shifted Girsanov argument is exactly the continuous-time analog of Theorem~\ref{thm:kl-general}.
Details in \S\ref{app:shifted_girsanov_rmk}.
In light of this remark, the reader may wonder why we developed the discrete-time scaffolding of Theorem~\ref{thm:kl-general}. This is for two reasons. The first reason is that for more complicated kernels, such as the randomized midpoint discretization considered in \S\ref{sec:rmd}, there may not be a natural SDE interpolation of the algorithm iterates, and hence the shifted Girsanov argument cannot be directly applied.
Theorem~\ref{thm:kl-general} circumvents this issue by considering the entire kernel $\hat P$ at once and hence has wider applicability. The second reason is that the framework of Theorem~\ref{thm:kl-general} is more modular, as it reduces the problem to checking simpler one-step bounds (instead of redoing the shifted Girsanov argument ``from scratch'' for each application).
Indeed, inherent in the statement of Theorem~\ref{thm:kl-general} is an optimized choice of shifts (carried out in \S\ref{app:general_framework}), resulting in a more user-friendly framework.

    \section{LMC under higher-order smoothness}\label{sec:lmc_smooth}

Here, we consider~\ref{eq:lmc} under a higher-order smoothness assumption.
In the previous section, we used the local error framework only in a shallow way, namely via the trivial bound $\mc E_{\rm weak} \le \mc E_{\rm strong}$.
In this section, we take advantage of the fact that under an additional, \emph{higher-order} smoothness condition (specifically, a bound on $\norm{\nabla \Delta V}$), the weak error improves, as was first noted in~\cite[Lemma D.2]{LiZhaTao22LMCSqrtd}. This improves the iteration complexity of LMC from scaling in the dimension as $\Otilde(d)$ to $\Otilde(d^{1/2})$. Such a result is tight for Gaussian $\pi$ and was previously only known for the $W_2$ metric.

\par We begin by recalling the fact that the weak error improves under higher-order smoothness. For completeness, we provide a proof in \S\ref{app:lmc_higher_local}.

\begin{lemma}[{Weak error for LMC under higher-order smoothness}]\label{lem:lmc_higher_local}
    Let $P$, $\hat P$ denote the kernels corresponding to~\ref{eq:langevin} run for time $h$ and~\ref{eq:lmc} respectively. Assume that $V \in C^3(\R^d)$, with $-\beta I \preceq \nabla^2 V \preceq \beta I$, and furthermore, that $\norm{\nabla \Delta V} \leq \zeta_0 + \zeta_1\,\norm{\nabla V}$ everywhere. Then, for all $x \in \R^d$ and $h \lesssim 1/\beta$, there is a coupling $\hat X_h \sim \delta_x \hat P$, $X_h \sim \delta_x P$ such that
    \begin{align*}
        \norm{\E \hat X_h - \E X_h} \lesssim (\beta + \zeta_1)\, h^2\, \norm{\nabla V(x)} + (\beta+\zeta_1)\,\beta d^{1/2} h^{5/2} +\zeta_0 h^2\,.
    \end{align*}
\end{lemma}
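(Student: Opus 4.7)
My plan is to use the synchronous coupling in which both $\hat X$ and $X$ are driven by the same Brownian motion. Then $\hat X_h = x - h\,\nabla V(x) + \sqrt 2\, B_h$ and $X_h = x - \int_0^h \nabla V(X_t)\,\D t + \sqrt 2\, B_h$, so the noise cancels and the bias reduces to
\begin{align*}
    \E \hat X_h - \E X_h = \int_0^h \E[\nabla V(X_t) - \nabla V(x)]\,\D t\,.
\end{align*}
The game is now to bound $\norm{\E[\nabla V(X_t) - \nabla V(x)]}$ in a way that extracts the extra factor of $h$ (relative to the strong error) afforded by the higher-order smoothness assumption.

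The key step is to apply It\^o's formula to the vector-valued function $\nabla V$ along the Langevin diffusion. The generator of~\eqref{eq:langevin} acts on $\nabla V$ componentwise, producing the drift $-\nabla^2 V(X_s)\,\nabla V(X_s) + \nabla \Delta V(X_s)$. Taking expectations kills the martingale term, so
\begin{align*}
    \E[\nabla V(X_t)] - \nabla V(x) = \int_0^t \E\bigl[-\nabla^2 V(X_s)\,\nabla V(X_s) + \nabla \Delta V(X_s)\bigr]\,\D s\,.
\end{align*}
Here the first integrand is bounded in norm by $\beta\,\norm{\nabla V(X_s)}$ using $\norm{\nabla^2 V}_{\rm op} \le \beta$, and the second by $\zeta_0 + \zeta_1 \,\norm{\nabla V(X_s)}$ by hypothesis. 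This is where the higher-order smoothness enters: without it, one would be stuck bounding $\norm{\nabla^3 V}_{\rm op}$ via standard It\^o expansions, losing a factor of $d^{1/2}$.

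To complete the chain, I control $\E\,\norm{\nabla V(X_s)}$ by $\beta$-smoothness and a short-time moment bound for the Langevin diffusion: $\norm{\nabla V(X_s)} \le \norm{\nabla V(x)} + \beta\, \norm{X_s - x}$ with $\E\,\norm{X_s - x} \lesssim s\,\norm{\nabla V(x)} + d^{1/2} s^{1/2}$ for $s \lesssim 1/\beta$, giving $\E\,\norm{\nabla V(X_s)} \lesssim \norm{\nabla V(x)} + \beta d^{1/2} s^{1/2}$. Plugging in,
\begin{align*}
    \norm{\E[\nabla V(X_t)] - \nabla V(x)} \lesssim (\beta+\zeta_1)\,\bigl(t\,\norm{\nabla V(x)} + \beta d^{1/2} t^{3/2}\bigr) + \zeta_0\, t\,,
\end{align*}
and integrating once more over $t\in[0,h]$ yields the claimed bound.

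The only mild obstacle is the bookkeeping to keep the two contributions ($\beta$ from the Hessian and $\zeta_1$ from the higher-order term) packaged as the single factor $\beta+\zeta_1$, and to correctly pick up the additive $\zeta_0$ piece; all other steps are routine It\^o calculus plus standard short-time Langevin moment estimates.
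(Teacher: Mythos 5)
Your proposal is correct and follows essentially the same route as the paper's proof: synchronous coupling to cancel the noise, It\^o's formula applied to $\nabla V$ along the diffusion to expose the generator $-\nabla^2 V\,\nabla V + \nabla\Delta V$, and then a short-time Gr\"onwall-type estimate on $\E\,\norm{\nabla V(X_s)}$. The only cosmetic difference is that you invoke the short-time moment bound $\E\,\norm{X_s - x} \lesssim s\,\norm{\nabla V(x)} + d^{1/2}s^{1/2}$ as a black box whereas the paper derives the equivalent gradient bound explicitly via Gr\"onwall, but the content is identical.
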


We now combine this weak error guarantee with the properties of the Langevin diffusion (Lemma~\ref{lem:ld_properties}), the strong error (Lemma~\ref{lem:lmc_strong_err}), and the cross-regularity (Lemma~\ref{lem:cross_reg_lmc}) already established in the previous section.
We also recall Remarks~\ref{rmk:slc_init} and~\ref{rmk:lsi_init} on initialization, and we state our results in terms of scale-invariant parameters $\kappa$, $\bar\kappa_0$, $\bar\kappa_1$.

\begin{theorem}[LMC under higher-order smoothness]\label{thm:lmc_smooth}
    Let $\pi \propto \exp(-V)$, and let $\hat P$ denote the kernel for~\ref{eq:lmc}.
    We impose the following assumptions:
    \begin{itemize}
        \item $V \in C^3(\R^d)$ satisfies $0 \preceq \alpha I \preceq \nabla^2 V \preceq \beta I$ over $\R^d$.
        Write $\kappa \deq \beta/\alpha$.
        \item $V$ satisfies $\norm{\nabla \Delta V} \le \zeta_0 + \zeta_1\,\norm{\nabla V}$.
        Write $\bar \kappa_0 \deq \zeta_0/\alpha^{3/2}$ and $\bar\kappa_1 \deq \zeta_1/\alpha$.
    \end{itemize}
    Then, the following statements hold.
    \begin{enumerate}
        \item \underline{$\alpha > 0$ case:} If $\varepsilon \in [0,\sqrt{d/\kappa}]$, $h\asymp \frac{\varepsilon}{\beta}\min\{\frac{\kappa}{\bar\kappa_0}, \, \frac{1}{\sqrt{(1+\zeta_1/\beta)\,(\kappa+\bar\kappa_1)\,\kappa d \log((\kappa+\bar\kappa_1)\,d/\varepsilon^2)}}\}$, and $W_2(\hat\mu_0,\pi)\le\sqrt{d/\alpha}$, then $\KL(\hat\mu_0 \hat P^N \mmid \pi) \le \varepsilon^2$ for all
        \begin{align*}
            N \ge \widetilde\Omega\Bigl(\frac{\bar\kappa_0 + (\kappa^2 + \kappa\bar\kappa_1)\,d^{1/2}}{\varepsilon}\Bigr)\,.
        \end{align*}
        \item \underline{$\alpha = 0$ case:} If $\varepsilon > 0$ is sufficiently small and $h = \widetilde \Theta(\frac{\varepsilon^2}{\max\{\zeta_0 W, \,(\beta+\zeta_1)\,\beta W^2, \, (\beta+\zeta_1)\,\sqrt{\beta d W^2}\}})$, where $W \deq W_2(\hat\mu_0,\pi)$, then $\KL(\hat\mu_0\hat P^N \mmid \pi) \le \varepsilon^2$ for
        \begin{align*}
            N = \widetilde\Theta\Bigl(\frac{\zeta_0 + (\beta+\zeta_1)\,(\beta W + \beta^{1/2} d^{1/2})}{\varepsilon^4}\, W^3\Bigr)\,.
        \end{align*}
    \end{enumerate}
\end{theorem}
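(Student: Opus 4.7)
The plan is to follow the same template as the proof of Theorem~\ref{thm:lmc}, namely to apply Theorem~\ref{thm:kl-general} to LMC, the only difference being that I now use the \emph{improved} weak-error bound from Lemma~\ref{lem:lmc_higher_local} (under the additional assumption $\norm{\nabla \Delta V} \le \zeta_0 + \zeta_1 \norm{\nabla V}$) in place of the trivial estimate $\mc E_{\rm weak} \le \mc E_{\rm strong}$ used there. Concretely, I invoke Theorem~\ref{thm:kl-general} with $L = \exp(-\alpha h)$, $\gamma \asymp \beta h$ (from Lemma~\ref{lem:ld_properties}), $c, c' \asymp 1/h$ and $b(x)^2 = O(\beta^2 h^3 \norm{\nabla V(x)}^2 + \beta^2 d h^2)$ (from Lemma~\ref{lem:cross_reg_lmc}), strong error $\mc E_{\rm strong}(x) = O(\beta h^2 \norm{\nabla V(x)} + \beta d^{1/2} h^{3/2})$ (from Lemma~\ref{lem:lmc_strong_err}), and the sharper weak error
\begin{align*}
    \mc E_{\rm weak}(x) = O\bigl((\beta+\zeta_1)\, h^2 \norm{\nabla V(x)} + (\beta+\zeta_1)\,\beta d^{1/2} h^{5/2} + \zeta_0 h^2\bigr)\,.
\end{align*}
A direct comparison shows that for $h \lesssim 1/\beta$, the hybrid term $\gamma \bar{\mc E}_{\rm strong}$ is dominated by $\bar{\mc E}_{\rm weak}$ termwise, so it can be absorbed into the weak-error contribution.

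Plugging these inputs into Theorem~\ref{thm:kl-general} and simplifying as in~\eqref{eq:intro:sampling-ex:kl}, I obtain
\begin{align*}
    \KL(\hat\mu_0 \hat P^n \mmid \pi)
    \lesssim \frac{\alpha\, W_2^2(\hat\mu_0,\pi)}{\exp(\alpha n h) - 1}
    + \bigl(n \wedge \tfrac{1}{\alpha h}\bigr)\,\frac{\bar{\mc E}_{\rm weak}^2}{h}
    + \frac{\log(n \wedge \tfrac{1}{\alpha h})}{h}\,\bar{\mc E}_{\rm strong}^2\,,
\end{align*}
where $\bar{\mc E}_{\rm weak}$ and $\bar{\mc E}_{\rm strong}$ depend on $G_n^2 \deq \max_{k<n} \E_{\hat\mu_0 \hat P^k}[\norm{\nabla V}^2]$. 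Exactly as in the proof of Theorem~\ref{thm:lmc}, I handle $G_n^2$ by combining the $W_2$ recursion from Theorem~\ref{thm:local_error} with Lemma~\ref{lem:recursive_grad}: the strongly log-concave case yields $G_n^2 \lesssim \beta \kappa d$ for all $n$, while the weakly log-concave case yields $G_N^2 \lesssim \beta d + \beta^2 W^2$.

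It then remains to tune $h$ (and $N$) so that each error term above is at most $\varepsilon^2$. In the strongly log-concave case, choosing $N h \asymp \alpha^{-1} \log(d/\varepsilon^2)$ kills the transient term, and a quick check shows that the logarithmic strong-error contribution is of lower order, so the dominant constraints come from the weak-error contributions $(\beta+\zeta_1)\, h \sqrt{\beta \kappa d}/\alpha \lesssim \varepsilon$ and $\zeta_0 h/\alpha \lesssim \varepsilon$; these give exactly the stated $h$ and iteration count $\widetilde \Omega((\bar\kappa_0 + (\kappa^2 + \kappa \bar\kappa_1)\,d^{1/2})/\varepsilon)$. The weakly log-concave case is analogous: one takes $Nh \asymp W^2/\varepsilon^2$ to kill the transient $W^2/(Nh)$ term and then optimizes $h$ against the three weak-error contributions, with $G \asymp \beta W + \sqrt{\beta d}$ in place of $\sqrt{\beta\kappa d}$, yielding the stated rate. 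The main technical burden is purely bookkeeping: because $G_n$ is itself bootstrapped through a Wasserstein recursion whose contraction constant depends on $h$, one has to verify that the step-size restrictions imposed by Lemma~\ref{lem:recursive_grad} are compatible with those imposed by the weak-error constraints. No new conceptual ingredient beyond the proof of Theorem~\ref{thm:lmc} is required; the improvement in dimension dependence (from $d/\varepsilon^2$ to $d^{1/2}/\varepsilon$) is driven entirely by the extra factor of $h^{1/2}$ in the dimension-dependent part of the improved weak error.
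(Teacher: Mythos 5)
Your proposal is correct and follows essentially the same route as the paper's proof in \S\ref{app:lmc_smooth_pf}: plug the improved weak error from Lemma~\ref{lem:lmc_higher_local} into Theorem~\ref{thm:kl-general} alongside the standard LMC ingredients, bootstrap the gradient term $G_n^2$ via Lemma~\ref{lem:recursive_grad}, and then optimize $h$ and $N$. Your observation that $\gamma\bar{\mc E}_{\rm strong}$ is dominated termwise by $\bar{\mc E}_{\rm weak}$ when $\beta h \lesssim 1$ is a valid small simplification that the paper leaves implicit.
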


\begin{theorem}[LMC under higher-order smoothness and LSI]\label{thm:lmc_smooth_lsi}
    Let $\pi \propto \exp(-V)$, and let $\hat P$ denote the kernel for~\ref{eq:lmc}.
    We impose the following assumptions:
    \begin{itemize}
        \item $\pi$ satisfies~\eqref{eq:lsi} with constant $1/\alpha$.
        \item $V \in C^3(\R^d)$ satisfies $-\beta I \preceq \nabla^2 V \preceq \beta I$ over $\R^d$.
        Write $\kappa \deq \beta/\alpha \ge 1$.
        \item $V$ satisfies $\norm{\nabla \Delta V} \le \zeta_0 + \zeta_1\,\norm{\nabla V}$.
        Write $\bar \kappa_0 \deq \zeta_0/\alpha^{3/2}$ and $\bar\kappa_1 \deq \zeta_1/\alpha$.
        \item The initialization satisfies $\log \chi^2(\hat\mu_0,\pi) = \widetilde O(d)$.
    \end{itemize}
    If $\varepsilon \in [0,\sqrt d]$ and $h = \widetilde\Theta(\frac{\varepsilon}{\max\{(\beta+\zeta_1)\sqrt{\kappa d}, \, \zeta_0/\sqrt\alpha\}})$, then $\KL(\mu_0 \hat P^N \mmid \pi) \le \varepsilon^2$ for
    \begin{align*}
        N = \widetilde\Theta\Bigl(\frac{\bar\kappa_0 + (\kappa^{3/2} + \kappa^{1/2}\bar\kappa_1)\,d^{1/2}}{\varepsilon}\Bigr)\,.
    \end{align*}
\end{theorem}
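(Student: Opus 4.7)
The plan is to mirror the proof of Theorem~\ref{thm:lmc_lsi}, but to take full advantage of the refined weak error bound of Lemma~\ref{lem:lmc_higher_local} rather than the trivial bound $\mc E_{\rm weak}\le\mc E_{\rm strong}$. I will apply Theorem~\ref{thm:kl-general} with $L=\exp(\beta h)$ and $\gamma=O(\beta h)$ as in Theorem~\ref{thm:lmc_lsi}, using Lemma~\ref{lem:ld_properties} for $W_2$-Lipschitzness/coupling/regularity, Lemma~\ref{lem:lmc_strong_err} for the strong error
\[
    \mc E_{\rm strong}(x) = O\bigl(\beta h^2\,\norm{\nabla V(x)} + \beta d^{1/2} h^{3/2}\bigr)\,,
\]
Lemma~\ref{lem:lmc_higher_local} for the weak error
\[
    \mc E_{\rm weak}(x) = O\bigl((\beta+\zeta_1)\,h^2\,\norm{\nabla V(x)} + (\beta+\zeta_1)\,\beta d^{1/2} h^{5/2} + \zeta_0 h^2\bigr)\,,
\]
and Lemma~\ref{lem:cross_reg_lmc} for cross-regularity (giving $c,c'=O(1/h)$ and $b(x)^2 = O(\beta^2 h^3\,\norm{\nabla V(x)}^2+\beta^2 dh^2)$).

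Substituting these into Theorem~\ref{thm:kl-general}, noting that $\bar N = N$ and $(L-1)N\vee\log\bar N = \widetilde O(\beta hN+1)$ for the choices made below, the resulting discretization bound takes the schematic form
\[
    \KL(\hat\mu_0 \hat P^N \mmid \hat\mu_0 P^N)
    \lesssim \widetilde O\bigl(\beta h N\bigr)\,\bigl(\beta^2 h^3 G_N^2 + \beta^2 dh^2\bigr) + \tfrac{N}{h}\,\bigl((\beta+\zeta_1)^2 h^4 G_N^2 + (\beta+\zeta_1)^2\,\beta^2 dh^5 + \zeta_0^2 h^4\bigr)\,,
\]
where $G_N^2\deq\max_{n<N}\E_{\hat\mu_0\hat P^n}[\norm{\nabla V}^2]$. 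I will then combine with convergence of the continuous-time diffusion via the R\'enyi weak triangle inequality (Corollary~\ref{kl:weak-triangle}) and Theorem~\ref{thm:langevin_renyi_conv}, which yields
\[
    \KL(\hat\mu_0\hat P^N\mmid\pi)\le \log\bigl(1+\chi^2(\hat\mu_0 P^N\mmid\pi)\bigr)+2\KL(\hat\mu_0\hat P^N\mmid\hat\mu_0 P^N)
    \lesssim \exp(-\alpha Nh)\,\widetilde O(d) + (\text{discretization})\,.
\]
Choosing $Nh=\widetilde\Theta(1/\alpha)$ kills the first term; the key new step relative to Theorem~\ref{thm:lmc_lsi} is that the weak-error contribution to the discretization scales as $h^2$ (with coefficient $(\beta+\zeta_1)^2\beta d$ or $\zeta_0^2$) rather than $h$, which is precisely what yields a $\sqrt d$ improvement.

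Before balancing the terms, I must control $G_N^2$. Following Theorem~\ref{thm:lmc_lsi}, I would apply Lemma~\ref{lem:recursive_grad} from \S\ref{app:grad} with $n_0=0$ and parameters of the same order as before (essentially $\msf C^2=\widetilde O(\beta\kappa h^2)$, $\msf D^2=\widetilde O(d)$), which continues to be valid under $h\le \widetilde O(1/(\beta\kappa))$ and gives $G_N^2=\widetilde O(\beta d)$. Substituting this back, the gradient-dependent weak-error term becomes $\widetilde O((\beta+\zeta_1)^2 \beta^2 dh^3/\alpha)$, which is dominated by the $\norm{\mc E_{\rm weak}}^2$ terms that do not involve gradients once $h$ is chosen small. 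Balancing the dominant contributions
\[
    \widetilde O\Bigl(\tfrac{(\beta+\zeta_1)^2\,\beta d h^2}{\alpha}\Bigr)\le \varepsilon^2 \qquad\text{and}\qquad \widetilde O\Bigl(\tfrac{\zeta_0^2 h^2}{\alpha}\Bigr)\le\varepsilon^2
\]
against the required $\le\varepsilon^2$ forces $h=\widetilde\Theta(\varepsilon/\max\{(\beta+\zeta_1)\sqrt{\kappa d},\,\zeta_0/\sqrt\alpha\})$, at which point $N=\widetilde\Theta(1/(\alpha h))$ reduces to the advertised rate $\widetilde\Theta(\bigl(\bar\kappa_0+(\kappa^{3/2}+\kappa^{1/2}\bar\kappa_1)\,d^{1/2}\bigr)/\varepsilon)$.

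The main obstacle I anticipate is the bookkeeping: (i) verifying that the ``$\log\bar N$'' multiplier on $\mc E_{\rm strong}^2$ and the ``$\bar N$'' multiplier on $\mc E_{\rm weak}^2$ in Theorem~\ref{thm:kl-general} really do reproduce the $\widetilde O(\beta h N)\le \widetilde O(\kappa)$ and $\widetilde O(N)$ prefactors I used above, so that the strong-error contribution is subdominant compared with the weak-error one (this is where the $\sqrt d$ improvement comes from and is the whole point); (ii) making sure Lemma~\ref{lem:recursive_grad} can be invoked under the weaker assumption of LSI rather than strong log-concavity, which should follow from the same recursive structure used in Theorem~\ref{thm:lmc_lsi} but needs to be checked carefully given the higher-order smoothness; and (iii) confirming that the initialization assumption $\log\chi^2(\hat\mu_0\mmid\pi)=\widetilde O(d)$ is what actually appears after the R\'enyi weak triangle step, absorbing polylogarithmic factors into $\widetilde\Theta$.
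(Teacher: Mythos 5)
Your proposal follows the paper's own proof essentially step for step: apply Theorem~\ref{thm:kl-general} with the $W_2$-Lipschitz/regularity/coupling data from Lemma~\ref{lem:ld_properties} (with $L=\exp(\beta h)$), the strong error from Lemma~\ref{lem:lmc_strong_err}, the refined weak error from Lemma~\ref{lem:lmc_higher_local}, and the LMC cross-regularity from Lemma~\ref{lem:cross_reg_lmc}; then combine the discretization bound with the continuous-time convergence (Theorem~\ref{thm:langevin_renyi_conv}) via the weak triangle inequality (Corollary~\ref{kl:weak-triangle}); set $Nh = \widetilde\Theta(1/\alpha)$; control $G_N^2$ by Lemma~\ref{lem:recursive_grad}; and balance. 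The one slip worth fixing is in the invocation of Lemma~\ref{lem:recursive_grad}: you quote $\msf C^2 = \widetilde O(\beta\kappa h^2)$ from Theorem~\ref{thm:lmc_lsi}, but here the coefficient of $G_n^2$ in the KL recursion is $n\,(\beta+\zeta_1)^2 h^3 = \widetilde O\bigl((\beta+\zeta_1)\,(\kappa+\bar\kappa_1)\,h^2\bigr)$ (using $(\beta+\zeta_1)/\alpha = \kappa+\bar\kappa_1$), which reduces to your expression only when $\zeta_1=0$. The stated theorem's choice of $h$ still satisfies the resulting condition $h \le \frac{1}{\beta}\,\widetilde O\bigl(\tfrac{1}{\sqrt{(1+\zeta_1/\beta)\,(\kappa+\bar\kappa_1)}}\bigr)$, so the conclusion is unchanged; you correctly flagged this as the point needing care in your item (ii).
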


For brevity we defer the proofs to \S\ref{app:lmc_smooth_pf}. We make several remarks about these results, focusing on the dependence on $d$ and $\eps$ which is the main point of these results from a complexity perspective.

\begin{remark}[Dependence on $d,\eps$]\label{rmk:op_norm_lip}
    Theorem~\ref{thm:lmc_smooth} shows that under strong log-concavity and a stringent third-order condition (which is nevertheless satisfied for, e.g., a Gaussian target measure), the iteration complexity in KL divergence is $\widetilde O(d^{1/2}/\varepsilon)$.
    Previously, such a result was only known to hold in the $W_2$ metric.
    
    Under the more usual third-order smoothness condition, namely, that $\nabla^2 V$ is $\rho$-Lipschitz in the operator norm, one has $\norm{\nabla \Delta V} \le \rho \sqrt d$.
    Theorems~\ref{thm:lmc_smooth} and~\ref{thm:lmc_smooth_lsi} therefore immediately yield corollaries in this setting as well.
    For example, in the strongly log-concave and LSI cases, the iteration complexity becomes $\widetilde O(d/\varepsilon)$ (compared to $\widetilde O(d/\varepsilon^2)$ for LMC).
\end{remark}

In these settings, we are unaware of any prior analyses of LMC based on either the Girsanov approach or the convex optimization approach (see \S\ref{sec:lmc_discussion} for descriptions of these) which can exploit higher-order smoothness for LMC\@.
See also \S\ref{sec:lmc_discussion} for further discussion about the advantages of our framework over those standard analysis techniques. The closest prior result to ours is in~\cite{mou2022improved}, which is based on a sophisticated application of the interpolation method.
The main result of~\cite{mou2022improved} establishes an $\widetilde O(d/\varepsilon)$ iteration complexity guarantee for LMC in KL divergence under LSI and the Hessian Lipschitz assumption (cf.\ Remark~\ref{rmk:op_norm_lip}).
Here, we have recovered their result as Theorem~\ref{thm:lmc_smooth_lsi}, as a direct application of our KL local error framework via a straightforward calculation of the weak error (Lemma~\ref{lem:lmc_higher_local}); moreover, our argument shows that the complexity improves to $\widetilde O(\sqrt d/\varepsilon)$ under the more stringent higher-order smoothness condition.

Finally, we remark in passing that for the strongly log-concave setting, the dependence on $\kappa$ is sharper for Theorem~\ref{thm:lmc_smooth_lsi} than in Theorem~\ref{thm:lmc_smooth} due to the stronger initialization assumption, which leads to better control of the error terms. The dependence on $\kappa$ can be similarly sharpened in Theorem~\ref{thm:lmc_smooth} by using that initialization, or directly as a corollary of Theorem~\ref{thm:lmc_smooth_lsi} (since LSI implies strong log-concavity). The reason we present our results in this way is to emphasize that Dirac initialization suffices for our framework, as opposed to standard analyses (see the discussion in \S\ref{sec:lmc_discussion}).

    \section{Randomized midpoint discretization}\label{sec:rmd}

The randomized midpoint discretization, first introduced in~\cite{shen2019randomized} and applied to the Langevin diffusion in~\cite{hebalasubramanianerdogdu2020randomizedmidpoint}, improves the iteration complexity of LMC substantially from $\widetilde O(d/\varepsilon^2)$ to $\widetilde O(\sqrt{d/\varepsilon^2})$, \emph{without requiring higher-order smoothness}.
Moreover, it was shown to be an \emph{optimal} discretization in a suitable sense in~\cite{caoluwang2021uldlowerbd}.
When applied to the underdamped (or kinetic) Langevin diffusion, it yields the only known sampling algorithm with a complexity of $\widetilde O(\sqrt[3]{d/\varepsilon^2})$ without higher-order smoothness assumptions~\cite{shen2019randomized}.\footnote{The shifted ODE method~\cite{foslyoobe21shiftedode} also attains dimension dependence $\widetilde O(d^{1/3})$ with an idealized ODE, but there is currently no rate-matching discretization thereof.}
For these reasons, it remains an important algorithm for study.
However, current guarantees only hold in $W_2$, since the use of uniformly random integration times has thus far prohibited its analysis in KL divergence (or even the simpler metric of total variation) with rates better than the vanilla LMC discretization.
This was listed as a ``highly non-trivial open problem'' in~\cite{YuDal24Parallel}.

As discussed in the related work section (\S\ref{ssec:related_work}), the recent work of~\cite{KanNag24PoiMidpt} makes progress toward this problem.
Namely, they show that a variant of the randomized midpoint method, when applied to the Langevin diffusion, achieves the rate $\widetilde O(d^{3/4}/\varepsilon)$ in total variation distance for the strongly log-concave and LSI settings. Their clever proof technique, which compares the Poisson midpoint method to LMC with a smaller step size, is designed to circumvent some of the difficulties with analyzing RMD\@. However, their technique is currently unable to reach the expected $\widetilde O(d^{1/2}/\varepsilon)$ rate, and it is also observed therein that existing techniques are unable to analyze the original randomized midpoint method beyond the strongly log-concave setting in $W_2$.

We resolve this open problem by obtaining the first $\widetilde O(\sqrt{d/\varepsilon^2})$ rate for the randomized midpoint discretization that holds in KL divergence.
This is obtained as an application of our KL local error framework, which allows us to ``upgrade'' $W_2$ guarantees into KL guarantees.
Moreover, we establish the first $\widetilde O(\sqrt{d/\varepsilon^2})$ rate for the randomized midpoint discretization, in \emph{any} metric, when $\pi$ is only assumed to be weakly log-concave or to satisfy LSI\@.

First, we recall the algorithm.
Let $u_0,u_1,u_2,\dotsc$ be a sequence of i.i.d.\ uniform random variables over $[0,1]$, independent of the Brownian motion.
Then, we set
\begin{align}\label{eq:rmlmc}\tag{$\msf{RM\text{--}LMC}$}
    \begin{aligned}
        \hat X_{nh + u_n h}^+
        &\deq \hat X_{nh} - u_n h\,\nabla V(\hat X_{nh}) + \sqrt 2\,(B_{nh + u_n h} - B_{nh})\,, \\
        \hat X_{(n+1)h}
        &\deq \hat X_{nh} - h\,\nabla V(\hat X_{nh+u_n h}^+) + \sqrt 2\,(B_{(n+1)h} - B_{nh})\,, \qquad n=0,1,2,\dotsc\,.
    \end{aligned}
\end{align}
In words, the algorithm takes a preliminary step of LMC with a \emph{random} step size of length $u_n h$ to obtain $\hat X_{nh+u_n h}^+$, and then takes the next iterate $\hat X_{(n+1)h}$ to be a step of LMC from $\hat X_{nh}$ except that the gradient is evaluated at the preliminary point $\hat X_{nh + u_n h}^+$.
Crucially, the Brownian increments in the two steps of the algorithm are correlated, but the algorithm can still be implemented; see~\cite{shen2019randomized, hebalasubramanianerdogdu2020randomizedmidpoint, chewibook} for details.

The randomness from the uniform variables engenders favorable cancellations, which readily leads to an improved $W_2$ rate through the local error framework---in particular, via improved weak error.
The difficulty of extending the guarantee to other metrics is that if, say, we try to directly apply Girsanov's theorem to bound the discretization error, there is no natural (adapted) SDE which interpolates the iterates of~\ref{eq:rmlmc}.
For similar reasons, it is also not known how to successfully apply the interpolation method or the proof via convex optimization.
However, our KL local error framework (Theorem~\ref{thm:kl-general}) allows us to take full advantage of the improved weak error and thereby obtain the correct rate.

We again make use of the properties of the continuous-time Langevin diffusion (Lemma~\ref{lem:ld_properties}).
Additionally, the following estimates are standard, see, e.g.,~\cite{hebalasubramanianerdogdu2020randomizedmidpoint} or~\cite[\S 5]{chewibook}. Observe that the strong error for RM--LMC is of the same order as LMC (Lemma~\ref{lem:lmc_strong_err}), but the weak error improves by a factor of $h$.

\begin{lemma}[Local errors for RM--LMC]\label{lem:rmlmc_local}
    Let $P$, $\hat P$ denote the kernels corresponding to~\ref{eq:langevin} run for time $h$ and~\ref{eq:rmlmc} respectively. Assume that $-\beta I \preceq \nabla^2 V \preceq \beta I$. Then, for all $x \in \R^d$ and $h \lesssim 1/\beta$, there is a coupling $\hat X_h \sim \delta_x \hat P$, $X_h \sim \delta_x P$ such that the following hold.
    \begin{enumerate}
        \item \underline{Weak error.} $\norm{\E\hat X_h - \E X_h} \lesssim \beta^2 h^3\,\norm{\nabla V(x)} + \beta^2 d^{1/2} h^{5/2}$.
        \item \underline{Strong error.} $\norm{\hat X_h - X_h}_{L^2} \lesssim \beta h^2\,\norm{\nabla V(x)} + \beta d^{1/2} h^{3/2}$.
    \end{enumerate}
\end{lemma}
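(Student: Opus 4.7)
The plan is to use the natural coupling in which the same Brownian motion $B$ drives both the~\ref{eq:rmlmc} iterate and the continuous Langevin trajectory ${(X_t)}_{t\ge 0}$ started from $X_0=x$. Throughout I will write $\hat X_s^+ \deq x - s\,\nabla V(x) + \sqrt 2\,B_s$ for the ``preliminary'' Euler step at time $s$, so that $\hat X_h = x - h\,\nabla V(\hat X_{uh}^+) + \sqrt 2\,B_h$ with $u \sim \Unif([0,1])$ independent of $B$.

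\textbf{Step 1: One-step moment bounds.} The first step is to record that for $h \lesssim 1/\beta$ and $0\le s\le h$, standard Itô-isometry-plus-Gr\"onwall arguments combined with $\beta$-smoothness give
\[
	\norm{X_s - x}_{L^2} \lesssim h\,\norm{\nabla V(x)} + d^{1/2} h^{1/2}\,, \qquad \norm{\hat X_s^+ - x}_{L^2} \lesssim h\,\norm{\nabla V(x)} + d^{1/2} h^{1/2}\,,
\]
both of which are routine.

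\textbf{Step 2: Strong error.} Rewriting $h\,\nabla V(\hat X_{uh}^+) = \int_0^h \nabla V(\hat X_{uh}^+)\,\D s$, I would express
\[
	\hat X_h - X_h = \int_0^h \{\nabla V(X_s) - \nabla V(\hat X_{uh}^+)\}\,\D s\,.
\]
Applying $\beta$-Lipschitzness of $\nabla V$, Minkowski's inequality, and Step~1 yields $\norm{\hat X_h - X_h}_{L^2} \lesssim \beta h\,(h\,\norm{\nabla V(x)} + d^{1/2} h^{1/2})$, which is the claimed strong error bound.

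\textbf{Step 3: Weak error.} This is where the randomization is essential. Since $u$ is independent of $B$, the change of variable $s = uh$ gives
\[
	\E[h\,\nabla V(\hat X_{uh}^+)] = \int_0^h \E[\nabla V(\hat X_s^+)]\,\D s\,,
\]
while for the diffusion $\E X_h = x - \int_0^h \E[\nabla V(X_s)]\,\D s$ by Fubini. Subtracting,
\[
	\E \hat X_h - \E X_h = \int_0^h \{\E[\nabla V(X_s)] - \E[\nabla V(\hat X_s^+)]\}\,\D s\,.
\]
Now the Brownian contributions in $X_s$ and $\hat X_s^+$ match exactly, and the deterministic Euler correction cancels $\nabla V(x)\,s$, leaving $X_s - \hat X_s^+ = -\int_0^s\{\nabla V(X_r) - \nabla V(x)\}\,\D r$. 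Applying $\beta$-smoothness twice---once inside this integral representation and once to convert $\norm{X_s-\hat X_s^+}_{L^2}$ into a bound on $\norm{\E[\nabla V(X_s)] - \E[\nabla V(\hat X_s^+)]}$---and invoking Step~1 to bound $\norm{X_r - x}_{L^2}$, one obtains an integrand of order $\beta^2 s\,(h\,\norm{\nabla V(x)} + d^{1/2} h^{1/2})$. Integrating over $s\in[0,h]$ produces the stated $\beta^2 h^3\,\norm{\nabla V(x)} + \beta^2 d^{1/2} h^{5/2}$ bound.

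The conceptual punchline is that the uniform randomization of the midpoint converts the RM--LMC drift into an exact average of $\nabla V(\hat X_s^+)$ over $s \in [0,h]$, after which the first-order Taylor comparison $X_s \approx \hat X_s^+$ supplies an extra power of $h$ beyond the strong error. I do not anticipate any significant obstacle here; these computations are entirely standard (cf.~\cite{hebalasubramanianerdogdu2020randomizedmidpoint, chewibook}) and the only care needed is in correctly exploiting the independence of $u$ from $B$ in Step~3.
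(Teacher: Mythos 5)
Your proof is correct and reconstructs the standard argument that the paper only cites (to~\cite{hebalasubramanianerdogdu2020randomizedmidpoint} and~\cite{chewibook}) rather than proves: synchronous coupling, Minkowski plus Lipschitzness for the strong error, and the key observation that averaging over $u\sim\Unif([0,1])$ turns the RM--LMC drift into $\int_0^h \E[\nabla V(\hat X_s^+)]\,\D s$, which cancels against the diffusion drift at first order to give the extra factor of $h$ in the weak error. The bookkeeping in Steps 2--3 (Jensen, $\beta$-Lipschitz applied twice, Step 1 moment bounds) all checks out.
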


On the other hand, the cross-regularity is problematic.
Although we expect that RM--LMC enjoys an improved version of cross-regularity compared to LMC\@, we are unable to prove it. Nevertheless, the reason we are ultimately able to obtain our guarantee for RM--LMC is that the cross-regularity only enters the analysis of Theorem~\ref{thm:kl-general} for one step at the very end, contributing a lower-order term to the final bound.
Indeed, by using the cross-regularity for LMC (Lemma~\ref{lem:cross_reg_lmc}), we would already be able to show that $N-1$ steps of RM--LMC, followed by a single step of LMC\@, suffices to achieve the $\widetilde O(\sqrt{d/\varepsilon^2})$ rate in KL divergence (see Remark~\ref{rmk:last_step_hack}).
However, this approach is not fully satisfactory, as it requires modifying the algorithm.
To address this, we prove the following lemma, which shows that the cross-regularity for RM--LMC is \emph{not much worse} than that of LMC\@.

\begin{lemma}[Cross-regularity for RM--LMC]\label{lem:cross_reg_rmlmc}
    Let $P$, $\hat P$ denote the kernels corresponding to~\ref{eq:langevin} and~\ref{eq:rmlmc} respectively.
    Assume that $-\beta I \preceq \nabla^2 V \preceq \beta I$ and that $h \lesssim 1/\beta$ for a sufficiently small implied constant.
    Then, for all $x,y\in\R^d$,
    \begin{align*}
        \KL(\delta_x \hat P \mmid \delta_y P)
        \lesssim \frac{\norm{x-y}^2}{h} \log \frac{1}{\beta h} + \beta^2 h^3 \,\norm{\nabla V(x)}^2 + \beta^2 dh^2\,.
    \end{align*}
\end{lemma}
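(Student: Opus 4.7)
The plan is to adapt the shifted Girsanov argument from the proof of Lemma~\ref{lem:cross_reg_lmc} to RM--LMC\@. The new difficulty is that the RM--LMC update uses a look-ahead step---its iterate depends on a gradient evaluated at a uniformly random intermediate time---which destroys the natural adapted SDE interpolation that made the LMC argument straightforward. I would therefore first condition on the random midpoint fraction $u$, which is uniform on $[0,1]$. For each fixed $u$, an adapted interpolation of the RM--LMC iterate is obtained from the piecewise constant drift
\[
    g_t = \begin{cases} \nabla V(x)\,, & t \in [0, uh]\,, \\[0.4em] \dfrac{h\,\nabla V(\hat X_{uh}^+) - uh\,\nabla V(x)}{(1-u)\,h}\,, & t \in [uh, h]\,, \end{cases}
\]
so that $\hat X_t \deq x - \int_0^t g_s\,\D s + \sqrt 2\,B'_t$ matches the RM--LMC iterate at $t = h$ almost surely.

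Next, introduce an auxiliary process $Y'$ from $y$ via $\D Y'_t = \{-\nabla V(Y'_t) + \eta_t\,(\hat X_t - Y'_t)\}\,\D t + \sqrt 2\,\D B'_t$, with a deterministic shift profile $\eta_t$ chosen to force $Y'_h = \hat X_h$. Arguing exactly as in the proof of Lemma~\ref{lem:cross_reg_lmc}, Girsanov's theorem combined with the data-processing inequality yields, after averaging over the joint randomness of $(u, B')$,
\[
    \KL(\delta_x \hat P \mmid \delta_y P) \le \tfrac{1}{4}\,\E^{\mb P'}\!\int_0^h \eta_t^2\,\norm{\hat X_t - Y'_t}^2\,\D t\,.
\]
Setting $d_t \deq \sqrt{\E^{\mb P'}\norm{\hat X_t - Y'_t}^2}$, the $\beta$-smoothness of $V$ gives a differential inequality $\partial_t d_t \le -\eta_t\,d_t + \mf a(t)$, where $\mf a(t)$ bounds the $L^2$ norm of the drift mismatch $g_t - \nabla V(\hat X_t)$.

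The key new feature is the behavior of $\mf a(t)$ on the second phase $[uh, h]$, where $g_t$ contains the correction $\tfrac{u}{1-u}\,(\nabla V(\hat X_{uh}^+) - \nabla V(x))$. By $\beta$-smoothness and the standard one-step estimate $\E\norm{\hat X_{uh}^+ - x}^2 \lesssim (uh)^2\,\norm{\nabla V(x)}^2 + uh\,d$, this correction contributes a term of order $\tfrac{\beta}{1-u}\,(uh\,\norm{\nabla V(x)} + \sqrt{uhd})$ to $\mf a(t)$, which is singular as $u \to 1$. Choosing the LMC-style shift $\eta_t = \alpha/(h-t)$ with $\alpha = 1 + \mf a\,h/d_0$ yields, conditional on $u$, a bound of the form $(d_0 + \mf a\,h)^2/h$ that inherits this $1/(1-u)$ factor.

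The main obstacle is to show that, upon averaging over $u \in [0,1]$, the singular $1/(1-u)$ factor produces only a $\log \tfrac{1}{\beta h}$ penalty rather than diverging. The saving is that the singular contribution is multiplied by the length $(1-u)\,h$ of the interval $[uh, h]$, so the effective singularity is truncated at $1 - u \sim \beta h$ (beyond which the baseline LMC-style error dominates), producing exactly the claimed $\log \tfrac{1}{\beta h}$ factor on the $\norm{x-y}^2/h$ term; the discretization terms $\beta^2 h^3\,\norm{\nabla V(x)}^2 + \beta^2 d h^2$ are unchanged from Lemma~\ref{lem:cross_reg_lmc}. As in that lemma, the non-convex case $-\beta I \preceq \nabla^2 V$ is handled by absorbing the extra curvature terms into the shift profile, which does not change the leading order.
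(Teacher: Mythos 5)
Your interpolation idea is a reasonable starting point, and indeed conditioning on $u$ and building a piecewise-constant-drift SDE interpolation of the RM{--}LMC step is an adapted process to which shifted Girsanov applies. However, there is a genuine gap in how you handle the resulting singularity as $u\to 1$, and the actual proof in the paper contains a second, qualitatively different argument that your proposal is missing.

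Concretely: you correctly identify that the drift mismatch on $[uh,h]$ contains the correction $\tfrac{u}{1-u}\,(\nabla V(\hat X_{uh}^+)-\nabla V(x))$, whose $L^2$ size is of order $\tfrac{A}{1-u}$ with $A \asymp \beta h\,\norm{\nabla V(x)}+\beta\sqrt{dh}$. But the claim that multiplying by the interval length $(1-u)h$ ``truncates'' the singularity is incorrect. The KL integrand is $\eta_t^2 d_t^2$, which picks up \emph{two} powers of the drift mismatch (once through $d_t$ and once through the resulting KL cost), i.e.\ a factor $\tfrac{1}{(1-u)^2}$. Integrating over $[uh,h]$ supplies one compensating factor of $(1-u)h$, leaving a net contribution of order $\tfrac{A^2 h}{1-u}$ to $\KL(\delta_x\hat P^u\mmid\delta_y P)$. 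Averaging over $u$ then gives $\int_0^1 \tfrac{\D u}{1-u}=\infty$: the bound diverges. No choice of shift schedule $\eta_t$ within this interpolation avoids this (at $u=1$ the second-phase drift is infinite, and the continuous interpolation degenerates into a deterministic jump). As a secondary issue, the singular factor $\tfrac{1}{1-u}$ lands on the discretization terms $\beta^2 h^3\,\norm{\nabla V(x)}^2+\beta^2 dh^2$, not on $\norm{x-y}^2/h$ as you state (with $d_0=\norm{x-y}$ fixed at time $0$, that term incurs no $(1-u)$-dependence at all).

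What is missing is an entirely separate argument for $u$ close to $1$. The paper handles this by splitting at a threshold $\delta$: for $u\le 1-\delta$ it uses a Girsanov-style bound with the $\tfrac{1}{1-u}$ factor (contributing $\log\tfrac{1}{\delta}$ upon integration), and for $u\ge 1-\delta$ it abandons Girsanov entirely, instead comparing $\hat P^u$ to $\hat P^{\msf{LMC}}$ via the R\'enyi weak triangle inequality and bounding $\KL(\delta_x\hat P^u\mmid\delta_x\hat P^{\msf{LMC}})$ by a direct change-of-variables computation (both measures are pushforwards of the same Gaussian under near-identity maps, with Jacobian $I - h\nabla^2 V$). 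That second argument yields a crude but $u$-uniform bound of order $\tfrac{\norm{x-y}^2}{h}+\beta^2 h^3\norm{\nabla V(x)}^2+d$, and choosing $\delta=\beta^2 h^2$ balances the two regimes, producing exactly the claimed $\log\tfrac{1}{\beta h}$. Your proposal needs this second ingredient (or a substitute) to be made correct; without it, the integral over $u$ is divergent and the lemma does not follow.
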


We defer the proof to \S\ref{app:cross_reg_rmlmc}. With this lemma in hand, we can now establish the following KL guarantees for RM--LMC\@.
We recall Remarks~\ref{rmk:slc_init} and~\ref{rmk:lsi_init} on initialization.

\begin{theorem}[RM--LMC]\label{thm:rmlmc}
    Let $\pi \propto \exp(-V)$, and let $\hat P$ denote the kernel for~\ref{eq:rmlmc}.
    We impose the following assumption:
    \begin{itemize}
        \item $V$ is twice-continuously differentiable and satisfies $0 \preceq \alpha I \preceq \nabla^2 V \preceq \beta I$ over $\R^d$.
        Write $\kappa \deq \beta/\alpha$.
    \end{itemize}
    Then, the following statements hold.
    \begin{enumerate}
        \item \underline{$\alpha > 0$ case:} If $\varepsilon \in [0,\sqrt d/\kappa]$, $h = \widetilde\Theta(\frac{\varepsilon}{\beta d^{1/2}})$, and $W_2(\hat\mu_0,\pi) \le \sqrt{d/\alpha}$, then $\KL(\hat\mu_0\hat P^N \mmid \pi) \le \varepsilon^2$ for
        \begin{align*}
            N \ge \widetilde \Omega\Bigl(\frac{\kappa d^{1/2}}{\varepsilon}\Bigr)\,.
        \end{align*}
        \item \underline{$\alpha = 0$ case:} If $\varepsilon > 0$ is sufficiently small and $h = \widetilde\Theta(\frac{\varepsilon^{4/3}}{\beta^{4/3} d^{1/3} W^{2/3}})$, then $\KL(\hat\mu_0\hat P^N \mmid \pi) \le \varepsilon^2$ for
        \begin{align}\label{eq:rmd_weak_cvx}
            N = \widetilde\Theta\Bigl(\frac{\beta^{4/3} d^{1/3} W^{8/3}}{\varepsilon^{10/3}}\Bigr)\,,
        \end{align}
        where $W \deq W_2(\hat\mu_0,\pi)$.
    \end{enumerate}
\end{theorem}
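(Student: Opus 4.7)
The plan is a direct application of the KL local error framework (Theorem~\ref{thm:kl-general}), whose one-step assumptions are all already verified in the preceding lemmas. Specifically, I take $L = \exp(-\alpha h)$, $\gamma = O(\beta h)$, and $c = O(1/h)$ from Lemma~\ref{lem:ld_properties}; the weak and strong errors $\mc E_{\rm weak}(x) \lesssim \beta^2 h^3 \,\norm{\nabla V(x)} + \beta^2 d^{1/2} h^{5/2}$ and $\mc E_{\rm strong}(x) \lesssim \beta h^2 \,\norm{\nabla V(x)} + \beta d^{1/2} h^{3/2}$ from Lemma~\ref{lem:rmlmc_local}; and $c' = \widetilde O(1/h)$ together with $b(x)^2 \lesssim \beta^2 h^3 \,\norm{\nabla V(x)}^2 + \beta^2 d h^2$ from Lemma~\ref{lem:cross_reg_rmlmc}. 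Critically, the weak error is a factor of $h$ smaller than the strong error; this is exactly what the framework is designed to exploit and is the source of the improved rate. Substituting these parameters into Theorem~\ref{thm:kl-general}, the bound on $\KL(\hat\mu_0 \hat P^N \mmid \pi)$ decomposes into an initialization term, a strong-error term of order $\widetilde O(\beta^2 d h^2)$, and a weak-error term of order $\widetilde O(\bar N\, \beta^4 d h^4)$, plus gradient-moment contributions and an absorbable $\bar b^2$ piece.

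Next I would bound the gradient moments $G_n^2 \deq \max_{k<n}\E_{\hat\mu_0 \hat P^k}[\,\norm{\nabla V}^2]$ uniformly via Lemma~\ref{lem:recursive_grad}, in direct analogy with the proof of Theorem~\ref{thm:lmc}. In the SLC case, the initialization assumption $W_2(\hat\mu_0,\pi)\le\sqrt{d/\alpha}$ combined with the $W_2$-contraction yields $G_n^2 \lesssim \beta \kappa d$ uniformly, provided $h \lesssim 1/(\beta\kappa)$. In the WLC case, one applies Lemma~\ref{lem:recursive_grad} with a finite horizon, burn-in $n_0 = \Theta(1/(\beta h))$, and $\msf B^2 = O(W^2)$, delivering $G_N^2 = O(\beta d + \beta^2 W^2)$. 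Once these uniform bounds are substituted, all $G^2$-dependent pieces are subdominant in the regimes of interest.

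Finally, the step size and iteration count are chosen by balancing the surviving terms. For the SLC case, the strong-error term $\widetilde O(\beta^2 d h^2)$ dominates, so $h \asymp \varepsilon/(\beta d^{1/2})$ suffices, with the weak-error contribution $\widetilde O(\beta^3 \kappa d h^3/\alpha)$ subleading precisely when $\varepsilon \le \sqrt d/\kappa$ (which is the stated range); running the method for $N = \widetilde\Theta(1/(\alpha h))$ steps then drives the initialization term to below $\varepsilon^2$, yielding $N = \widetilde\Theta(\kappa d^{1/2}/\varepsilon)$. For the WLC case, the initialization decays only as $W^2/(Nh)$, so I balance it against the weak-error term $N \beta^4 d h^4$: setting $Nh \asymp W^2/\varepsilon^2$ and $h \asymp \varepsilon^{4/3}/(\beta^{4/3} d^{1/3} W^{2/3})$ produces $N = \widetilde\Theta(\beta^{4/3} d^{1/3} W^{8/3}/\varepsilon^{10/3})$, matching~\eqref{eq:rmd_weak_cvx}. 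The condition ``$\varepsilon$ sufficiently small'' is what is needed to make the weak-error balance tighter than the strong-error balance.

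The main obstacle is really contained in the preceding lemmas rather than in the assembly: it is the cross-regularity bound (Lemma~\ref{lem:cross_reg_rmlmc}) that allows the framework to apply to the \emph{unmodified} algorithm with only a $\log(1/(\beta h))$ loss, thereby avoiding the last-step-hack of Remark~\ref{rmk:last_step_hack} and the weak-error estimate (Lemma~\ref{lem:rmlmc_local}) is what makes the $\widetilde O(\sqrt d/\varepsilon)$ rate accessible. With those in hand, the endgame here is largely bookkeeping: verifying that each of the many subdominant error pieces is genuinely lower-order in $h$ and $\varepsilon$ under the stated step-size choices, and that the recursive gradient bound closes on itself without feedback.
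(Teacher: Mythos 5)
Your proposal is correct and follows exactly the paper's proof: one applies Theorem~\ref{thm:kl-general} with $L=\exp(-\alpha h)$, $\gamma\asymp\beta h$, $c\asymp 1/h$ from Lemma~\ref{lem:ld_properties}, the weak/strong errors from Lemma~\ref{lem:rmlmc_local}, and $c'=\widetilde O(1/h)$, $b^2\lesssim\beta^2 h^3\norm{\nabla V}^2+\beta^2dh^2$ from Lemma~\ref{lem:cross_reg_rmlmc}; then controls $G_n^2$ via Lemma~\ref{lem:recursive_grad} (yielding $G_n^2\lesssim\beta\kappa d$ in the SLC case and $G_N^2=\widetilde O(\beta d+\beta^2W^2)$ in the WLC case); and finally balances the strong-error term $\widetilde O(\beta^2dh^2)$ against $\varepsilon^2$ in the SLC case (with the weak-error term subdominant precisely when $\varepsilon\le\sqrt d/\kappa$), and balances $W^2/(Nh)$ against $N\beta^4dh^4$ in the WLC case. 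The balancing computations and the resulting choices of $h$ and $N$ you give agree with the paper's.
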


\begin{theorem}[RM--LMC under LSI]\label{thm:rmlmc_lsi}
    Let $\pi \propto \exp(-V)$, and let $\hat P$ denote the kernel for~\ref{eq:rmlmc}.
    We impose the following assumptions:
    \begin{itemize}
        \item $\pi$ satisfies~\eqref{eq:lsi} with constant $1/\alpha$.
        \item $V$ is twice-continuously differentiable and satisfies $-\beta I \preceq \nabla^2 V \preceq \beta I$ over $\R^d$.
        Write $\kappa \deq \beta/\alpha \ge 1$.
        \item The initialization satisfies $\log\chi^2(\hat \mu_0 \mmid \pi) = \widetilde O(d)$.
    \end{itemize}
    If $\varepsilon \in [0,\sqrt d]$ and $h = \widetilde \Theta(\frac{\varepsilon}{\beta \kappa^{1/2} d^{1/2}})$, then $\KL(\mu_0\hat P^N \mmid \pi) \le \varepsilon^2$ for
    \begin{align*}
        N = \widetilde\Theta\Bigl(\frac{\kappa^{3/2} d^{1/2}}{\varepsilon}\Bigr)\,.
    \end{align*}
\end{theorem}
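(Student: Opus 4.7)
The proof follows the template of Theorem~\ref{thm:lmc_lsi}: invoke Theorem~\ref{thm:kl-general} with $L = \exp(\beta h) > 1$ to bound the pure discretization gap $\KL(\hat\mu_0 \hat P^n \mmid \hat\mu_0 P^n)$, and separately use the exponential decay of $\chi^2$ for the continuous-time Langevin diffusion under~\eqref{eq:lsi} (Theorem~\ref{thm:langevin_renyi_conv}) to bound $\chi^2(\hat\mu_0 P^n \mmid \pi)$. The two pieces are stitched together by the R\'enyi weak triangle inequality (Corollary~\ref{kl:weak-triangle}). Crucially, since both processes under comparison start from the same measure $\hat\mu_0$, the $W_2$ term in Theorem~\ref{thm:kl-general} vanishes, which is exactly the flexibility our framework provides beyond standard $W_2$-local-error analyses.

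First, I would collect the inputs to Theorem~\ref{thm:kl-general}. From Lemma~\ref{lem:ld_properties} (applied with curvature lower bound $-\beta$) one reads off $L = \exp(\beta h)$, $\gamma = O(\beta h)$, and regularity constant $c = O(1/h)$. Lemma~\ref{lem:rmlmc_local} supplies the local errors
\begin{align*}
    \mc E_{\rm strong}(x) &\lesssim \beta h^2\,\norm{\nabla V(x)} + \beta d^{1/2} h^{3/2}\,, \\
    \mc E_{\rm weak}(x) &\lesssim \beta^2 h^3\,\norm{\nabla V(x)} + \beta^2 d^{1/2} h^{5/2}\,,
\end{align*}
and Lemma~\ref{lem:cross_reg_rmlmc} supplies $c' = \widetilde O(1/h)$ and $b(x)^2 \lesssim \beta^2 h^3\,\norm{\nabla V(x)}^2 + \beta^2 d h^2$. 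On the intended horizon $Nh = \widetilde\Theta(1/\alpha)$ one has $(L-1)N \vee \log \bar N = \widetilde O(\beta hN)$, and in the regime $\beta h \ll 1$ the weak contribution $\beta^4 h^4 n\,(hG_n^2 + d)$ is dominated by the strong one, so Theorem~\ref{thm:kl-general} collapses to
\begin{align*}
    \KL(\hat\mu_0 \hat P^n \mmid \hat\mu_0 P^n)
    \lesssim \widetilde O\bigl(\beta^3 h^3 n\,(h G_n^2 + d)\bigr) + \widetilde O(\beta^2 d h^2)\,,
\end{align*}
where $G_n^2 \deq \max_{k<n}\E_{\hat\mu_0\hat P^k}[\norm{\nabla V}^2]$.

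Second, I would combine with continuous-time decay: by Corollary~\ref{kl:weak-triangle}, Theorem~\ref{thm:langevin_renyi_conv}, and the initialization hypothesis $\log\chi^2(\hat\mu_0\mmid\pi) = \widetilde O(d)$,
\begin{align*}
    \KL(\hat\mu_0 \hat P^n \mmid \pi)
    &\le \log\bigl(1+\chi^2(\hat\mu_0 P^n \mmid \pi)\bigr) + 2\KL(\hat\mu_0 \hat P^n \mmid \hat\mu_0 P^n) \\
    &\lesssim \exp(-\alpha nh)\,\widetilde O(d) + \widetilde O\bigl(\beta^3 h^3 n\,(h G_n^2 + d) + \beta^2 d h^2\bigr)\,.
\end{align*}
To close the loop I would control $G_n^2$ via Lemma~\ref{lem:recursive_grad}, following the LSI branch of the LMC proof: with $n_0 = 0$, $\msf C^2 = \widetilde O(\beta\kappa h^2)$, and $\msf D^2 = \widetilde O(d)$, under the mild condition $h \lesssim \widetilde O(1/(\beta\kappa))$, this yields $G_N^2 = \widetilde O(\beta d)$, at which point $h G_N^2 \ll d$.

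Finally, I would choose $Nh = \widetilde\Theta(1/\alpha)$ so the continuous-time term is $\le \varepsilon^2$; the dominant discretization contribution then becomes $\widetilde O(\beta^2 \kappa d h^2)$, and equating this with $\varepsilon^2$ forces $h = \widetilde\Theta(\varepsilon/(\beta\kappa^{1/2} d^{1/2}))$, giving $N = (1/\alpha)/h = \widetilde\Theta(\kappa^{3/2} d^{1/2}/\varepsilon)$ as claimed. I expect no serious obstacle here: the entire argument is a direct assembly of already-established lemmas, and the main bookkeeping subtleties are verifying that the prefactor $(L-1)N \vee \log \bar N$ really remains polylogarithmic at this step size, and checking that the ``last-step'' cross-regularity residual $\bar b^2 \lesssim \beta^2 d h^2 \asymp \varepsilon^2/\kappa$ is genuinely lower order---both of which are immediate once the parameters are pinned down.
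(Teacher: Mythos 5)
Your strategy matches the paper's proof exactly: bound the pure discretization gap $\KL(\hat\mu_0\hat P^n \mmid \hat\mu_0 P^n)$ via Theorem~\ref{thm:kl-general} with $L=\exp(\beta h)>1$ (so the $W_2$ term vanishes since both chains start at $\hat\mu_0$), combine with Theorem~\ref{thm:langevin_renyi_conv} via Corollary~\ref{kl:weak-triangle}, close the gradient bootstrap with Lemma~\ref{lem:recursive_grad}, and then tune $Nh = \widetilde\Theta(1/\alpha)$ and $h$. The assembly, the dominance of the strong-error term $\kappa\beta^2 dh^2$, and the final choice of $h$ and $N$ are all the same as in the paper.

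One bookkeeping slip worth flagging: in the call to Lemma~\ref{lem:recursive_grad} you take $\msf C^2 = \widetilde O(\beta\kappa h^2)$ with constraint $h\lesssim 1/(\beta\kappa)$, which are the LMC values from Theorem~\ref{thm:lmc_lsi}, not the RM--LMC ones. Reading off your own discretization bound $\widetilde O(\beta^3 h^3 n\,(h G_n^2 + d))$ at $n\le N = \widetilde\Theta(1/(\alpha h))$, the coefficient of $G_n^2$ is $\widetilde O(\beta^3 h^4 N) = \widetilde O(\beta^2\kappa h^3)$ (the paper records $\msf C^2 = \widetilde O(\beta^3\kappa h^4 + \beta^2\kappa h^3)$), and the binding step-size requirement for $\msf D^2 = \widetilde O(d)$ is $h\le\widetilde O(1/(\beta\kappa^{1/2}))$. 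This matters quantitatively: with the final $h = \widetilde\Theta(\varepsilon/(\beta\kappa^{1/2}d^{1/2}))$, the paper's condition $h\le\widetilde O(1/(\beta\kappa^{1/2}))$ holds for the full stated range $\varepsilon\le\sqrt d$, whereas your stricter $h\lesssim 1/(\beta\kappa)$ would artificially restrict to $\varepsilon\lesssim\sqrt{d/\kappa}$. Substituting the correct constants fixes this without changing anything else in the argument.
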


We defer the proofs to \S\ref{app:rmlmc}.

As discussed above, none of the standard analysis techniques discussed in \S\ref{sec:lmc_discussion} have successfully been applied to RM{--}LMC, and our KL guarantees are the first in each of the three settings.
We conclude with a few remarks about our rate~\eqref{eq:rmd_weak_cvx} for the weakly log-concave setting. This rate should be interpreted as $\widetilde O((d/\varepsilon^2)^{1/3}\, (\beta W^2/\varepsilon^2)^{4/3})$, which is incomparable to the rate $\widetilde O((d/\varepsilon^2)\, (\beta W^2/\varepsilon^2))$ of~\cite{durmus2019analysis} in that the dependence is better for the former term, but worse for the latter term. This suggests potential room for further improvement in our rate, cf.\ Remark~\ref{rmk:rmd_weak_cvx}.
We also note that for this rate~\eqref{eq:rmd_weak_cvx}, we only kept track of the leading-order term as $\varepsilon \searrow 0$; a more detailed rate can be extracted from the proof.

\begin{remark}
    It is natural to ask whether we can also obtain KL guarantees for the randomized midpoint discretization of the underdamped Langevin diffusion, which was originally considered in~\cite{shen2019randomized}.
    The study of the underdamped Langevin diffusion requires the development of further machinery to handle the degeneracy of the noise, and will be handled in a future work.
\end{remark}

	\paragraph*{Acknowledgments.} We thank Matthew S.\ Zhang for numerous helpful conversations. JMA acknowledges the support of a Seed Grant Award from Apple and an NYU Faculty Fellowship. SC acknowledges the support of the Eric and Wendy Schmidt Fund at the Institute for Advanced Study. This work was partially completed while the authors were visiting the Institute for Mathematical Sciences at the National University of Singapore in January 2024. 

    \newpage

    \appendix

\section{Extension to R\'enyi divergences}\label{app:renyi}

In this Appendix section, we extend Theorem~\ref{thm:kl-simple} to R\'enyi divergences. In \S\ref{app:renyi-prelim}, we first briefly recall relevant background about R\'enyi divergences, and then in \S\ref{app:renyi-result} we detail this extension.

\subsection{Background on R\'enyi divergence}\label{app:renyi-prelim}

\begin{defin}\label{def:renyi}
	The R\'enyi divergence of order $q \in (1,\infty)$ between probability measures $\mu,\nu$ is defined as
	\begin{align*}
		\Ren_q(\mu \mmid \nu)  \deq  \frac{1}{q-1} \log \int \Bigl( \frac{\D \mu}{\D \nu} \Bigr)^q\, \D \nu
	\end{align*}
	if $\mu \ll \nu$, and otherwise is defined to be $+\infty$. 
\end{defin}

The R\'enyi divergences of order $q = 1$ and $q = \infty$ are interpreted in the limiting sense: $\Ren_1 = \KL$ coincides with the KL divergence, and $\Ren_{\infty}(\mu,\nu) = \log \| \D \mu / \D \nu \|_{L^{\infty}(\nu)}$. Another famous special case is $\Ren_2 = \log(1 + \chi^2)$ due to the connection with the $\chi^2$-divergence. R\'enyi divergences can also be extended naturally to $q \in (0,1)$, see~\cite{van2014renyi}. R\'enyi divergences enjoy similar properties to KL, albeit sometimes in weaker forms. The following proposition collects some such properties; proofs and further background on R\'enyi divergence can be found in surveys such as~\cite{van2014renyi,mironov2017renyi}.

\begin{prop}[Basic properties of R\'enyi divergences]\label{prop:renyi_prop}
	Let $q \in [1,\infty]$ and let $\mu$, $\nu$ be probability measures.
	\begin{enumerate}
		\item (Positivity) $\Ren_q(\mu \mmid \nu) \ge 0$, with equality if and only if $\mu = \nu$.
		\item (Monotonicity) R\'enyi divergences are increasing in the order, i.e., $q \mapsto \Ren_q(\mu \mmid \nu)$ is increasing.
		\item (Data-processing inequality) For any Markov kernel $P$, it holds that $\Ren_q(\mu P \mmid \nu P) \le \Ren_q(\mu \mmid \nu)$.
            \item (Continuity) $\Ren_q$ is continuous in $q$ for all $q$ such that $\Ren_q < \infty$.
		\item (Weak triangle inequality) For any third probability measure $\pi$, and any $\lambda \in (0,1)$, 
		\begin{align*}
			\Ren_q(\mu \mmid \pi) \le \frac{q-\lambda}{q-1}\,\Ren_{q/\lambda}(\mu \mmid \nu) + \Ren_{(q-\lambda)/(1-\lambda)}(\nu \mmid \pi)\,.
		\end{align*}
	\end{enumerate}
\end{prop}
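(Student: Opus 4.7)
The plan is to derive all five properties from a single representation: with $L \deq \D\mu/\D\nu$, write
\[
    \Ren_q(\mu \mmid \nu) = \frac{1}{q-1}\log \E_\nu[L^q] = \frac{1}{q-1}\log \E_\mu[L^{q-1}].
\]
From here, everything reduces to Jensen's inequality, H\"older's inequality, and dominated convergence; this proposition is standard, so my aim is to organize the argument compactly rather than uncover anything new.

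I would dispatch (1)--(4) briskly, in order. For \emph{positivity}, Jensen applied to the convex map $x\mapsto x^q$ with $\E_\nu[L]=1$ gives $\E_\nu[L^q]\ge 1$, with equality iff $L$ is $\nu$-a.s.\ constant, i.e., iff $\mu=\nu$. For \emph{monotonicity}, note that $s\mapsto \tfrac{1}{s}\log\E[X^s]$ is nondecreasing for any positive random variable $X$ (a direct consequence of H\"older), and apply this with $X=L$ under $\mu$ and $s=q-1$. For the \emph{data-processing inequality}, recognize $\E_\nu[L^q]$ as the $f$-divergence with convex $f(x)=x^q$, whose DPI is routine by conditional Jensen on the output of $P$; the monotonicity of $\tfrac{1}{q-1}\log$ preserves the inequality. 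For \emph{continuity} in $q$, I would use dominated convergence: on any compact subinterval of $\{q:\Ren_q(\mu\mmid\nu)<\infty\}$, the family $L^q$ is dominated by $1+L^{q_0}$ for a suitable endpoint $q_0$; the endpoint cases $q=1,\infty$ come from their usual limiting definitions.

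The main content is the \emph{weak triangle inequality} (5), which I would handle by a single application of H\"older with carefully chosen exponents. Restricting to the case where the right-hand side is finite (otherwise the claim is vacuous), I would absorb one factor of $\D\nu/\D\pi$ into a change of measure to obtain
\[
    \int \Bigl(\tfrac{\D\mu}{\D\pi}\Bigr)^q \D\pi = \int \Bigl(\tfrac{\D\mu}{\D\nu}\Bigr)^q \Bigl(\tfrac{\D\nu}{\D\pi}\Bigr)^{q-1}\D\nu,
\]
then apply H\"older with conjugate exponents $p = 1/\lambda$ and $p' = 1/(1-\lambda)$. The first factor yields $(\E_\nu[L^{q/\lambda}])^\lambda$, and using $(q/\lambda-1)\lambda = q-\lambda$ this equals $\exp((q-\lambda)\,\Ren_{q/\lambda}(\mu\mmid\nu))$. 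The second factor, after pushing the integral back from $\D\nu$ to $\D\pi$, picks up an extra factor of $\D\nu/\D\pi$, producing the exponent $(q-1)/(1-\lambda)+1 = (q-\lambda)/(1-\lambda)$ and, after simplification, $\exp((q-1)\,\Ren_{(q-\lambda)/(1-\lambda)}(\nu\mmid\pi))$. Taking logs and dividing by $q-1$ produces the stated inequality, with the coefficient $\tfrac{q-\lambda}{q-1}$ appearing automatically from the exponent arithmetic.

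The only genuine decision in the whole proof is picking the right H\"older exponents in (5) so that the target orders $q/\lambda$ and $(q-\lambda)/(1-\lambda)$ emerge; once the pair $(1/\lambda, 1/(1-\lambda))$ is identified, the computation is mechanical. Everything else is bookkeeping about absolute continuity (which I would either reduce to finite-divergence cases or to the vacuous side) and the standard passage to the limits $q\in\{1,\infty\}$ via property (4) and a separate direct argument for the $L^\infty$ case.
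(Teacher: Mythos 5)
Your proof is correct and follows the standard derivations given in the references the paper cites (van Erven--Harrem\"oes and Mironov); the paper itself offers no proof, only a citation. The key step in part (5)---changing variables to $\int (\D\mu/\D\nu)^q\,(\D\nu/\D\pi)^{q-1}\,\D\nu$ and applying H\"older with conjugate exponents $1/\lambda$ and $1/(1-\lambda)$---is exactly the classical argument, and the exponent bookkeeping you carry out is right; the only caveats worth flagging are that the statement as written is vacuous at $q=1$ (coefficient $\tfrac{q-\lambda}{q-1}=\infty$, which is why the paper's Corollary~\ref{kl:weak-triangle} takes a simultaneous limit $q=1+\varepsilon$, $\lambda=1-\varepsilon$), and that the $q=\infty$ endpoint reduces to submultiplicativity of $\lVert\cdot\rVert_{L^\infty}$, both of which you acknowledge.
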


We often make use of the following special case of this weak triangle inequality. 

\begin{cor}[KL weak triangle inequality]\label{kl:weak-triangle}
    For any probability measures $\mu$, $\nu$, $\pi$,
    \begin{align*}
        \KL(\mu \mmid \pi) \le 2 \KL(\mu \mmid \nu) + \log(1+\chi^2(\nu \mmid \pi))\,.
    \end{align*}
\end{cor}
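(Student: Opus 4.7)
The plan is to prove Corollary~\ref{kl:weak-triangle} directly from the Donsker--Varadhan (DV) variational formula. Although the KL weak triangle inequality is advertised as a special case of the R\'enyi weak triangle inequality in Proposition~\ref{prop:renyi_prop}, the coefficient $(q-\lambda)/(q-1)$ there is singular as $q \to 1^+$ for every fixed $\lambda \in (0,1)$, so a direct argument is cleaner than a limiting one. Without loss of generality assume $\mu \ll \nu \ll \pi$ and that $\KL(\mu \mmid \nu)$ and $\chi^2(\nu \mmid \pi)$ are both finite, since otherwise the stated inequality is vacuous.

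First, I would split the target divergence using the log-ratio chain rule
\[
\KL(\mu \mmid \pi) \;=\; \int \log\frac{\D\mu}{\D\nu}\,\D\mu \;+\; \int \log\frac{\D\nu}{\D\pi}\,\D\mu \;=\; \KL(\mu \mmid \nu) \;+\; \int \log\frac{\D\nu}{\D\pi}\,\D\mu\,,
\]
which follows $\mu$-a.s.\ from $\D\mu/\D\pi = (\D\mu/\D\nu)\,(\D\nu/\D\pi)$. This delivers the first KL term on the right-hand side of the claim ``for free''; the task reduces to dominating the remaining cross term by $\KL(\mu \mmid \nu) + \log(1+\chi^2(\nu \mmid \pi))$.

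Next, I would apply DV, in the form $\int g \,\D\mu \le \KL(\mu \mmid \nu) + \log \int e^{g}\,\D\nu$, with the test function $g \deq \log(\D\nu/\D\pi)$, to obtain
\[
\int \log\frac{\D\nu}{\D\pi}\,\D\mu \;\le\; \KL(\mu \mmid \nu) \;+\; \log \int \frac{\D\nu}{\D\pi}\,\D\nu \;=\; \KL(\mu \mmid \nu) \;+\; \log\bigl(1+\chi^2(\nu \mmid \pi)\bigr)\,,
\]
where the last equality uses the change of variables $\int (\D\nu/\D\pi)\,\D\nu = \int (\D\nu/\D\pi)^2\,\D\pi = 1 + \chi^2(\nu \mmid \pi)$. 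Combining the two displays then yields the stated corollary. The only subtlety -- and the main thing to justify carefully -- is the applicability of DV for an unbounded test function $g$; this is standard, because $e^g = \D\nu/\D\pi$ is $\nu$-integrable by the finiteness of $\chi^2(\nu \mmid \pi)$, so one can truncate $g$ from above at level $N$, apply DV in its bounded form, and pass to the limit $N \to \infty$ via monotone convergence.
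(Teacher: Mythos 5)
Your proof is correct, and it takes a genuinely different route from the paper. The paper's proof is a one-liner: apply the R\'enyi weak triangle inequality (Proposition~\ref{prop:renyi_prop}, item 5) with $\lambda = 1-\varepsilon$, $q = 1+\varepsilon$, so that $\frac{q-\lambda}{q-1} = 2$ and $\frac{q-\lambda}{1-\lambda} = 2$, and then let $\varepsilon \searrow 0$ using the continuity of $q \mapsto \Ren_q$ and the identifications $\Ren_1 = \KL$, $\Ren_2 = \log(1+\chi^2)$. Your observation that the prefactor $(q-\lambda)/(q-1)$ diverges as $q \to 1^+$ is only an obstruction when $\lambda$ is held fixed; the paper's coupled scaling $\lambda = 1-\varepsilon$, $q = 1+\varepsilon$ keeps it equal to $2$ throughout, so the limit is benign. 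Your direct argument---the log-ratio decomposition $\KL(\mu \mmid \pi) = \KL(\mu \mmid \nu) + \int \log(\D\nu/\D\pi)\,\D\mu$ followed by Donsker--Varadhan with test function $g = \log(\D\nu/\D\pi)$, using $\int e^g\,\D\nu = \int (\D\nu/\D\pi)^2\,\D\pi = 1 + \chi^2(\nu \mmid \pi)$---is a clean, self-contained alternative that avoids invoking the R\'enyi machinery altogether, at the cost of a short argument (truncation plus monotone convergence) to legitimize DV with an unbounded $g$. What the paper's route buys is brevity, given that Proposition~\ref{prop:renyi_prop} is already stated; what your route buys is that it requires nothing beyond the classical DV formula, and it makes transparent why the constant $2$ appears: each of the two applications of KL-duality (the decomposition and the DV bound) contributes one copy of $\KL(\mu\mmid\nu)$.
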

\begin{proof}
    Apply the R\'enyi weak triangle inequality with $\lambda = 1-\varepsilon$, $q = 1+\varepsilon$, and let $\varepsilon\searrow 0$.
\end{proof}

The R\'enyi divergences satisfy a ``composition rule'' similar to the chain rule~\eqref{eq:chain-rule-kl} satisfied by the KL divergence. Namely, in the notation of \S\ref{sec:prelim}, 
\begin{align}
	\Ren_q(\bs \mu^{\msf Y} \mmid \bs \nu^{\msf Y})
	\leq
	\Ren_q(\bs\mu^{\msf X,\msf Y} \mmid \bs \nu^{\msf X,\msf Y})
	&\le \Ren_q(\bs \mu^{\msf X} \mmid \bs \nu^{\msf X}) + \esssup{\bs\mu^{\msf X}}{\bigl[\Ren_q(\bs \mu^{\msf Y\mid \msf X=\bullet} \mmid \bs \nu^{\msf Y\mid \msf X=\bullet})\bigr]}\,.
	\label{eq:composition-rule-renyi}
\end{align}
In words, the key difference between the KL chain rule~\eqref{eq:chain-rule-kl} and this R\'enyi composition rule~\eqref{eq:composition-rule-renyi} and is that the expectation is replaced by a supremum. The shifted composition rule---the namesake of this series of papers---generalizes this standard R\'enyi composition rule to allow for shifts in an analogous way as the shifted chain rule (Theorem~\ref{thm:scr-kl}) generalizes the standard KL chain rule. This is Theorem 3.1 of~\cite{scr1} (with a slightly simplified statement here since $q < 1$ is not the relevant regime in the applications to sampling pursued in this paper).

\begin{theorem}[Shifted composition rule]\label{thm:scr}
	Let $\msf X$, $\msf X'$, $\msf Y$ be three jointly defined random variables on a standard probability space $\Omega$.
	Let $\bs\mu$, $\bs \nu$ be two probability measures over $\Omega$, with superscripts denoting the laws of random variables under these measures. For any $q \in [1,\infty]$,
	\begin{align*}
		\Ren_q(\bs\mu^{\msf Y} \mmid \bs \nu^{\msf Y})
		&\le \Ren_q(\bs\mu^{\msf X'} \mmid \bs \nu^{\msf X}) + \inf_{\gamma \in \Coup(\bs \mu^{\msf X}, \bs \mu^{\msf X'})} {\esssup{\gamma}_{(x,x') \in \Omega \times \Omega}{\Ren_q(\bs\mu^{\msf Y\mid \msf X=x} \mmid \bs \nu^{\msf Y\mid \msf X=x'})}}\,.
	\end{align*}
\end{theorem}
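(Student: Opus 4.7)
My plan is to reduce the shifted composition rule to the standard (unshifted) composition rule~\eqref{eq:composition-rule-renyi} by constructing two auxiliary joint measures on $\Omega \times \Omega$ whose second marginals recover $\bs\mu^{\msf Y}$ and $\bs\nu^{\msf Y}$, but whose first marginals are the ``shifted'' pair $(\bs\mu^{\msf X'}, \bs\nu^{\msf X})$ required for the first term of the conclusion. Fix any $\gamma \in \Coup(\bs\mu^{\msf X}, \bs\mu^{\msf X'})$ and disintegrate it as $\gamma(\D x, \D x') = \bs\mu^{\msf X'}(\D x')\,\gamma^{\msf X \mid \msf X' = x'}(\D x)$. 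Define $\bs A$ on $\Omega \times \Omega$ with coordinates $(z_1, z_2)$ by drawing $z_1 \sim \bs\mu^{\msf X'}$ and, conditionally on $z_1 = x'$, drawing $z_2$ from the \emph{mixture} $\int \bs\mu^{\msf Y \mid \msf X = x}\,\gamma^{\msf X \mid \msf X' = x'}(\D x)$; set $\bs B \deq \bs\nu^{\msf X, \msf Y}$. A direct disintegration check shows $\bs A^{z_2} = \bs\mu^{\msf Y}$ (integrating out $x'$ collapses $\gamma$ back to $\bs\mu^{\msf X}$ before applying the $\bs\mu$-conditional of $\msf Y$), and $\bs B^{z_2} = \bs\nu^{\msf Y}$ holds by construction.

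I would then invoke the data-processing inequality (Proposition~\ref{prop:renyi_prop}) for the projection onto $z_2$ to get $\Ren_q(\bs\mu^{\msf Y} \mmid \bs\nu^{\msf Y}) \le \Ren_q(\bs A \mmid \bs B)$, and the standard composition rule~\eqref{eq:composition-rule-renyi} to obtain
\[
\Ren_q(\bs A \mmid \bs B) \le \Ren_q(\bs\mu^{\msf X'} \mmid \bs\nu^{\msf X}) + \esssup{\bs\mu^{\msf X'}} \Ren_q\bigl(\bs A^{z_2 \mid z_1 = \bullet} \mmid \bs\nu^{\msf Y \mid \msf X = \bullet}\bigr).
\]
The first term is exactly the first term of the conclusion. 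For the conditional R\'enyi term, I would use a quasi-convexity bound in the first argument: applying Jensen's inequality to the convex function $t \mapsto t^q$ inside the identity $\exp((q-1)\,\Ren_q(\mu \mmid \nu)) = \int (\D\mu/\D\nu)^{q-1}\,\D\mu$ yields, for any probability kernel $\kappa$ and measure $\beta$,
\[
\Ren_q\Bigl(\int \alpha_x\,\kappa(\D x) \Bigm\Vert \beta\Bigr) \le \esssup{\kappa} \Ren_q(\alpha_\bullet \mmid \beta).
\]
Applied to the mixture defining $\bs A^{z_2 \mid z_1 = x'}$, this bounds the conditional R\'enyi term by $\esssup{\gamma^{\msf X \mid \msf X' = x'}} \Ren_q(\bs\mu^{\msf Y \mid \msf X = \bullet} \mmid \bs\nu^{\msf Y \mid \msf X = x'})$. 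Nesting the two essential suprema---first over $x' \sim \bs\mu^{\msf X'}$, then over $x \sim \gamma^{\msf X \mid \msf X' = x'}$---collapses them to a single $\gamma$-essential supremum over pairs $(x, x') \in \Omega \times \Omega$, and taking the infimum over $\gamma$ completes the proof for $q \in (1, \infty)$. The boundary cases $q = 1$ and $q = \infty$ follow from the continuity of $q \mapsto \Ren_q$ (Proposition~\ref{prop:renyi_prop}); alternatively, $q = 1$ admits a direct proof via joint convexity of $\KL$, which sharpens the essential supremum into the integral against $\gamma$ appearing in Theorem~\ref{thm:scr-kl}.

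I expect the main obstacle to be the quasi-convexity step, since R\'enyi divergence of order $q > 1$ is not jointly convex; fortunately the argument only needs convexity in the \emph{first} argument so that the mixture defining $\bs A^{z_2 \mid z_1 = x'}$ can be broken apart into its pointwise components $\bs\mu^{\msf Y \mid \msf X = x}$, which is what couples the two essential suprema together into one over $\gamma$. The only other subtlety is measurability of the disintegrations and of the integrand $(x, x') \mapsto \Ren_q(\bs\mu^{\msf Y \mid \msf X = x} \mmid \bs\nu^{\msf Y \mid \msf X = x'})$, which is precisely why the hypothesis posits $\Omega$ to be a standard probability space.
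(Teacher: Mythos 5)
Your proof is correct, and since this paper defers to~\cite{scr1} for the proof of Theorem~\ref{thm:scr}, there is no in-paper argument to compare against; your route---construct an auxiliary joint law $\bs A$ on $\Omega \times \Omega$ by drawing $x' \sim \bs\mu^{\msf X'}$ and then $z_2$ from the $\gamma$-conditional mixture of $\bs\mu^{\msf Y \mid \msf X = \bullet}$, apply data-processing and the unshifted composition rule~\eqref{eq:composition-rule-renyi}, and break the mixture via convexity of $\mu \mapsto \int (\D\mu/\D\nu)^q \, \D\nu$ in its first argument---is precisely the natural one and matches the argument in~\cite{scr1}. One small caution: deducing $q=1$ by ``continuity'' is not automatic, because $\esssup{\gamma} \Ren_q$ decreasing pointwise as $q \searrow 1$ need not converge to $\esssup{\gamma} \Ren_1$; however, your alternative remark (direct proof for $q=1$ via joint convexity of $\KL$, recovering the stronger integral form of Theorem~\ref{thm:scr-kl}) closes this, and $q=\infty$ is safe by monotonicity in $q$ on both sides of the inequality.
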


\subsection{Local error framework for R\'enyi divergences}\label{app:renyi-result}

Here, we extend Theorem~\ref{thm:kl-simple} to R\'enyi divergences. The result replaces $\KL$ with $\Ren_q$, and $W_2$ with\footnote{Recall the definition $W_{\infty}(\mu,\nu) = \inf_{\gamma \in \Coup(\mu,\nu)} \esssup{\gamma} \|X-Y\|$.} $W_{\infty}$, and provides an otherwise identical bound in the case that the initialization measures $\hat\mu_0$, $\nu_0$ are Dirac distributions. The result immediately extends in a black-box way to arbitrary initialization measures $\hat\mu_0$, $\nu_0$ using the convexity principle for R\'enyi divergences~\cite[Theorem 3.7]{scr1}; however, that leads to a more complicated final expression (since the convexity principle is more involved for R\'enyi divergences than for KL), so we omit this extension for simplicity. 

We remark that the reason we did not emphasize this R\'enyi framework in the main text is that the $W_\infty$ assumption is too stringent for the applications to sampling we have in mind, and future work will be aimed at relaxing this assumption. 
However, we note that the theorem below already covers standard applications of shifted divergences to differential privacy~\cite{pabi}.

\begin{theorem}[Coupling analyses for R\'enyi divergences]\label{thm:main-renyi}
	Let $q \in [1,\infty]$. Suppose that $P$, $\hat{P}$ are Markov kernels on $\R^d$ satisfying the following one-step bounds:
	\begin{enumerate}
		\item \underline{Regularity.} $\Ren_{q}(\delta_x P \mmid \delta_y P) \le c\,\norm{x-y}^2$\,.
		\item \underline{Wasserstein bound.} $W_\infty(\delta_x \hat P, \delta_y P) \le L\,\norm{x-y} + a$.
		\item \underline{Cross-regularity.} $\Ren_{q}(\delta_x \hat P \mmid \delta_y P) \le c'\,\norm{x-y}^2 + b^2$.
	\end{enumerate}
	If $L \leq 1$, then for any $x,y \in\R^d$,
	\begin{align}
		\Ren_q(\delta_x \hat{P}^N \mmid \delta_y P^N)
		\leq
		\Bigl( c + (c'-c)\, \frac{1-L^2}{1-L^{2N}} \Bigr)\, \Bigl(\frac{1+L}{1-L}\Bigr)\, \frac{( a\,(1-L^{N-1}) +  L^{N-1}\, (1-L)\, \norm{x-y})^2} {1-L^{2N}} 
		+ b^2\,.
		\label{eq:thm-main:renyi}
	\end{align}

\end{theorem}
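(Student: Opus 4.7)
The plan is to mirror the proof of Theorem~\ref{thm:kl-simple} from \S\ref{ssec:kl-simple}, with two substitutions: the shifted chain rule (Theorem~\ref{thm:scr-kl}) is replaced by its R\'enyi analog, the shifted composition rule (Theorem~\ref{thm:scr}); and the metric $W_2$ is replaced by $W_\infty$. Since the statement only concerns Dirac initializations, no convexity principle is required at the end.

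First, I would construct an auxiliary process $\{Y_n'\}_{n=0}^N$ exactly as in \S\ref{sssec:aux_process}, except that at each step $(\hat X_n, Y_n')$ is coupled optimally with respect to $W_\infty$ rather than $W_2$. Define $\tilde Y_n \deq Y_n' + \eta_n\,(\hat X_n - Y_n')$ with shift parameters $\eta_0,\dotsc,\eta_{N-1} \in [0,1]$ to be optimized, and set $Y_{n+1}' \sim Q_n(\tilde Y_n, \cdot)$ where $Q_n = P$ for $n < N-1$ and $Q_{N-1} = \hat P$. The choice $\eta_{N-1} = 1$ ensures the interpolation property $Y_N' = \hat X_N$ (Observation~\ref{obs:aux-interp}). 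The distance recursion for $d_n \deq W_\infty(\hat\mu_n, \nu_n')$ then carries through unchanged: the optimal $W_\infty$-coupling yields $\|\hat X_n - \tilde Y_n\| = (1-\eta_n)\,\|\hat X_n - Y_n'\| \le (1-\eta_n)\,d_n$ a.s., and applying the hypothesis $W_\infty(\delta_{\hat X_n} \hat P, \delta_{\tilde Y_n} P) \le L\,\|\hat X_n - \tilde Y_n\| + a$ conditionally, then gluing the conditional couplings together, gives $d_{n+1} \le L\,(1-\eta_n)\,d_n + a$, the exact analog of Lemma~\ref{lem:aux-dist}.

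The main step is the R\'enyi divergence bound. Applying Theorem~\ref{thm:scr} iteratively with $\msf X = \tilde Y_n$, $\msf X' = Y_n'$, $\msf Y = Y_{n+1}'$ and with the coupling $\gamma_n$ of $(\tilde Y_n, Y_n')$ induced by the auxiliary construction gives
\begin{align*}
    \Ren_q(\nu_{n+1}' \mmid \nu_{n+1}) \le \Ren_q(\nu_n' \mmid \nu_n) + \operatorname*{ess\,sup}_{\gamma_n} \Ren_q\bigl(Q_n(\tilde Y_n,\cdot) \mmid P(Y_n',\cdot)\bigr)\,.
\end{align*}
This is precisely where the expectation in Lemma~\ref{lem:aux-kl} is upgraded to an essential supremum, and it is this upgrade that forces the use of $W_\infty$ rather than $W_2$. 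For $n < N-1$, regularity together with the a.s.\ bound $\|\tilde Y_n - Y_n'\| \le \eta_n d_n$ yields an essential-supremum bound of $c\eta_n^2 d_n^2$; for $n = N-1$, cross-regularity similarly yields $c'\,d_{N-1}^2 + b^2$. Telescoping and invoking the interpolation properties $\nu_0' = \delta_y$, $\nu_N' = \delta_x \hat P^N$ gives
\begin{align*}
    \Ren_q(\delta_x \hat P^N \mmid \delta_y P^N) \le c\sum_{n=0}^{N-2} \eta_n^2 d_n^2 + c'\,d_{N-1}^2 + b^2\,.
\end{align*}

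The resulting shift optimization problem is identical to~\eqref{eq:shift-opt-disc:opt}, since it depends only on the recursion for $d_n$ and the scalar coefficients, not on which divergence appears in the objective; invoking the general-$L$ version (Lemma~\ref{lem:shift-opt-L<1}) produces exactly~\eqref{eq:thm-main:renyi}. The only substantive point to verify is the essential-supremum control in the telescoping step, which works precisely because the optimal $W_\infty$-coupling was used at each step: this guarantees $\|\tilde Y_n - Y_n'\| \le \eta_n d_n$ a.s., so that the R\'enyi one-step bounds can be applied pointwise inside the essential supremum. Once this is set up, the remainder of the argument is mechanical.
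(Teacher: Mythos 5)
Your proposal matches the paper's own proof exactly: the paper states that the argument is obtained from Theorem~\ref{thm:kl-simple} by replacing $\KL$ with $\Ren_q$, $W_2$ with $W_\infty$, and the shifted chain rule (Theorem~\ref{thm:scr-kl}) with the shifted composition rule (Theorem~\ref{thm:scr}), which upgrades the expectations in Lemma~\ref{lem:aux-kl} to essential suprema (yielding Lemma~\ref{lem:aux-renyi-simple}), after which the shift optimization (Lemma~\ref{lem:shift-opt-L<1}) is applied unchanged. You have correctly identified all the substitutions and the reason $W_\infty$ is needed (to bound the essential supremum pointwise), so this is the paper's argument.
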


The proof is essentially identical to that of Theorem~\ref{thm:kl-simple} in \S\ref{ssec:kl-simple}, with $\KL$ replaced by $\Ren_q$, and $W_2$ replaced by $W_{\infty}$. Indeed, after making these changes, the argument extends unchanged except that the KL divergence bound in Lemma~\ref{lem:aux-kl} should be adapted to the following R\'enyi divergence bound. The proof of this lemma is omitted since it is identical except that the application of the KL shifted chain rule (Theorem~\ref{thm:scr-kl}) is now replaced by the R\'enyi shifted composition rule (Theorem~\ref{thm:scr}), which leads to a replacement of the expectations with $\mathrm{ess\,sup}$, and is why $W_2$ is replaced by $W_{\infty}$.

\begin{lemma}[R\'enyi divergence bound]\label{lem:aux-renyi-simple}
	Denote $d_n \deq W_\infty(\hat\mu_n, \nu_n')$. Then,
	\begin{align*}
		\Ren_q(\hat{\mu}_N \mmid \nu_N)
		&\le
		\sum_{n=0}^{N-1} \bigl\lVert\Ren_q\bigl(Q_n(\tilde Y_n,\cdot) \bigm\Vert P(Y_n',\cdot)\bigr)\bigr\rVert_{L^\infty}
		\leq c\sum_{n=0}^{N-2} \eta_n^2 d_n^2 + c' d_{N-1}^2 + b^2\,.
	\end{align*}
\end{lemma}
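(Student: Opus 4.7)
The plan is to mirror the proof of Lemma~\ref{lem:aux-kl} in \S\ref{sssec:evolution}, with three systematic substitutions: replace $\KL$ by $\Ren_q$, replace $W_2$ by $W_\infty$, and replace the KL shifted chain rule (Theorem~\ref{thm:scr-kl}) by the R\'enyi shifted composition rule (Theorem~\ref{thm:scr}). Because the R\'enyi composition rule contains an essential supremum where KL contains an expectation, this swap propagates naturally: the couplings used to define the auxiliary process in \S\ref{sssec:aux_process} should now be taken to be optimal for $W_\infty$ rather than $W_2$, so that $\|\hat X_n - Y_n'\|_{L^\infty(\bs\mu)} = d_n$.

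First I would establish the one-step recursion
\[
    \Ren_q(\nu_{n+1}' \mmid \nu_{n+1})
    \le \Ren_q(\nu_n' \mmid \nu_n) + \bigl\lVert\Ren_q\bigl(Q_n(\tilde Y_n,\cdot)\bigm\Vert P(Y_n',\cdot)\bigr)\bigr\rVert_{L^\infty}
\]
by applying Theorem~\ref{thm:scr} with the same choice of random variables as in the proof of Lemma~\ref{lem:aux-kl}: $\msf X = \tilde Y_n$, $\msf X' = Y_n'$, $\msf Y = Y_{n+1}'$ under $\bs\mu$, and $\msf X = Y_n$, $\msf Y = Y_{n+1}$ under $\bs\nu$. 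Iterating $N$ times and using $\nu_0' = \nu_0$ (so $\Ren_q(\nu_0' \mmid \nu_0) = 0$) together with the terminal interpolation $\nu_N' = \hat\mu_N$ (Observation~\ref{obs:aux-interp}) yields the first inequality of the lemma.

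For the second inequality, I would bound each one-step term separately. For $n < N-1$, where $Q_n = P$, the regularity assumption gives $\Ren_q(P(\tilde Y_n, \cdot) \mmid P(Y_n', \cdot)) \le c\,\|\tilde Y_n - Y_n'\|^2 = c\eta_n^2\,\|\hat X_n - Y_n'\|^2$ pointwise; taking the essential supremum over the $W_\infty$-optimal coupling between $\hat X_n$ and $Y_n'$ gives $c\eta_n^2 d_n^2$. For the terminal step $n = N-1$, where $Q_n = \hat P$, the cross-regularity assumption analogously yields $c'\eta_{N-1}^2 d_{N-1}^2 + b^2 = c'd_{N-1}^2 + b^2$ once we use $\eta_{N-1} = 1$. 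Summing the bounds completes the argument.

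The only subtlety, and the main obstacle worth flagging, is to verify that the coupling construction of \S\ref{sssec:aux_process} extends cleanly to $W_\infty$: one needs the existence of a joint distribution on $(\hat X_n, Y_n')$ achieving $\|\hat X_n - Y_n'\|_{L^\infty} = W_\infty(\hat\mu_n, \nu_n')$, which is standard for Polish spaces but should be invoked explicitly, and one needs the regularity/cross-regularity bounds to be pointwise rather than merely in expectation (which they are, as stated in Theorem~\ref{thm:main-renyi}). Once these are in place, every remaining step is a direct transcription of the KL argument.
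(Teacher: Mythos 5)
Your proposal is correct and matches the paper's own (omitted) argument: the paper states that the proof is identical to Lemma~\ref{lem:aux-kl} with $\KL \to \Ren_q$, $W_2 \to W_\infty$, and Theorem~\ref{thm:scr-kl} replaced by Theorem~\ref{thm:scr}, which is precisely your plan, including the observation that the expectations become essential suprema. You also correctly flag the one technical point worth making explicit---the existence of a $W_\infty$-optimal (or $\varepsilon$-optimal) coupling and the fact that the regularity and cross-regularity hypotheses of Theorem~\ref{thm:main-renyi} are pointwise---so nothing is missing.
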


\section{Proofs for Section~\ref{sec:multi}}

\subsection{Optimal shifts}\label{app:shifts-optimal}

Here we prove the analog of Lemma~\ref{lem:shift-opt} for $L<1$ (rather than $L=1$). Specifically, the following lemma solves in closed form the optimization problem
\begin{align}
    R_N(a,d_0,L)  \deq  \min_{\substack{\eta_0,\dots,\eta_{N} \in [0,1] \\ \text{s.t.}~\eta_N = 1 \\ d_{n+1} \leq L\,(1 - \eta_n) d_n + a, \; \forall n < N}} \sum_{n=0}^N \eta_n^2 d_n^2\,.
    \label{eq:shift-opt:L<1explicit}
\end{align}
Throughout this section, we assume for simplicity that $d_0 \geq a$; this suffices since the bounds increase in $d_0$, so these bounds hold for all $d_0$ by replacing $d_0$ with $\max(d_0,a)$. Exact closed-form solutions for $d_0 < a$ are also possible using the same techniques, but require more casework (cf., the $L=1$ analysis in \S\ref{sssec:shift_opt}). We remark that the derivations here for $L < 1$ extend more generally to $L=1$ (by taking appropriate limits using L'H\^{o}pital's theorem) and also to $L > 1$ (by assuming that $d_0$ is sufficiently large that the shifts $\eta_n$ described below are bounded by $1$).

\begin{lemma}[Closed-form expression for $R_N$]\label{lem:shiftopt:L<1explicit}
	For any $N \geq 0$, any $L \in (0,1)$, and any $d_0 \geq a \geq 0$:
	\begin{align}
		R_N(a,d_0,L) =
			\frac{(a\,(1+L+\dots + L^{N-1}) + d_0L^N)^2}{1+L^2 + \dots + L^{2N}} = \frac{1+L}{1-L}\, \frac{(a\,(1-L^N) + d_0L^N\,(1-L))^2}{1-L^{2(N+1)}}\,.
	\end{align}
\end{lemma}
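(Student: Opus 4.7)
The plan is to follow the template of Lemma~\ref{lem:shift-opt-explicit:L1}, proceeding by induction on $N$ via a dynamic programming argument, but now tracking the geometric factor $L$ that appears in the recursion $d_{n+1} \le L\,(1-\eta_n)\, d_n + a$. The base case $N=0$ is immediate: the constraint $\eta_0 = 1$ forces the value $d_0^2$, which matches the claimed formula (numerator $(0 + d_0)^2$ and denominator $1$). For the inductive step, I would split the joint optimization and write
\[
    R_N(a,d_0,L) = \min_{\eta_0 \in [0,1]} \bigl\{ \eta_0^2 d_0^2 + R_{N-1}\bigl(a,\, L\,(1-\eta_0)\,d_0 + a,\, L\bigr) \bigr\}\,.
\]

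The inductive hypothesis applies since $d_1 = L\,(1-\eta_0)\,d_0 + a \ge a$ automatically. After substituting the closed form and simplifying with the identity $a\,(1+L+\cdots+L^{N-2}) + a L^{N-1} = a\,(1+L+\cdots+L^{N-1})$, the expression collapses to
\[
    R_{N-1}\bigl(a,\, L\,(1-\eta_0)\,d_0 + a,\, L\bigr) = \frac{\bigl(A + B\,(1-\eta_0)\bigr)^2}{D_{N-1}}\,,
\]
where I abbreviate $A \deq a\,(1+L+\cdots+L^{N-1})$, $B \deq L^N d_0$, and $D_{N-1} \deq 1 + L^2 + \cdots + L^{2(N-1)}$. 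What remains is then a one-variable quadratic minimization in $\eta_0$, the exact $L$-analog of Observation~\ref{obs:shift-opt-induct:L1}. Differentiating and using $B^2 + d_0^2 D_{N-1} = d_0^2 D_N$ (where $D_N = D_{N-1} + L^{2N}$), the stationary point is $\eta_0^\star = \frac{L^N\,(A+B)}{d_0 D_N}$. Substituting back and simplifying gives the optimal value $(A+B)^2/D_N$, which is precisely the claimed formula; the second equality in the statement is just the algebraic identity $D_N = (1-L^{2(N+1)})/(1-L^2)$ combined with factoring $(1-L)$ out of $A$.

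The only step requiring care is checking $\eta_0^\star \in [0,1]$, since otherwise I would need to fall back to a boundary case analogous to the $d_0 < a$ branch of Lemma~\ref{lem:shift-opt-explicit:L1}. Under the standing assumption $d_0 \ge a$, I would bound $A + B \le d_0\,(1+L+\cdots+L^N)$ and compare with the corresponding factorization of $D_N$, reducing the desired inequality $\eta_0^\star \le 1$ to $L^N\,(1+L) \le 1 + L^{N+1}$, which is trivially true for $L \in (0,1)$. This is the only real obstacle in the argument; everything else is bookkeeping with geometric series, and the limit $L \to 1$ (via L'H\^opital) recovers Lemma~\ref{lem:shift-opt-explicit:L1} as a sanity check. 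Finally, for completeness I would record the explicit induced values of the optimal shifts and the induced distances $d_n$, analogous to Remark~\ref{rem:shift-opt-explicit:L1}, since these are needed when plugging back into the original (non-simplified) shift optimization in \S\ref{sssec:pieces} to handle the final-iterate constants $c'$ and $b$.
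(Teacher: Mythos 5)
Your proof is correct and follows essentially the same route as the paper: induction on $N$, splitting the DP over $\eta_0$ versus $\eta_1,\dotsc,\eta_N$, applying the inductive hypothesis, and closing with a single-variable quadratic minimization (which is the content of the paper's Observation~\ref{obs:shift-opt-induct:L<1}). Your choice of the notation $A$, $B$, $D_{N}$ and the identity $B^2 + d_0^2 D_{N-1} = d_0^2 D_N$ streamlines the algebra somewhat, and you make the feasibility check $\eta_0^\star\in[0,1]$ explicit (the paper's observation asserts it but does not spell out the reduction to $L^N\le 1$); otherwise the two proofs are the same argument in different bookkeeping.
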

\begin{proof}
    We prove by induction on $N$. The base case $N=0$ is trivial. For the induction, express    
    \begin{align*}
        R_{N}(a,d_0,L)
        &=
        \min_{\eta_0 \in [0,1]}\bigl\{\eta_0^2 d_0^2 + R_{N-1}(a,(1-\eta_0)Ld_0 + a, L)\bigr\}
        \\ &=
        \min_{\eta_0 \in [0,1]}\Bigl\{\eta_0^2 d_0^2 +
        \frac{1+L}{1-L} \, \frac{(a\,(1-L^{N-1}) + ((1-\eta_0) L d_0 + a)\,L^{N-1}\,(1-L))^2}{1-L^{2N}}\Bigr\}\,.
    \end{align*}
    Above, the first step splits the joint optimization over $\eta_0, \dots, \eta_{N}$ in $R_{N}$ into two layers of optimization: first over $\eta_0$, and then over $\eta_1, \dots, \eta_{N}$. The second step is by the inductive hypothesis. The remaining optimization over $\eta_0$ is now solvable in closed form by appealing to the following calculation. 
\end{proof}

\begin{obs}\label{obs:shift-opt-induct:L<1}
	For any $N \geq 0$ and any $d \geq a > 0$, the optimization problem
\begin{align*}
	\min_{\eta \in [0,1]}\Bigl\{\eta^2 d^2 +  \frac{1+L}{1-L} \, \frac{(a\,(1-L^{N-1}) + ((1-\eta) L d_0 + a)\,L^{N-1}\,(1-L))^2}{1-L^{2N}}\Bigr\}
\end{align*}
has unique optimal solution
\begin{align*}
	\eta =
	\frac{L^N\,(1+L)\, (a\,(1-L^N) + d_0 L^N\, (1-L))}{ d_0\, (1 - L^{2(N+1)}) }
\end{align*}
and optimal value
\begin{align*}
	\frac{1+L}{1-L} \, \frac{(a\,(1-L^N) + dL^N\,(1-L))^2}{1-L^{2(N+1)}}\,.
\end{align*}
\end{obs}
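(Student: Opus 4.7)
The plan is to carry out a direct single-variable calculus argument: once the quantity under the second square is simplified, the objective becomes a strictly convex quadratic in $\eta$ whose minimizer and minimum can be read off and then algebraically massaged into the claimed form.

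First I would simplify the inner expression. Grouping the $a$-terms in $a\,(1-L^{N-1}) + ((1-\eta)\,Ld + a)\,L^{N-1}\,(1-L)$ combines them into $a\,(1-L^N)$, yielding the cleaner form $a\,(1-L^N) + (1-\eta)\,L^N d\,(1-L)$. Writing $B \deq a\,(1-L^N)$, $E \deq L^N d\,(1-L)$, and $C \deq (1+L)/[(1-L)\,(1-L^{2N})]$, the objective becomes
\[
f(\eta) \;=\; \eta^2 d^2 + C\,(B + (1-\eta)\,E)^2 \;=\; (d^2 + CE^2)\,\eta^2 \;-\; 2CE\,(B+E)\,\eta \;+\; C\,(B+E)^2,
\]
a strictly convex quadratic.

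Second, I would read off the unconstrained minimizer $\eta^\star = CE\,(B+E)/(d^2 + CE^2)$ and its optimal value $C\,(B+E)^2\,d^2/(d^2 + CE^2)$. To bring these into the claimed form, I would use the identities $CE^2 = (1+L)\,L^{2N} d^2\,(1-L)/(1-L^{2N})$ and, as a consequence, $d^2 + CE^2 = d^2\,(1 - L^{2(N+1)})/(1-L^{2N})$. Substituting these back, the factors of $d^2$ and the telescoping $(1-L^{2N})$ factors cancel, recovering precisely the expressions stated in the observation for $\eta$ and for the optimal value. Uniqueness of the minimizer follows from strict convexity ($d^2 + CE^2 > 0$).

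The only non-routine step will be verifying that $\eta^\star$ lies in $[0,1]$, so that the box constraint is inactive and the unconstrained minimum is also the constrained one. Nonnegativity is immediate since every factor is nonnegative. For the upper bound $\eta^\star \le 1$, after clearing denominators the inequality to check is $(1+L)\,L^N\,(a\,(1-L^N) + L^N d\,(1-L)) \le d\,(1 - L^{2(N+1)})$. The left-hand side is monotone in $a$, so under the hypothesis $d \ge a$ I would replace $a$ by $d$; the inequality then simplifies (using $1 - L^{2(N+1)} = (1-L^{N+1})(1+L^{N+1})$ on the right and $1 - L^N + L^N - L^{N+1} = 1 - L^{N+1}$ inside the parentheses on the left) first to $(1+L)\,L^N \le 1 + L^{N+1}$ and then to $L^N \le 1$, which holds for all $L \in (0,1)$. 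This confirms feasibility and completes the argument.
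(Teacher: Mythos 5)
Your proposal is correct and follows essentially the same route the paper indicates (set the derivative of the convex quadratic to zero, verify the resulting $\eta$ lies in $[0,1]$, and substitute back); the paper merely labels this a "straightforward albeit tedious calculation," while you have spelled out the algebraic telescoping $(1-L^{2N})$ factors and the monotonicity-in-$a$ step needed for the feasibility check $\eta^\star \le 1$. All intermediate identities ($d^2 + CE^2 = d^2\,(1-L^{2(N+1)})/(1-L^{2N})$, the simplification of the inner bracket to $a\,(1-L^N) + (1-\eta)\,L^N d\,(1-L)$, and the reduction of $\eta^\star \le 1$ to $L^N \le 1$) check out.
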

\begin{proof}
	This is a straightforward albeit tedious calculation using single-variable calculus. To find the optimal $\eta$, set the derivative equal to $0$ (this gives the global minimizer of the unconstrained optimization problem since the objective is convex in $\eta$) and note that the resulting unique value of $\eta$ lies in $[0,1]$ and thus is a minimizer of the original constrained problem. Plugging in this optimal $\eta$ yields the claimed optimal value. 
\end{proof}

\begin{remark}\label{rem:shiftopt:L<1explicit}
	The optimal shifts implicit in the proof of Lemma~\ref{lem:shiftopt:L<1explicit} are $\eta_N = 1$ and 
	\begin{align*}
		\eta_n =
			\frac{L^{N-n}\,(1+L)\, (a\,(1-L^{N-n}) + d_n L^{N-n}\, (1-L))}{ d_n\, (1 - L^{2(N-n+1)}) }
		 \qquad \forall n \in \{0,1, \dotsc, N-1\}\,,
	\end{align*}
	where $d_{n+1} = (1 - \eta_n)L d_n + a$. By substituting in these values and simplifying, one obtains expressions in terms of just the iteration $n$ and the initial data $a$, $d_0$, $L$, $N$:
	\begin{align*}
		\eta_n &= 
		 \frac{L^{N-n}\,(1+L)\, ( a\,(1-L^N) + d_0 L^N\,(1-L))}{d_0L^n\, (1-L^{2(N+1-n)}) + a\, (1-L^n)\, (1 - L^{N+2} - L^{N+2-n} + L^{2N+2-n}) ) /(1 - L)}
    \end{align*}
    for $n \in \{0,1,\dotsc,N-1\}$, and
    \begin{align*}
		d_n &= \frac{
			d_0L^n\,(1-L^{2(N+1-n)})
			+
			a\,(1-L^n)\, (1 - L^{N+2} - L^{N+2-n} + L^{2N+2-n}) / (1-L)
		}{1-L^{2(N+1)}}
	\end{align*}
    for $n\in \{1,\dotsc,N\}$.
\end{remark}

Of course, recall that the ``true'' shift optimization problem has a slightly different objective due to the cross-regularity arising from the final iteration. We now apply these shifts (optimal without the cross-regularity) to this.

\begin{lemma}[Final bound for shift-optimization for $L<1$]\label{lem:shift-opt-L<1}
	For $L<1$, the optimization problem in the right hand side of~\eqref{eq:shift-opt-disc:opt} has value at most 
	\[
	\Bigl( c + (c'-c)\, \frac{1-L^2}{1-L^{2N}} \Bigr)\, \Bigl( \frac{1+L}{1-L} \Bigr) \, \frac{( a\,(1-L^{N-1}) +  L^{N-1}\, (1-L)\, d_0) ^2 } {1-L^{2N}} 
	+ b^2\,.
	\]
\end{lemma}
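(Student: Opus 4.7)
The strategy is to mirror the proof of Lemma~\ref{lem:shift-opt} for the $L=1$ case: rather than solve the optimization problem~\eqref{eq:shift-opt-disc:opt} exactly, I would use the shifts that are optimal for the simplified problem in which $c'$ is replaced by $c$, and then plug them into the original objective. The simplified problem is, up to the additive constant $b^2$, exactly $c\,R_{N-1}(a,d_0,L)$, which has been solved in closed form in Lemma~\ref{lem:shiftopt:L<1explicit}, with the optimal shifts recorded in Remark~\ref{rem:shiftopt:L<1explicit}. Hence only a finite amount of algebraic bookkeeping remains.

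The key observation is that~\eqref{eq:shift-opt-disc:opt} differs from $c\,R_{N-1}(a,d_0,L) + b^2$ only in the final iterate term: the former contains $c'\,d_{N-1}^2$ while the latter contains $c \cdot \eta_{N-1}^2\,d_{N-1}^2 = c\,d_{N-1}^2$ (recall $\eta_{N-1}=1$ in the definition of $R_{N-1}$). Evaluating~\eqref{eq:shift-opt-disc:opt} at the shifts from Remark~\ref{rem:shiftopt:L<1explicit} therefore gives
\[
c\,R_{N-1}(a,d_0,L) + (c'-c)\,d_{N-1}^2 + b^2,
\]
with $d_{N-1}$ denoting the corresponding distance at step $N-1$ under these shifts.

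To finish, I would compute $d_{N-1}$ from the explicit formula in Remark~\ref{rem:shiftopt:L<1explicit}. Substituting $n=N-1$ (after replacing $N$ by $N-1$ in the formula, since it is applied here to $R_{N-1}$), the auxiliary polynomial $1 - L^{N+1} - L^{N+1-n} + L^{2N-n}$ collapses to $1-L^2$, yielding
\[
d_{N-1} = \frac{(1+L)\,\bigl(a\,(1-L^{N-1}) + d_0\,L^{N-1}\,(1-L)\bigr)}{1 - L^{2N}}.
\]
Combined with $R_{N-1}(a,d_0,L) = \frac{1+L}{1-L} \cdot \frac{(a\,(1-L^{N-1}) + d_0\,L^{N-1}\,(1-L))^2}{1-L^{2N}}$ from Lemma~\ref{lem:shiftopt:L<1explicit}, I would factor out the common shape $\frac{1+L}{1-L} \cdot \frac{(\,\cdots\,)^2}{1-L^{2N}}$ and read off the prefactor $c + (c'-c)\,\frac{1-L^2}{1-L^{2N}}$, yielding exactly the bound in the lemma statement.

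The main obstacle is not conceptual but bookkeeping: correctly carrying out the index shift between Remark~\ref{rem:shiftopt:L<1explicit} (stated for $R_N$) and its present application (to $R_{N-1}$), and then verifying the cancellation that reduces the long polynomial above to $1-L^2$. As in the $L=1$ case, this choice of shifts is not exactly optimal for the original problem but is near-optimal because the cross-regularity constant $c'$ only affects a single iteration out of $N$.
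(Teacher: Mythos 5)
Your proof is correct and follows exactly the same route as the paper: plug the shifts that are optimal for the simplified problem (Remark~\ref{rem:shiftopt:L<1explicit}) into the original objective~\eqref{eq:shift-opt-disc:opt}, obtain $c\,R_{N-1}(a,d_0,L) + (c'-c)\,d_{N-1}^2 + b^2$, and simplify. Your explicit evaluation of $d_{N-1}$ (including the collapse of the auxiliary polynomial to $1-L^2$ after the index shift $N\to N-1$) and the resulting factorization into the stated prefactor are both correct; the paper's proof merely leaves this algebra implicit.
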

\begin{proof}
	This optimization problem~\eqref{eq:shift-opt-disc:opt} has the same decision variables and constraints (and nearly the same objective) as the simplified problem~\eqref{eq:shift-opt:L<1explicit}. Thus, by plugging in the optimal solutions to the latter problem from Remark~\ref{rem:shiftopt:L<1explicit}, we obtain a feasible solution to the former problem with value 
	\begin{align*}
		c R_{N-1}(a, d_0, L) + (c' - c)\, d_{N-1}^2 + b^2\,.
	\end{align*}
	The value of $R_{N-1}(a,d_0, L) $ is computed in Lemma~\ref{lem:shiftopt:L<1explicit}, and the value of $d_{N-1}$ is computed in Remark~\ref{rem:shiftopt:L<1explicit}. Plugging these values into the above display and simplifying 
	 completes the proof.
\end{proof}

Note that in the limit $L \to 1$, Lemma~\ref{lem:shift-opt-L<1} exactly recovers the bound in Lemma~\ref{lem:shift-opt}.

\subsection{Proof of Theorem~\ref{thm:kl-general}}\label{app:general_framework}

The proof of Theorem~\ref{thm:kl-general} uses the auxiliary process constructed in \S\ref{sssec:aux_process}, and we use the notation therein, with the following slight modifications. We initialize $\hat X_0 \sim \mu$ and $Y_0' = Y_0 \sim \nu$, which does not change the argument.
For simplicity of notation, we define $a_0(x) \deq \mc E_{\rm strong}(x)$, $a_1(x) \deq \mc E_{\rm weak}(x) + \gamma \mc E_{\rm strong}(x)$, and $\bar a_i \deq \max_{n=0,1,\dotsc,N-1}{\norm{a_i}_{L^2(\mu\hat P^n)}}$ for $i \in \{0,1\}$.
We replace the distance recursion in Lemma~\ref{lem:aux-dist} with the following one.

\begin{lemma}[New distance recursion]\label{lem:new_aux_dist}
    Denote $d_n \deq W_2(\hat\mu_n, \nu_n')$.
    Then, for all $n < N-2$,
    \begin{align}\label{eq:new_aux_dist}
        d_{n+1}^2
        &\le L^2\,{(1-\eta_n)}^2\,d_n^2 + 2\bar a_1\,(1-\eta_n)\, d_n + \bar a_0^2\,.
    \end{align}
\end{lemma}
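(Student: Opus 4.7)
The plan is to establish the pointwise local-error identity first and then lift it to the desired $W_2$-recursion, in the exact spirit of the Wasserstein argument in Lemma~\ref{lem:aux-dist} but with the single bound $W_2(\delta_x\hat P,\delta_y P)\le L\norm{x-y}+a$ replaced by the more refined four-part decomposition coming from weak error, strong error, $W_2$-Lipschitzness, and coupling. Concretely, for any fixed $x,y\in\R^d$, I would let $(\hat X,X,Y)$ be the joint coupling of $\delta_x\hat P$, $\delta_x P$, $\delta_y P$ guaranteed by the assumptions of Theorem~\ref{thm:local_error}, and bound
\[
\norm{\hat X-Y}_{L^2}^2 \;=\; \norm{\hat X-X}_{L^2}^2 \;+\; 2\,\E\langle \hat X-X,\,X-Y\rangle \;+\; \norm{X-Y}_{L^2}^2
\]
using the strong and $W_2$-Lipschitz assumptions on the outer two terms. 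For the cross term, I would split $X-Y=(X-x-(Y-y))+(x-y)$ so that
\[
\E\langle \hat X-X,\,X-Y\rangle \;=\; \E\langle \hat X-X,\,X-x-(Y-y)\rangle \;+\; \langle \E(\hat X-X),\,x-y\rangle,
\]
and estimate the first summand by Cauchy--Schwarz (strong error times $\gamma\norm{x-y}$) and the second by the weak error times $\norm{x-y}$. This produces the pointwise bound
\[
\norm{\hat X-Y}_{L^2}^2 \;\le\; L^2\,\norm{x-y}^2 \;+\; 2\,\bigl(\mc E_{\rm weak}(x)+\gamma\,\mc E_{\rm strong}(x)\bigr)\,\norm{x-y} \;+\; \mc E_{\rm strong}(x)^2.
\]

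Next, I would apply this with $x=\hat X_n$ and $y=\tilde Y_n$, where $(\hat X_n,Y_n')$ is the $W_2$-optimal coupling (so $\E\norm{\hat X_n-Y_n'}^2=d_n^2$) and $\tilde Y_n=Y_n'+\eta_n(\hat X_n-Y_n')$, whence $\hat X_n-\tilde Y_n=(1-\eta_n)(\hat X_n-Y_n')$. By the joint convexity of $W_2^2$ applied to the coupling of $(\hat X_{n+1},Y_{n+1}')$ built pointwise above,
\[
d_{n+1}^2 \;=\; W_2^2(\hat\mu_n\hat P,\,\tilde\nu_n P) \;\le\; \E\bigl[L^2\,(1-\eta_n)^2\,\norm{\hat X_n-Y_n'}^2 + 2\,(1-\eta_n)\,\norm{\hat X_n-Y_n'}\,a_1(\hat X_n) + a_0(\hat X_n)^2\bigr].
\]

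Finally, I would apply Cauchy--Schwarz to the middle expectation, $\E[\norm{\hat X_n-Y_n'}\,a_1(\hat X_n)]\le d_n\,\norm{a_1}_{L^2(\hat\mu_n)}\le d_n\,\bar a_1$, and bound the last term by $\norm{a_0}_{L^2(\hat\mu_n)}^2\le\bar a_0^2$; this yields precisely~\eqref{eq:new_aux_dist}. The main conceptual step is the cross-term splitting which is what makes the weak error $\mc E_{\rm weak}$ enter linearly (rather than as $\mc E_{\rm weak}^2$, as would happen if one only used strong error), and I do not expect any genuine obstacle beyond verifying that the optimal coupling plus deterministic shift is measurable enough to permit the joint-convexity step--a technicality that is standard.
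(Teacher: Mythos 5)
Your proof is correct and follows essentially the same route as the paper: reduce to Dirac-to-Dirac via joint convexity of $W_2^2$, invoke the one-step local-error bound $W_2^2(\delta_x\hat P,\delta_y P)\le L^2\norm{x-y}^2 + 2\,(\mc E_{\rm weak}(x)+\gamma\mc E_{\rm strong}(x))\,\norm{x-y}+\mc E_{\rm strong}(x)^2$ at $(x,y)=(\hat X_n,\tilde Y_n)$ using $\hat X_n-\tilde Y_n=(1-\eta_n)(\hat X_n-Y_n')$, and then apply Cauchy--Schwarz in the cross term. The only difference is that you additionally derive the cited one-step bound (equation~\eqref{eq:local_error_1step_W2}) from the four local assumptions via the $\hat X-X$ / $X-Y$ decomposition and the cross-term split, whereas the paper invokes it by reference to the literature.
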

\begin{proof}
    By a coupling argument, the one-step distance recursion for standard local error analysis (cf.~equation~\eqref{eq:local_error_1step_W2}), and then the Cauchy--Schwarz inequality,
    \begin{align*}
        W_2^2(\hat\mu_{n+1},\nu_{n+1}')
        & \le \E W_2^2\bigl(\hat P(\hat X_n,\cdot), P(\tilde Y_n,\cdot)\bigr)
        \\ &\le \E\bigl[L^2\,\norm{\hat X_n - \tilde Y_n}^2 + 2a_1(\hat X_n)\,\norm{\hat X_n - \tilde Y_n} + {a_0(\hat X_n)}^2\bigr] \\
        &\le L^2\,{(1-\eta_n)}^2\,d_n^2 + 2\bar a_1\,(1-\eta_n)\,d_n + \bar a_0^2\,. \qedhere
    \end{align*}
\end{proof}

A simple adaptation of Lemma~\ref{lem:aux-kl}, recalling that $\tilde Y_{N-1} = \hat X_{N-1}$, yields
\begin{align*}
    \KL(\hat\mu_0\hat P^N \mmid \nu_0 P^N)
    &\le c\sum_{n=0}^{N-2} \eta_n^2 d_n^2 + c' d_{N-1}^2 + \bar b^2\,.
\end{align*}
To prove Theorem~\ref{thm:kl-general}, it remains to optimize over the choice of shifts, which we accomplish below.

\begin{lemma}[New shift optimization]\label{lem:shift-opt-new}
    For any $1/2 \le L \le 2$, there exist shifts $\eta_1,\dotsc,\eta_{N-2} \in [0,1]$ such that if ${\{d_n\}}_{n=0}^{N-1}$ satisfies the recursion~\eqref{eq:new_aux_dist}, then
    \begin{align*}
       % c\sum_{n=0}^{N-2} \eta_n^2 d_n^2 + c' d_{N-1}^2
       % &\lesssim (1 \vee L^{-2})\,(c + c')\,\Bigl[\frac{L^{-1}-1}{L^{-N}-1}\,d_0^2 +\bigl( \frac{(L-1)\,N}{L} \vee \log \bar N\bigr)\, \bar a_0^2 + \bar N\, \bar a_1^2\Bigr]\,,
        c\sum_{n=0}^{N-2} \eta_n^2 d_n^2 + c' d_{N-1}^2
        &\lesssim (c + c')\,\Bigl[\frac{L^{-1}-1}{L^{-N}-1}\,d_0^2 +\bigl( (L-1)\,N \vee \log \bar N\bigr)\, \bar a_0^2 + \bar N\, \bar a_1^2\Bigr]\,,
    \end{align*}
    where $\bar N \deq N \wedge \frac{1}{{(1-L)}_+}$.
\end{lemma}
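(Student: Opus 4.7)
The plan is to choose shifts $\eta_n \in [0,1]$ to control both $c\sum \eta_n^2 d_n^2$ and the terminal value $c' d_{N-1}^2$, starting from the quadratic recursion~\eqref{eq:new_aux_dist}. Compared to the simpler framework of Lemma~\ref{lem:shift-opt-L<1}, the new recursion contains two distinct error terms: a linear-in-$d_n$ term $2\bar a_1(1-\eta_n)d_n$ from weak error, and a constant term $\bar a_0^2$ from strong error. These correspond respectively to systematic bias and stochastic fluctuation of the discretization, and the asymmetry is reflected in the target bound: $\bar a_1^2$ carries the linear coefficient $\bar N$, while $\bar a_0^2$ only carries the logarithmic coefficient $\log \bar N$ (for $L \le 1$). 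To exploit this, I would treat the two error sources separately via a decomposition.

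The key reduction is to show by induction that $d_n \le d_n^{(0)} + d_n^{(1)}$, where $d_n^{(0)}$ and $d_n^{(1)}$ are nonnegative auxiliary sequences driven respectively by strong and weak error only,
\begin{align*}
    (d_{n+1}^{(0)})^2 = L^2 (1-\eta_n)^2\,(d_n^{(0)})^2 + \bar a_0^2\,, \qquad d_{n+1}^{(1)} = L(1-\eta_n)\,d_n^{(1)} + \bar a_1/L\,,
\end{align*}
with initializations $d_0^{(0)} = d_0$, $d_0^{(1)} = 0$. The inductive step expands $(d_{n+1}^{(0)} + d_{n+1}^{(1)})^2$ and bounds the cross product $2 d_{n+1}^{(0)} d_{n+1}^{(1)}$ from below using $d_{n+1}^{(0)} \ge L(1-\eta_n)\,d_n^{(0)}$; this yields a term $2(1-\eta_n)\bar a_1\,(d_n^{(0)} + d_n^{(1)}) \ge 2(1-\eta_n)\bar a_1\,d_n$ that absorbs the cross coupling in the original recursion, while the leftover $\bar a_1^2/L^2$ is nonnegative and can be discarded. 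Once established, $\sum \eta_n^2 d_n^2 \le 2\sum \eta_n^2\,[(d_n^{(0)})^2 + (d_n^{(1)})^2]$ and $d_{N-1}^2 \le 2\,[(d_{N-1}^{(0)})^2 + (d_{N-1}^{(1)})^2]$, and the problem reduces to controlling two simpler, single-error recursions.

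For the shift choice, my plan is to engage shifts only on the final $\bar N$ iterations, mirroring the continuous-time shifts $\eta_t \sim 1/(h-t)$ from the shifted Girsanov calculation in \S\ref{ssec:shifted_girsanov}. In the model case $L = 1$, $\bar N = N$, and this is simply $\eta_n = 1/(N-n)$. Using the telescoping $\prod_{k=m+1}^{n-1}(1-\eta_k) = (N-n)/(N-m-1)$, one solves the two recursions explicitly to get $(d_n^{(0)})^2 \lesssim \bar a_0^2\,(N-n)^2\sum_{j=N-n}^{N-1} 1/j^2$ and $d_n^{(1)} \lesssim \bar a_1\,(N-n)\,\log(N/(N-n))$. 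Exchanging the order of summation then produces $\sum \eta_n^2 (d_n^{(0)})^2 \lesssim \bar a_0^2 \sum_j 1/j \asymp \bar a_0^2 \log N$ and $\sum \eta_n^2 (d_n^{(1)})^2 \lesssim \bar a_1^2 \sum_{k=1}^N \log^2(N/k) \asymp N \bar a_1^2$; similarly, $(d_{N-1}^{(0)})^2 \lesssim \bar a_0^2$ and $(d_{N-1}^{(1)})^2 \lesssim \bar a_1^2 \log^2 N \le N \bar a_1^2$. The contribution of $d_0$ enters $d_n^{(0)}$ as $((N-n)/N)^2 d_0^2$, summing to $d_0^2/N$ and recovering the $1/N$ prefactor.

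The main obstacle will be uniformly handling the three regimes $L < 1$, $L = 1$, $L > 1$. For $L < 1$, I plan to take $\eta_n = 0$ on $n < N - \bar N$ (letting natural contraction drive $d_n^{(1)}$ to its steady-state value $\Theta(\bar a_1 \bar N)$ and $d_n^{(0)}$ to $\Theta(\bar a_0/\sqrt{1-L^2})$) and $\eta_n = 1/(N-n)$ on the last $\bar N$ steps; for $L > 1$, I would instead use $\eta_n \equiv 1 - 1/L$ on $n < N - \bar N$ (so $L(1-\eta_n) = 1$, preventing exponential blow-up) and again $\eta_n = 1/(N-n)$ on the last $\bar N$ steps. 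Verifying the correct coefficient $(L^{-1}-1)/(L^{-N}-1)$ for $d_0^2$---which behaves like $L^{N-1}(1-L)$ for $L<1$, like $1/N$ for $L=1$, and like $(L-1)/L$ for $L>1$---as well as the strong-error coefficient $(L-1)N\vee \log\bar N$ across these regimes, requires careful $L$-weighted bookkeeping of telescoping products, in the spirit of Remark~\ref{rem:shiftopt:L<1explicit} and \S\ref{app:shifts-optimal} but adapted to the decomposed pair $(d_n^{(0)}, d_n^{(1)})$ rather than the single sequence $d_n$.
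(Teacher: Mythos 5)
Your decomposition $d_n \le d_n^{(0)} + d_n^{(1)}$, with $d_n^{(0)}$ carrying only the strong error through a squared recursion and $d_n^{(1)}$ carrying only the weak error through a linear recursion (the factor $\bar a_1/L$ is needed so the cross term closes), is a valid and genuinely different route from the paper's. The paper instead applies Young's inequality on the cross term to collapse everything into a single scalar recursion
$d_{n+1}^2 \le L\,(1-\eta_n)\,d_n^2 + \bar a_0^2 + \tfrac{(1-\eta_n)\,\bar a_1^2}{L\,(1-L\,(1-\eta_n))}$,
then telescopes. Your inductive step verifying $d_n \le d_n^{(0)} + d_n^{(1)}$ is correct, and the advantage of the split is that each auxiliary sequence is elementary to solve; the cost is tracking two sequences through the same shift schedule. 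The terminal-phase computation ($\eta_n = 1/(N-n)$ over the last $\bar N$ steps, exchanging the order of summation to land on the $\log\bar N$ and $\bar N$ coefficients) agrees with the paper's, and the $L\le 1$ cases should close with the weighted-telescoping bookkeeping you describe.

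The $L > 1$ case, however, has a concrete error. You set $\eta_n \equiv 1-1/L$ in the initial phase so that $L\,(1-\eta_n)=1$; this prevents exponential blow-up but it is a marginal choice that gives \emph{no contraction}, so $(d_n^{(0)})^2$ grows linearly in $n$ and $d_n^{(1)}$ grows linearly (hence $(d_n^{(1)})^2$ quadratically). Since $\eta_n^2 \asymp (L-1)^2$ on this phase, its contribution is $\asymp (L-1)^2 N\, d_0^2 + (L-1)^2 N^2\,\bar a_0^2 + (L-1)^2 N^3\,\bar a_1^2$, which overshoots the target $\asymp (L-1)\,d_0^2 + (L-1)\,N\,\bar a_0^2 + N\,\bar a_1^2$ by a factor of $(L-1)\,N$ whenever the initial phase is non-empty, i.e., $N \gtrsim 1/(L-1)$. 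The fix is a strictly contractive shift such as the paper's $\eta_n = 1-1/L^2$, giving $L\,(1-\eta_n) = 1/L < 1$; then $d_n^{(0)}$ and $d_n^{(1)}$ stabilize at steady states $\Theta(\bar a_0/\sqrt{L-1})$ and $\Theta(\bar a_1/(L-1))$, and the sums come out to the correct order. Relatedly, for $L > 1$ the cutoff ``$n < N - \bar N$'' is empty (since $\bar N = N$ there), so you need a separate terminal-phase length of order $\tfrac{1}{L-1}\wedge N$, which is the role $\bar N$ plays for $L<1$ but not for $L>1$.
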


\begin{remark}[Optimality of Lemma~\ref{lem:shift-opt-new}]\label{rem:shift-opt-new}
    The proof of Lemma~\ref{lem:shift-opt-new} relies on an approximate solution to the simplified shift optimization problem
    \begin{align}
        \min_{\substack{\eta_0, \dots, \eta_{N-1} \in [0,1] \\ \text{\emph{s.t.}}\; \eta_{N-1} = 1 \\ d_{n+1}^2 \leq L^2(1-\eta_n)^2 d_n^2 + 2\bar{a}_1 (1 - \eta_n) d_n + \bar{a}_0^2}} \sum_{n=0}^{N-1} \eta_n^2 d_n^2\,.
        \label{eq:rem:shift-opt-new}
    \end{align}
    Namely, the proof exhibits feasible shifts $\eta_0, \dots, \eta_{N-1}$ that achieve, up to constants, the value $\tfrac{L^{-1} - 1}{L^{-N} - 1}\,d_0^2 +  \bar{a}_1^2 \bar{N}  + \bar{a}_0^2 \log \bar{N}$. This bound is optimal in all three terms. To see optimality of the first two terms, notice that in the case $\bar{a}_1 = L\bar{a}_0$,
    % (this gives a lower bound on the general value of~\eqref{eq:rem:shift-opt-new} since $\bar{a}_0 \geq \bar{a}_1$), 
    then~\eqref{eq:rem:shift-opt-new} is equivalent to~\eqref{eq:shift-opt:L<1explicit} which, by Lemma~\ref{lem:shiftopt:L<1explicit}, has value of order 
   $\tfrac{L^{-1} - 1}{L^{-N} - 1}\,d_0^2 +  \bar{a}_1^2 \bar{N} $ in the relevant regime $d_0 \geq a$ and $L = 1 - \delta$ for $\delta > 0$ small.
    To show optimality of the third term, consider next the case $\bar{a}_1 = 0$ and $L=1$ (this also gives a lower bound on the general value of~\eqref{eq:rem:shift-opt-new} since $\bar{a}_1 \geq 0$ and $L \leq 1$). Then one can explicitly solve the shift optimization problem~\eqref{eq:rem:shift-opt-new} to see that it has solution $\eta_{i} = \tfrac{1}{N-i}$ and value $\tfrac{d_0^2}{N} + \bar{a}_0^2 \sum_{i=1}^{N-1} \tfrac{1}{i} \asymp \tfrac{d_0^2}{N} + \bar{a}_0^2 \log N$. 
\end{remark}

\begin{proof}[Proof of Lemma~\ref{lem:shift-opt-new}]
    For brevity of notation, in this proof we write $a_0 \deq \bar a_0$ and $a_1 \deq \bar a_1$.
    We split the analysis into two phases, starting with the case $L \le 1$.
    
\textbf{Initial phase.}
We start by focusing on controlling the terms with $n \le n_\star$, where $n_\star \ge 0$ is the largest integer so that $L^{-(N-n_\star)} \ge 2$ (assuming such an integer even exists; if not, then this part of the proof can be skipped).
In this phase, we use the simple choice   
\begin{align}\label{eq:initial_shift}
    \eta_n = \frac{L^{-1} - 1}{L^{-(N-n)}-1}\,.
\end{align}
This choice is motivated by the continuous-time calculations in \S\ref{app:shifted_girsanov_rmk}.
By Young's inequality, the recursion~\eqref{eq:new_aux_dist} implies
\begin{align}\label{eq:shift_opt_aux_bd}
    d_{n+1}^2
    &\le L\,(1-\eta_n)\,d_n^2 + a_0^2 + \frac{(1-\eta_n)\,a_1^2}{L\,(1-L\,(1-\eta_n))}
    % \le L\,(1-\eta_n)\,d_n^2 + a_0^2 + \frac{a_1^2}{L(1-L)\,(1-\eta_n)}
    \,.
    % \le L\,(1-\eta_n)\,d_n^2 + a_0^2 + \frac{\tilde a_1^2}{1-L\,(1-\eta_n)}\,,
\end{align}
% where we set $\tilde a_1 \deq a_1/\sqrt L$.
% Since we assume that $1/2 \le L \le 2$, then $a_1 \asymp \tilde a_1$, so for brevity of notation we will simply write $a_1$ instead of $\tilde a_1$.
Note also that
\begin{align*}
    L\,(1-\eta_n)
    &= \frac{L^{-(N-(n+1))}-1}{L^{-(N-n)}-1}\,,
\end{align*}
so we have telescoping.

A calculation shows that in this initial phase,
\begin{align*}
    1-L\,(1-\eta_n)
    &= \frac{L^{-(N-n)} - L^{-(N-(n+1))}}{L^{-(N-n)}-1}
    = \frac{1 - L}{1 - L^{N-n}}
    \le 2\,(1-L)\,.
\end{align*}
Therefore, in this phase, the recursion~\eqref{eq:shift_opt_aux_bd} can be further simplified to
\begin{align*}
    d_{n+1}^2
    &\le L\,(1-\eta_n)\,d_n^2 + \frac{a^2}{1-L\,(1-\eta_n)}\,, \qquad a^2 \deq 2\,(1-L)\,a_0^2 + \frac{a_1^2}{L}\,.
\end{align*}
Unrolling the recursion, we have
\begin{align*}
    d_n^2
    &\le \frac{L^{-(N-n)}-1}{L^{-N}-1}\,d_0^2 + a^2 \sum_{k=1}^n \frac{L^{-(N-n)}-1}{L^{-(N-k)}-1}\, \frac{1}{1-L\,(1-\eta_{k-1})}\,.
\end{align*}
A further calculation shows
\begin{align*}
    \sum_{k=1}^n \frac{L^{-(N-n)}-1}{L^{-(N-k)}-1}\, \frac{1}{1-L\,(1-\eta_{k-1})}
    = \sum_{k=1}^n \frac{L^{-(N-n)} - 1}{L^{-1} - 1}\, \frac{L^{-1} - L^{N-k}}{L^{-(N-k)}-1}
    &\le \frac{2\,(L^{-(N-n)} - 1)}{1-L} \sum_{k=1}^n L^{N-k} \\
    &= \frac{2\,(1-L^n)}{{(1-L)}^2}\,.
\end{align*}
This means that the total sum up to this point is
\begin{align*}
    \sum_{n=0}^{n_\star} \eta_n^2 d_n^2
    &= \sum_{n=0}^{n_\star} \Bigl( \frac{L^{-1}-1}{L^{-(N-n)}-1}\Bigr)^2 \,\Bigl\{ \frac{L^{-(N-n)}-1}{L^{-N}-1}\,d_0^2 + \frac{2a^2\,(1-L^n)}{{(1-L)}^2}\Bigr\}\,.
\end{align*}
The first term is bounded by
\begin{align*}
    \frac{{(L^{-1} - 1)}^2\, d_0^2}{L^{-N}-1} \sum_{n=0}^{n_\star} \frac{1}{L^{-(N-n)}-1}
    &\le \frac{2\,{(L^{-1} - 1)}^2\, d_0^2}{L^{-N}-1} \sum_{n=0}^{n_\star} L^{N-n}
    = \frac{2\,(L^{-1}-1)\,d_0^2}{L^{-N}-1}\,(L^{N-n_\star-1}-L^N)\\
    &\le L^{-1}\,\frac{L^{-1}-1}{L^{-N}-1}\,d_0^2\,.
\end{align*}
The second term is bounded by
\begin{align*}
    2a^2 L^{-2} \sum_{n=0}^{n_\star} \frac{1-L^n}{{(L^{-(N-n)}-1)}^2}
    &\le 8a^2 L^{2\,(N-1)} \sum_{n=0}^{n_\star} L^{-2n}
    = \frac{8a^2 L^{-2}}{1-L^2} \,(L^{2\,(N-n_\star)} - L^{2\,(N+1)})
    \le \frac{8a^2}{1-L}\,.
\end{align*}
Hence, we have shown that
\begin{align*}
    \sum_{n=0}^{n_\star} \eta_n^2 d_n^2
    &\le L^{-1}\,\frac{L^{-1}-1}{L^{-N}-1}\,d_0^2 + \frac{2a^2}{1-L}
    \lesssim L^{-1}\,\frac{L^{-1}-1}{L^{-N}-1}\,d_0^2 + a_0^2 + \frac{a_1^2}{1-L}\,.
\end{align*}

\textbf{Terminal phase.}
Next, we consider the remaining terms with $n\ge n_\star$.
For this part of the proof, if $L^{-N} \ge 2$ does not hold, we simply set $n_\star = 0$. Write $d_\star \deq d_{n_\star}$.

In this phase, it turns out that the simple choice of shifts $\eta_n = 1/(N-n)$ suffices.
The distance recursion~\eqref{eq:shift_opt_aux_bd}, after dropping the factor of $L$ for simplicity, becomes
\begin{align}\label{eq:shift_terminal_recursion}
    d_{n+1}^2
    &\le \Bigl(\frac{N-n-1}{N-n}\Bigr)^2\,d_n^2 + a_0^2 + (N-n)\,a_1^2
\end{align}
which leads, after bounding sums by integrals, to
\begin{align*}
    d_n^2
    &\le \Bigl(\frac{N-n}{N-n_\star}\Bigr)^2\, d_\star^2 + a_0^2 \sum_{k=0}^{n-n_\star-1} \Bigl(\frac{N-n}{N-(n-k)}\Bigr)^2 + a_1^2 \sum_{k=0}^{n-n_\star-1}\Bigl(\frac{N-n}{N-(n-k)}\Bigr)^2\,(N-(n-k-1)) \\
    &\lesssim \Bigl(\frac{N-n}{N-n_\star}\Bigr)^2\, d_\star^2 + (N-n)\, a_0^2 + {(N-n)}^2 \log\bigl(\frac{N-n_\star}{N-n}\bigr)\,a_1^2\,.
\end{align*}
Hence,
\begin{align}
    \sum_{n=n_\star}^{N-2} \eta_n^2 d_n^2
    &\lesssim \frac{d_\star^2}{N-n_\star} + a_0^2 \sum_{n=n_\star}^{N-2} \frac{1}{N-n} + a_1^2 \sum_{n=n_\star}^{N-2} \log \frac{N-n_\star}{N-n} \nonumber \\
    &\lesssim \frac{d_\star^2}{N-n_\star} + \log(N-n_\star)\,a_0^2 + (N-n_\star)\,a_1^2\,.\label{eq:shift_terminal_sum}
\end{align}
We also note that
\begin{align}\label{eq:shift_terminal_final_dist}
    d_{N-1}^2
    &\lesssim \frac{d_\star^2}{{(N-n_\star)}^2} + a_0^2 + \log(N-n_\star)\,a_1^2\,.
\end{align}

\textbf{Combining the bounds.}
First, if $L^{-N} < 2$, the first case does not apply and we use the second case to obtain
\begin{align*}
    \sum_{n=0}^{N-2} \eta_n^2 d_n^2
    &\lesssim \frac{d_0^2}{N} + (\log N)\,a_0^2 + N\,a_1^2\,.
\end{align*}
Also, $d_{N-1}^2 \lesssim d_0^2/N^2 + a_0^2 + (\log N)\,a_1^2$, so by adding the last summand,
\begin{align}\label{eq:shift_terminal_kl}
    \Sigma \deq c\sum_{n=0}^{N-2} \eta_n^2 d_n^2 + c' d_{N-1}^2
    &\lesssim (c+c')\,\Bigl[ \frac{d_0^2}{N} + (\log N)\,a_0^2 + N\,a_1^2\Bigr]\,.
\end{align}
Also, $2 > L^{-N} = {(1+(L^{-1} - 1))}^N \ge 1 + N\,(L^{-1}-1)$ implies $N\,(L^{-1}-1) \le 1$, thus $L^{-N}-1 = {(1+(L^{-1}-1))}^N-1 \le \exp(N\,(L^{-1}-1)) -1 \le 2N\,(L^{-1}-1)$.
Therefore, we can also write the above inequality as
\begin{align}\label{eq:shift_terminal_kl_simplified}
    \Sigma
    &\lesssim (c+c')\,\Bigl[ \frac{L^{-1}-1}{L^{-N}-1}\, d_0^2 + (\log N)\,a_0^2 + N\,a_1^2\Bigr]\,.
\end{align}

On the other hand, suppose that $L^{-N} \ge 2$.
Then,
\begin{align}\label{eq:d_star}
    d_\star^2
    &\le \frac{L^{-(N-n_\star)}-1}{L^{-N}-1}\,d_0^2 + \frac{2a^2\,(1-L^{n_\star})}{{(1-L)}^2}
    \le \frac{2L^{-1}}{L^{-N}-1}\,d_0^2 + \frac{2a^2}{{(1-L)}^2}\,,
\end{align}
using the definition of $n_\star$.
Also, $2 \le L^{-(N-n_\star)} \le 2L^{-1}$ together with bounds on the logarithm yield $2L \le (1-L)\,(N-n_\star) \le 2L^{-1}$.
Hence,
\begin{align*}
    \sum_{n=0}^{N-2} \eta_n^2 d_n^2
    &\lesssim L^{-1}\,\frac{L^{-1}-1}{L^{-N}-1}\,d_0^2 + \frac{a^2}{1-L} + L^{-1}\,(1-L)\,\Bigl\{\frac{L^{-1}}{L^{-N}-1}\, d_0^2 + \frac{a^2}{{(1-L)}^2}\Bigr\} + \log\bigl(\frac{2L^{-1}}{1-L}\bigr)\,a_0^2+ \frac{L^{-1}\,a_1^2}{1-L} \\
    &\lesssim L^{-1}\,\Bigl(\frac{L^{-1}-1}{L^{-N}-1}\,d_0^2 + \log\bigl(\frac{1}{1-L}\bigr)\, a_0^2 + \frac{1}{1-L}\,a_1^2\Bigr)\,.
\end{align*}
Also,
\begin{align}\label{eq:final_dN}
    d_{N-1}^2
    &\lesssim L^{-2}\,{(1-L)}^2\,d_\star^2 + a_0^2 + a_1^2 \log \frac{2L^{-1}}{1-L}
    \lesssim \frac{L^{-3}\,{(1-L)}^2}{L^{-N}-1}\,d_0^2 + L^{-2}\,a_0^2 + L^{-2} \log\bigl(\frac{1}{1-L}\bigr)\, a_1^2\,,
\end{align}
so adding together the last summand yields a final bound of
\begin{align*}
    \Sigma
    &\lesssim (cL^{-1} + c' L^{-2})\,\Bigl[\frac{L^{-1}-1}{L^{-N}-1}\,d_0^2 + \log\bigl(\frac{1}{1-L}\bigr)\,a_0^2 + \frac{1}{1-L}\,a_1^2\Bigr]\,.
\end{align*}

If we let $\bar N \deq N \wedge \frac{1}{1-L}$, we can combine both cases into the following bound:
\begin{align*}
    \Sigma
    &\lesssim (cL^{-1}+c' L^{-2})\,\Bigl[ \frac{L^{-1}-1}{L^{-N}-1}\,d_0^2 + (\log\bar N)\,a_0^2 + \bar N\, a_1^2\Bigr]\,.
\end{align*}

\textbf{Non-contractive case.} %
Here, we consider the case $L > 1$.
Note that the recursion~\eqref{eq:shift_opt_aux_bd} still holds, provided that $L\,(1-\eta_n) < 1$.
As before, we split into two regimes.

In the distance recursion~\eqref{eq:shift_opt_aux_bd}, we choose $\eta_n = 1 - 1/L^2$ so that $L\,(1-\eta_n) = 1/L$ and
\begin{align*}
    d_{n+1}^2
    &\le \frac{1}{L}\,d_n^2 + a_0^2 + \frac{La_1^2}{L-1}
    = \frac{1}{L}\,d_n^2 + a^2\,, \qquad a^2 \deq a_0^2 + \frac{La_1^2}{L-1}\,.
\end{align*}
Unrolling,
\begin{align*}
    d_n^2
    &\le \frac{1}{L^n}\,d_0^2 + \frac{L}{L-1}\,a^2\,.
\end{align*}
Hence, since $\eta_n \le 2\,(L-1)/L$, for any $n_\star$ we obtain
\begin{align*}
    \sum_{n=0}^{n_\star-1} \eta_n^2 d_n^2
    &\le \frac{4\,(L-1)}{L}\,(d_0^2 + n_\star a^2)
    \lesssim \frac{L-1}{L}\,d_0^2 + \frac{L-1}{L}\,n_\star a_0^2 + n_\star a_1^2\,.
\end{align*}

In the second regime, we consider $n$ such that $N-n \le 2L/(L-1)$.
We take $\eta_n$ such that
\begin{align*}
    L\,(1-\eta_n) = \Bigl( \frac{N-n-1}{N-n}\Bigr)^2\,,
\end{align*}
i.e., $1-L\,(1-\eta_n) \ge 1/(N-n)$.
Also, $\eta_n \le (L-1)/L + 2/(N-n) \le 4/(N-n)$.
Substituting this into~\eqref{eq:shift_opt_aux_bd}, the recursion~\eqref{eq:shift_terminal_recursion} still holds.
Therefore,~\eqref{eq:shift_terminal_sum} and~\eqref{eq:shift_terminal_final_dist} still hold as well.

To obtain the final bound, we split into two cases.
First, suppose that $N \le 2L/(L-1)$.
Then, only the second regime is relevant, so that the final KL bound is given by~\eqref{eq:shift_terminal_kl}.
Moreover, since $(L^{-1}-1)/(L^{-N}-1) = 1/\sum_{k=0}^{N-1} L^{-k} \ge 1/N$, the bound can further be rewritten in the form~\eqref{eq:shift_terminal_kl_simplified}.

Otherwise, let $n_\star$ be the first non-negative integer such that $N-n_\star \le 2L/(L-1)$.
In this case, we have $N-n_\star \ge L/(L-1)$ and $d_\star^2 \le d_0^2 + \frac{L}{L-1}\, a^2$, so
\begin{align*}
    d_{N-1}^2
    &\lesssim \frac{{(L-1)}^2}{L^2}\,\bigl(d_0^2 + \frac{L}{L-1}\, a_0^2 + \frac{L^2}{{(L-1)}^2}\,a_1^2\bigr) + a_0^2 + \log\bigl(\frac{L}{L-1}\bigr)\,a_1^2 \\
    &\lesssim \frac{{(L-1)}^2}{L^2}\,d_0^2 + a_0^2 + \log\bigl(\frac{L}{L-1}\bigr)\,a_1^2\,.
\end{align*}
Hence, the final KL bound is
\begin{align*}
    \Sigma
    &\lesssim c\,\Bigl[ \frac{L-1}{L}\,d_0^2 + \frac{L-1}{L}\,Na_0^2 + Na_1^2 + \frac{L-1}{L}\,d_\star^2 + \log\bigl(\frac{L}{L-1}\bigr)\,a_0^2 + \frac{L}{L-1}\,a_1^2 \Bigr] \\
    &\qquad{} + c'\,\Bigl[ \frac{{(L-1)}^2}{L^2}\,d_0^2 + a_0^2 + \log\bigl(\frac{L}{L-1}\bigr)\,a_1^2\Bigr] \\
    &\lesssim (c+c')\,\Bigl[\frac{L-1}{L}\,d_0^2 + \frac{L-1}{L} \log\bigl(\frac{L}{L-1}\bigr)\,Na_0^2 + Na_1^2\Bigr]\,.
\end{align*}
In this case, $(L^{-1}-1)/(L^{-N}-1) \ge 1-L^{-1} = (L-1)/L$.

In both cases, we obtain
\begin{align*}
    \Sigma \lesssim (c+c')\,\Bigl[\frac{L^{-1}-1}{L^{-N}-1}\,d_0^2 + \bigl(\frac{(L-1)\,N}{L}\vee \log N\bigr)\, a_0^2 + Na_1^2\Bigr]\,.
\end{align*}
Combining all the cases proves the result.
\end{proof}

\begin{remark}[Tighter bound]\label{rmk:tighter_shift}
    A more refined bound can be read off from the proof; in particular, we can replace $c+c'$ in the final bound with $c + c'/(\log\bar N)$.
\end{remark}

\subsection{Mean-squared error framework}\label{app:mean_sq_err}

For completeness, in this section we prove Theorem~\ref{thm:local_error}.

\begin{proof}[Proof of Theorem~\ref{thm:local_error}]
    The assumptions imply
    \begin{align}
        W_2^2(\delta_x \hat P, \delta_y P)
        &\le L^2\,\norm{x-y}^2 + 2\,(\mc E_{\rm weak}(x) + \gamma \mc E_{\rm strong}(x))\,\norm{x-y} + {\mc E_{\rm strong}(x)}^2\,,
        \label{eq:local_error_1step_W2}
    \end{align}
    see, e.g.,~\cite[\S 5]{chewibook}.
    We now split into cases, depending on $L$.
    When $L < 1$, we apply Young's inequality on the cross term and bound it by $L\,(1-L)\,\norm{x-y}^2 + \frac{1}{L\,(1-L)}\,(\mc E_{\rm weak}(x) + \gamma \mc E_{\rm strong}(x))^2$, leading to the inequality
    \begin{align*}
        W_2^2(\delta_x\hat P,\delta_y P)
        &\le L \,\norm{x-y}^2 + \frac{1}{L\,(1-L)}\,(\mc E_{\rm weak}(x) + \gamma \mc E_{\rm strong}(x))^2 + {\mc E_{\rm strong}(x)}^2\,.
    \end{align*}
    Together with a coupling argument, this inequality can be iterated to obtain the desired bound.

    The other cases are similar. Namely, we bound the cross term for $L = 1$ by $\frac{1}{N}\,\norm{x-y}^2 + N\,(\mc E_{\rm weak}(x)+\gamma \mc E_{\rm strong}(x))^2$, and for $L > 1$ by $L^2\,(L-1) + \frac{1}{L^2\,(L-1)}\,(\mc E_{\rm strong}(x) +\gamma \mc E_{\rm strong}(x))^2$.
    The resulting inequalities are readily iterated and we omit the details.
\end{proof}

\section{Proofs for Section~\ref{sec:sampling}}

\subsection{Shifted Girsanov}\label{app:shifted_girsanov_rmk}

In this section, we show how to obtain a version of Theorem~\ref{thm:lmc} directly using the shifted Girsanov approach.
We assume that $-\beta I \preceq \alpha I \preceq \nabla^2 V \preceq \beta I$, and we follow the proof of Lemma~\ref{lem:cross_reg_lmc}.

We have the stochastic processes
\begin{align*}
    \D \hat X_t
    &= -\nabla V(\hat X_{t_-})\,\D t + \sqrt 2\,\D B_t'\,, & \hat X_0 &= x\,, \\
    \D Y_t'
    &= \{-\nabla V(Y_t') + \eta_t\,(\hat X_t - Y_t')\}\,\D t + \sqrt 2\,\D B_t'\,, & Y_0' &= y\,, \\
    &= -\nabla V(Y_t') \, \D t + \sqrt 2 \,\D B_t\,,
\end{align*}
where we introduce the notation $t_- \deq \lfloor t/h\rfloor \,h$.
We define the path measures $\mb P$, $\mb P'$ as before, so that
\begin{align*}
    \KL(\delta_x \hat P^N \mmid \delta_y P^N)
    \le \KL(\mb P' \mmid \mb P)
    = \frac{1}{4} \,\E^{\mb P'}\int_0^T\eta_t^2\,\norm{\hat X_t - Y_t'}^2\,\D t\,,
\end{align*}
where $T \deq Nh$.
We split the proof into two cases, based on whether or not $\alpha \ge 0$.
For simplicity, we use a simpler, suboptimal choice of shifts.

\paragraph*{Convex case.} Here, we assume that $\alpha \ge 0$.
We now have the inequality
\begin{align*}
    \frac{1}{2}\, \D(\norm{\hat X_t - Y_t'}^2)
    &\le \bigl[-(\alpha+\eta_t) \,\norm{\hat X_t - Y_t'}^2 + \langle \nabla V(\hat X_t) - \nabla V(\hat X_{t_-}), \hat X_t - Y_t'\rangle\bigr]\,\D t \\
    &\le \Bigl[- \frac{\alpha +\eta_t}{2}\,\norm{\hat X_t - Y_t'}^2 + \frac{\beta^2}{2\,(\alpha + \eta_t)}\,\norm{\hat X_t - \hat X_{t_-}}^2 \Bigr]\, \D t\,.
\end{align*}
Also,
\begin{align}\label{eq:shifted_girsanov_disc_error}
    \E^{\mb P'}[\norm{\hat X_t - \hat X_{t_-}}^2]
    &\lesssim h^2\,\E^{\mb P'}[\norm{\nabla V(\hat X_{t_-})}^2] + dh\,.
\end{align}
Let $\bar{\mf a}^2 \deq \max_{n=0,1,\dotsc,N-1}\{\beta^2 h^2\,\E^{\mb P'}[\norm{\nabla V(\hat X_{nh})}^2] + \beta^2 dh\}$ denote the maximum error.
Also, we take $\eta_t \deq \frac{2\alpha}{\exp(\alpha\,(T-t))-1}$ (in the case $\alpha = 0$, the expressions should be understood as their limiting values, e.g., $\eta_t = 2/(T-t)$).
Then, we obtain
\begin{align*}
    \partial_t\Bigl\{\exp\Bigl(\alpha t + \int_0^t \eta_s \, \D s\Bigr)\,\E^{\mb P'}[\norm{\hat X_t - Y_t'}^2]\Bigr\}
    &\lesssim \frac{\bar{\mf a}^2}{\alpha + \eta_t} \exp\Bigl(\alpha t + \int_0^t \eta_s \, \D s\Bigr) \\
    &= \frac{\bar{\mf a}^2\exp(\alpha t)\,\{\exp(\alpha\,(T-t)) - 1\}}{\alpha\,\{\exp(\alpha\,(T-t)) + 1\}}\, \Bigl( \frac{1-\exp(-\alpha T)}{1-\exp(-\alpha\,(T-t))}\Bigr)^2 \\
    &\le \frac{\bar{\mf a}^2 \exp(\alpha T)\,\{1-\exp(-\alpha T)\}^2}{\alpha\,\{\exp(\alpha\,(T-t)) - 1\}}\,.
\end{align*}
Integrating,
\begin{align*}
    &\exp\Bigl(\alpha t + \int_0^t \eta_s \,\D s\Bigr)\,\E^{\mb P'}[\norm{\hat X_t - Y_t'}^2] \\
    &\qquad \le \norm{x-y}^2 + O\biggl(\frac{\bar{\mf a}^2\exp(\alpha T)\,\{1-\exp(-\alpha T)\}^2}{\alpha^2} \log \frac{\exp(\alpha T) - 1}{\exp(\alpha T) - \exp(\alpha t)}\biggr)
\end{align*}
or
\begin{align}\label{eq:shifted_girsanov_cvx_dist}
    \begin{aligned}
        \E^{\mb P'}[\norm{\hat X_t - Y_t'}^2]
        &\le \exp(-\alpha t)\,\Bigl( \frac{1-\exp(-\alpha\,(T-t))}{1-\exp(-\alpha T)}\Bigr)^2\,\norm{x-y}^2 \\
        &\qquad{} + O\biggl( \frac{\bar{\mf a}^2 \exp(\alpha\,(T-t)) \,\{1-\exp(-\alpha\,(T-t))\}^2}{\alpha^2}\log \frac{\exp(\alpha T) - 1}{\exp(\alpha T) - \exp(\alpha t)} \biggr)\,.
    \end{aligned}
\end{align}
It yields the KL divergence bound
\begin{align*}
    \KL(\mb P' \mmid \mb P)
    &\lesssim \int_0^T \Bigl[\frac{\alpha^2\exp(-2\alpha T + \alpha t))\,\norm{x-y}^2}{\{1-\exp(-\alpha T)\}^2} + \bar{\mf a}^2 \exp(-\alpha \,(T-t))\log \frac{\exp(\alpha T) - 1}{\exp(\alpha T) - \exp(\alpha t)} \Bigr]\,\D t \\
    &\lesssim \frac{\alpha\,\norm{x-y}^2}{\exp(\alpha T) - 1} + \bar{\mf a}^2 \underbrace{\int_0^T \exp(-\alpha \,(T-t))\log \frac{\exp(\alpha T) - 1}{\exp(\alpha T) - \exp(\alpha t)} \,\D t}_{\eqqcolon \mc I_0^T}\,.
\end{align*}
We evaluate the last integral by splitting the region of integration.
First,
\begin{align*}
    \mc I_0^{T-1/\alpha}
    \deq \int_0^{(T-1/\alpha)\vee 0} \exp(-\alpha\,(T-t)) \log \frac{1-\exp(-\alpha T)}{1-\exp(-\alpha\,(T-t))} \, \D t
    &\lesssim \int_0^{(T-1/\alpha)\vee 0} \exp(-\alpha\,(T-t)) \, \D t \\
    &\lesssim T \wedge \frac{1}{\alpha}\,.
\end{align*}
For the remaining part, if $T \le 1/\alpha$,
\begin{align*}
    \mc I_{T-1/\alpha}^T
    \deq \int_{(T-1/\alpha) \vee 0}^T\exp(-\alpha\,(T-t)) \log \frac{1-\exp(-\alpha T)}{1-\exp(-\alpha\,(T-t))} \, \D t
    &\le \int_0^T \log \frac{T}{2\,(T-t)} \, \D t
    \lesssim T\,.
\end{align*}
On the other hand, if $T \ge 1/\alpha$, then
\begin{align*}
    \mc I_{T-1/\alpha}^T
    &\le \int_{T-1/\alpha}^T \log \frac{1}{2\alpha\,(T-t)} \, \D t
    \lesssim \frac{1}{\alpha}\,.
\end{align*}
Combining these estimates, we obtain the following bound:
\begin{align*}
    \boxed{\KL(\delta_x \hat P^N \mmid \delta_y P^N)
    \lesssim \frac{\alpha\,\norm{x-y}^2}{\exp(\alpha T) - 1} + \bigl( T \wedge \frac{1}{\alpha}\bigr)\,\bar{\mf a}^2}
\end{align*}

\paragraph*{Semi-convex case.}
We now consider the case $\alpha < 0$, and for simplicity, we set $\alpha = -\beta$.
In this case, assuming that $\eta_t > \beta$ (so that the use of Young's inequality is valid), we arrive as before at
\begin{align*}
    &\partial_t\Bigl\{\exp\Bigl(-\beta t + \int_0^t \eta_s \, \D s\Bigr)\,\E^{\mb P'}[\norm{\hat X_t - Y_t'}^2]\Bigr\} \\
    &\qquad \lesssim \frac{\bar{\mf a}^2\exp(-\beta t)\,\{1-\exp(-\beta\,(T-t))\}}{\beta\,\{1+\exp(-\beta\,(T-t))\}}\, \Bigl( \frac{\exp(\beta T)-1}{\exp(\beta\,(T-t))-1}\Bigr)^2
    \le \frac{\bar{\mf a}^2 \exp(-\beta T)\,\{\exp(\beta T)-1\}^2}{\beta\,\{\exp(\beta\,(T-t)) - 1\}}\,.
\end{align*}
Integrating,
\begin{align*}
    &\exp\Bigl(-\beta t + \int_0^t \eta_s \, \D s\Bigr)\,\E^{\mb P'}[\norm{\hat X_t - Y_t'}^2] \\
    &\qquad \lesssim \norm{x-y}^2 + \frac{\bar{\mf a}^2\exp(-\beta T)\,\{\exp(\beta T) - 1\}^2}{\beta^2} \log \frac{\exp(\beta T) - 1}{\exp(\beta T) - \exp(\beta t)}\,,
\end{align*}
which yields
\begin{align}\label{eq:shifted_girsanov_semicvx_dist}
    \begin{aligned}
        \E^{\mb P'}[\norm{\hat X_t - Y_t'}^2]
        &\lesssim \exp(\beta t)\,\Bigl(\frac{\exp(\beta\,(T-t)) - 1}{\exp(\beta T) - 1}\Bigr)^2\,\norm{x-y}^2 \\
        &\qquad{} + \frac{\bar{\mf a}^2\exp(-\beta\,(T-t)) \, \{\exp(\beta\,(T-t)) - 1\}^2}{\beta^2} \log \frac{\exp(\beta T) - 1}{\exp(\beta T) - \exp(\beta t)}\,.
    \end{aligned}
\end{align}
It yields the KL divergence bound
\begin{align*}
    \KL(\mb P' \mmid \mb P)
    &\lesssim \frac{\beta\,\norm{x-y}^2}{1-\exp(-\beta T)} + \bar{\mf a}^2 \int_0^T \exp(\beta\,(T-t)) \log \frac{\exp(\beta T) - 1}{\exp(\beta T) - \exp(\beta t)}\,.
\end{align*}
A similar case-by-case analysis of the last integral shows that it is of order $T$, hence:
\begin{align}\label{eq:lmc_shifted_girsanov_semicvx}
    \boxed{\KL(\delta_x \hat P^N \mmid \delta_y P^N) \lesssim \frac{\beta\,\norm{x-y}^2}{1-\exp(-\beta T)} + T\bar{\mf a}^2}
\end{align}
In particular, this establishes the semi-convex case of Lemma~\ref{lem:cross_reg_lmc}.

\subsection{Extension to R\'enyi divergences}\label{app:renyi_shifted_girsanov}

In this section, we extend the analysis of \S\ref{app:shifted_girsanov_rmk} to R\'enyi divergences of all orders $q > 1$.

First, we briefly recall the definition of the sub-Gaussian Orlicz norm.
We recall that a random variable is sub-Gaussian if and only if the sub-Gaussian Orlicz norm is finite. We refer the reader to standard textbooks such as~\cite{Rao91book} and~\cite{vershynin2018high} for further background on Orlicz norms in the context of analysis and probability, respectively.

\begin{defin}[Sub-Gaussian Orlicz norm]
	The sub-Gaussian Orlicz norm $\|\cdot\|_{\psi_2}$ is defined as
	\begin{align*}
		\|X\|_{\psi_2}  \deq  \inf \{\lambda > 0 \; : \; \E \exp(\|X\|^2/\lambda^2) \leq e \}\,.
	\end{align*}
\end{defin}

We follow the notation and setup of \S\ref{app:shifted_girsanov_rmk}. %
Here, by Girsanov's theorem and Cauchy{--}Schwarz,
\begin{align*}
    \Ren_q(\mb P' \mmid \mb P)
    &= \frac{1}{q-1} \log \E^{\mb P'} \exp\Bigl(\frac{q-1}{\sqrt 2} \int_0^T \eta_t\,\langle Y_t' - \hat X_t, \D B_t'\rangle + \frac{q-1}{4} \int_0^T \eta_t^2\,\norm{\hat X_t - Y_t'}^2\,\D t \Bigr) \\
    &\le \frac{1}{2\,(q-1)} \log \underbrace{\E^{\mb P'} \exp\Bigl(\sqrt 2\,(q-1) \int_0^T \eta_t\,\langle Y_t'-\hat X_t,\D B_t'\rangle - {(q-1)}^2 \int_0^T \eta_t^2\,\norm{\hat X_t - Y_t'}^2 \, \D t \Bigr)}_{\le 1} \\
    &\qquad{} + \frac{1}{2\,(q-1)} \log \E^{\mb P'} \exp\Bigl(\bigl({(q-1)}^2 + \frac{q-1}{2}\bigr) \int_0^T \eta_t^2 \,\norm{\hat X_t - Y_t'}^2 \, \D t\Bigr) \\
    &\le \frac{1}{2\,(q-1)} \log \E^{\mb P'} \exp\Bigl(q\,(q-1) \int_0^T \eta_t^2 \,\norm{\hat X_t - Y_t'}^2 \, \D t\Bigr)\,,
\end{align*}
where the inequality for the underlined term follows from It\^o's lemma.

We now substitute the inequality~\eqref{eq:shifted_girsanov_disc_error} with the pointwise bound
\begin{align*}
    \norm{\hat X_t - \hat X_{t_-}}^2
    &\lesssim h^2\,\norm{\nabla V(\hat X_{t_-})}^2 + \norm{B_t' - B_{t_-}'}^2\,.
\end{align*}
If we follow the calculations of \S\ref{app:shifted_girsanov_rmk}, now working pointwise rather than in expectation, we arrive at the following version of the bounds~\eqref{eq:shifted_girsanov_cvx_dist} and~\eqref{eq:shifted_girsanov_semicvx_dist}:
\begin{align*}
    \norm{\hat X_t - Y_t'}^2
    &\le A_t \,\norm{x-y}^2 + C_t\, \mf a^2\,,
\end{align*}
for some $A_t, B_t \ge 0$ (which depend on $\alpha$ and $T$), and
\begin{align*}
    \mf a^2 \lesssim \max_{n=0,1,\dotsc,N-1}\Bigl\{\beta^2 h^2\,\norm{\nabla V(\hat X_{nh})}^2 + \beta^2 \sup_{t\in [nh, (n+1)h]}{\norm{B_t' - B_{nh}'}^2}\Bigr\}\,.
\end{align*}
Therefore, we obtain
\begin{align}
    \Ren_q(\mb P' \mmid \mb P)
    &\le \frac{q\,\norm{x-y}^2}{2} \int_0^T \eta_t^2 A_t \, \D t + \frac{1}{2\,(q-1)} \log \E^{\mb P'} \exp\Bigl(q\,(q-1)\, \mf a^2 \int_0^T \eta_t^2 C_t \, \D t\Bigr) \label{eq:renyi_girsanov_splitting} \\
    &\le \frac{q\,\norm{x-y}^2}{2} \int_0^T \eta_t^2 A_t \, \D t + \frac{q\,\norm{\mf a}_{\psi_2}^2}{2} \int_0^T \eta_t^2 C_t \, \D t\,, \nonumber
\end{align}
provided that
\begin{align}\label{eq:renyi_shifted_girsanov_condition}
    q\,(q-1)\,\norm{\mf a}_{\psi_2}^2 \int_0^T \eta_t^2 C_t \, \D t \le 1\,.
\end{align}
Under this condition, from the calculations in \S\ref{app:shifted_girsanov_rmk}, we can read off the following bounds:
\begin{align*}
    \Ren_q(\delta_x \hat P^N \mmid \delta_y P^N)
    &\lesssim \frac{\alpha q\,\norm{x-y}^2}{\exp(\alpha T) - 1} + q\,\bigl(T \wedge \frac{1}{\alpha}\bigr)\,\norm{\mf a}_{\psi_2}^2
\end{align*}
in the case $\alpha > 0$, and
\begin{align*}
    \Ren_q(\delta_x \hat P^N \mmid \delta_y P^N)
    &\lesssim \frac{q\beta\,\norm{x-y}^2}{1-\exp(-\beta T)} + qT\,\norm{\mf a}_{\psi_2}^2
\end{align*}
in the case $\alpha = -\beta$, provided that the condition~\eqref{eq:renyi_shifted_girsanov_condition} is met.

However,~\eqref{eq:renyi_shifted_girsanov_condition} is often not sharp and can be improved with a more careful argument.
We conclude this section by doing so for the case $N = 1$, thereby establishing a R\'enyi version of the cross-regularity lemma (Lemma~\ref{lem:cross_reg_lmc}).

\begin{lemma}[R\'enyi cross-regularity for LMC]\label{lem:cross_reg_lmc_renyi}
    Let $P$, $\hat P$ denote the kernels corresponding to~\ref{eq:langevin} run for time $h$ and~\ref{eq:lmc} respectively, and let $q > 1$.
    Assume that $-\beta I \preceq \nabla^2 V \preceq \beta I$ and that $h \lesssim \frac{1}{\beta \sqrt{q\,(q-1)}}$ for a sufficiently small implied constant.
    Then, for all $x,y\in\R^d$,
    \begin{align*}
        \Ren_q(\delta_x \hat P \mmid \delta_y P)
        \lesssim \frac{q\,\norm{x-y}^2}{h} + \beta^2 h^3 q \,\norm{\nabla V(x)}^2 + \beta^2 dh^2 q\,.
    \end{align*}
\end{lemma}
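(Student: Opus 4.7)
The plan is to specialize the shifted Girsanov argument from Section~\ref{app:renyi_shifted_girsanov} to $N=1$, replacing the crude sub-Gaussian Orlicz norm bound of equation~\eqref{eq:renyi_girsanov_splitting} with a sharper pathwise analysis. We reuse the same auxiliary construction as in the proof of Lemma~\ref{lem:cross_reg_lmc}, so that by Girsanov's theorem followed by Cauchy--Schwarz (as in Section~\ref{app:renyi_shifted_girsanov}),
\begin{align*}
    \Ren_q(\delta_x \hat P \mmid \delta_y P) \le \frac{1}{2(q-1)} \log \E^{\mb P'} \exp\Bigl(q(q-1) \int_0^h \eta_t^2 \,\norm{\hat X_t - Y_t'}^2 \,\D t\Bigr)\,.
\end{align*}

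The key new step is a \emph{pathwise} bound on $u_t \deq \hat X_t - Y_t'$. Using $\beta$-smoothness, Cauchy--Schwarz, and the explicit expression $\hat X_t - x = -t\,\nabla V(x) + \sqrt 2\,B_t'$, one finds
\begin{align*}
    \partial_t \,\norm{u_t} \le (-\eta_t + \beta)\,\norm{u_t} + \beta t \,\norm{\nabla V(x)} + \sqrt 2\,\beta\,\norm{B^*}\,,\qquad \norm{B^*} \deq \sup_{s \le h}\norm{B_s'}\,.
\end{align*}
Choosing the deterministic shifts $\eta_t = c/(h-t)$ for a fixed $c > 1$ (which forces the termination condition $Y_h' = \hat X_h$), Gr\"onwall's inequality together with the smallness of $\beta h$ (which is controlled under the lemma's hypothesis) yields, after squaring,
\begin{align*}
    \norm{u_t}^2 \lesssim \Bigl(\frac{h-t}{h}\Bigr)^{2c}\,\norm{x-y}^2 + (h-t)^2\,\bigl(\beta^2 h^2 \,\norm{\nabla V(x)}^2 + \beta^2\,\norm{B^*}^2\bigr)\,,
\end{align*}
so that integrating against $\eta_t^2$ over $[0,h]$ gives
\begin{align*}
    \int_0^h \eta_t^2 \,\norm{u_t}^2 \,\D t \lesssim \frac{\norm{x-y}^2}{h} + \beta^2 h^3\,\norm{\nabla V(x)}^2 + \beta^2 h\,\norm{B^*}^2\,.
\end{align*}

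The main obstacle is then controlling the exponential moment of $\norm{B^*}^2$. By Doob's inequality together with the $\chi^2$ MGF identity $\E\exp(\lambda\,\norm{B_h}^2) = (1-2\lambda h)^{-d/2}$, one has $\log \E \exp(\lambda\,\norm{B^*}^2) \lesssim \lambda dh$ whenever $\lambda h \lesssim 1$. Applying this with $\lambda \asymp q(q-1)\,\beta^2 h$, finiteness of the MGF requires exactly $q(q-1)\,\beta^2 h^2 \lesssim 1$, matching the hypothesis $h \lesssim 1/(\beta\sqrt{q(q-1)})$ precisely. Pulling the deterministic contributions $\norm{x-y}^2/h$ and $\beta^2 h^3\,\norm{\nabla V(x)}^2$ out of the logarithm, and combining with the MGF bound on $\norm{B^*}^2$, produces after dividing by $2(q-1)$ the three claimed terms $q\,\norm{x-y}^2/h$, $\beta^2 h^3 q \,\norm{\nabla V(x)}^2$, and $\beta^2 dh^2 q$.
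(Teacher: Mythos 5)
Your proposal follows essentially the same route as the paper's own proof in \S\ref{app:renyi_shifted_girsanov}: apply Girsanov with the Cauchy--Schwarz splitting, obtain a \emph{pathwise} bound on $\|\hat X_t - Y_t'\|^2$ via a deterministic Gr\"onwall argument, separate the deterministic contributions $\|x-y\|^2/h$ and $\beta^2 h^3\|\nabla V(x)\|^2$ from the random one $\beta^2 h\,\|B^*\|^2$, and close with an exponential moment bound for $\|B^*\|^2$. The paper does this by choosing $\mf a_0^2 \asymp \beta^2 h^2\|\nabla V(x)\|^2$ and reusing the pointwise version of the estimates from \S\ref{app:shifted_girsanov_rmk}, whereas you redo the Gr\"onwall step explicitly with the slightly simpler shift $\eta_t = c/(h-t)$; both work. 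Your pathwise differential inequality and integration $\int_0^h \eta_t^2\|u_t\|^2\,\D t \lesssim \|x-y\|^2/h + \beta^2 h^3\|\nabla V(x)\|^2 + \beta^2 h\,\|B^*\|^2$ are correct.

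There is, however, a genuine (if fixable) gap in your final step. Doob's $L^p$ maximal inequality applied to the submartingale $\exp(\lambda\|B_t\|^2/p)$, combined with the $\chi^2$ MGF identity, yields $\E\exp(\lambda\|B^*\|^2) \lesssim (1-2\lambda h)^{-d/2}$ only \emph{up to a multiplicative prefactor}, so the resulting bound is $\log\E\exp(\lambda\|B^*\|^2) \lesssim d + \lambda dh$, not $\lesssim \lambda dh$. After dividing by $2(q-1)$, the extra additive $O(d)/(q-1)$ term is \emph{not} dominated by $q\beta^2 dh^2$ in the regime of interest, since the hypothesis $q(q-1)\beta^2 h^2 \lesssim 1$ points in the wrong direction. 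The claimed estimate $\log\E\exp(\lambda\|B^*\|^2)\lesssim \lambda dh$ for $\lambda h\lesssim 1$ \emph{is} true, but it requires a sharper argument than Doob alone: for instance, reducing coordinatewise to a one-dimensional running maximum, then using sub-Gaussian concentration of $\sup_{t\le h}|b_t|$ around its $O(\sqrt{h})$ mean (so that $\sup_t|b_t|^2$ is sub-exponential with the correct normalization with no extraneous constant), which is the content of the reference {\rm[chewi2021optimal, Lemma 23]} that the paper's proof invokes. With that step replaced, your proof is complete.
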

\begin{proof}
    Here, $\mf a^2 \lesssim \beta^2 h^2 \,\norm{\nabla V(x)}^2 + \beta^2 \sup_{t\in [0,h]}{\norm{B_t'}^2}$.
    In~\eqref{eq:renyi_girsanov_splitting}, we can add and subtract any constant $\mf a_0^2 > 0$ before evaluating the expectation, leading to
    \begin{align*}
        \Ren_q(\mb P' \mmid \mb P)
        &\lesssim q\,\norm{x-y}^2 \int_0^h \eta_t^2 A_t \, \D t + q\,\mf a_0^2 \int_0^h \eta_t^2 C_t \, \D t \\
        &\qquad{} + \frac{1}{q-1} \log \E^{\mb P'} \exp\Bigl(q\,(q-1)\, (\mf a^2 - \mf a_0^2) \int_0^h \eta_t^2 C_t \, \D t\Bigr)\,.
    \end{align*}
    In particular, choosing $\mf a_0^2 \asymp \beta^2 h^2\,\norm{\nabla V(x)}^2$, we have $\mf a^2 - \mf a_0^2 \lesssim \beta^2 \sup_{t\in [0,h]}{\norm{B_t'}^2}$.
    Then,
    \begin{align*}
        \log \E^{\mb P'} \exp\Bigl(\beta^2 q\,(q-1)\,\sup_{t\in [0,h]}{\norm{B_t'}^2} \int_0^h \eta_t^2 C_t \, \D t\Bigr)
        &= d\log \E^{\mb P'} \exp\Bigl(\beta^2 q\,(q-1)\,\sup_{t\in [0,h]}{\abs{B_{t,1}^2}^2} \int_0^h \eta_t^2 C_t \, \D t\Bigr)
    \end{align*}
    where ${\{B_{t,1}'\}}_{t\ge 0}$ denotes the first coordinate of the Brownian motion.
    The desired R\'enyi bound now follows from standard estimates (e.g.,~\cite[Lemma 23]{chewi2021optimal}), provided that the condition $\beta^2 hq\,(q-1)\int_0^h \eta_t^2 C_t\,\D t \lesssim 1$ holds.
    It suffices to have $\beta h\sqrt{q\,(q-1)} \lesssim 1$.
\end{proof}

\subsection{Gradient estimates}\label{app:grad}

Here, we establish useful lemmas to control the gradient terms in the proofs. This is helpful throughout our applications to sampling since local error analysis bounds involve the squared gradient norm, and therefore one needs bounds on these quantities to obtain final iteration complexities. The first lemma, simpler, bounds the squared gradient norm at a measure $\mu$ by the squared gradient norm at the stationary measure $\pi$, which is $O(\beta d)$, plus either the Wasserstein or KL divergence between $\mu$ and $\pi$. The second lemma then shows how to combine this in a user-friendly way with the Wasserstein and KL divergence bounds produced by local error analysis. 

\begin{lemma}[Gradient bound]\label{lem:grad_bd}
    Suppose that $-\beta I \preceq \nabla^2 V \preceq \beta I$ and $\pi\propto\exp(-V)$.
    Then, for any probability measure $\mu$,
    \begin{align*}
        \E_\mu[\norm{\nabla V}^2]
        \lesssim \beta d + \min\{\beta^2\,W_2^2(\mu,\pi),\,\beta \KL(\mu \mmid \pi)\}\,.
    \end{align*}
\end{lemma}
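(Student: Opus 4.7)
The plan is to establish the three pieces of the bound separately. The foundational estimate is the base bound $\E_\pi[\norm{\nabla V}^2] \le \beta d$, which is immediate from the integration-by-parts identity $\E_\pi[\norm{\nabla V}^2] = \E_\pi[\Delta V]$, valid for $\pi \propto \exp(-V)$, combined with the upper Hessian bound $\Delta V = \tr(\nabla^2 V) \le \beta d$.

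For the Wasserstein refinement, I would use that $\nabla V$ is $\beta$-Lipschitz, since the two-sided Hessian bound implies $\norm{\nabla^2 V}_{\mathrm{op}} \le \beta$. Letting $\gamma$ be the optimal $W_2$-coupling between $\mu$ and $\pi$ and applying the triangle inequality in $L^2(\gamma)$ gives
\begin{align*}
    \norm{\nabla V}_{L^2(\mu)}
    \le \norm{\nabla V}_{L^2(\pi)} + \beta\,W_2(\mu, \pi)\,.
\end{align*}
Squaring, applying $(a+b)^2 \le 2a^2 + 2b^2$, and invoking the base bound yields $\E_\mu[\norm{\nabla V}^2] \lesssim \beta d + \beta^2\,W_2^2(\mu, \pi)$, which is one of the two terms inside the minimum.

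For the KL refinement, the plan is to invoke the Donsker--Varadhan variational formula
\begin{align*}
    \lambda\,\E_\mu[\norm{\nabla V}^2] \le \KL(\mu \mmid \pi) + \log \E_\pi\bigl[\exp(\lambda\,\norm{\nabla V}^2)\bigr]
\end{align*}
for some $\lambda \asymp 1/\beta$ to be chosen. The main obstacle is showing that the log-moment-generating function on the right is $O(d)$, since this is a concentration statement for the $\beta$-Lipschitz function $\norm{\nabla V}$ under $\pi$ and no isoperimetric inequality has been assumed. My approach is to leverage the consequence of $\beta$-smoothness $\norm{\nabla V(x)}^2 \le 2\beta\,(V(x) - \inf V)$, thereby reducing the problem to controlling $\log \E_\pi[\exp((V - \inf V)/2)]$. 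This quantity equals $\log(Z_{1/2}/Z) - (\inf V)/2$ for $Z_{s} \deq \int \exp(-sV)$, and the two-sided Hessian bound supplies the quadratic envelopes $\lvert V(x) - \inf V - \tfrac{1}{2}\langle x - x_\star, \nabla^2 V(x_\star)\,(x-x_\star)\rangle\rvert \le \tfrac{\beta}{2}\,\norm{x-x_\star}^2$ near a minimizer $x_\star$, which together with the normalization $Z < \infty$ should yield a bound of the form $\exp(O(d))$. Feeding this back into the Donsker--Varadhan inequality with $\lambda \asymp 1/\beta$ then produces the desired $\E_\mu[\norm{\nabla V}^2] \lesssim \beta d + \beta\,\KL(\mu \mmid \pi)$, completing the second term of the minimum.
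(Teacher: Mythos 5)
Your handling of the base bound ($\E_\pi[\norm{\nabla V}^2] = \E_\pi[\Delta V] \le \beta d$ via integration by parts) and the $W_2$ term ($\beta$-Lipschitzness of $\nabla V$ plus the triangle inequality in $L^2$ of the optimal coupling) match the paper exactly, and you also start the KL term the same way the paper does, via Donsker--Varadhan with $\lambda \asymp 1/\beta$. You correctly identify the crux: showing $\log\E_\pi[\exp(\lambda\norm{\nabla V}^2)] = O(d)$ with no isoperimetric assumption available.

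The route you propose for that crux, however, has a genuine gap. The pointwise bound $\norm{\nabla V(x)}^2 \le 2\beta\,(V(x) - \inf V)$ trades a quantity that typically \emph{decays} in the tails for one that \emph{grows} there, so it is lossy precisely where integrability is in question. Concretely, after normalizing $\inf V = 0$, your argument reduces to showing $\log(Z_{1/2}/Z) = O(d)$ with $Z_s \deq \int e^{-sV}$. This can fail under the stated hypotheses: in $d=1$, take $e^{-V(x)} = x^{-2}(\log x)^{-2}$ for $|x|\ge 2$ (symmetrized and smoothed near the origin). Then $Z < \infty$ and $\nabla^2 V$ is uniformly bounded, yet $Z_{1/2} = \int e^{-V/2}\,\D x \asymp \int \frac{\D x}{|x|\log|x|} = \infty$. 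Even when $Z_{1/2}$ is finite, controlling the ratio by $e^{O(d)}$ requires a quantitative lower bound on the growth of $V$ (e.g.\ strong convexity with $m \gtrsim \beta$); the quadratic envelope $V(x) \le \frac{\beta}{2}\norm{x-x_\star}^2$ you invoke only bounds $Z$ from below and says nothing about tail integrability of $e^{-V/2}$. The fix, which is what the cited references are doing, is to stay with the concentrating quantity $\norm{\nabla V}$ itself and iterate integration by parts on moments: $\E_\pi[\norm{\nabla V}^{2k}] = \E_\pi[\nabla\cdot(\norm{\nabla V}^{2k-2}\nabla V)] \le \beta\,(d+2k-2)\,\E_\pi[\norm{\nabla V}^{2k-2}]$, hence $\E_\pi[\norm{\nabla V}^{2k}] \le (2\beta)^k\,\Gamma(d/2+k)/\Gamma(d/2)$ and $\E_\pi[\exp(\lambda\norm{\nabla V}^2)] \le (1-2\lambda\beta)^{-d/2}$ for $\lambda < 1/(2\beta)$, giving the desired $O(d)$ uniformly with no assumptions beyond $-\beta I \preceq \nabla^2 V \preceq \beta I$.
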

\begin{proof}
    For the $W_2$ bound, we simply use $\E_\mu[\norm{\nabla V}^2] \lesssim \E_\pi[\norm{\nabla V}^2] + \beta^2\,W_2^2(\mu,\pi)$ together with the bound $\E_\pi[\norm{\nabla V}^2] \le \beta d$ (cf.~\cite[\S 4]{chewibook}, ``basic lemma'').

    For the KL bound, the Donsker{--}Varadhan variational principle, for any $\lambda > 0$,
    \begin{align*}
        \E_\mu[\norm{\nabla V}^2]
        &\le \frac{1}{\lambda}\,\{\KL(\mu \mmid \pi) + \log \E_\pi \exp(\lambda\,\norm{\nabla V}^2)\}\,.
    \end{align*}
    The second term in braces is controlled using the sub-Gaussian concentration of $\norm{\nabla V}$ (cf.~\cite{Neg22Thesis} or~\cite[Corollary 5.5]{scr2}): for $\lambda \asymp 1/\beta$, the second term is bounded by $O(d)$.
\end{proof}

\begin{lemma}[Recursive gradient control]\label{lem:recursive_grad}
    Suppose that $-\beta I \preceq \nabla^2 V \preceq \beta I$ and $\pi \propto \exp(-V)$.
    We write $G_n^2 \deq \max_{k < n} \E_{\hat\mu_0\hat P^k}[\norm{\nabla V}^2]$.
    \begin{enumerate}
        \item Suppose that the following bound holds for all iterations $n \le n_0$:
        \begin{align}\label{eq:w2_iteration}
            W_2^2(\hat\mu_0\hat P^n, \pi)
            &\leq \msf A^2 G_n^2 + \msf B^2\,.
        \end{align}
        If $\msf A\beta \lesssim 1$ for a sufficiently small implied constant, then for all $n \le n_0$,
        \begin{align*}
            W_2^2(\hat\mu_0\hat P^n, \pi)
            &\lesssim \msf A^2 \beta d + \msf B^2 \qquad\text{and}\qquad G_n^2 \lesssim \beta d + \msf B^2 \beta^2\,.
        \end{align*}
        \item Suppose that the following bound holds for all iterations $n \ge n_0$:
        \begin{align}\label{eq:kl_iteration}
            \KL(\hat\mu_0\hat P^n \mmid \pi)
            \le \msf C^2 G_n^2 + \msf D^2\,.
        \end{align}
        If $\msf C^2\beta \lesssim 1$ for a sufficiently small implied constant, then for all $n \ge n_0$,
        \begin{align*}
            \KL(\hat\mu_0\hat P^n \mmid \pi)
            &\lesssim \msf C^2 G_{n_0}^2 + \msf C^2 \beta d + \msf D^2 \qquad\text{and}\qquad G_n^2 \lesssim G_{n_0}^2 + \beta d + \msf D^2\beta\,.
        \end{align*}
    \end{enumerate}
\end{lemma}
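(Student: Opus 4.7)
The plan is a bootstrap/absorption argument built on top of Lemma~\ref{lem:grad_bd}, which converts gradient-squared expectations into either $W_2^2$ or $\KL$ against $\pi$. The key point is that the hypotheses~\eqref{eq:w2_iteration} and~\eqref{eq:kl_iteration} express a divergence at step $n$ in terms of $G_n^2$, while Lemma~\ref{lem:grad_bd} does the reverse: it bounds $G_n^2$ in terms of the running maximum of the divergence at earlier steps. Composing these two estimates produces a self-referential inequality on the running maximum, and the smallness assumptions $\msf A\beta \lesssim 1$ and $\msf C^2 \beta \lesssim 1$ are exactly what is needed to absorb the self-referential term.

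For Part 1, I would set $\bar W_n^2 \deq \max_{k \le n} W_2^2(\hat\mu_0 \hat P^k, \pi)$ and prove by induction on $n \le n_0$ that $\bar W_n^2 \lesssim \msf A^2 \beta d + \msf B^2$. The base case $n=0$ follows from~\eqref{eq:w2_iteration} since $G_0^2 = 0$ by convention (vacuous max). For the inductive step, Lemma~\ref{lem:grad_bd} gives $G_n^2 \lesssim \beta d + \beta^2 \bar W_{n-1}^2$, and substituting into~\eqref{eq:w2_iteration} yields $W_2^2(\hat\mu_0 \hat P^n, \pi) \lesssim \msf A^2 \beta d + \msf A^2 \beta^2\, \bar W_{n-1}^2 + \msf B^2$. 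Taking the max with $\bar W_{n-1}^2$ and choosing the implicit constant in $\msf A\beta \lesssim 1$ so that $\msf A^2 \beta^2$ is smaller than the reciprocal of the absolute constant hidden in $\lesssim$, the $\msf A^2 \beta^2 \bar W_{n-1}^2$ term can be absorbed into $\bar W_n^2$ on the left. The gradient bound $G_n^2 \lesssim \beta d + \msf B^2 \beta^2$ then follows immediately by plugging the $W_2^2$ bound back into Lemma~\ref{lem:grad_bd} and using $\msf A^2 \beta^3 d \le \beta d$.

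Part 2 is analogous but must handle the fact that~\eqref{eq:kl_iteration} only holds for $n \ge n_0$. I would split the max defining $G_n^2$ into iterations $k < n_0$, which contribute at most $G_{n_0}^2$, and iterations $n_0 \le k < n$, to which Lemma~\ref{lem:grad_bd} applies with the KL variant. Setting $\bar K_n \deq \max_{n_0 \le k \le n} \KL(\hat\mu_0 \hat P^k \mmid \pi)$, this yields $G_n^2 \lesssim G_{n_0}^2 + \beta d + \beta\, \bar K_{n-1}$. Plugging into~\eqref{eq:kl_iteration} gives $\KL(\hat\mu_0 \hat P^n \mmid \pi) \lesssim \msf C^2 G_{n_0}^2 + \msf C^2 \beta d + \msf C^2 \beta\, \bar K_{n-1} + \msf D^2$, and under $\msf C^2 \beta \lesssim 1$ the $\msf C^2 \beta\, \bar K_{n-1}$ term is absorbed into $\bar K_n$ on the left. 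Substituting the resulting KL bound back into the gradient estimate and using $\msf C^2 \beta^2 d \lesssim \beta d$ and $\msf C^2 \beta \lesssim 1$ produces the claimed $G_n^2 \lesssim G_{n_0}^2 + \beta d + \msf D^2 \beta$.

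The only delicate point is the absorption step, and it is entirely routine: it requires the implicit constant in the smallness hypotheses to be chosen in terms of the absolute constant coming out of Lemma~\ref{lem:grad_bd}. Everything else is bookkeeping — taking running maxima, handling the split at $n_0$, and re-substituting to extract the final gradient bound — so I do not anticipate any genuine obstacle.
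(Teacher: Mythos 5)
Your proposal is correct and follows essentially the same route as the paper: compose Lemma~\ref{lem:grad_bd} with the hypothesis to produce a self-referential bound on the running maximum of the divergence, and absorb the self-referential term using the smallness of $\msf A\beta$ (resp.\ $\msf C^2\beta$). The paper phrases the absorption as a single inequality on $\max_{k\le n}$ rather than an explicit induction, but that is only a cosmetic difference; the split at $n_0$ in Part 2 and the re-substitution to recover the $G_n^2$ bound also match.
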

\begin{proof}\mbox{}
    \begin{enumerate}
        \item By Lemma~\ref{lem:grad_bd},~\eqref{eq:w2_iteration} implies
        \begin{align*}
            \max_{k\le n} W_2^2(\hat\mu_0\hat P^k,\pi)
            &\lesssim \msf A^2 \beta^2 \max_{k<n} W_2^2(\hat\mu_0 \hat P^k, \pi) + \msf A^2 \beta d + \msf B^2\,.
        \end{align*}
        If we assume that $\msf A\beta \lesssim 1$ with a sufficiently small implied constant, then the first term on the right-hand side can be absorbed back into the left-hand side which yields the $W_2^2$ bound. The bound on $G_n^2$ then follows from Lemma~\ref{lem:grad_bd}.
        \item By Lemma~\ref{lem:grad_bd},~\eqref{eq:kl_iteration} implies
        \begin{align*}
            \max_{n_0\le k\le n} \KL(\hat\mu_0\hat P^k \mmid \pi)
            &\lesssim \msf C^2 \beta \max_{n_0\le k < n} \KL(\hat\mu_0 \hat P^k \mmid \pi) + \msf C^2 G_{n_0}^2 + \msf C^2 \beta d + \msf D^2\,.
        \end{align*}
        The remaining steps then mirror the proof of the first item. 
    \end{enumerate}
\end{proof}

    \section{Proofs for Section~\ref{sec:lmc_smooth}}\label{app:lmc_smooth}

\subsection{Proof of Lemma~\ref{lem:lmc_higher_local}}\label{app:lmc_higher_local}

\begin{proof}[Proof of Lemma~\ref{lem:lmc_higher_local}]
    We follow the proof of~\cite{LiZhaTao22LMCSqrtd}.
    By It\^o's formula,
    \begin{align*}
        &\norm{\E \hat X_h - \E X_h}
        = \Bigl\lVert \int_0^h \{\E \nabla V(X_t) -\nabla V(x)\}\, \D t\Bigr\rVert \\
        &\qquad = \Big\lVert\E \Bigl[\int_0^h \int_0^t \{-\nabla^2 V(X_s)\, \nabla V(X_s) + \nabla \Delta V(X_s)\} \, \D s \, \D t + \sqrt{2} \int_0^h \int_0^t \nabla^2 V(X_s) \, \D B_s\,\D t \Bigr]\Big\rVert \\
        &\qquad \leq \int_0^h \int_0^t \bigl\{\E \norm{\nabla^2 V(X_s)\, \nabla V(X_s)} + \E\norm{\nabla \Delta V(X_s)} \bigr\}\, \D s \, \D t \\
        &\qquad \leq (\beta + \zeta_1) \int_0^h \int_0^t \E \norm{\nabla V(X_s)}\, \D s \, \D t + \frac{\zeta_0 h^2}{2}\,.
    \end{align*}
    To handle the gradient term, we write
    \begin{align*}
        \E\norm{\nabla V(X_s)} &\leq \norm{\nabla V(x)} + \beta\,\E\norm{X_s - x} \\
        &\lesssim \norm{\nabla V(x)} + \beta \int_0^s \E \norm{\nabla V(X_r)} \, \D r + \beta\, \E \norm{B_s}\,.
    \end{align*}
    By Gr\"onwall's inequality, for $h \lesssim 1/\beta$, we obtain the following bound, which allows us to conclude:
    \begin{align*}
        \E\norm{\nabla V(X_s)}
        &\lesssim \norm{\nabla V(x)} + \beta\sqrt{dh} \,. \qedhere
    \end{align*}
\end{proof}

\subsection{Proof of Theorems~\ref{thm:lmc_smooth} and~\ref{thm:lmc_smooth_lsi}}\label{app:lmc_smooth_pf}

\begin{proof}[Proof of Theorem~\ref{thm:lmc_smooth}]
    We now apply Theorem~\ref{thm:kl-general} with $L = \exp(-\alpha h)$, $\gamma = \beta h$, $c = O(1/h)$ (by Lemma~\ref{lem:ld_properties}), $b(x) = O(\beta h^{3/2}\,\norm{\nabla V(x)} + \beta d^{1/2} h)$, $c' = O(1/h)$ (by Lemma~\ref{lem:cross_reg_lmc}), and
    \begin{align}
        \mc E_{\rm weak}(x)
        &\lesssim (\beta + \zeta_1)\, h^2\, \norm{\nabla V(x)} + (\beta+\zeta_1)\,\beta d^{1/2} h^{5/2} +\zeta_0 h^2\,,\label{eq:lmc_smooth_weak} \\
        \mc E_{\rm strong}(x)
        &\lesssim \beta h^2\,\norm{\nabla V(x)} + \beta d^{1/2} h^{3/2}\,,\label{eq:lmc_smooth_strong}
    \end{align}
    by Lemmas~\ref{lem:lmc_strong_err} and~\ref{lem:lmc_higher_local}.

    Following the proof of Theorem~\ref{thm:lmc}, we apply Theorems~\ref{thm:local_error} and~\ref{thm:kl-general} to obtain
    \begin{align*}
        W_2^2(\hat\mu_0\hat P^n, \pi)
        &\lesssim \exp(-\alpha nh)\,W_2^2(\hat\mu_0,\pi) + \bigl(n \wedge\frac{1}{\alpha h}\bigr)^2\,((\beta+\zeta_1)^2 \,h^4 G_n^2 + (\beta+\zeta_1)^2\,\beta^2 dh^5 + \zeta_0^2 h^4) \\
        &\qquad{} + \bigl(n \wedge \frac{1}{\alpha h}\bigr)\,\beta^2 dh^3\,,
    \end{align*}
    and
    \begin{align}
        \KL(\hat\mu_0\hat P^n \mmid \pi)
        &\lesssim \frac{\alpha W_2^2(\hat\mu_0,\pi)}{\exp(\alpha nh) -1} + \bigl(n \wedge \frac{1}{\alpha h}\bigr)\, ((\beta+\zeta_1)^2 \,h^3 G_n^2 + (\beta+\zeta_1)^2\,\beta^2 dh^4 + \zeta_0^2 h^3) \nonumber \\
        &\qquad{} + \log\bigl(n\wedge \frac{1}{\alpha h}\bigr)\,\beta^2 dh^2\,. \label{eq:lmc_smooth_kl_recursion}
    \end{align}

    In the case $\alpha > 0$, we invoke Lemma~\ref{lem:recursive_grad} with $n_0 = \infty$, $\msf A^2 = O((\kappa+\bar\kappa_1)^2\,h^2)$, $\msf B^2 = O(d/\alpha)$, provided that
    \begin{align*}
        h \lesssim \frac{1}{\beta}\,\Bigl[\frac{1}{\kappa+\bar\kappa_1}\wedge \frac{1}{(1+\zeta_1/\beta)^{1/3}\,(\kappa+\bar\kappa_1)^{1/3}} \wedge \frac{\kappa d^{1/2}}{\bar\kappa_0}\Bigr]\,.
    \end{align*}
    It yields $G_n^2\lesssim \beta \kappa d$ for all $n$, which we substitute into~\eqref{eq:lmc_smooth_kl_recursion} to prove the result in this case.

    In the case $\alpha = 0$, we invoke Lemma~\ref{lem:recursive_grad} with $N = \Theta(W^2/(\varepsilon^2 h))$, $n_0 = \Theta(1/(\beta h))$, $\msf A^2 = O((1+\zeta_1/\beta)^2\,h^2)$, $\msf B^2 = O(W^2)$, $\msf C^2 = O((\beta+\zeta_1)^2\,W^2 h^2/\varepsilon^2)$, $\msf D^2 = O(\beta W^2)$, provided that $h$ is sufficiently small---which is satisfied for our eventual choice of $h$, for sufficiently small $\varepsilon$---where we write $W \deq W_2(\hat\mu_0,\pi)$.
    It yields $G_N^2 \lesssim \beta d + \beta^2 W^2$, which is then substituted back into~\eqref{eq:lmc_smooth_kl_recursion} to complete the proof.
\end{proof}

\begin{proof}[Proof of Theorem~\ref{thm:lmc_smooth_lsi}]
    We follow the proof of Theorem~\ref{thm:lmc_lsi}.
    By the R\'enyi weak triangle inequality (Corollary~\ref{kl:weak-triangle}), Theorem~\ref{thm:langevin_renyi_conv}, and Theorem~\ref{thm:kl-general},
    \begin{align*}
        \KL(\hat\mu_0\hat P^n \mmid \pi)
        &\lesssim \exp(-\alpha nh)\,\widetilde O(d) + n\,((\beta+\zeta_1)^2\,h^3 G_n^2 + (\beta+\zeta_1)^2\,\beta^2 dh^4 + \zeta_0^2 h^3) \\
        &\qquad{} + (\beta nh \vee \log n)\,\beta^2 dh^2\,.
    \end{align*}
    The total iteration count is taken to be $N = \widetilde\Theta(1/(\alpha h))$.
    We invoke Lemma~\ref{lem:recursive_grad} with $n_0 = 0$, $\msf C^2 = \widetilde O((\beta+\zeta_1)\,(\kappa+\bar\kappa_1)\,h^2)$, $\msf D^2 = \widetilde O(d)$, provided
    \begin{align*}
        h\le \frac{1}{\beta}\,\widetilde O\Bigl(\frac{1}{\sqrt{(1+\zeta_1/\beta)\,(\kappa +\bar \kappa_1)}} \wedge \frac{\kappa d^{1/2}}{\bar\kappa_0} \Bigr)\,.
    \end{align*}
    It yields $G_N^2 \lesssim \beta d$, and substituting this into the bound on the KL divergence finishes the proof.
\end{proof}

    \section{Proofs for Section~\ref{sec:rmd}}

\subsection{Proof of Lemma~\ref{lem:cross_reg_rmlmc}}\label{app:cross_reg_rmlmc}

In this section, we prove the cross-regularity bound for~\ref{eq:rmlmc}.

\begin{proof}[Proof of Lemma~\ref{lem:cross_reg_rmlmc}]
At a high level, the proof consists of conditioning on the uniform random variable $u$ and splitting into cases.
For small $u$, there is enough noise added after time $uh$ so that we can apply the cross-regularity bound for LMC (Lemma~\ref{lem:cross_reg_lmc}).
For large $u$, we use a different argument based on change of variables.

Let $\hat P^u$ denote the kernel for~\ref{eq:rmlmc}, conditioned on the value of the uniform random variable $u$.
By the convexity of the KL divergence (Proposition~\ref{prop:kl_prop}), it suffices to bound $\KL(\delta_x \hat P^u \mmid \delta_y P)$ for each $u\in [0,1]$ separately.

\underline{\textbf{Argument for small $u$.}}
In this case, we condition on ${\{B_t\}}_{t\in [0,uh]}$ and apply the joint convexity of the KL divergence (Proposition~\ref{prop:kl_prop}), leading to
\begin{align*}
    \KL(\delta_x \hat P^u \mmid \delta_y P)
    \le \E \KL\bigl(\mc N(x-h\,\nabla V(\hat X_{uh}^+) + \sqrt 2\,B_{uh},\, 2h\,(1-u)\,I) \bigm\Vert \delta_{X_{uh}} P_{(1-u)\,h}\bigr)
\end{align*}
where $P_{(1-u)\,h}$ is the Langevin kernel run for time $(1-u)\,h$, ${\{X_t\}}_{t\in [0,h]}$ is the Langevin diffusion started at $y$, and the expectation is taken over ${\{B_t\}}_{t\in [0,uh]}$.
We now follow the proof of Lemma~\ref{lem:cross_reg_lmc}, except that we replace $x$ with $x-uh\,\nabla V(\hat X_{uh}^+) + \sqrt 2\, B_{uh}$, $y$ with $X_{uh}$, and we replace the drift of the discretized process by $-\nabla V(\hat X_{uh}^+)$.
By tracing through the computations, we see that
\begin{align*}
    \KL(\delta_x \hat P^u \mmid \delta_y P)
    &\lesssim \E\Bigl[\frac{\norm{x-uh\,\nabla V(\hat X_{uh}^+)+\sqrt 2\,B_{uh} - X_{uh}}^2}{(1-u)\,h}
    + \beta^2 h^3 \,\norm{\nabla V(\hat X_{uh}^+)}^2 + \beta^2 dh^2\Bigr]\,.
\end{align*}
It remains to control these terms.

First,
\begin{align*}
    \E[\norm{\hat X_{uh}^+ - x}^2]
    &= \E[\norm{-uh\,\nabla V(x) + \sqrt 2 \, B_{uh}}^2]
    \lesssim h^2 \,\norm{\nabla V(x)}^2 + dh\,.
\end{align*}
Therefore,
\begin{align*}
    \E[\norm{\nabla V(\hat X_{uh}^+)}^2]
    &\lesssim \norm{\nabla V(x)}^2 + \beta^2\,\E[\norm{\hat X_{uh}^+ - x}^2]
    \lesssim \norm{\nabla V(x)}^2 + \beta^2 dh\,.
\end{align*}
Next,
\begin{align*}
    &\E[\norm{x-uh\,\nabla V(\hat X_{uh}^+) + \sqrt 2\,B_{uh} - X_{uh}}^2] \\
    &\qquad \lesssim \E[\norm{x-uh\,\nabla V(\hat X_{uh}^+) + \sqrt 2\,B_{uh} - \hat X_{uh}^+}^2] + \E[\norm{\hat X_{uh}^+ - X_{uh}}^2]\,.
\end{align*}
The first term here equals
\begin{align*}
    \E[\norm{uh\,(\nabla V(x) - \nabla V(\hat X_{uh}^+))}^2]
    &\lesssim \beta^2 h^2\,\E[\norm{\hat X_{uh}^+ - x}^2]
    \lesssim \beta^2 h^4 \,\norm{\nabla V(x)}^2 + \beta^2 dh^3\,.
\end{align*}
The second term, by the second part of Lemma~\ref{lem:lmc_strong_err}, is bounded by
\begin{align*}
    \E[\norm{\hat X_{uh}^+ - X_{uh}}^2]
    &\lesssim \norm{x-y}^2 + \beta^2 h^4\,\norm{\nabla V(x)}^2 + \beta^2 dh^3\,,
\end{align*}
since $h \lesssim 1/\beta$.
Putting the bounds together,
\begin{align*}
    \KL(\delta_x \hat P^u \mmid \delta_y P)
    &\lesssim \frac{\norm{x-y}^2}{(1-u)\,h} + \beta^2 h^3 \,\norm{\nabla V(x)}^2 + \beta^2 dh^2\,.
\end{align*}

\underline{\textbf{Argument for large $u$.}}
For this part of the argument, it turns out that we only require relatively crude bounds.
Let $\hat P^{\msf{LMC}}$ denote the transition kernel for~\ref{eq:lmc}.
By the weak triangle inequality for R\'enyi divergences (Corollary~\ref{kl:weak-triangle}),
\begin{align*}
    \KL(\delta_x \hat P^u \mmid \delta_y P)
    &\le 2\KL(\delta_x \hat P^u \mmid \delta_x \hat P^{\msf{LMC}}) +\Ren_2(\delta_x \hat P^{\msf{LMC}} \mmid \delta_y P)\,.
\end{align*}
The second term is bounded via Lemma~\ref{lem:cross_reg_lmc_renyi}, provided $h\lesssim 1/\beta$.
We focus on the first term.

Define the measures
\begin{align*}
    \rho_0
    &\deq \law\bigl(x-h\,\nabla V(x-uh\,\nabla V(x)+\sqrt{2}\,B_{uh}) + \sqrt 2\,B_{uh}\bigr)\,, \\
    \rho_1
    &\deq \law\bigl(x-h\,\nabla V(x) + \sqrt 2\,B_{uh}\bigr)\,,
\end{align*}
and note that
\begin{align*}
    \delta_x \hat P^u = \rho_0 * \mc N(0,\, 2\,(1-u)\,h\,I)\,, \qquad \delta_x \hat P^{\msf{LMC}} = \rho_1 * \mc N(0,\,2\,(1-u)\,h\,I)\,.
\end{align*}
By the data-processing inequality (Proposition~\ref{prop:kl_prop}),
\begin{align*}
    \KL(\delta_x \hat P^u \mmid \delta_x \hat P^{\msf{LMC}})
    &\le \KL(\rho_0 \mmid \rho_1)\,.
\end{align*}
Moreover, if $\gamma_{uh} \deq \mc N(0, uhI)$, then $\rho_0 = {(F_0)}_\# \gamma_{uh}$ and $\rho_1 = {(F_1)}_\# \gamma_{uh}$ for appropriate deterministic maps $F_0, F_1 : \R^d\to\R^d$.
The change of variables formula yields $\gamma/(F_\# \gamma \circ F) = \det \nabla F$ for any smooth density $\gamma$ and any diffeomorphism $F : \R^d\to\R^d$.
Hence,
\begin{align*}
    \KL(\rho_0 \mmid \rho_1)
    &= \int \bigl(\log \frac{\D\rho_0}{\D\rho_1}\bigr)\,\D \rho_0
    = \int \Bigl( \log \frac{\rho_0 \circ F_0}{\rho_1\circ F_0}\Bigr)\,\D \gamma_{uh} \\
    &= \int \log\Bigl(\frac{\gamma_{uh}}{\gamma_{uh} \circ F_1^{-1} \circ F_0}\, \frac{(\det \nabla F_1) \circ F_1^{-1} \circ F_0}{\det \nabla F_0} \Bigr)\,\D \gamma_{uh}\,.
\end{align*}
Since
\begin{align*}
    \nabla F_0(B_{uh})
    &= \sqrt 2\,\bigl(I - h\,\nabla^2 V(x-uh\,\nabla V(x)+\sqrt 2 \, B_{uh})\bigr)\,, \qquad \nabla F_1(B_{uh}) = \sqrt 2\,I\,,
\end{align*}
and $F_1^{-1}(Z) = (Z - x + h\,\nabla V(x))/\sqrt 2$,
\begin{align*}
    \KL(\rho_0 \mmid \rho_1)
    &= \E\Bigl[ \frac{\norm{(Z_{uh} - x + h\,\nabla V(x))/\sqrt 2}^2 - \norm{B_{uh}}^2}{2uh} \\
    &\qquad\qquad{} - \log \det\bigl(I - h\,\nabla^2 V(x - uh\,\nabla V(x) + \sqrt 2\,B_{uh})\bigr)\Bigr]\,,
\end{align*}
where $Z_{uh} \deq x-h\,\nabla V(x-uh\,\nabla V(x) + \sqrt 2\,B_{uh})+\sqrt 2\,B_{uh}$.
For the second term, we use the assumption $h\lesssim 1/\beta$ to argue that $\abs{\log \det(I-h\,\nabla^2 V(z))} \lesssim \beta dh$ for any $z\in\R^d$.
Then, since we only seek a crude bound, we can assert
\begin{align*}
    \KL(\rho_0 \mmid \rho_1)
    &\lesssim \frac{1}{uh}\,\bigl(h^2 \,\E[\norm{\nabla V(x-uh\,\nabla V(x) + \sqrt 2\,B_{uh}) - \nabla V(x)}^2] + dhu\bigr) + \beta dh \\
    &\lesssim \frac{\beta^2 h}{u}\,\E[\norm{-uh\,\nabla V(x) + \sqrt 2\,B_{uh}}^2] + d + \beta dh
    \lesssim \beta^2 h^3 u\,\norm{\nabla V(x)}^2 + \beta^2 dh^2 + d \\
    &\lesssim \beta^2 h^3\,\norm{\nabla V(x)}^2 + d\,.
\end{align*}
Putting the bounds together,
\begin{align*}
    \KL(\delta_x \hat P^u \mmid \delta_y P)
    &\lesssim \frac{\norm{x-y}^2}{h} + \beta^2 h^3 \,\norm{\nabla V(x)}^2 + d\,.
\end{align*}

\textbf{\underline{Completing the proof.}}
Let $\delta \in (0,1)$ be a small parameter to be chosen later. After invoking the joint convexity of the KL divergence, we apply our first argument for $u \le 1-\delta$ and our second argument for $u \ge 1-\delta$, yielding
\begin{align*}
    \KL(\delta_x \hat P \mmid \delta_y P)
    &\le \int_0^1 \KL(\delta_x \hat P^u \mmid \delta_y P) \, \D u \\
    &\lesssim \int_0^{1-\delta} \Bigl( \frac{\norm{x-y}^2}{(1-u)\,h} + \beta^2 h^3 \,\norm{\nabla V(x)}^2 + \beta^2 dh^2 \Bigr) \, \D u
    + \delta \,\Bigl( \frac{\norm{x-y}^2}{h} + \beta^2 h^3 \,\norm{\nabla V(x)}^2 + d\Bigr) \\
    &\lesssim \frac{\norm{x-y}^2}{h}\,\bigl(1+\log(1/\delta)\bigr) + \beta^2 h^3\,\norm{\nabla V(x)}^2 + \beta^2 dh^2 + \delta d\,.
\end{align*}
We conclude the proof by setting $\delta = \beta^2 h^2$.
\end{proof}

\subsection{Proof of Theorems~\ref{thm:rmlmc} and~\ref{thm:rmlmc_lsi}}\label{app:rmlmc}

\begin{proof}[Proof of Theorem~\ref{thm:rmlmc}]
    We apply Theorem~\ref{thm:kl-general} with the following parameters: $L = \exp(-\alpha h)$, $\gamma = \beta h$, $c = O(1/h)$ (Lemma~\ref{lem:ld_properties}), $c' = O(\log(1/(\beta h))/h)$, $b(x) = \beta h^{3/2}\,\norm{\nabla V(x)} + \beta d^{1/2} h$ (Lemma~\ref{lem:cross_reg_rmlmc}), and $\mc E_{\rm weak}$, $\mc E_{\rm strong}$ given in Lemma~\ref{lem:rmlmc_local}.
    
    Following the proof of Theorem~\ref{thm:lmc}, Theorems~\ref{thm:local_error} and~\ref{thm:kl-general} yield
    \begin{align*}
        W_2^2(\hat\mu_0\hat P^n, \pi)
        &\lesssim \exp(-\alpha nh)\,W_2^2(\hat\mu_0,\pi) + \bigl(n \wedge \frac{1}{\alpha h}\bigr)^2\,(\beta^4 h^6 G_n^2 + \beta^4 dh^5) \\
        &\qquad{} + \bigl(n \wedge \frac{1}{\alpha h}\bigr)\,(\beta^2 h^4 G_n^2 + \beta^2 dh^3)
    \end{align*}
    and
    \begin{align}\label{eq:rmd_kl_recursion}
        \KL(\hat\mu_0\hat P^n \mmid \pi)
        &\lesssim_{\log} \frac{\alpha W_2^2(\hat\mu_0,\pi)}{\exp(\alpha nh)-1} + \bigl(n \wedge \frac{1}{\alpha h}\bigr)\,(\beta^4 h^5 G_n^2 + \beta^4 dh^4) + \beta^2 h^3 G_n^2 + \beta^2 dh^2\,,
    \end{align}
    where $\lesssim_{\log}$ is used to suppress logarithms for simplicity.

    In the case $\alpha > 0$, we invoke Lemma~\ref{lem:recursive_grad} with $n_0=\infty$, $\msf A^2 = \widetilde O(\beta^2 h^3/\alpha)$, $\msf B^2 = O(d/\alpha)$, provided $h\le \widetilde O(1/(\beta \kappa))$.
    This yields $G_n^2 \lesssim \beta \kappa d$ for all $n$, and substitution into~\eqref{eq:rmd_kl_recursion} finishes the proof.

    In the case $\alpha = 0$, we invoke Lemma~\ref{lem:recursive_grad} with total iteration count $N = \widetilde \Theta(W^2/(\varepsilon^2 h))$, $n_0 = \Theta(1/(\beta h))$, $\msf A^2 = \widetilde O(\beta h^3)$, $\msf B^2 = \widetilde O(W^2)$, $\msf C^2 = \widetilde O(\beta^4 h^4 W^2/\varepsilon^2 + \beta^2 h^3)$, $\msf D^2 = \widetilde O(\beta W^2)$, provided
    \begin{align*}
        h \le \frac{1}{\beta}\, \widetilde O\Bigl(1 \wedge \frac{\beta^{1/2} W}{d^{1/2}} \wedge \frac{\varepsilon^{1/2}}{\beta^{1/4} W^{1/2}} \wedge \frac{\varepsilon^{2/3}}{d^{1/3}}\Bigr)\,.
    \end{align*}
    It yields $G_N^2 = \widetilde O(\beta d + \beta^2 W^2)$, and substituting this into~\eqref{eq:rmd_kl_recursion} finishes the proof.
\end{proof}

\begin{proof}[Proof of Theorem~\ref{thm:rmlmc_lsi}]
    We follow the proof of Theorem~\ref{thm:lmc_lsi}.
    Namely, by the R\'enyi weak triangle inequality (Corollary~\ref{kl:weak-triangle}), Theorem~\ref{thm:langevin_renyi_conv}, and Theorem~\ref{thm:kl-general},
    \begin{align*}
        \KL(\hat\mu_0 \hat P^n \mmid \pi)
        &\lesssim_{\log} d\exp(-\alpha n h) + n\,(\beta^4 h^5 G_n^2 + \beta^4 dh^4) + \beta nh\,(\beta^2 h^3 G_n^2 + \beta^2 dh^2)
    \end{align*}
    The total iteration count is taken to be $N = \widetilde\Theta(1/(\alpha h))$.
    We invoke Lemma~\ref{lem:recursive_grad} with $n_0 = 0$, $\msf C^2 = \widetilde O(\beta^3 \kappa h^4 + \beta^2 \kappa h^3)$, $\msf D^2 = \widetilde O(d)$, provided $h \le \widetilde O(1/(\beta \kappa^{1/2}))$.
    It yields $G_N^2 = \widetilde O(\beta d)$, and substituting this into the KL divergence bound completes the proof.
\end{proof}

\begin{remark}\label{rmk:rmd_weak_cvx}
    In the usual analysis of the randomized midpoint discretization, the strong error dominates.
    However, in the proof of Theorem~\ref{thm:rmlmc} in the weakly convex case, the weak error dominates and leads to the final $\widetilde O(\beta^{4/3} d^{1/3} W^{8/3}/\varepsilon^{10/3})$ rate.
    This suggests that if the weak error can be further reduced, e.g., by using more Picard iterations as suggested in~\cite{shen2019randomized}, it may be possible to further improve the rate to $\widetilde O(\beta d^{1/2} W^2/\varepsilon^3)$, although we do not pursue this here.
\end{remark}

    \small
    \addcontentsline{toc}{section}{References}
    \printbibliography{}
    
\end{document}